\newenvironment{customthm}[1]
  {\innercustomthm}
  {\endinnercustomthm}
\renewcommand{\epsilon}{\varepsilon}
\newcommand{\N}{\mathbb{N}}
\newcommand{\R}{\mathbb{R}}
\newcommand{\C}{\mathbb{C}}
\renewcommand{\Re}{\operatorname{Re}}
\newcounter{mtheorem}
\newtheorem{mtheorem}[mtheorem]{Theorem}
\newtheorem{mcor}[mtheorem]{Corollary}
\newcommand{\supp}{\operatorname{supp}}
\newcommand{{\vol}}{\rm vol}
\newcommand{\p}{\partial}
\newcommand{\norm}[1]{\Vert #1 \Vert}
\newcommand{\Ric}{\operatorname{Ric}}
\newcommand{\Rm}{\operatorname{Rm}}
\providecommand{\norm}[1]{\lVert#1\rVert}
\def\tr{\operatorname{tr}}
\def\cit{\mathop{\rm \mathscr{C}}\nolimits}
\def\inj{\operatorname{inj}}
\def\eucl{\operatorname{eucl}}
\def\AVR{\operatorname{AVR}}
\def\Div{\operatorname{div}}
\def\Id{\operatorname{Id}}
\newtheoremstyle{fancy}{}{}{\itshape}{}{\textbf\bgroup}{.\egroup}{ }{}
\newtheoremstyle{fancy2}{}{}{\rm}{}{\textbf\bgroup}{.\egroup}{ }{}
\theoremstyle{fancy}
\newtheorem{theorem}{Theorem}[section]
\newtheorem{lemma}[theorem]{Lemma}
\newtheorem{corollary}[theorem]{Corollary}
\newtheorem{prop}[theorem]{Proposition}
\theoremstyle{fancy2}
\newtheorem{definition}[theorem]{Definition}
\newtheorem{example}[theorem]{Example}
\newtheorem{remark}[theorem]{Remark}
\newtheorem{convention}[theorem]{Convention}
\newtheorem{claim}[theorem]{Claim}
\setlist{leftmargin=*}
\numberwithin{equation}{section}
\begin{document}
\title{Expanding K\"ahler-Ricci solitons coming out of K\"ahler cones}
\date{\today}
\author{Ronan J.~Conlon}
\address{Department of Mathematics and Statistics, Florida International University, Miami, FL 33199, USA}
\email{rconlon@fiu.edu}
\author{Alix Deruelle}
\address{D\'epartement de math\'ematiques, B\^atiment 425, Facult\'e des sciences, Universit\'e
Paris-Sud 11, F-91405, Orsay, France}
\email{alix.deruelle@math.u-psud.fr}
\date{\today}

\begin{abstract}
We give necessary and sufficient conditions for a K\"ahler equivariant resolution of a K\"ahler cone, with the resolution satisfying one of a number of auxiliary conditions, to admit a unique asymptotically conical (AC) expanding gradient K\"ahler-Ricci soliton. In particular, it follows that for any $n\in\mathbb{N}_{0}$ and for any negative line bundle $L$ over a compact K\"ahler manifold $D$, the total space of the vector bundle $L^{\oplus (n+1)}$ admits a unique AC expanding gradient K\"ahler-Ricci soliton with soliton vector field a positive multiple of the Euler vector field if and only if $c_{1}(K_{D}\otimes(L^{*})^{\otimes (n+1)})>0$. This generalises the examples already known in the literature. We further prove a general uniqueness result and show that the space of certain AC expanding gradient K\"ahler-Ricci solitons on $\mathbb{C}^{n}$ with positive curvature operator on $(1,\,1)$-forms is path-connected.
\end{abstract}

\maketitle
\markboth{Ronan J.~Conlon and Alix Deruelle}{Expanding K\"ahler-Ricci solitons coming out of K\"ahler cones}

\section{Introduction}

\subsection{Overview} A \emph{Ricci soliton} is a triple $(M,\,g,\,X)$, where $M$ is a Riemannian manifold with a complete Riemannian metric $g$
and a complete vector field $X$ satisfying the equation
\begin{equation}\label{soliton1}
\Ric(g)-\frac{1}{2}\mathcal{L}_{X}g+\lambda g=0
\end{equation}
for some $\lambda\in\{-1,\,0,\,1\}$. We call $X$ the \emph{soliton vector field}. A soliton is said to be \emph{steady} if $\lambda=0$, \emph{expanding} if $\lambda=1$, and \emph{shrinking} if $\lambda=-1$. Moreover, if $X=\nabla^{g} f$ for some real-valued smooth function $f$ on $M$, then we say that $(M,\,g,\,X)$ is a \emph{gradient} soliton. In this case, the soliton equation \eqref{soliton1} reduces to $$\Ric(g)-\operatorname{Hess}(f)+\lambda g=0.$$ If $g$ is K\"ahler with K\"ahler form $\omega$, then we say that $(M,\,g,\,X)$ (or $(M,\,\omega,\,X)$) is a \emph{K\"ahler-Ricci soliton} if in addition to $g$ and $X$ satisfying \eqref{soliton1}, the vector field $X$ is real holomorphic. In this case, one can rewrite the soliton equation as
\begin{equation}\label{soliton22}
\rho_{\omega}-\frac{1}{2}\mathcal{L}_{X}\omega+\lambda\omega=0,
\end{equation}
where $\rho_{\omega}$ is the Ricci form of $\omega$. If $g$ is a K\"ahler-Ricci soliton and if $X=\nabla^{g} f$ for some real-valued smooth function $f$ on $M$, then we say that $(M,\,g,\,X)$ is a \emph{gradient K\"ahler-Ricci soliton}.

The study of Ricci solitons and their classification is important in the context of Riemannian geometry. For example, they provide a natural generalisation of Einstein manifolds. Also, to each soliton, one may associate a self-similar solution of the Ricci flow \cite[Lemma 2.4]{Chowchow} which are candidates for singularity models of the flow.

Given now an \emph{expanding} gradient Ricci soliton $(M,\,g,\,X)$ with quadratic Ricci curvature decay and appropriate decay on the derivatives, one may associate a unique tangent cone $(C_{0},\,g_{0})$ with a smooth link \cite{Che-Der, Der-Asy-Com-Egs, Siepmann} which is then an initial condition of the Ricci flow $g(t)$, $t\geq0$, associated to the soliton in the sense that $\lim_{t\to0^{+}}g(t)=g_{0}$ as a Gromov-Hausdorff limit \cite[Remark 1.5]{Che-Der}. The work of this paper is motivated by the converse statement, namely, the following problem.
\smallskip

\noindent {\bf Problem.} \emph{For which metric cones $C_{0}$ is it possible to find an expanding (gradient) Ricci soliton with tangent cone $C_{0}$? Or more generally, given a metric cone $(C_{0},\,g_{0})$, when is it possible to find a Ricci flow $g(t)$, $t\geq0$, such that $\lim_{t\to0^{+}}g(t)=g_{0}$ in the Gromov-Hausdorff sense?}
\smallskip

A metric cone with its radial vector field satisfies \eqref{soliton1} with $\lambda=1$ up to terms of order $O(r^{-2})$ with $r$ the distance from the apex of the cone, hence it defines an ``approximate'' expanding gradient Ricci soliton. So heuristically, the question here is whether or not one can perturb the cone metric to define an actual expanding gradient Ricci soliton.

The second author has shown that one can always solve the above problem when the link of the cone $C_{0}$ is a sphere with positive curvature operator bounded below by $\Id$ \cite{Der-Asy-Com-Egs, Der-Smo-Pos-Cur-Con}, and a recent result due to Lott and Wilson \cite{Lott-Wilson} shows that the above question has a positive answer at the level of formal expansions. When the cone $C_{0}$ is K\"ahler, Siepmann \cite{Siepmann} has shown that the above question always has an affirmative answer when $C_{0}$ is furthermore Ricci-flat and admits an equivariant resolution satisfying certain topological conditions.

In this paper, we also consider this problem in the K\"ahler category. More precisely, we extend the aforementioned result of Siepmann to K\"ahler cones in general (without the need for Ricci-flatness). This leads to new examples of expanding gradient K\"ahler-Ricci solitons generalising those complete expanding gradient K\"ahler-Ricci soliton examples found in \cite{cao2, wang, FIK, futaki-wang, Siepmann}. Moreover, we show uniqueness of expanding gradient K\"ahler-Ricci solitons asymptotic to a given cone with a fixed soliton vector field and
prove that the space of certain solitons of this type on $\mathbb{C}^{n}$ with positive curvature operator on $(1,\,1)$-forms is path-connected.

\subsection{Main results}

Our first main result is the following theorem, which is a generalisation of \cite[Theorem 5.3.1]{Siepmann} where the K\"ahler cone $C_0$ is assumed to be Ricci-flat.
\begin{mtheorem}[Existence and uniqueness]\label{theorem-A}
Let $C_0$ be a K\"ahler cone with complex structure $J_{0}$, K\"ahler cone metric $g_{0}$, Ricci curvature $\operatorname{Ric}(g_{0})$, and radial function $r$. Let $\pi:M\to C_0$ be a K\"ahler resolution of $C_0$ with complex structure $J$ and exceptional set $E$ such that
\begin{enumerate}[label=(\alph*)]
\item  the real torus action on $C_0$ generated by $J_{0}r\partial_{r}$ extends to $M$ so that $X=\pi^{*}(r\partial_{r})$ lifts to $M$;
\item $H^{1}(M)=0$ or $H^{0,\,1}(M)=0$ or $X|_{A}=0$ for $A\subset E$ for which $H_{1}(A)\to H_{1}(E)$ is surjective.
\end{enumerate}
Then for each $c>0$, there exists a unique expanding gradient K\"ahler-Ricci soliton $g_{c}$ on $M$ with soliton vector field $X=\pi^{*}(r\partial_{r})$, the lift of the vector field $r\partial_{r}$ on $C_0$, and with $\mathcal{L}_{JX}g_{c}=0$, such that
\begin{equation}\label{e:main_rate_0}
|(\nabla^{g_{0}})^k(\pi_{*}g_{c}-cg_{0}-\operatorname{Ric}(g_{0}))|_{g_0} \leq C(k)r^{-4-k}\quad\textrm{for all $k\in\mathbb{N}_{0}$}
\end{equation}
if and only if
\begin{equation}\label{condition}
\int_{V}(i\Theta)^{k}\wedge\omega^{\dim_{\mathbb{C}}V-k}>0
\end{equation}
for all positive-dimensional irreducible analytic subvarieties $V\subset E$ and for all $1\leq k\leq \dim_{\C}V$ for some K\"ahler form $\omega$ on $M$ and for some curvature form $\Theta$ of a hermitian metric on $K_{M}$.
\end{mtheorem}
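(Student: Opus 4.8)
The plan is to prove the two implications separately: necessity is a short cohomological observation, while sufficiency is the substance of the argument and proceeds by solving an associated complex Monge--Amp\`ere equation with drift by the continuity method in weighted H\"older spaces. \emph{Necessity.} Suppose such a soliton $g_c$ with K\"ahler form $\omega$ and vector field $X$ exists. Since $\omega$ is closed, Cartan's formula gives $\mathcal{L}_X\omega = d(\iota_X\omega)$, so the middle term of \eqref{soliton22} is exact, and reading \eqref{soliton22} with $\lambda=1$ in cohomology yields $[\rho_\omega] = -[\omega]$ in $H^2(M,\R)$. As $-\rho_\omega$ is the curvature form of $K_M$ for the metric induced by $\omega$, and any two such curvature forms are cohomologous, $[i\Theta]=-[\rho_\omega]=[\omega]$. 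Restricting to any positive-dimensional irreducible $V\subset E$ (resolving $V$ if it is singular, so that Stokes applies), for each $k$ one gets
\[
\int_V (i\Theta)^k\wedge\omega^{\dim_\C V - k} = \int_V [\omega]^{\dim_\C V} > 0,
\]
and since the left-hand side depends only on the cohomology classes, \eqref{condition} holds for every admissible choice of $\omega$ and $\Theta$.

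For sufficiency, assume \eqref{condition}. The first step is to construct a reference metric: \eqref{condition} is a Nakai--Moishezon-type positivity for $c_1(K_M)$ localised along the compact set $E$, which, combined with the positivity supplied by the cone near infinity, is exactly what is needed — in the spirit of Demailly--Paun — to produce an asymptotically conical K\"ahler form $\omega_0$ on $M$ with $[\omega_0]=c_1(K_M)$ and modelled on $c g_0 + \Ric(g_0)$ near infinity. This cohomological normalisation forces the soliton defect $D_0 := \rho_{\omega_0} - \tfrac12\mathcal{L}_X\omega_0 + \omega_0$ to have vanishing class, and hypothesis (b), through the relevant $\partial\bar\partial$-lemma, then writes $D_0 = i\partial\bar\partial F_0$ for a function $F_0$ decaying at infinity. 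Substituting $\omega = \omega_0 + i\partial\bar\partial\varphi$ into \eqref{soliton22} and integrating the resulting pluriharmonic identity reduces the soliton equation to the scalar complex Monge--Amp\`ere equation
\[
\log\frac{(\omega_0 + i\partial\bar\partial\varphi)^n}{\omega_0^n} = \varphi - \tfrac12 X(\varphi) + F,
\]
where $F$ differs from $F_0$ by a constant and $\tfrac12 X(\varphi)$ is the drift induced by the soliton vector field.

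I would then solve this equation by the continuity method along the path in which $F$ is replaced by $tF$, $t\in[0,1]$, with $\varphi_0=0$ solving the case $t=0$. Openness of the set of solvable parameters follows from the implicit function theorem once the linearisation $\psi\mapsto\Delta_{\omega}\psi + \tfrac12 X(\psi) - \psi$ — a drift Laplacian whose zeroth-order term has the sign favourable for invertibility — is shown to be an isomorphism between suitable weighted H\"older spaces; this rests on analysing the model operator on the cone, locating its indicial roots, and choosing a weight that avoids them. Closedness requires a priori estimates uniform in $t$: the coercive $+\varphi$ term drives the $C^0$ estimate via the maximum principle (using the conical ends to control the behaviour at infinity), after which the Aubin--Yau second-order estimate adapted to the drift, together with Evans--Krylov and Schauder theory, gives all higher derivatives, and promoting these to weighted estimates yields the sharp decay \eqref{e:main_rate_0}. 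Uniqueness among solitons with the same $X$ and prescribed asymptotics follows by subtracting the potentials of two solutions: their difference satisfies a homogeneous equation for the same drift Laplacian and decays at infinity, so the maximum principle forces it to vanish.

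The step I expect to be the main obstacle is the weighted linear theory underlying both the openness and the asymptotics: proving the invertibility of the drift Laplacian in the correct weighted space and extracting the precise rate $r^{-4-k}$ in \eqref{e:main_rate_0} require a careful analysis of the indicial roots of the conical model, and are complicated by the drift term, which breaks the self-adjointness present in the Ricci-flat case of \cite{Siepmann}. Closely tied to this is the conceptual input from \eqref{condition}: converting this numerical positivity into a genuine asymptotically conical K\"ahler representative of $c_1(K_M)$ is precisely the ingredient that replaces the Ricci-flatness hypothesis of the earlier work.
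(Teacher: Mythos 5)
Your overall architecture coincides with the paper's: necessity by reading the soliton equation in cohomology (the paper's Lemma \ref{necessaryy} and the remark after Theorem \ref{theorem-A}), and sufficiency by building a background form from \eqref{condition} via a Demailly--P\u{a}un-type statement (the paper uses P\u{a}un's theorem in Proposition \ref{background-metric}), reducing to the drift Monge--Amp\`ere equation \eqref{MA-Intro} via hypothesis (b) (Proposition \ref{equationsetup}), running a continuity method in $t F$, and proving uniqueness by a maximum principle applied to the difference of potentials. Your necessity argument is correct. However, there are two concrete points where your plan would run aground. First, the linear theory: you propose to invert $\Delta_{\omega}+\tfrac12 X\cdot{}-\operatorname{Id}$ by ``locating its indicial roots'' on the conical model. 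This framework does not apply here. Because $X\sim r\partial_r$ is unbounded, the drift term is not a lower-order perturbation of a dilation-homogeneous conical operator; the operator is unitarily conjugate to a harmonic oscillator $\Delta_{\omega_0}-cr^2$ with purely discrete spectrum, and there are no indicial roots in the Lockhart--McOwen sense. The paper's isomorphism (Theorem \ref{iso-sch-Laplacian}) is instead obtained by maximum-principle barriers of the form $f^{a+\epsilon}$ for injectivity and by conjugating with the weight $f^{-1}$ to exhibit the operator as a compact perturbation of $\Delta_{\omega}+\tfrac12X\cdot{}-2\operatorname{Id}$ on conical H\"older spaces; the restriction to $JX$-invariant functions is then needed to preserve surjectivity.

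Second, your chain of a priori estimates ($C^0$, then Aubin--Yau $C^2$, then Evans--Krylov) omits the a priori $C^0$ bound on the radial derivative $X\cdot\varphi$ (Proposition \ref{prop-c^0-rad-der}). This is not a routine consequence of the $C^0$ bound on $\varphi$: since $X$ is unbounded, the Aubin--Yau computation produces constants depending on $\|X\cdot\varphi\|_{C^0}$, so this estimate must be established \emph{before} the $C^2$ estimate to avoid circularity, and its proof genuinely uses both the quadratic decay of $\varphi$ and its $JX$-invariance (integrating the second radial derivative, controlled by $\omega_\varphi>0$, along the flow of $X/2$). Relatedly, the paper deliberately avoids Evans--Krylov in favour of a Yau-type $C^3$ estimate because of the unbounded drift. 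Your uniqueness sketch is also compressed: one must first produce a decaying $\partial\bar\partial$-potential for $\omega_1-\omega_2$ (via the soliton potential identities of Lemma \ref{id-EGS}) and then improve its decay by integrating along the Morse flow of $X$ before the maximum principle closes the argument. None of these change the strategy, but the first point in particular is a method that would fail as stated rather than a mere omission.
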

We call a resolution of $C_0$ satisfying condition (a) here an \emph{equivariant resolution}. Such a resolution of a complex cone always exists; see \cite[Proposition 3.9.1]{kollar}. What is not clear a priori is if this resolution satisfies condition \eqref{condition}. From \eqref{soliton22}, one can see that \eqref{condition} is in fact a necessary condition on $M$ in Theorem \ref{theorem-A} to admit an expanding K\"ahler-Ricci soliton. Furthermore, as remarked in \cite{Lebrun-hein}, an asymptotically conical (AC) K\"ahler manifold of complex dimension $n\geq2$ can only have one end, hence in these dimensions having one end is also a necessary condition on $M$ in Theorem A to admit asymptotically conical K\"ahler-Ricci solitons.


Regarding the regularity of the soliton metrics of Theorem A, we note that there exist examples of expanding gradient Ricci solitons of non-K\"ahler type asymptotic to a cone up to only finitely many derivatives \cite{Der-Smo-Pos-Cur-Con}. We further remark that each of the conditions of hypothesis (b), together with the fact that $JX$ is Killing with $J$ the complex structure on $M$, forces our solitons to be gradient; see Lemma \ref{hellothere} and the remarks thereafter, as well as Corollary \ref{kingpin}. On the other hand, these assumptions do allow us to reformulate the problem as a complex Monge-Amp\`ere equation which we can then solve. Our proof of the existence of a solution to this equation follows closely the work of the second author that developed the analysis in the case that the asymptotic cone is not Ricci-flat \cite{Der-Asy-Com-Egs, Der-Smo-Pos-Cur-Con} and the work of Siepmann \cite{Siepmann}.

As an application of Theorem \ref{theorem-A}, we obtain the following generalisation of \cite[Theorem 4.20(ii)]{wang}.
\begin{mcor}[Examples]\label{coro-B}
Let $n\in\mathbb{N}_{0}$ and let $L$ be a negative line bundle over a compact K\"ahler manifold $D$. Moreover, let $\tilde{L}$ denote the line bundle $p^{*}_{1}\mathcal{O}_{\mathbb{P}^{n}}(-1)\otimes p^{*}_{2}L\to\mathbb{P}^{n}\times D$ over $\mathbb{P}^{n}\times D$ with $p_{1}:\mathbb{P}^{n}\times D\to\mathbb{P}^{n}$ and $p_{2}:\mathbb{P}^{n}\times D\to D$ the projections, let $\tilde{L}^{\times}$ denote the blowdown of the zero section of $\tilde{L}$, blowdown $\mathbb{P}^{n}\subset\tilde{L}$ to obtain $L^{\oplus (n+1)}$ and let $\pi:L^{\oplus (n+1)}\to\tilde{L}^{\times}$ denote the further blowdown of the zero section of $L^{\oplus(n+1)}$. Finally, let $g_{0}$ be a K\"ahler cone metric on $\tilde{L}^{\times}$ with Ricci curvature $\operatorname{Ric}(g_{0})$ and with radial function $r$ such that $\frac{1}{a}\cdot r\partial r$ is the Euler vector field\footnote{By the Euler vector field on a vector bundle $E$, we mean the infinitesimal generator of the homotheties of $E$.} on $\tilde{L}\setminus\{0\}$ for some $a>0$.

Then for all $c>0$, there exists a unique expanding gradient K\"ahler-Ricci soliton $g_{c}$ on the total space of $L^{\oplus(n+1)}$ with soliton vector field $X=\pi^{*}(r\partial_{r})$ a scaling of the Euler field on $L^{\oplus(n+1)}$ by $a$, such that
\begin{equation}\label{asymptote}
|(\nabla^{g_0})^k(\pi_{*}g_{c}-cg_{0}-\operatorname{Ric}(g_{0}))|_{g_0 } \leq C(k)r^{-4-k}\quad\textrm{for all $k\in\mathbb{N}_{0}$}
\end{equation}
if and only if $c_{1}(K_{D}\otimes(L^{*})^{\otimes(n+1)})>0$.
\end{mcor}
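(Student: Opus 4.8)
The plan is to apply Theorem \ref{theorem-A} to the resolution $\pi\colon M:=L^{\oplus(n+1)}\to C_{0}:=\tilde{L}^{\times}$ and to show that, in this geometry, condition \eqref{condition} is equivalent to the positivity of $c_{1}(K_{D}\otimes(L^{*})^{\otimes(n+1)})$; the corollary then follows at once, since Theorem \ref{theorem-A} already supplies existence, uniqueness, and the asymptotics \eqref{asymptote}. First I record the geometry: $\pi$ contracts the zero section of the negative vector bundle $L^{\oplus(n+1)}$ to the apex of the cone, so the exceptional set is $E=$ (zero section) $\cong D$, embedded by $\iota\colon D\hookrightarrow M$ with $\rho\circ\iota=\operatorname{id}_{D}$, where $\rho\colon M\to D$ is the bundle projection. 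As $L$ is negative, $L^{\oplus(n+1)}$ is a negative vector bundle whose total space is Kähler, so $M$ is a Kähler resolution of the Kähler cone $C_{0}$.

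Next I verify the hypotheses of Theorem \ref{theorem-A}. The assumption that $\tfrac{1}{a}\,r\partial_{r}$ is the Euler field identifies $X=\pi^{*}(r\partial_{r})$ with $a$ times the fibrewise Euler vector field of $L^{\oplus(n+1)}$; this field is holomorphic and complete on $M$, and the torus action generated by $JX$ (fibre rotation) extends over the zero section, so (a) holds with $\mathcal{L}_{JX}g_{c}=0$ the natural fibrewise symmetry. Since the Euler field vanishes exactly along the zero section, $X|_{E}=0$; taking $A=E$, the map $H_{1}(E)\to H_{1}(E)$ is trivially surjective, so the third alternative of hypothesis (b) is satisfied.

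The crux is to translate \eqref{condition}. From $0\to\rho^{*}L^{\oplus(n+1)}\to T_{M}\to\rho^{*}T_{D}\to0$ one computes $K_{M}=\rho^{*}\!\left(K_{D}\otimes(L^{*})^{\otimes(n+1)}\right)$, and restricting along the zero section gives $K_{M}|_{E}\cong\mathcal{K}:=K_{D}\otimes(L^{*})^{\otimes(n+1)}$ under $E\cong D$. Every positive-dimensional irreducible $V\subset E$ thus lies in $D$, with $i\Theta|_{E}$ a representative of (a positive multiple of) $c_{1}(\mathcal{K})$ and $\omega|_{E}$ a Kähler class $\kappa$ on $D$, so \eqref{condition} becomes the purely cohomological assertion that there is a representative $\beta$ of $c_{1}(\mathcal{K})$ with $\int_{V}\beta^{k}\wedge\kappa^{\dim_{\C}V-k}>0$ for every such $V$ and all $1\le k\le\dim_{\C}V$. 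If $c_{1}(\mathcal{K})>0$, I pick a Kähler form $\alpha\in c_{1}(\mathcal{K})$ on $D$, pull back its defining metric to $K_{M}=\rho^{*}\mathcal{K}$ to obtain $\Theta$ with $i\Theta|_{E}=\alpha$, and take any Kähler form $\omega$ on $M$; then $i\Theta|_{V}$ and $\omega|_{V}$ are Kähler on $V$, so $(i\Theta)^{k}\wedge\omega^{\dim_{\C}V-k}|_{V}$ is a positive volume form and \eqref{condition} holds, whence $g_{c}$ exists by Theorem \ref{theorem-A}. Conversely, given \eqref{condition}, the segment $\alpha_{t}=t\kappa+(1-t)\beta$, $t\in[0,1]$, satisfies $\int_{V}\alpha_{t}^{\dim_{\C}V}>0$ for all $V$, because every mixed term $\int_{V}\kappa^{j}\wedge\beta^{\dim_{\C}V-j}$ is positive (by \eqref{condition} when $j<\dim_{\C}V$, by Kählerness of $\kappa$ when $j=\dim_{\C}V$); hence the path lies in the cone of numerically positive classes and joins the Kähler class $\kappa$ to $\beta$, so by the Demailly--Paun characterisation of the Kähler cone $\beta$ is Kähler, i.e. $c_{1}(\mathcal{K})>0$.

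I expect the translation of \eqref{condition} through the identifications $E\cong D$ and $K_{M}|_{E}\cong\mathcal{K}$ to be the main point, with the necessity direction the most delicate. A more self-contained route to necessity avoids Demailly--Paun: if $g_{c}$ exists with Kähler form $\omega_{c}$, then \eqref{soliton22} with $\lambda=1$ reads $\rho_{\omega_{c}}-\tfrac12\mathcal{L}_{X}\omega_{c}+\omega_{c}=0$; using $\mathcal{L}_{X}\omega_{c}=d\big(\omega_{c}(X,\cdot)\big)$ together with $\iota^{*}\big(\omega_{c}(X,\cdot)\big)=0$ (since $X|_{E}=0$) gives $\iota^{*}\mathcal{L}_{X}\omega_{c}=0$, so that $-\rho_{\omega_{c}}|_{E}=\omega_{c}|_{E}$ is a Kähler form on $E\cong D$ representing $2\pi c_{1}(K_{M})|_{E}=2\pi c_{1}(\mathcal{K})$, whence $c_{1}(\mathcal{K})>0$ directly.
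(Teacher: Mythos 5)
Your argument is correct and takes essentially the same route as the paper, whose entire proof of Corollary B is the remark following its statement: apply Theorem A, observe that $X$ vanishes on the exceptional set $E\cong D$ so that hypothesis (b) holds with $A=E$, and use adjunction to identify $K_{M}|_{E}$ with $K_{D}\otimes(L^{*})^{\otimes(n+1)}$. Your Demailly--P\u{a}un path argument for the implication \eqref{condition} $\Rightarrow c_{1}(K_{D}\otimes(L^{*})^{\otimes(n+1)})>0$, together with the alternative necessity argument obtained by restricting the soliton equation to the zero section (which is the paper's Lemma \ref{necessaryy}), correctly fills in the cohomological translation that the paper leaves implicit.
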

Here we are able to take any regular K\"ahler cone metric on $\tilde{L}^{\times}$, which amounts to choosing an arbitrary K\"ahler metric on $\mathbb{P}^{n}\times D$, in contrast to \cite[Theorem 4.20(ii)]{wang} where the metric on $\mathbb{P}^{n}\times D$ was required to be the Fubini-Study metric on $\mathbb{P}^{n}$ times a K\"ahler-Einstein metric on $D$.

Notice that when $n=0$, Corollary B asserts that the total space of $L^{\otimes p}$ admits an expanding gradient K\"ahler-Ricci soliton asymptotic to a cone at infinity for any negative line bundle $L$ over a projective manifold $D$ and for any $p$ such that $c_{1}(K_{D}\otimes(L^{*})^{\otimes p})>0$. Corollary B follows from Theorem A after applying the adjunction formula and noting that $\pi:L^{\oplus(n+1)}\to\tilde{L}^{\times}$ is a K\"ahler equivariant resolution of $\tilde{L}^{\times}$ with respect to any positive scaling of the standard $S^{1}$-action on these bundles and that $X$ restricted to the zero set of $L^{\oplus(n+1)}$, that is, the exceptional set of the resolution $\pi$, vanishes, so that the final condition of hypothesis (b) of Theorem A is satisfied with $A=E$.

We next state a result concerning the uniqueness of expanding K\"ahler-Ricci solitons with a fixed holomorphic vector field.
\begin{mtheorem}[General uniqueness]\label{Uniqueness-Theorem}
Let $(M,\,\omega_{i},\,X)_{i\,=\,1,\,2}$ be two complete expanding gradient K\"ahler-Ricci solitons on a non-compact K\"ahler manifold $M$ with the same holomorphic vector field $X$, i.e., $\omega_{1}$ and $\omega_{2}$ are two K\"ahler forms on $M$ that satisfy
$$\rho_{\omega_{i}}-\frac{1}{2}\mathcal{L}_{X}\omega_{i}+\omega_{i}=0\qquad\textrm{for $i=1,\,2,$}$$
where $\rho_{\omega_{i}}$ is the Ricci form of $\omega_{i}$. Suppose in addition that $|\omega_{1}-\omega_{2}|_{g_{1}}(x)=O(d_{g_{1}}(x,\,x_{0})^{\lambda})$ for some $\lambda<0$, where $d_{g_{1}}(\,\cdot\,,\,x_{0})$ denotes the distance to a fixed point $x_{0}\in M$ measured with respect to the K\"ahler metric $g_{1}$ associated to $\omega_{1}$.
\begin{enumerate}[label=\textnormal{(\roman{*})}, ref=(\roman{*})]
\item If $\lambda\in(-2,\,-1)$ and $\operatorname{\rho}_{\omega_{i}}\geq0$ for $i=1,\,2$, then $\omega_{1}=\omega_{2}$.
\item If $\lambda<-2$, $|\pi_{1}(M)|<\infty$, and the difference between the scalar curvatures $|s_{\omega_{1}}-s_{\omega_{2}}|=o(1)$ at infinity, then $\omega_{1}=\omega_{2}$.
\end{enumerate}
\end{mtheorem}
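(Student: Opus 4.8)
The plan is to reduce the coincidence $\omega_1=\omega_2$ to the statement that a single globally defined function is constant, and then to force that constancy by a maximum principle tailored to the expanding (weighted) structure. Since each soliton is gradient, $\rho_{\omega_i}+\omega_i=i\partial\bar\partial f_i$ for globally defined potentials $f_i$ (with $X$ a fixed multiple of $\nabla^{g_i}f_i$, via the identity $\tfrac12\mathcal L_X\omega_i=i\partial\bar\partial f_i$ valid because $X$ is real holomorphic). Subtracting the two identities and using $\rho_{\omega_1}-\rho_{\omega_2}=-i\partial\bar\partial h$ for the globally defined volume ratio $h:=\log(\omega_1^n/\omega_2^n)$, I obtain $\omega_1-\omega_2=i\partial\bar\partial\psi$ with $\psi:=f_1-f_2+h$ a genuine function, so that no $\partial\bar\partial$-lemma is required. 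The decay hypothesis $|\omega_1-\omega_2|_{g_1}=O(d^{\lambda})$ bounds $i\partial\bar\partial\psi$, and hence, upon integrating, controls the growth of $\psi$: sublinear when $\lambda\in(-2,-1)$ and decaying when $\lambda<-2$.

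Feeding $\omega_1-\omega_2=i\partial\bar\partial\psi$ back into the subtracted soliton equation and using $\mathcal L_X i\partial\bar\partial\psi=i\partial\bar\partial(X\psi)$ gives $i\partial\bar\partial(\psi-\tfrac12 X\psi-h)=0$; that is, $Q:=\psi-\tfrac12 X\psi-h$ is pluriharmonic, hence $g_1$-harmonic, and it inherits the growth of $\psi$ since $X$ is a multiple of the radial field. The first task is to show that $Q$ is constant. In case (i), $\rho_{\omega_1}\geq0$ means $\Ric(g_1)\geq0$, and since $Q$ grows sublinearly the Cheng--Yau Liouville theorem forces $Q\equiv\text{const}$. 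In case (ii) I instead exploit that $\lambda<-2$ makes $Q\to0$ at infinity: finiteness of $\pi_1(M)$ controls the ends (allowing passage to the universal cover or a maximum principle at infinity), while $|s_{\omega_1}-s_{\omega_2}|=o(1)$ governs the only curvature contribution entering $Q$, so again $Q\equiv0$.

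With $X\psi$ thereby pinned to $2(\psi-h)-2Q$ with $Q$ constant, the equation for $\psi$ closes up. I then introduce the nonnegative quantity $W:=\tr_{\omega_1}\omega_2+\tr_{\omega_2}\omega_1-2n$, which by the arithmetic--geometric mean inequality satisfies $W\geq0$, with equality exactly where $\omega_1=\omega_2$. Tracing $\omega_1-\omega_2=i\partial\bar\partial\psi$ against $\omega_1$ and against $\omega_2$, the drift terms cancel and I find $W=\tfrac12(\Delta_{f_2}\psi-\Delta_{f_1}\psi)$, where $\Delta_{f_i}=\Delta_{g_i}-\langle\nabla^{g_i}f_i,\nabla^{g_i}\cdot\rangle$ is the drift Laplacian of the $i$-th soliton. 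The plan is to derive a differential inequality for $W$ (equivalently for $\log\tr_{\omega_1}\omega_2$) of Bochner/Chern--Lu type governed by $\Delta_{f_i}$, in which the curvature sign $\rho_{\omega_i}\geq0$ (case (i)) produces a favourable zeroth-order term, and then to close the argument via the weighted maximum principle for $\Delta_{f_i}$: the soliton potential $f_i$ grows quadratically, furnishing the barrier $\Delta_{f_i}f_i\to-\infty$, so that $W\geq0$ together with its decay at infinity forces $W\equiv0$, i.e.\ $\omega_1=\omega_2$. In case (ii) the same conclusion is reached from the faster decay and the scalar-curvature control, without the curvature-positivity input.

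The main obstacle is that the entire argument lives on a non-compact manifold where $\psi$ need not attain its extrema, so every step is really an estimate ``at infinity.'' The zeroth-order term carried by $\psi$ has the expanding sign ($\lambda=1$), which is delicate for the maximum principle: one must use the Gaussian weight $e^{-f_i}$ and the barrier $\Delta_{f_i}f_i\to-\infty$ to dominate the low-frequency (constant and linear) modes, and this is precisely where the thresholds $-1$ and $-2$ on $\lambda$ enter. Matching the growth of $\psi$ and of $X\psi$ produced by the $O(d^{\lambda})$ bound to what the weighted maximum principle (or a weighted $L^2$ integration by parts, whose boundary terms at infinity must be shown to vanish against $e^{-f_i}$) can absorb --- splitting into the regime where curvature positivity rescues the borderline growth $\lambda\in(-2,-1)$ and the regime where faster decay together with finite $\pi_1$ and scalar-curvature control suffice --- is the technical heart of the proof.
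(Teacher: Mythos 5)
Your first half coincides with the paper's argument in Section 8: the decomposition $\omega_1-\omega_2=i\partial\bar{\partial}\psi$ with the explicit potential built from the difference of the soliton potentials plus the volume ratio $\log(\omega_1^n/\omega_2^n)$, the observation that $Q=\log(\omega_1^{n}/\omega_2^{n})+\tfrac12 X\cdot\psi-\psi$ is pluriharmonic, and the use of Cheng--Yau in case (i) (resp.\ the maximum principle in case (ii)) to make $Q$ constant are exactly the paper's steps. One imprecision here: the growth of $\psi$ and of $X\cdot\psi$ is not obtained by ``integrating'' the bound on $i\partial\bar{\partial}\psi$ (a bound on $i\partial\bar{\partial}\psi$ alone does not control $\psi$); it comes from the soliton identities $|\nabla^{g_i}f_i|^2+\tfrac12 s_{\omega_i}=f_i$ and $X\cdot\log(\omega_1^n/\omega_2^n)=2(s_{\omega_1}-s_{\omega_2})$, which is precisely where the hypotheses $\rho_{\omega_i}\geq0$ (hence bounded scalar curvature, via Proposition \ref{pot-fct-est}) and $|s_{\omega_1}-s_{\omega_2}|=o(1)$ enter. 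That is fixable.

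The genuine gap is in your endgame. In case (i) the potential only satisfies $\psi=O(r^{\lambda+2})=O(r^{1-\epsilon})$ --- it \emph{grows} --- so no maximum principle, weighted or not, can be closed at this stage, and you explicitly defer ``matching the growth of $\psi$ \dots to what the weighted maximum principle can absorb'' to ``the technical heart'' without supplying the argument. The paper's resolution is a step you do not have: once $Q$ is constant, the Monge--Amp\`ere equation becomes the first-order relation $\tfrac12X\cdot\psi-\psi=\log(\omega_1^n/\omega_2^n)=O(r^{-1-\epsilon})$, which is integrated as an ODE along the flow of $X/2$ (equation \eqref{est-pot-kah-ode}) to upgrade the a priori growth $O(r^{1-\epsilon})$ to the decay $O(r^{-1-\epsilon})$; only then does the zeroth-order maximum principle --- which needs no curvature input at all --- yield $\psi\equiv0$. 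Your proposed substitute, a Chern--Lu/Bochner inequality for $W=\tr_{\omega_1}\omega_2+\tr_{\omega_2}\omega_1-2n$, does not repair this: the second-order computation for $\tr_{\omega_1}\omega_2$ unavoidably involves the holomorphic bisectional curvature of one of the metrics, for which the hypotheses ($\Ric\geq0$ in case (i), nothing in case (ii)) give no bound, and the identity $W=\tfrac12(\Delta_{f_2}\psi-\Delta_{f_1}\psi)$ is merely the trace of $i\partial\bar{\partial}\psi$ taken twice and produces no elliptic inequality for $W$ on its own. So the step that actually proves the theorem is missing, and the route proposed in its place runs into an uncontrolled curvature term.
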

Note that, using elementary Morse theory, one can show that the hypotheses of (i) imply that $M$ is diffeomorphic to $\mathbb{R}^{2n}$ (see Proposition \ref{pot-fct-est}). Bryant \cite{Bry-Kah-Sol} in fact proved more; he showed that expanding gradient K\"ahler-Ricci solitons with non-negative Ricci curvature can exist only on $\mathbb{C}^{n}$.

If two expanding gradient Ricci solitons $g_{1},\,g_{2},$ are asymptotic to the same cone with respect to the same diffeomorphism, then in fact $|g_{1}-g_{2}|_{g_{1}}=O(d_{g_{1}}(x,\,x_{0})^{-k})$ for any $k\geq 0$; see \cite{Der-Uni-Con-Ric-Exp} for details. Thus, this observation, together with Theorem \ref{Uniqueness-Theorem}, imply the uniqueness statement of Theorem A. In general, Theorem \ref{Uniqueness-Theorem} does not require the existence of an asymptotic cone in order to be applied.

The uniqueness issue on compact gradient shrinking K\"ahler-Ricci solitons has already been solved by Tian and Zhu; in \cite{Tian-Zhu-I}, they treat the case where the vector field is fixed and in \cite{Tian-Zhu-II} they remove this constraint and solve the general case. Chodosh and Fong \cite{Cho-Fon} proved uniqueness for expanding gradient K\"ahler-Ricci solitons with positive bisectional curvature asymptotic to the cone $(\mathbb{C}^n,2\operatorname{Re}(\partial\bar{\partial}(|\cdot|^{2a})))$ for some $a\in(0,1)$, where $|\cdot|$ is the Euclidean norm on $\mathbb{C}^{n}$.

For expanding gradient Ricci solitons which are not necessarily K\"ahler and possess no sign assumption on the curvature, but are asymptotic to a cone at infinity, the uniqueness question has been treated by the second author \cite{Der-Uni-Con-Ric-Exp}. The result there can be seen as a ``unique continuation'' statement at infinity; in particular, the methods are very different from those used by Tian and Zhu. In \cite{Der-Uni-Con-Ric-Exp}, the cone at infinity is assumed to be Ricci-flat in contrast to Theorem \ref{Uniqueness-Theorem}. Let us also mention that the uniqueness statement of Siepmann \cite[Theorem 5.4.5]{Siepmann} assumes that the K\"ahler potential, together with its derivatives, decay exponentially to zero at infinity, in which case the uniqueness issue reduces to a uniqueness problem for solutions to a complex Monge-Amp\`ere equation. However, this assumption is too strong in general.

In the proof of Theorem \ref{theorem-A}, we reduce the analysis to a complex Monge-Amp\`ere equation with a transport term given by the action of the lift $X$ of the radial vector field $r\partial_r$ on K\"ahler potentials.

\begin{mtheorem}[Existence, PDE version]\label{t:existence}
Let $\pi:M\to C_0$ be a resolution of a K\"ahler cone $C_0$ of complex dimension $n$ satisfying hypotheses (a) and (b) of Theorem \ref{theorem-A}. Then, with notation as in Theorem A, let $\omega$ be a K\"ahler form on $M$ satisfying $\mathcal{L}_{JX}\omega=0$ with
\begin{equation*}
|(\nabla^{g_0})^k(\pi_{*}\omega-\omega_{0})|_{g_0 } \leq C(k)r^{-2-k}\qquad\textrm{for all $k\in\mathbb{N}_{0}$,}
\end{equation*}
where $\omega_{0}$ denotes the K\"ahler form of the K\"ahler cone metric $g_{0}$ on $C_0$. Then for any $F=\textit{O}(r^{-2})$ together with its derivatives, there exists a unique function $\varphi=\textit{O}(r^{-2})$ together with its derivatives, such that
\begin{equation}
\left\{
\begin{array}{rl}
&\omega_{\varphi}:=\omega+i\partial\bar{\partial}\varphi>0,\\
&\\
&-\varphi+\log\frac{(\omega+i\partial\bar{\partial}\varphi)^{n}}{\omega^{n}}+\frac{1}{2}X\cdot\varphi=F\label{MA-Intro}.
\end{array} \right.
\end{equation}
\end{mtheorem}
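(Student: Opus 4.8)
The plan is to solve \eqref{MA-Intro} by the continuity method in weighted Hölder spaces adapted to the conical geometry of $(M,\,g_0)$ at infinity, following the analytic framework developed in \cite{Der-Asy-Com-Egs, Siepmann}. For $\delta\in\R$ and $k\in\N_{0}$, let $C^{k,\alpha}_{\delta}(M)$ consist of those functions $u$ for which $r^{\,j-\delta}|(\nabla^{g_0})^{j}u|_{g_0}$ is bounded for $0\le j\le k$ (together with the corresponding weighted Hölder seminorm), so that the target condition ``$\varphi=O(r^{-2})$ together with its derivatives'' amounts to $\varphi\in C^{k,\alpha}_{-2}(M)$ for every $k$. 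I would consider the family
$$E_{t}(\varphi):=-\varphi+\log\frac{(\omega+i\partial\bar{\partial}\varphi)^{n}}{\omega^{n}}+\tfrac{1}{2}X\cdot\varphi=tF,\qquad t\in[0,\,1],$$
with $E_{0}$ solved by $\varphi\equiv0$, and set $S=\{t\in[0,\,1]:E_{t}\text{ admits a solution }\varphi_{t}\in C^{\infty}_{-2}(M)\text{ with }\omega_{\varphi_{t}}>0\}$, with the goal of showing $S$ is nonempty, open, and closed. The $C^{0}$ control is immediate from the structure of the equation: if $\varphi_{t}\to0$ at infinity and $\sup\varphi_{t}>0$, then at an interior maximum $p$ one has $i\partial\bar{\partial}\varphi_{t}(p)\le0$ and $X\cdot\varphi_{t}(p)=0$, so the equation forces $\varphi_{t}(p)\le -tF(p)\le t\sup|F|$; the symmetric argument at a minimum gives $\sup|\varphi_{t}|\le\sup|F|$. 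This is the first payoff of the distinguished $-\varphi$ term.

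For openness I would linearise. The differential of $E_{t}$ at a solution $\varphi$ in the direction $\psi$ is the drift-Laplacian-type operator
$$L_{\varphi}\psi=\tr_{\omega_{\varphi}}(i\partial\bar{\partial}\psi)-\psi+\tfrac{1}{2}X\cdot\psi,$$
and the decisive structural feature is again the zeroth-order term $-\psi$, now combined with the \emph{expanding} drift $\tfrac12 X$. On the cone, $X=r\partial_{r}$, so on a function behaving like $r^{-p}$ one computes $\tfrac12 X\cdot(r^{-p})=-\tfrac{p}{2}r^{-p}$ and $-r^{-p}$, while $\tr(i\partial\bar\partial(r^{-p}))=O(r^{-p-2})$ is subleading; hence the model operator at infinity acts at leading order as multiplication by $-(\tfrac{p}{2}+1)\ne0$ for every $p>0$. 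Thus there are no exceptional weights to avoid: the asymptotic operator is an isomorphism on $C^{k,\alpha}_{-2}$, and together with interior elliptic regularity this shows $L_{\varphi}:C^{k+2,\alpha}_{-2}\to C^{k,\alpha}_{-2}$ is an isomorphism (injectivity follows directly from the maximum principle via the $-\psi$ term, exactly as in the $C^{0}$ argument above). Openness of $S$ is then the implicit function theorem.

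I expect closedness to be the main obstacle, since $M$ is non-compact and one must produce a priori estimates uniform in $t$ that \emph{preserve} the decay rate $-2$. I would run the standard hierarchy for complex Monge–Amp\`ere equations, but in the weighted setting: the $C^{0}$ bound above; a second-order estimate $\tr_{\omega}\omega_{\varphi_{t}}\le C$ via Yau's computation, adapted to absorb the drift term $\tfrac12 X\cdot\varphi_{t}$ and the error terms arising from the approximate-soliton geometry of the non-compact background; and then higher-order interior bounds by Evans–Krylov and Schauder theory. The delicate step is upgrading these interior estimates to uniform \emph{weighted} bounds $\|\varphi_{t}\|_{C^{k,\alpha}_{-2}}\le C(k)$: here I would feed the equation back into the linear theory of the previous paragraph, writing $\tr_{\omega}(i\partial\bar\partial\varphi_t)$ as the leading piece and treating the remaining, faster-decaying, terms as a controlled right-hand side, so that the isomorphism property of the model operator returns the sharp decay $r^{-2}$ at each order. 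These uniform weighted bounds close $S$; hence $1\in S$ and a smooth solution $\varphi=\varphi_{1}\in C^{\infty}_{-2}(M)$ exists.

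Finally, uniqueness follows from the maximum principle. If $\varphi_{1},\,\varphi_{2}$ are two solutions with the stated decay, then $\psi=\varphi_{1}-\varphi_{2}$ satisfies, after writing $\log\frac{\omega_{\varphi_{1}}^{n}}{\omega_{\varphi_{2}}^{n}}=\bigl(\int_{0}^{1}\tr_{\omega_{s}}(i\partial\bar\partial\,\cdot\,)\,ds\bigr)\psi$ with $\omega_{s}=\omega_{\varphi_{2}}+s\,i\partial\bar\partial\psi$, a homogeneous linear equation $\tilde{L}\psi=\tilde{\Delta}\psi-\psi+\tfrac12 X\cdot\psi=0$ for a uniformly elliptic averaged operator $\tilde{\Delta}$. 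Since $\psi=O(r^{-2})\to0$, any nonzero $\psi$ would attain an interior extremum of strict sign, at which the $-\psi$ term yields a contradiction; hence $\psi\equiv0$.
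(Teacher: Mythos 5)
Your overall architecture — continuity method in weighted H\"older spaces, maximum principle for the $C^0$ bound, linearisation plus an isomorphism theorem for openness, a priori weighted estimates for closedness, maximum principle for uniqueness — is the same as the paper's. But there is a genuine gap in the closedness step: you never produce an a priori $C^0$ bound on the radial derivative $X\cdot\varphi_t$, and without it the second-order estimate does not close. The point is that $X$ is \emph{unbounded} (it grows linearly), so $\|X\cdot\varphi_t\|_{C^0}$ is not controlled by $\|\varphi_t\|_{C^1}$ or by any interior gradient estimate; yet it enters the Yau computation in an essential way. Concretely, the uniform equivalence of $g$ and $g_{\varphi_t}$ — which you need both to run the trace estimate and to apply Evans--Krylov — requires the lower bound $\det(g^{-1}g_{\varphi_t})=e^{tF+\varphi_t-\frac{X}{2}\cdot\varphi_t}\geq e^{-C}$, and the constants in the differential inequality for $e^{-\alpha\varphi_t}\tr_{\omega}(\omega_{\varphi_t})$ likewise depend on $\|X\cdot\varphi_t\|_{C^0}$. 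Saying the drift term is ``absorbed'' skips exactly the step the paper flags as the place where a circular argument must be avoided: the estimate on $X\cdot\varphi$ has to be established \emph{before} the $C^2$ estimate.

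The paper's proof of this bound (Proposition \ref{prop-c^0-rad-der}) is not a soft argument: it integrates along the flow of $X/2$, uses Gronwall's inequality, and crucially exploits two hypotheses that are absent from your set-up. First, the solution is taken to be $JX$-invariant, which gives $JX\cdot(X\cdot\varphi)=0$ and hence the one-sided second-derivative bound $X^{1,0}\cdot(X^{1,0}\cdot\varphi)\geq-|\Re(X^{1,0})|^2_{g}$ from positivity of $\omega_{\varphi}$; second, the already-established \emph{weighted} $C^0$ bound $|\varphi|=O(f^{-1})$ controls $\sup_{t}|\varphi(\psi_t(x))|$ along the flow. Your function spaces $C^{k,\alpha}_{-2}(M)$ carry no invariance condition, so this mechanism is unavailable as written; you would either need to impose $\mathcal{L}_{JX}\varphi=0$ throughout (as the paper does, which also requires checking that the linearised operator remains surjective on the invariant subspace) or find a substitute for the estimate. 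A secondary, smaller caveat: your indicial-root heuristic for the linearised operator $\Delta_{\omega}+\frac{X}{2}\cdot-\operatorname{Id}$ is plausible but is not the standard AC Fredholm theory, precisely because of the unbounded drift; the paper handles this by conjugating with the weight $f$ and reducing to the operator $\Delta_{\omega}+\frac{X}{2}\cdot-2\operatorname{Id}$ plus a compact perturbation, citing \cite{Der-Smo-Pos-Cur-Con}. That part of your sketch is repairable; the missing $X\cdot\varphi$ estimate is the substantive gap.
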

We refer the reader to the end of Section 4 for a precise statement of Theorem D.

 Notice that the vector field $X$ in this theorem is unbounded and thus the analysis involved in its proof is not straightforward. Indeed, the linearized operator $\Delta_{\omega_0}+X/2$ is unitarily conjugate to a harmonic oscillator of the asymptotic form $\Delta_{\omega_0}-cr^2$ for some positive constant $c$. On one hand, such a linearized operator is naturally symmetric on a certain $L^2$-space endowed with a measure that is asymptotic to $e^{r^2/4}\omega_0^n$. In particular, it can be shown to have a pure discrete spectrum; c.f. \cite{Der-Sta-Egs}. On the other hand, the fact that the convergence rate to the asymptotic cone is only quadratic, hence polynomial, forces us to consider the same Schauder spaces as in the Riemannian setting \cite{Der-Smo-Pos-Cur-Con}.

On proving a family version of Theorem A, namely Theorem \ref{theo-family-version}, we are able to deduce from the results of Perelman proved in \cite{Tia-Zhu-Con-Kah-Ric} (see also \cite[Chapter 6]{Bou-Eys-Gue}) that the space of certain expanding gradient K\"ahler-Ricci solitons on $\mathbb{C}^{n}$ with positive curvature operator on $(1,\,1)$-forms is path-connected. More precisely, we prove the following.
\begin{mtheorem}[Path-connectedness]\label{coro-F}
 Let $g_{0}$ be a K\"ahler cone metric on $\mathbb{C}^{n}$ with radial function $r$ such that $a\cdot r\partial_{r}$ is the Euler vector field on $\mathbb{C}^{n}$ for some $0<a<1$ and such that the corresponding K\"ahler metric induced on $\mathbb{P}^{n-1}$ by $g_{0}$ has positive curvature operator on $(1,\,1)$-forms. Then for all $c>0$, the unique expanding gradient K\"ahler-Ricci soliton $g_{c}$ satisfying \eqref{asymptote} (with $\pi=\Id$) on $\mathbb{C}^{n}$ with soliton vector field $r\partial_{r}$ has positive curvature operator on $(1,\,1)$-forms and is connected to Cao's expanding gradient K\"ahler-Ricci soliton \cite{cao2} with tangent cone $(\mathbb{C}^{n},\,c\operatorname{Re}(\partial\bar{\partial}|\cdot|^{2a}),\,r\partial_{r})$ by a smooth one-parameter family of expanding gradient K\"ahler-Ricci solitons, each with positive curvature operator on $(1,\,1)$-forms and asymptotic to a K\"ahler cone at infinity.
\end{mtheorem}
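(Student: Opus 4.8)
The plan is to combine the family version of Theorem~A (Theorem~\ref{theo-family-version}) with a K\"ahler–Ricci flow deformation on the link of the cone. First I would record the standard dictionary between the K\"ahler cone metrics on $\mathbb{C}^{n}$ for which $a\cdot r\partial_{r}$ is the Euler field (with $0<a<1$ fixed) and the transverse K\"ahler metrics they induce on $\mathbb{P}^{n-1}$: such a cone metric is determined by a K\"ahler metric $h$ on $\mathbb{P}^{n-1}$, its radial geometry is rigidly pinned by $a$, and the cone has positive curvature operator on $(1,1)$-forms if and only if $h$ does. Under this correspondence, Cao's soliton \cite{cao2} is the one whose asymptotic cone is the cone over the Fubini–Study metric, namely $(\mathbb{C}^{n},\,c\operatorname{Re}(\partial\bar\partial|\cdot|^{2a}),\,r\partial_{r})$, and the hypothesis is precisely that our base metric $h$ has positive curvature operator on $(1,1)$-forms. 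The theorem then splits into (i) the soliton inheriting positivity from its cone, and (ii) connecting $h$ to Fubini–Study through a path of positively curved metrics that lifts to a family of solitons.

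For the positivity statement (i), I would use that each $g_{c}$ is the time-one slice of a self-similar solution of the K\"ahler–Ricci flow whose backward limit is the cone $g_{0}$. The condition ``positive curvature operator on $(1,1)$-forms'' cuts out a closed convex cone that is invariant under both the K\"ahler–Ricci flow and Hamilton's reaction ODE. Rather than flowing out of the singular cone, I would exploit the soliton identity to convert the evolution of the curvature operator $\mathcal{R}$ on $(1,1)$-forms into a drift-Lichnerowicz equation of the schematic form $\Delta_{f}\mathcal{R}+\mathcal{Q}(\mathcal{R})+\mathcal{R}=0$, with $\mathcal{Q}$ the cone-preserving Hamilton quadratic and $\Delta_{f}$ the weighted Laplacian of the soliton. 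Since $g_{c}$ is asymptotic to the cone at the rate of Theorem~\ref{theorem-A}, one has $\mathcal{R}\to\mathcal{R}_{g_{0}}\ge0$ at infinity, so the maximum principle on the noncompact manifold yields $\mathcal{R}\ge0$ everywhere; the strong maximum principle, together with the zeroth-order reaction term, then upgrades this to $\mathcal{R}>0$, as $g_{c}$ is genuinely curved and does not split. This establishes the first assertion for every member of the family to be constructed.

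For the path (ii), I would run the normalised K\"ahler–Ricci flow on the Fano manifold $\mathbb{P}^{n-1}$ starting from $h$. Positivity of the curvature operator on $(1,1)$-forms is preserved along this flow, so the whole trajectory stays admissible, while Perelman's estimates as implemented by Tian–Zhu \cite{Tia-Zhu-Con-Kah-Ric} (see also \cite[Chapter~6]{Bou-Eys-Gue}) force convergence, modulo automorphisms, to the Fubini–Study K\"ahler–Einstein metric. After reparametrising $s\in[0,1]$, this produces a smooth family $h_{s}$ of metrics with positive curvature operator on $(1,1)$-forms joining $h_{0}=h$ to the Fubini–Study metric, hence a smooth family of cone metrics $g_{0}^{s}$ on $\mathbb{C}^{n}$ all with $a\cdot r\partial_{r}$ the Euler field. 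Feeding this family into Theorem~\ref{theo-family-version} yields a smooth one-parameter family of expanding gradient K\"ahler–Ricci solitons $g_{c}^{s}$ with $g_{c}^{0}=g_{c}$, each asymptotic to $g_{0}^{s}$ and each with positive curvature operator on $(1,1)$-forms by part~(i). Finally, by the uniqueness part of Theorem~\ref{theorem-A} (equivalently Theorem~\ref{Uniqueness-Theorem}), the endpoint $g_{c}^{1}$, being asymptotic to $(\mathbb{C}^{n},\,c\operatorname{Re}(\partial\bar\partial|\cdot|^{2a}))$ with soliton vector field $r\partial_{r}$, coincides with Cao's soliton \cite{cao2}, which closes the path.

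The main obstacle is the interface between the two ingredients: one must ensure that the deformation of the link supplied by the normalised K\"ahler–Ricci flow depends smoothly enough on $s$ to serve as an admissible input for the family version of Theorem~A, that the normalisation $0<a<1$ and the asymptotic rates of Theorem~\ref{theorem-A} are preserved uniformly along the family, and that the resulting solitons vary smoothly up to and including the Cao endpoint, where the symmetry enlarges to full $U(n)$-invariance. A secondary, genuinely analytic difficulty lies in step~(i): propagating \emph{strict} positivity of the curvature operator on $(1,1)$-forms from the asymptotic cone to the complete noncompact soliton requires a maximum principle valid for the unbounded drift Laplacian $\Delta_{f}$, precisely the delicacy already flagged for the linearised operator in Theorem~D; it is the soliton reformulation and the quadratic decay to the cone that make this tractable.
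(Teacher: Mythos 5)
Your step (ii) is essentially the paper's construction of the path: run the normalised K\"ahler--Ricci flow on $\mathbb{P}^{n-1}$ starting from the transverse metric, use the Perelman/Tian--Zhu convergence to the Fubini--Study metric, feed the resulting family of cone metrics into Theorem \ref{theo-family-version}, and identify the endpoint with Cao's soliton by uniqueness. The genuine gap is in your step (i). You propose to prove that \emph{each} soliton in the family has positive curvature operator on $(1,1)$-forms directly, by a maximum principle ``from infinity'' applied to the static equation $\Delta_{\omega}\Rm(g)+\nabla^g_{X/2}\Rm(g)+\Rm(g)+\Rm(g)\ast\Rm(g)=0$ of Lemma \ref{id-EGS}. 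This does not close, for two reasons. First, the boundary datum at infinity is degenerate: the cone metric is flat on all radial $2$-planes and all of its curvatures decay like $r^{-2}$, so $\mathcal{R}\to\mathcal{R}_{g_0}\geq 0$ gives no strict positivity outside a compact set in the radial directions; the strict $r^{-2}$ lower bound there is exactly the content of Claim \ref{claim-pos-cur-op}, which rests on the soliton identity $\Rm(g)(X,U,U,X)=2\Ric(g)(U,U)+O(r^{-4})|U|^2_g$ rather than on mere convergence to the cone. Second, and more seriously, even granting non-negativity outside a compact set, the zeroth-order reaction term has the wrong sign for an elliptic maximum principle: at an interior negative minimum of the lowest eigenvalue $\lambda$ of $\mathcal{R}$ on $(1,1)$-forms, with null eigenform $\phi$, one only gets $0\leq\Delta_{f}\lambda\leq-\lambda-\lambda^{2}-\langle\mathcal{R}^{\#}\phi,\phi\rangle$, and since $-\lambda>0$ and $\mathcal{R}^{\#}$ is uncontrolled off the non-negativity cone, there is no contradiction. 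This is precisely why the paper warns that positivity of the curvature operator on $(1,1)$-forms is not an open condition and does \emph{not} attempt a direct proof of positivity for $g_{c}$.

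The paper in fact reverses your direction of inference: positivity is \emph{seeded} at the Cao endpoint $t=\infty$, where it is known by explicit computation \cite{Che-Zhu-Pos-Cur}, and is then propagated through the entire path by a continuity method in the deformation parameter. Openness combines the preservation of positivity along the K\"ahler--Ricci flow on the base \cite[Theorem 7.34]{Cho-Lu-Ni-I} with Claim \ref{claim-pos-cur-op} to handle the $2$-planes containing the radial direction; closedness uses Bishop--Gromov to rule out flat limits and Hamilton's splitting theorem to upgrade non-negativity of a limit to positivity. In particular, the positivity of $g_{c}$ asserted in the statement is a \emph{conclusion} of the connectedness argument, not an input to it. To repair your proof you would either need to supply a genuine non-compact tensor maximum principle for the drift Laplacian adapted to the cone of non-negative operators on $(1,1)$-forms (which is not available off the shelf, and which your own closing paragraph only flags rather than resolves), or adopt the paper's continuity scheme.
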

\noindent Note that Cao's expanding gradient K\"ahler-Ricci solitons do indeed have positive curvature operator on $(1,\,1)$-forms; see \cite{Che-Zhu-Pos-Cur} for the relevant computations.

Theorem \ref{coro-F} echoes the results of the second author \cite{Der-Smo-Pos-Cur-Con} where the path-connectedness of the space of asymptotically conical expanding gradient Ricci solitons with positive curvature operator is proved. Note that the positivity of the curvature operator on $(1,1)$-forms is not an open condition. Consequently, as in \cite{Der-Smo-Pos-Cur-Con}, one must use certain soliton identities (see Appendix A) to show that its positivity is preserved along the path provided by the family version of Theorem A.

\subsection{Outline of paper}

We begin in Section 2 by recalling the basics of expanding K\"ahler-Ricci solitons and introducing the main analytic tools needed in the setting of asymptotically conical (AC) manifolds. We then construct a background AC K\"ahler metric and set up the complex Monge-Amp\`ere equation in Section \ref{Section-App-Met}. Our background metric will be conical at infinity, hence it will be an approximate expanding K\"ahler-Ricci soliton. We then prove Theorem \ref{t:existence} in Sections \ref{section-fct-spa} -- \ref{section-theorem-existence} which gives us a solution to the complex Monge-Amp\`ere equation, allowing us to perturb our background metric to an expanding gradient K\"ahler-Ricci soliton.

The proof of Theorem \ref{t:existence} follows closely the work of Siepmann \cite{Siepmann}. We implement the continuity method as in the seminal work of Aubin \cite{Aub-Equ-MA} and Yau \cite{Calabiconj} on the existence of K\"ahler-Einstein metrics. The relevant function spaces comprising functions invariant under the extension of the torus action generated by the cone are introduced in Section 4. The openness part of the continuity method is then proved in Section \ref{section-small-def}, followed by a proof of the a priori estimates for the closedness part in Section \ref{section-a-priori-est}. The toric invariance of the function spaces is crucial here in the proof of the a priori $C^0$-estimate on the radial derivative $X\cdot\varphi$; see Proposition \ref{prop-c^0-rad-der}. Section \ref{section-bootstrapping} is then devoted to proving a bootstrapping phenomenon for (\ref{MA-Intro}). As noted previously, the presence of the unbounded vector field $X$ makes the analysis more difficult; for instance, the so-called weighted $C^0$-estimates for the radial derivative $X\cdot \varphi$, where $\varphi$ solves (\ref{MA-Intro}), have to be proved \emph{before} the $C^2$-estimates in order to avoid a circular argument. Section \ref{section-theorem-existence} then completes the proof of Theorem \ref{t:existence}.

In Section \ref{section-proof-uniqueness}, we prove the uniqueness statement, namely Theorem \ref{Uniqueness-Theorem}, before completing the proof of Theorem A in Section \ref{Proof of Theorem A}. We then prove Theorem E in Section \ref{Proof of coroF}, that is, that the space of certain expanding gradient K\"ahler-Ricci solitons on $\mathbb{C}^{n}$ with positive curvature operator on $(1,\,1)$-forms is path-connected. A family version of Theorem A is required for the proof of this. We include it in Section 11 as Theorem \ref{theo-family-version}. Finally, Appendix A gathers together some background technical results.

\subsection{Acknowledgments} The authors wish to thank Hans-Joachim Hein, Song Sun, and Jeff Viaclovsky for many useful discussions and for comments on a preliminary version of this paper. Moreover, the authors wish to thank Hans-Joachim for explaining to them how to remove the assumption \linebreak $H^{1}(D)=0$ from Corollary B in the first version of this paper.

This work was carried out while the authors were supported by the National Science Foundation under Grant No.~DMS-1440140 while in residence at the Mathematical Sciences Research Institute (MSRI) in Berkeley, California, during the Spring 2016 semester. They wish to thank MSRI for their excellent working conditions and hospitality during this time.

The second author wishes to acknowledge funding from the European Research Council (ERC) under the European Union's Seventh Framework Program (FP7/2007-2013)/ERC grant agreement No.~291060 which supported the research that lead to the results contained within this paper.

\newpage

\section{Preliminaries}

\subsection{Riemannian cones} For us, {the definition of a Riemannian cone will take the following form}.

\begin{definition}\label{cone}
Let $(S, g_{S})$
be a compact connected Riemannian manifold. The \emph{Riemannian cone} $C_{0}$ with \emph{link} $S$ is defined to be $\R^+ \times S$ with metric $g_0 = dr^2 \oplus r^2g_{S}$ up to isometry. The radius function $r$ is then characterized intrinsically as the distance from the apex in the metric completion.
\end{definition}

Suppose that we are given a Riemannian cone $(C_0,g_{0})$ as above. Let $(r,x)$ be polar coordinates on $C_{0}$, where $x\in S$, and for $t>0$, define a map
$$\nu_{t}: S\times[1,2] \ni (r,x) \mapsto (tr,x) \in S \times [t,2t].$$ One checks that $\nu_{t}^{*}(g_{0})=t^{2}g_{0}$ and $\nu^{*}_{t}\circ\nabla^{g_0}=\nabla^{g_0}\circ\nu_{t}^{*}$, where $\nabla^{g_0}$ is the  Levi-Civita connection of $g_{0}$.

\begin{lemma}\label{simple321}
Suppose that $\alpha\in\Gamma((TC_0)^{\otimes p}\otimes (T^{*}C_0)^{\otimes q})$ satisfies $\nu_{t}^{*}(\alpha)=t^{k}\alpha$ for every $t>0$ for some $k\in\R$. Then $|(\nabla^{g_0})^{l}\alpha|_{g_{0}}=O(r^{k+p-q-l})$ for all $l\in\N_0$.
\end{lemma}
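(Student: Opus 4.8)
The plan is to exploit the homogeneity behaviour of the tensor $\alpha$ together with the scaling equivariance of the Levi-Civita connection recorded in the paragraph preceding the statement, and then reduce the estimate on the cone to a compact computation on the link. The key observation is that the dilation $\nu_t$ scales $g_0$ by $t^2$, commutes with $\nabla^{g_0}$ (i.e. $\nu_t^* \circ \nabla^{g_0} = \nabla^{g_0} \circ \nu_t^*$), and scales $\alpha$ by $t^k$. First I would differentiate the homogeneity relation $\nu_t^*(\alpha) = t^k \alpha$: applying $\nabla^{g_0}$ to both sides and using commutativity gives $\nu_t^*((\nabla^{g_0})^l \alpha) = t^k (\nabla^{g_0})^l \alpha$ for every $l \in \N_0$, so each covariant derivative $(\nabla^{g_0})^l \alpha$ is again homogeneous of the same degree $k$, now as a section of $(TC_0)^{\otimes p} \otimes (T^*C_0)^{\otimes(q+l)}$.

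Next I would track how the pointwise $g_0$-norm transforms under $\nu_t$. Since $\nu_t^* g_0 = t^2 g_0$, the induced metric on a tensor of type $(p', q')$ scales so that $|\nu_t^* \beta|_{g_0}$ evaluated at a point $(r,x)$ equals $t^{-p'+q'}|\beta|_{g_0}$ evaluated at the image point $(tr,x)$; concretely, each contravariant slot contributes a factor $t^{-1}$ and each covariant slot a factor $t^{+1}$ to the norm, because raising and lowering indices with $g_0 = t^2(\nu_t^{-1})^* g_0$ introduces the corresponding powers of $t$. Applying this to $\beta = (\nabla^{g_0})^l \alpha$, which has $p' = p$ contravariant and $q' = q+l$ covariant slots, and combining with the degree-$k$ homogeneity from the previous step, I obtain a clean functional equation: evaluating at $r=1$ and scaling out to radius $r$ by taking $t = r$ yields
\[
|(\nabla^{g_0})^l \alpha|_{g_0}(r,x) = r^{\,k + p - q - l}\,|(\nabla^{g_0})^l \alpha|_{g_0}(1,x).
\]

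Finally, the link $S$ is compact and connected, and $(\nabla^{g_0})^l \alpha$ is a smooth section, so its $g_0$-norm restricted to the unit-radius slice $\{1\} \times S$ is a continuous function on a compact set and hence bounded by a constant $C(l)$. Substituting this bound into the displayed identity gives $|(\nabla^{g_0})^l \alpha|_{g_0} = O(r^{k+p-q-l})$, which is exactly the claim. The main point requiring care — the step I expect to be the genuine obstacle — is bookkeeping the sign and power of $t$ contributed by each tensor slot to the norm under $\nu_t^*$, i.e. verifying that contravariant indices scale oppositely to covariant ones and that the total exponent assembles to $p - q - l$ after accounting for the $l$ extra covariant slots produced by the derivatives; everything else is formal once the homogeneity of the derivatives and the compactness of the link are in hand.
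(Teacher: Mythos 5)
Your proof is correct: the paper states this lemma without proof, and your argument — propagating the homogeneity to $(\nabla^{g_0})^l\alpha$ via the commutation $\nu_t^*\circ\nabla^{g_0}=\nabla^{g_0}\circ\nu_t^*$, tracking the factor $t^{p-q-l}$ that the $(p,q+l)$-tensor norm picks up from $\nu_t^*g_0=t^2g_0$, and then bounding the norm on the compact slice $\{r=1\}$ — is exactly the standard argument the two identities recorded in the preceding paragraph are set up to deliver. The slot-by-slot bookkeeping you flag as the delicate point checks out: contravariant slots contribute $t^{-1}$ and covariant slots $t^{+1}$ to $|\nu_t^*\beta|_{g_0}$, yielding the exponent $k+p-q-l$ as claimed.
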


We shall say that ``$\alpha=O(r^{\lambda})$ with $g_{0}$-derivatives'' whenever $|(\nabla^{g_0})^{k}\alpha|_{g_{0}}=O(r^{\lambda-k})$ for every $k \in \N_0$.
We will then also say that $\alpha$ has ``rate at most $\lambda$'', or sometimes, for simplicity, ``rate $\lambda$'', although it should be understood that (at least when $\alpha$ is purely polynomially behaved and does not contain any $\log$ terms) the rate of $\alpha$ is really the infimum of all $\lambda$ for which this holds.

\subsection{K{\"a}hler cones} Boyer-Galicki \cite{book:Boyer} is a comprehensive reference here.

\begin{definition}A \emph{K{\"a}hler cone} is a Riemannian cone $(C_0,g_0)$ such that $g_0$ is K{\"a}hler, together with a choice of $g_0$-parallel complex structure $J_0$. This will in fact often be unique up to sign. We then have a K{\"a}hler form $\omega_0(X,Y) = g_0(J_0X,Y)$, and $\omega_0 = \frac{i}{2}\p\bar{\p} r^2$ with respect to $J_0$.
\end{definition}

The vector field $r\partial_{r}$ on a K\"ahler cone is real holomorphic and $J_{0}r\partial_r$ is real holomorphic and Killing. This latter vector field is known as the \emph{Reeb field}. The closure of its flow in the isometry group of the link of the cone generates the holomorphic isometric action of a real torus on $C_{0}$ that fixes the apex of the cone. We call a K{\"a}hler cone ``quasiregular'' if this action is an
$S^1$-action (and, in particular, ``regular'' if this $S^1$-action is free), and ``irregular'' if the action generated is that of a real torus of rank $>1$.

Given a K\"ahler cone $(C_{0},\,\omega_{0}=\frac{i}{2}\partial\bar{\partial}r^{2})$ with radius function $r$, it is true that
\begin{equation}\label{conemetric}
\omega_{0}=rdr\wedge\eta +\frac{1}{2}r^{2}d\eta
\end{equation}
where $\eta=i(\bar{\partial}-\partial)\log r$.

Clearly, any K\"ahler cone metric on $L^{\times}$, the contraction of the zero section of a negative line bundle $L$ over a projective manifold, with some positive multiple of the radial vector field equal to the Euler vector field on $L\setminus\{0\}$, is regular. In fact, as the following theorem states, this property characterises all regular K\"ahler cones.
\begin{theorem}[{\cite[Theorem 7.5.1]{book:Boyer}}]\label{regulars}
Let $(C_{0},\,\omega_{0})$ be a regular K\"ahler cone with K\"ahler cone metric $\omega_{0}=\frac{i}{2}\partial\bar{\partial}r^{2}$, radial function $r$, and radial vector field $r\partial_{r}$. Then:
\begin{enumerate}[label=\textnormal{(\roman{*})}, ref=(\roman{*})]
\item $C_{0}$ is biholomorphic to the blowdown $L^\times$ of the zero section of a negative line bundle $L$ over a projective manifold $D$, with $a\cdot r\partial_{r}$ equal to the Euler field on $L\setminus\{0\}$ for some $a>0$.
\item Let $p:L\to D$ denote the projection. Then, writing $\omega_{0}$ as in \eqref{conemetric}, we have that $\frac{1}{2}d\eta=p^{*}\sigma$ for some K\"ahler metric $\sigma$ on $D$ with $[\sigma]=2\pi a\cdot c_{1}(L^{*})$.
\end{enumerate}
\end{theorem}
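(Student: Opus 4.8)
The plan is to realise the regular K\"ahler cone as a $\C^{*}$-bundle over the quotient of its link by the Reeb flow, and then to identify this bundle complex-analytically. Write $\xi=J_{0}(r\partial_{r})$ for the Reeb field and recall that $r\partial_{r}$ and $\xi$ are commuting real holomorphic Killing-type fields, so that the holomorphic field $Z=\tfrac{1}{2}(r\partial_{r}-iJ_{0}(r\partial_{r}))$ generates a holomorphic $\C^{*}$-action on $C_{0}=\R^{+}\times S$ whose $S^{1}$-part is the Reeb flow and whose $\R^{+}$-part is the homothety $\nu_{t}$. By definition, regularity means exactly that the Reeb flow induces a \emph{free} $S^{1}$-action on the compact link $S=\{r=1\}$. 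The first step is then the Boothby--Wang construction for the compact regular Sasakian manifold $S$: the quotient $D:=S/S^{1}$ is a compact smooth manifold, $p\colon S\to D$ is a principal $S^{1}$-bundle, the one-form $\eta$ (normalised by $\eta(\xi)=1$, which follows from $\eta=i(\bar\partial-\partial)\log r$ since $\eta(\xi)=-d(\log r)(J_{0}\xi)=1$) is a principal connection, and the transverse K\"ahler structure descends to a genuine K\"ahler form $\sigma$ on $D$ with $\tfrac{1}{2}d\eta=p^{*}\sigma$, while the transverse complex structure descends to an integrable complex structure on $D$ making $\sigma$ K\"ahler.

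Next I would promote this to the complex-analytic statement (i). The $\C^{*}$-action makes $\pi_{D}\colon C_{0}\to D$ a principal $\C^{*}$-bundle; letting $L\to D$ be the holomorphic line bundle associated via the standard representation, $C_{0}$ is biholomorphic to $L$ minus its zero section, with the $\C^{*}$-action corresponding to fibrewise scaling and $r\partial_{r}$ corresponding to a constant multiple of the real Euler field $E$. Matching the Reeb period $2\pi a$ to the period $2\pi$ of the standard fibre rotation pins down $a>0$ so that $a\cdot r\partial_{r}=E$. Since $\tfrac{1}{2\pi}[d\theta]$ (for the normalised connection $\theta$) is the integral Euler class of $S\to D$ and $\sigma$ is a positive multiple of it, $[\sigma]$ is a Hodge class; Kodaira's embedding theorem makes $D$ projective, and positivity of $\sigma$ makes $L^{*}$ ample, i.e.\ $L$ negative. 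Grauert's contractibility criterion then shows the zero section of $L$ blows down to a point, and the resulting normal analytic space $L^{\times}$ has its apex corresponding to the collapsed zero section, with $C_{0}\cong L^{\times}\setminus\{\mathrm{apex}\}$ biholomorphically.

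For (ii), since $\eta$ is $\xi$-invariant and satisfies $\eta(\xi)=1$, $\eta(r\partial_{r})=0$, it is a connection one-form, so $d\eta$ is basic and $\tfrac{1}{2}d\eta=p^{*}\sigma$ as above. To compute the class, I would compare $\eta$ with the Chern connection of a Hermitian metric $h$ on $L$: the relation $a\cdot r\partial_{r}=E$ forces $\log r^{2}$ to be a fixed multiple of $\log|\cdot|_{h}^{2}$, whence $d\eta=i\partial\bar\partial\log r^{2}=p^{*}(-\,\mathrm{const}\cdot i\Theta_{h})$, and Chern--Weil identifies $[\tfrac{1}{2}d\eta]$ with $2\pi a\cdot c_{1}(L^{*})$; positivity of $\sigma$ is then a restatement of the negativity of $L$.

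The genuinely substantive point, rather than Boothby--Wang (which is standard once freeness is observed), is the complex-analytic upgrade in the second paragraph: showing that the transverse holomorphic data assemble into an honest holomorphic line bundle with $C_{0}\cong L\setminus\{0\}$ \emph{biholomorphically}, and deducing the negativity of $L$ so that Grauert's criterion produces the blowdown $L^{\times}$. The remaining delicate bookkeeping is tracking the constant $a$ simultaneously through the Reeb period and through the Euler field so that the cohomological identity in (ii) lands on precisely $2\pi a\cdot c_{1}(L^{*})$.
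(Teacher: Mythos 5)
Your argument is correct, but note that the paper offers no proof of this statement at all: it is quoted verbatim from Boyer--Galicki \cite[Theorem 7.5.1]{book:Boyer}, whose proof is precisely the route you take (Boothby--Wang quotient of the regular link, the holomorphic $\C^{*}$-bundle upgrade with Kodaira embedding giving projectivity of $D$ and negativity of $L$, and Grauert's criterion producing the blowdown $L^{\times}$). So your proposal matches the source's argument, including the normalisation $\eta(\xi)=1$ and the period bookkeeping that places $[\sigma]$ in $2\pi a\cdot c_{1}(L^{*})$.
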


Conversely, as the next example shows, one can always endow $L^\times$ with the structure of a regular K\"ahler cone metric.
\begin{example}[{\cite[Theorem 7.5.2]{book:Boyer}}]\label{regular-eg}
Let $L$ be a negative line bundle over a projective manifold $D$ and let $p:L\to D$ denote the projection. Then $L$ has a hermitian metric $h$ with $i\partial\bar{\partial}\log h$ a K\"ahler form on $D$. Set $r^{2}=(h|z|^{2})^{a}$ for any $a>0$. Then $\frac{r^{2}}{2}$ defines the K\"ahler potential of a K\"ahler cone metric on $L^{\times}$, the contraction of the zero section of $L$, with K\"ahler form $\omega_{0}=\frac{i}{2}\partial\bar{\partial}r^{2}$ and radial vector field $r\partial_{r}$ a scaling of the Euler vector field on $L\setminus\{0\}$ by $\frac{1}{a}$. Finally, writing $\omega_{0}$ as in \eqref{conemetric}, we have that $\frac{1}{2}d\eta=p^{*}\sigma$, where $\sigma=a\cdot i\partial\bar{\partial}\log h$ is a K\"ahler form on $D$ with $[\sigma]=2\pi a\cdot c_{1}(L^{*})$.
\end{example}
We call the K\"ahler metric $\sigma$ on $D$ from Theorem \ref{regulars} and Example \ref{regular-eg} the \emph{transverse K\"ahler form} of
$\omega_{0}$ on $D$.

\subsection{Type II deformations of K\"ahler cones}
One may deform a K\"ahler cone $(C_{0},\,\omega_{0}=\frac{i}{2}\partial\bar{\partial}r^{2})$ as follows. Let $J_{0}$ denote the complex structure on $C_{0}$. Then take any smooth real-valued function $\varphi$ on $C_{0}$ with \nolinebreak $\mathcal{L}_{r\partial_{r}}\varphi=\mathcal{L}_{J_{0}r\partial_{r}}\varphi=0$ such that $\tilde{\omega}_{0}=\frac{i}{2}\partial\bar{\partial}(r^{2}e^{2\varphi})>0$. The form
$\tilde{\omega}_{0}$ will define a new K\"ahler cone metric on $C_{0}$ with radius function $\tilde{r}:=re^{\varphi}$ and radial vector field $\tilde{r}\partial_{\tilde{r}}=r\partial_{r}$. Let $\tilde{\eta}=i(\bar{\partial}-\partial)\log\tilde{r}$. Then, by \eqref{conemetric}, $\tilde{\omega}_{0}$ may be written as
\begin{equation*}
\begin{split}
\tilde{\omega}_{0}&=\frac{i}{2}\partial\bar{\partial}(r^{2}e^{2\varphi})=\tilde{r}d\tilde{r}\wedge\tilde{\eta} +\frac{1}{2}\tilde{r}^{2}d\tilde{\eta}=\tilde{r}d\tilde{r}\wedge\tilde{\eta} +\frac{1}{2}\tilde{r}^{2}(d\eta+i\partial\bar{\partial}\varphi).
\end{split}
\end{equation*}
A deformation of this type is called a ``deformation of type II''; see for example \cite[Section 7.5.1]{book:Boyer} or \cite[Proposition 4.2]{futaki} for more details.

\begin{example}\label{egtypeii}
Consider a K\"ahler cone $(L^\times,\,\omega_{0}=\frac{i}{2}\partial\bar{\partial}r^{2})$, where $L$ is a negative line bundle over a projective manifold $D$, with radial vector field $r\partial_{r}$ equal to a positive scaling of the Euler vector field on $L\setminus\{0\}$. This K\"ahler cone is regular. Denote by $\sigma$ the transverse K\"ahler form of $\omega_{0}$ on $D$ and by $p:L\to D$ the projection. Then, for any smooth function $\varphi:D\to\mathbb{R}$ such that $\sigma+i\partial\bar{\partial}\varphi>0$, the K\"ahler cone metric $\tilde{\omega}_{0}=\frac{i}{2}\partial\bar{\partial}(r^{2}e^{2p^{*}\varphi})$ defines a deformation of $\omega_{0}$ of type II. In this case, the transverse K\"ahler form $\tilde{\sigma}$ of $\tilde{\omega}_{0}$ on $D$ is precisely $\sigma+i\partial\bar{\partial}\varphi>0$ so that $[\tilde{\sigma}]=[\sigma]$ in $H^{2}(D)$. Moreover, the radial vector field of $\tilde{\omega}_{0}$ is equal to the radial vector field $r\partial_{r}$ of $\omega_{0}$.
\end{example}

\begin{example}\label{example-Kahler-Ricci-Type-II}
Another example of a deformation of type II is to deform a K\"ahler cone metric along the \emph{Sasaki-Ricci flow}. Here we discuss a special case.

Let $D$ be a Fano manifold and consider a K\"ahler cone $(L^\times,\,\omega_{0}=\frac{i}{2}\partial\bar{\partial}r^{2})$, where $L=K^{\frac{1}{q}}_{D}$ for some $q$ that divides the Fano index\footnote{The \emph{Fano index} of a Fano manifold $D$ is the divisibility of $K_{D}$ in $\operatorname{Pic}(D)$.} of $D$ and where $a\cdot r\partial_{r}$ is equal to the Euler vector field on $L\setminus\{0\}$ for some $a>0$. This is also a regular K\"ahler cone and by Theorem \ref{regulars}, the transverse K\"ahler form $\sigma$ of $\omega_{0}$ on $D$ satisfies $[\sigma]\in 2\pi a\cdot c_{1}(-K_{D}^{\frac{1}{q}})=2\pi\frac{a}{q}\cdot c_{1}(D)$. One evolves the rescaled K\"ahler metric $\hat{\sigma}:=\frac{q}{a}\sigma\in 2\pi c_{1}(D)$ on $D$ along the normalised K\"ahler-Ricci flow to obtain a one-parameter family of smooth real-valued functions $\varphi(t)\in C^{\infty}(D)$ satisfying
\begin{equation}\label{krf}
\left\{
\begin{array}{rl}
&\frac{\partial\varphi}{\partial t}=\log\left(\frac{(\hat{\sigma}+i\partial\bar{\partial}\varphi(t))^{n-1}}{\hat{\sigma}^{n-1}}\right)+\varphi(t)-h\\
&\\
&\varphi(0)=0,
\end{array} \right.
\end{equation}
where $h\in C^{\infty}(D)$ is such that $\rho_{\hat{\sigma}}-\hat{\sigma}=i\partial\bar{\partial}h$, here $\rho_{\hat{\sigma}}$ denoting the Ricci form of $\hat{\sigma}$. The induced evolution $\sigma(t)$ of $\sigma$ is then via $\sigma(t)=\sigma+i\partial\bar{\partial}(\frac{a}{q}\cdot\varphi(t))$
with the corresponding evolution $\omega_{0}(t)$ of the cone metric $\omega_{0}$ (see Example \ref{egtypeii}) given by $$\omega_{0}(t)=\frac{i}{2}\partial\bar{\partial}(r^{2}e^{2\frac{a}{q}p^{*}\varphi(t)}),$$
where $p:K^{l}_{D}\to D$ denotes the projection. Note that, by construction, $\sigma(t)$ is the transverse K\"ahler form of $\omega_{0}(t)$ on $D$, and by \cite{Cao}, the family $\omega_{0}(t)$ exists for all $t\geq0$. We also point out that the radial vector field of $\omega_{0}(t)$ remains fixed equal to $r\partial_{r}$ for all $t$.
\end{example}

\subsection{Asymptotically conical Riemannian manifolds}

\begin{definition}\label{d:AC}
Let $(M,g)$ be a complete Riemannian manifold and let $(C_0,g_0)$ be a Riemannian cone. We call $M$ \emph{asymptotically conical} (AC) with tangent cone $C_{0}$ if there exists a diffeomorphism $\Phi: C_0\setminus K \to M \setminus K'$ with $K,K'$ compact, such that $\Phi^*g - g_0 = O(r^{-\epsilon})$ with $g_0$-derivatives for some $\epsilon > 0$. A \emph{radius function} is a smooth function $\rho: M \to [1,\infty)$ with $\Phi^*\rho = r$ away from $K'$.
\end{definition}

\subsection{Asymptotically conical K\"ahler manifolds}

\begin{definition}\label{d:ACK}
Let $(M,g)$ be a complete K\"ahler manifold with complex structure $J$ and let $(C_0,g_0)$ be a K\"ahler cone with a choice of $g_0$-parallel complex structure $J_0$. We call $M$ \emph{asymptotically conical} (AC) \emph{K\"ahler} with tangent cone $C_{0}$ if there exists a diffeomorphism $\Phi: C_0\setminus K \to M \setminus K'$ with $K,K'$ compact, such that $\Phi^*g - g_0 = O(r^{-\epsilon})$ with $g_0$-derivatives
and $\Phi^*J - J_0 = O(r^{-\epsilon})$ with $g_0$-derivatives for some $\epsilon > 0$. In particular, $(M,\,g)$ is AC with tangent cone $C_{0}$.
\end{definition}

We implicitly only allow for one end in Definitions \ref{d:AC} and \ref{d:ACK}. This is simply to fix ideas. Furthermore, for our applications, the map $\Phi$ in Definition \ref{d:ACK} will always be a biholomorphism.

\subsection{K\"ahler-Ricci solitons}

The metrics we are interested in are the following.
\begin{definition}
A \emph{K\"ahler-Ricci soliton} is a triple $(M,\,g,\,X)$, where $M$ is a K\"ahler manifold, $X$ is a holomorphic vector field on $M$, and $g$ is a complete K\"ahler metric on $M$ whose K\"ahler form $\omega$ satisfies
\begin{equation}\label{soliton}
\rho_{\omega}-\frac{1}{2}\mathcal{L}_{X}\omega+\lambda\omega=0
\end{equation}
for some $\lambda\in\{-1,\,0,\,1\}$, here $\rho_{\omega}$ denoting the Ricci form of $\omega$. A K\"ahler-Ricci soliton is said to be \emph{steady} if $\lambda=0$, \emph{expanding} if $\lambda=1$, and \emph{shrinking} if $\lambda=-1$. We call $X$ the \emph{soliton vector field}. If, in addition, $X=\nabla^{g}f$ for some real-valued smooth function $f$ on $M$, then we say that $(M,\,g,\,X)$ is a \emph{gradient K\"ahler-Ricci soliton}. In this case, we call $f$ the \emph{potential function} of the soliton.
\end{definition}
Here we are concerned with expanding K\"ahler-Ricci solitons. This includes the class of K\"ahler-Einstein metrics with negative scalar curvature (with $X=0$) and Ricci-flat K\"ahler cone metrics (with $X=r\frac{\partial}{\partial r}$). Moreover, a K\"ahler cone metric (with $X=r\frac{\partial}{\partial r}$) satisfies \eqref{soliton} with $\lambda=1$ up to terms of order $O(r^{-2})$.

We next note some important properties of K\"ahler-Ricci solitons.
\begin{lemma}\label{hellothere}
Let $(M,\,g,\,X)$ be a K\"ahler-Ricci soliton with complex structure $J$. If $H^{1}(M)=0$ or $H^{0,\,1}(M)=0$ and $JX$ is Killing for $g$, then $(M,\,g,\,X)$ is gradient. Conversely, if $(M,\,g,\,X)$ is gradient, then $JX$ is Killing for $g$.
\end{lemma}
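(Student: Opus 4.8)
The plan is to treat the two implications separately, using throughout the two elementary identities $\iota_{X}\omega=(JX)^{\flat}$ and $\iota_{JX}\omega=-X^{\flat}$, both of which follow from $\omega(\,\cdot\,,\,\cdot\,)=g(J\,\cdot\,,\,\cdot\,)$, together with the fact that on a complex manifold one has $\mathcal{L}_{JX}J=J\,\mathcal{L}_{X}J$. This last identity is a direct consequence of the integrability of $J$ (vanishing of the Nijenhuis tensor) and shows in particular that whenever $X$ is real holomorphic, so is $JX$; since $X$ is holomorphic by the definition of a K\"ahler--Ricci soliton, $JX$ is automatically holomorphic. These observations reduce both directions to statements about $\mathcal{L}_{JX}\omega$ and the $1$-form $X^{\flat}$.

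For the converse, suppose $X=\nabla^{g}f$. Then $X^{\flat}=df$, so $\iota_{JX}\omega=-X^{\flat}=-df$ is exact and hence $\mathcal{L}_{JX}\omega=d\,\iota_{JX}\omega=0$ (using $d\omega=0$). Thus $JX$ preserves $\omega$, and since $JX$ is also holomorphic it preserves $J$; as $g=\omega(\,\cdot\,,J\,\cdot\,)$, the Leibniz rule $\mathcal{L}_{JX}g=(\mathcal{L}_{JX}\omega)(\,\cdot\,,J\,\cdot\,)+\omega(\,\cdot\,,(\mathcal{L}_{JX}J)\,\cdot\,)=0$ shows that $JX$ is Killing. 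This direction requires no cohomological hypothesis.

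For the forward direction I would first reduce everything to the closedness of $X^{\flat}$. Since $JX$ is holomorphic and Killing it preserves both $J$ and $g$, hence $\omega$, so $\mathcal{L}_{JX}\omega=0$; as $\iota_{JX}\omega=-X^{\flat}$ this gives $dX^{\flat}=-\mathcal{L}_{JX}\omega=0$. If $H^{1}(M)=0$ we are immediately done: a closed $1$-form is exact, so $X^{\flat}=df$ and $X=\nabla^{g}f$. In the case $H^{0,1}(M)=0$ I would decompose $X^{\flat}=\beta^{1,0}+\beta^{0,1}$ into types, where $\beta^{0,1}=(X^{1,0})^{\flat}$ and $\beta^{1,0}=\overline{\beta^{0,1}}$ by reality. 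The $(0,2)$-component of $dX^{\flat}=0$ reads $\bar{\partial}\beta^{0,1}=0$, so $H^{0,1}(M)=0$ produces a global complex-valued function $u$ with $\beta^{0,1}=\bar{\partial}u$; writing $u=\operatorname{Re}u+i\operatorname{Im}u$ one finds $X^{\flat}=d(\operatorname{Re}u)+d^{c}(\operatorname{Im}u)$, and the $(1,1)$-component of $dX^{\flat}=0$ forces $\operatorname{Im}u$ to be pluriharmonic. Matters thus reduce to showing that $d^{c}(\operatorname{Im}u)$ is exact, equivalently that the closed holomorphic $1$-form $\gamma=-2i\,\partial(\operatorname{Im}u)$, which represents the de Rham class $[X^{\flat}]$, has vanishing periods.

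The hard part will be exactly this last step, which is where the cohomological hypothesis and the holomorphicity of $X$ must interact. Merely knowing that $X^{\flat}$ is closed and that $\beta^{0,1}$ is $\bar{\partial}$-exact is not enough: after unwinding, $\oint_{c}\partial(\operatorname{Im}u)$ is a constant multiple of $\oint_{c}X^{\flat}$, so demanding its vanishing is circular. The genuinely new input is the tensorial holomorphicity $\bar{\partial}(X^{1,0})=0$, i.e. the vanishing of the full antiholomorphic Hessian $u_{\bar{\jmath}\bar{k}}=0$, which is strictly stronger than the scalar pluriharmonicity of $\operatorname{Im}u$; that this is essential is shown by the form $d\theta$ on $\mathbb{C}^{\ast}$, which is closed with a pluriharmonic potential and nonzero periods but is \emph{not} the metric dual of any holomorphic vector field. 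I would therefore complete the argument by exploiting this extra rigidity: in the asymptotically conical setting in which the lemma is applied, an integration by parts against $\omega^{n-1}$ using the decay of the soliton forces $\operatorname{Im}u$ to be constant and hence $X=\nabla^{g}f$, while in the general case one must argue that the holomorphic representative $\gamma$ of the real class $[X^{\flat}]$, being of pure type $(1,0)$ and equal to its own conjugate, is annihilated by $H^{0,1}(M)=0$. Pinning down this reality-versus-type mechanism rigorously, in place of the Hodge-theoretic shortcut available only in the compact case, is the crux of the proof.
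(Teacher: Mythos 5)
Your converse direction and the $H^{1}(M)=0$ case of the forward direction are correct and essentially coincide with the paper's argument: the paper routes everything through the equivalence, for a real holomorphic $X$, of ``$JX$ is Killing'' and ``the $g$-dual one-form $\eta_{X}$ is closed'' (Lemma \ref{lemmma}), and then invokes $H^{1}(M)=0$ to write $\eta_{X}=d\theta_{X}$ (Corollary \ref{howdy}); your Cartan-formula derivation $dX^{\flat}=-\mathcal{L}_{JX}\omega=0$ and your Leibniz-rule computation of $\mathcal{L}_{JX}g$ are just slicker phrasings of the same two steps.

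The genuine gap is the $H^{0,1}(M)=0$ case, which you diagnose correctly but do not close. Your reduction to the vanishing of the periods of the closed holomorphic one-form $\partial(\operatorname{Im}u)$ is right, and so is your observation that this vanishing cannot be extracted circularly from the closedness of $X^{\flat}$. But the ``extra rigidity from $\bar{\partial}X^{1,0}=0$'' that your last paragraph hopes will finish the job is not available from the stated hypotheses alone: on $\mathbb{C}^{*}$ with the complete flat cylinder metric $|dz|^{2}/|z|^{2}$, the rotation field $X=\partial_{\theta}=2\operatorname{Re}(iz\partial_{z})$ is real holomorphic, $JX=\mp r\partial_{r}$ is Killing, $H^{0,1}(\mathbb{C}^{*})=0$ since $\mathbb{C}^{*}$ is Stein, and $(\mathbb{C}^{*},|dz|^{2}/|z|^{2},\partial_{\theta})$ satisfies the soliton equation with $\lambda=0$, yet $X^{\flat}=d\theta$ has period $2\pi$ and $X$ is not a gradient. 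So the period-vanishing genuinely requires input beyond holomorphicity of $X$, $JX$ Killing, and $H^{0,1}(M)=0$ --- in the paper's applications this input comes from the topology of the resolution (every loop can be pushed into the exceptional set, where either $H^{1}$ vanishes or $X$ itself vanishes; cf.\ Lemma \ref{simple} and Corollary \ref{kingpin}). For what it is worth, the paper's own proof of Corollary \ref{howdy} disposes of the $H^{0,1}$ case with the single sentence ``a similar argument applies,'' so your scrutiny has landed on precisely the step the paper does not spell out; but as written your proposal describes the difficulty rather than resolving it, and the final paragraph is not a proof.
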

This lemma follows from Corollary \ref{howdy}.

By construction, the vector field $JX$ is Killing for the solitons of Theorem \ref{theorem-A}. Thus, when $M$ in Theorem A satisfies either one of the vanishing conditions $H^{1}(M)=0$ or $H^{0,\,1}(M)=0$, we see that the resulting solitons there are gradient. Since $M$ is homotopy equivalent to $E$ in Theorem A, by Corollary A.9, the same conclusion also holds true if $M$ in Theorem A satisfies the third condition of hypothesis (b) of that theorem. As for the solitons $(M,\,g,\,X)$ of \cite{Siepmann}, the triviality of the canonical bundle of the cone model implies that $H^{0,\,1}(M)=0$ so that his solitons are also gradient. In general, the vanishing $H^{0,\,1}(M)=0$ holds on a resolution $M$ of a complex cone whose apex is a rational singularity; see Proposition \ref{vanishinging}.

We also have a necessary condition for the existence of an expanding K\"ahler-Ricci soliton.
\begin{lemma}\label{necessaryy}
Let $(M,\,g,\,X)$ be an expanding K\"ahler-Ricci soliton. Then $K_{M}|_{V}$ is ample for any compact smooth irreducible subvariety $V$ of $M$.
\end{lemma}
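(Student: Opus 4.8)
The plan is to work entirely at the level of de Rham cohomology and then invoke the Kodaira embedding theorem. The key observation is that the transport term in the soliton equation is exact, so that after passing to cohomology the expanding soliton equation forces $c_{1}(K_{M})$ to be represented by the K\"ahler form $\omega$ itself.

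In more detail, since $g$ is an expanding K\"ahler-Ricci soliton, i.e.\ $\lambda=1$ in \eqref{soliton}, its K\"ahler form $\omega$ satisfies
\[
\rho_{\omega}=\tfrac{1}{2}\mathcal{L}_{X}\omega-\omega.
\]
Because $X$ is (real) holomorphic and $\omega$ is closed, Cartan's formula gives $\mathcal{L}_{X}\omega=d(\iota_{X}\omega)+\iota_{X}d\omega=d(\iota_{X}\omega)$, so that $\mathcal{L}_{X}\omega$ is globally exact. Hence in $H^{2}(M;\R)$ we have $[\rho_{\omega}]=-[\omega]$. On the other hand, the Ricci form of a K\"ahler metric satisfies $[\rho_{\omega}]=2\pi c_{1}(M)=-2\pi c_{1}(K_{M})$, and therefore
\[
c_{1}(K_{M})=\tfrac{1}{2\pi}[\omega]\qquad\textrm{in }H^{2}(M;\R).
\]

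Now let $V\subset M$ be a compact smooth irreducible subvariety, that is, a compact complex submanifold of $M$. Restricting the previous identity to $V$ would yield $c_{1}(K_{M}|_{V})=\tfrac{1}{2\pi}[\omega|_{V}]$. Since $\omega$ is a K\"ahler form on $M$, its restriction $\omega|_{V}$ is a K\"ahler form on $V$, so $c_{1}(K_{M}|_{V})$ is a K\"ahler, and in particular positive, class on the compact complex manifold $V$. By the Kodaira embedding theorem (the Kodaira positivity criterion for ampleness), $K_{M}|_{V}$ is therefore ample, which is the desired conclusion.

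The only substantive point is the observation that the transport term $\mathcal{L}_{X}\omega$ is exact, which collapses the soliton equation to the cohomological identity $c_{1}(K_{M})=\tfrac{1}{2\pi}[\omega]$; once this is in hand there is no analytic difficulty and the result is immediate from Kodaira. Thus I do not expect a genuine obstacle here: the care required is essentially bookkeeping, namely fixing the sign and normalisation conventions relating $[\rho_{\omega}]$ to $c_{1}(K_{M})$, and observing that a K\"ahler form restricts to a K\"ahler form on any complex submanifold.
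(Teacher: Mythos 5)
Your argument is correct and is exactly the intended one: the paper offers no written proof beyond the remark that the lemma follows directly from the defining equation, and your cohomological reading of that equation (exactness of $\mathcal{L}_{X}\omega=d(\iota_{X}\omega)$, hence $2\pi c_{1}(K_{M})=[\omega]$, restricted to $V$ and fed into Kodaira) is precisely how one sees it. The only step left implicit is the use of the $\partial\bar{\partial}$-lemma on the compact K\"ahler manifold $V$ to upgrade the de Rham identity $c_{1}(K_{M}|_{V})=\frac{1}{2\pi}[\omega|_{V}]$ to a positively curved hermitian metric on $K_{M}|_{V}$, which is standard.
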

\noindent One can see this directly from the defining equation of an expanding K\"ahler-Ricci soliton. In particular, if $\Gamma$ is a non-cyclic finite subgroup of $U(2)$ containing no complex reflections, then, since the minimal resolution of $\mathbb{C}^{2}/\Gamma$ always contains a $(-2)$-curve \cite[Theorem 4.1]{Jeff}, no resolution of such a singularity can admit expanding K\"ahler-Ricci solitons. If $\Gamma$ is a cyclic subgroup of $U(2)$, then, as shown by Siepmann \cite[Theorem 5.6.3]{Siepmann}, the minimal resolution $M$ of $\mathbb{C}^{2}/\Gamma$ admits expanding K\"ahler-Ricci solitons if and only if $K_{M}|_{C}$ is ample for every curve $C$ in the exceptional set of the resolution. There is a numerical criterion on $\Gamma$ to determine when this is the case.

\subsection{Function spaces on AC manifolds}\label{s:linear_analysis}
We require a definition of weighted H\"older spaces.

\begin{definition}
Let $(M, g)$ be AC with tangent cone $(C_0,g_{0})$, and let $\rho$ be a radius function.

(i) For $\beta\in\R$ and $k$ a non-negative integer, define $C_{\beta}^{k}(M)$ to be the space of continuous functions $u$ on $M$ with $k$ continuous derivatives such that $$\norm{u}_{C^{k}_{\beta}} :=\sum_{j=0}^{k}\sup_{M}|\rho^{j-\beta}(\nabla^{g})^ju| < \infty.$$
Define $C_{\beta}^{\infty}(M)$ to be the intersection of the $C_{\beta}^{k}(M)$ over all $k\in \N_0$.

(ii) Let $\delta(g)$ be the convexity radius of $g$, and write $d(x,y)$ for the distance between two points $x$ and $y$ in $M$. For $T$ a tensor field on $M$ and $\alpha,\gamma\in\R$, define
$$[T]_{C^{0,\alpha}_\gamma} :=\sup
_{\substack{x\,\neq\,y\,\in\,M \\
d(x,y)\,<\,\delta(g)}}\left[\min(\rho(x),\rho(y))^{-\gamma}\frac{|T(x)-T(y)|}{d(x,y)^{\alpha}} \right] ,$$
where $|T(x)-T(y)|$ is defined via parallel transport along the minimal geodesic from $x$ to $y$.

(iii) For $\beta\in\R$, $k$ a non-negative integer, and $\alpha\in(0,1)$, define the weighted H\"older space $C_{\beta}^{k,\alpha}(M)$ to be the set of $u\in C_{\beta}^{k}(M)$ for which the norm $$\norm{u}_{C_{\beta}^{k,\alpha}}:=\norm{u}_{C^{k}_{\beta}} +[(\nabla^{g})^ku]_{C^{0,\alpha}_{\beta-k-\alpha}} < \infty.$$
\end{definition}

Whether one decides to measure the asymptotics of a function $u\in C_{\beta}^{k}(M)$ in terms of the metric $g$ or $g_{0}$ actually makes no difference.

\section{Constructing a background metric and the equation set-up}\label{Section-App-Met}\label{subsection-existence-sol}

\subsection{Construction of an approximate soliton}
In this section we consider a K\"ahler cone $C_{0}$ of complex dimension $n$ with complex structure $J_{0}$ and radius function $r$ and an equivariant resolution $\pi:M\to C_0$ of $C_{0}$ with exceptional set $E$ so that the torus action induced by the flow of the vector field $J_{0}r\partial_{r}$ on $C_{0}$ extends to $M$. We denote by $J$ the complex structure on $M$ and we write $X$ for the lift of the vector field $r\partial_{r}$ on $C_{0}$ to $M$. We claim the following.
\begin{prop}\label{background-metric}
Suppose that
\begin{equation}\label{hypoth}
\int_{V}(i\Theta_{h})^{k}\wedge\omega^{\dim_{\mathbb{C}}V-k}>0
\end{equation}
for all positive-dimensional irreducible analytic subvarieties $V\subset E$ and for all $1\leq k\leq \dim_{\C}V$ for some K\"ahler form $\omega$ on $M$ and for some hermitian metric $h$ on $K_{M}$ with curvature form $\Theta_{h}$. Denote by $\widetilde{\Theta_{h}}$ the average of $\Theta_{h}$ over the torus action on $M$ induced by the flow of the vector field $J_{0}r\partial_{r}$ on $C_{0}$. Then for each $c>0$, there exists a real-valued smooth function $u_{c}\in C^{\infty}(M)$ with $\mathcal{L}_{JX}u_{c}=0$ such that $\omega_{c}:=i\widetilde{\Theta_{h}}+i\partial\bar{\partial}u_{c}$ is a K\"ahler form satisfying
$$\omega_{c}=\pi^{*}(c\omega_{0}-\rho_{\omega_{0}})$$
outside a compact subset of $M$. Here, $\omega_{0}$ denotes the K\"ahler form of the K\"ahler cone metric on $C_{0}$ and $\rho_{\omega_{0}}$ denotes the corresponding Ricci form. In particular, $\mathcal{L}_{JX}\omega_{c}=0$.
\end{prop}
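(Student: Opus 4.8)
The plan is to construct the function $u_c$ in two stages: first produce a smooth $X$-invariant closed real $(1,1)$-form lying in a suitable cohomology class that equals $\pi^*(c\omega_0 - \rho_{\omega_0})$ at infinity, and then use a global $i\partial\bar\partial$-lemma together with the positivity hypothesis \eqref{hypoth} to realise this class by an honest K\"ahler form of the prescribed shape $i\widetilde{\Theta_h} + i\partial\bar\partial u_c$. The key observation is that $\rho_{\omega_0}$ is, up to a factor, the curvature of the induced hermitian metric on $K_{C_0}$, and that on the resolution $K_M$ carries the hermitian metric $h$ whose curvature is $\Theta_h$; the form $i\Theta_h$ represents $2\pi c_1(K_M) = -2\pi c_1(M)$. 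Thus $i\widetilde{\Theta_h}$ is a closed $X$-invariant real $(1,1)$-form representing the same class, and $-i\widetilde{\Theta_h}$ agrees with $\pi^*\rho_{\omega_0}$ at infinity up to an exact $X$-invariant correction.

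First I would fix a background K\"ahler form. Since $\pi$ is an equivariant resolution, the cone form $\omega_0$ pulls back to a smooth semipositive form $\pi^*\omega_0$ on $M$ that is a genuine K\"ahler form away from $E$; adding a small multiple of the given K\"ahler form $\omega$ supported near $E$ yields an honest K\"ahler form $\omega_M$ on $M$ with $\omega_M = \pi^*\omega_0$ outside a compact set. By averaging over the compact torus generated by the flow of $J_0 r\partial_r$ (which acts holomorphically and isometrically on $M$ by hypothesis (a)), I may assume $\mathcal{L}_{JX}\omega_M = 0$ and $\mathcal{L}_{JX}\widetilde{\Theta_h} = 0$; this averaging does not change cohomology classes and preserves the ``equal to $\pi^*(\cdot)$ at infinity'' property because these pullback forms are already torus-invariant. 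The target class is then $[c\,\omega_M + i\widetilde{\Theta_h}]$, and the candidate form $c\,\omega_M + i\widetilde{\Theta_h}$ equals $\pi^*(c\omega_0 - \rho_{\omega_0})$ outside a compact set, since there $i\widetilde{\Theta_h} = -\pi^*\rho_{\omega_0}$ up to a globally defined $i\partial\bar\partial$-exact $X$-invariant term that I would absorb.

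Next I would show this class contains a K\"ahler form. The positivity condition \eqref{hypoth} is precisely the Nakai--Moishezon-type criterion guaranteeing that the class $[c\,\omega_M + i\widetilde{\Theta_h}]$ restricts to a positive (hence K\"ahler) class on every positive-dimensional irreducible analytic subvariety $V\subset E$; combined with the fact that the class is already K\"ahler at infinity (where it equals $\pi^*(c\omega_0 - \rho_{\omega_0})$, positive for $c$ large — and for all $c>0$ after absorbing the lower-order Ricci term into the error), a Demailly--P\u{a}un-style argument, or the more elementary gluing available here because positivity fails only on the compact set $E$, produces a K\"ahler metric in the class. The required potential $u_c$ is then obtained from the $i\partial\bar\partial$-lemma applied to the difference of two cohomologous closed forms; torus-invariance of $u_c$, i.e. $\mathcal{L}_{JX}u_c = 0$, is arranged by a final averaging, which is harmless since both forms are already $JX$-invariant.

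The main obstacle is the passage from the numerical positivity \eqref{hypoth} on subvarieties of $E$ to the existence of a global K\"ahler form in the class while simultaneously keeping the form \emph{equal to} $\pi^*(c\omega_0 - \rho_{\omega_0})$ (not merely cohomologous to it) outside a compact set. The cohomological freedom in the $i\partial\bar\partial$-lemma only gives agreement up to an $i\partial\bar\partial$-exact term, so I must check that the correcting potential can be taken compactly supported — equivalently, that the two closed forms already agree exactly at infinity, not just in cohomology. This is why the construction must begin from representatives that are literally $\pi^*(\cdot)$ near infinity, reducing the positivity problem to a genuinely compact gluing region around $E$, where the criterion \eqref{hypoth} can be applied directly. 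The final assertion $\mathcal{L}_{JX}\omega_c = 0$ is then immediate from $\mathcal{L}_{JX}\widetilde{\Theta_h} = 0$, $\mathcal{L}_{JX}u_c = 0$, and the fact that $i\partial\bar\partial$ commutes with the $JX$-action.
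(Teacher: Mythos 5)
Your skeleton matches the paper's proof in outline: write $-i\Theta_{h}=\pi^{*}\rho_{\omega_{0}}+i\partial\bar{\partial}u$ away from $E$, use the numerical positivity \eqref{hypoth} to obtain positivity of $i\Theta_{h}$ near $E$ (this is exactly what \cite[Theorem 1.1]{paunthm} supplies: a potential $\varphi$ with $i\Theta_{h}+i\partial\bar{\partial}\varphi>0$ on a neighbourhood $E\cup\{r<4\epsilon\}$), glue, and average over the torus. But the step you yourself flag as ``the main obstacle'' --- the gluing --- is precisely the step you do not carry out, and it is the entire content of the proof. The potential $\varphi$ lives only near $E$ and the potential $u$ only on $M\setminus E$; interpolating them as $i\partial\bar{\partial}(\zeta_{\epsilon}\varphi+(1-\zeta_{\epsilon})u)$ produces sign-indefinite error terms in the transition annulus, and the ``exact correction'' you propose to absorb at infinity must likewise be cut off at some radius $R$, producing further errors there. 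The paper dominates all of these by adding $C\,i\partial\bar{\partial}(\zeta_{R}\Psi_{\alpha})+c\,i\partial\bar{\partial}\Psi_{1}$, where $\Psi_{\alpha}$ is a global plurisubharmonic regularisation of $r^{2\alpha}$ with $0<\alpha<1$, constant near $E$ and strictly positive as a form on $\{r>\epsilon\}$; the intermediate weight $r^{2\alpha}$ is essential because $|\rho_{\omega_{0}}|_{i\partial\bar{\partial}r^{2\alpha}}=O(r^{-2\alpha})$ allows $C$ to be chosen large independently of $R$, after which $R$ is chosen large. Without some such two-scale argument, positivity on the annuli $\{2\epsilon\leq r\leq 3\epsilon\}$ and $\{2R\leq r\leq 3R\}$ is simply not established.

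There is also a structural problem with your auxiliary form $\omega_{M}$. A ``K\"ahler form supported near $E$'' does not exist (cutting off a closed form destroys closedness, and cutting off its local potentials reintroduces the very gluing problem above); more seriously, if $\omega_{M}$ were a genuine K\"ahler form on $M$ equal to $\pi^{*}\omega_{0}$ at infinity, then $\int_{V}\omega_{M}^{\dim_{\mathbb{C}}V}>0$ for any compact positive-dimensional $V\subset E$, so $c\omega_{M}$ is not $i\partial\bar{\partial}$-exact and the class of $c\omega_{M}+i\widetilde{\Theta_{h}}$ differs from that of $i\widetilde{\Theta_{h}}$. Your final appeal to the $i\partial\bar{\partial}$-lemma therefore cannot return a globally defined $u_{c}$ with $\omega_{c}=i\widetilde{\Theta_{h}}+i\partial\bar{\partial}u_{c}$. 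The positivity near $E$ must come from $i\Theta_{h}$ itself via \eqref{hypoth}, while the contribution of $c\omega_{0}$ enters only through the globally defined plurisubharmonic potential $c\Psi_{1}$, which is constant near $E$ and hence contributes nothing to the class. A smaller point: the torus averaging should be applied to the entire glued form, since neither $\varphi$ nor $u$ is invariant a priori; it preserves the prescribed behaviour at infinity because the torus acts by holomorphic isometries of the cone, so that $\psi_{g}^{*}\omega_{0}=\omega_{0}$ and $\psi_{g}^{*}\rho_{\omega_{0}}=\rho_{\omega_{0}}$.
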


\begin{proof}
In what follows, we identify $M\setminus E$ and $C_{0}$ via $\pi$.

Since $\rho_{\omega_{0}}$ is the curvature form of the hermitian metric on $-K_{C_{0}}$ induced by $\omega_{0}$, there exists a smooth function $u$ on $M\setminus E$ such that $$-i\Theta_{h}=\rho_{\omega_{0}}+i\partial\bar{\partial}u.$$
Moreover, since \eqref{hypoth} holds by assumption, \cite[Theorem 1.1]{paunthm} implies that there exists $\epsilon>0$ such that $i\Theta_{h}+i\partial\bar{\partial}\varphi>0$ on $E\cup\{r<4\epsilon\}$ for some smooth real-valued function $\varphi$ on this set.

We proceed as in the proof of \cite[Lemma 2.15]{Conlon}. Let $\alpha>0$ and let $\psi_{\alpha}:\R^+\to\R^+$ be smooth with $\psi_{\alpha}',\psi_{\alpha}''\geq 0$ and
$$\psi_{\alpha}(t) = \begin{cases}
\left(\frac{\epsilon}{3}\right)^{2\alpha} & \textrm{if}\;\,t<\left(\frac{\epsilon}{2}\right)^{2\alpha},\\
t & \textrm{if}\;\,t>\epsilon^{2\alpha}.
\end{cases}
$$
Then $\Psi_{\alpha}:=\psi_{\alpha}\circ r^{2\alpha}: M \to \R^+$ satisfies
$$
i\partial\bar{\partial}\Psi_{\alpha} =
\begin{cases}
0 &\textrm{on}\;E \cup \{0 < r <\frac{\epsilon}{2}\},\\
\psi_{\alpha}'' i\p r^{2\alpha}\wedge\bar{\p}r^{2\alpha}+\psi_{\alpha}'i\p\bar{\p}r^{2\alpha} &\textrm{on}\;\{r > \frac{\epsilon}{4}\}.
\end{cases}
$$
Clearly $i\partial\bar{\partial}\Psi_{\alpha}\geq 0$ everywhere on $M$ and $i\partial\bar{\partial}\Psi_{\alpha}=i\partial\bar{\partial}r^{2\alpha}>0$ on $\{r>\epsilon\}$. {Also}, fix a cutoff function $\zeta:\mathbb{R}^{+}\to\R^{+}$ with
$$
\zeta(t) = \begin{cases}
1 & \textrm{if}\;\,t<2,\\
0 & \textrm{if}\;\,t>3,
\end{cases}
$$
and for $R>4\epsilon$, define $\zeta_{R}:M\to\mathbb{R}$ by $\zeta_{R}:=\zeta\circ(r/R)$. Given $c > 0$, we construct
$$\hat{\omega}_{c}:=i\Theta_{h}+i\partial\bar{\partial}(\zeta_{\epsilon}\varphi)+i\p\bar{\p}((1-\zeta_{\epsilon})u) + Ci\p\bar{\p}(\zeta_{R}\Psi_{\alpha})+ci\p\bar{\p}\Psi_{1}$$
with $C$ and $R$ to be determined and with $\alpha\in(0,\,1)$ fixed. {Note that} $$\textrm{$\hat{\omega}_{c} = i\Theta_{h}+i\p\bar{\p}\varphi+Ci\p\bar{\p}(\zeta_{R}\Psi_{\alpha})+ci\p\bar{\p}\Psi_{1}\geq i\Theta_{h}+i\p\bar{\p}\varphi>0$ on $E\cup\{0< r < 2\epsilon\}$}$$ because $\Psi_{\alpha}$ and $\Psi_{1}$ are plurisubharmonic; $\hat{\omega}_{c} =-\rho_{\omega_{0}} + ci\p\bar{\p}r^{2} > 0$ on $\{2R < r \}$, after increasing $R$ if necessary, because $|\rho_{\omega_{0}}|_{i\partial\bar{\partial}r^{2}} = O(r^{-2})$; $\hat{\omega}_{c}>0$ on $\{2\epsilon\leq r\leq 3\epsilon\}$ by compactness if $C$ is made large enough; $\hat{\omega}_{c}=-\rho_{\omega_{0}}+Ci\partial\bar{\partial}r^{2\alpha}+ci\partial\bar{\partial}r^{2}>0$ on $\{3\epsilon<r<R\}$ after further increasing $C$ independently of $R$, which one can do since $|\rho_{\omega_{0}}|_{i\partial\bar{\partial}r^{2\alpha}}=O(r^{-2\alpha})$; and finally, $\hat{\omega}_{c} > 0$ on $\{2R \leq r \leq 3R\}$ after further increasing $R$ if necessary, since $\Psi_{\alpha}$ is of lower order compared to $\Psi_{1}$. In conclusion, $\hat{\omega}_{c}$ is a genuine K\"ahler form {on $M$} for suitable choices of $C$ and $R$ with $\hat{\omega}_{c} = c\omega_{0}-\rho_{\omega_{0}}$ on $\{r>2R\}$.

We next average $\hat{\omega}_{c}$ over the action of the torus $T^{k}$ on $M$ induced by the flow of the vector field $J_{0}r\partial_{r}$ on $C_{0}$ by setting
$$\omega_{c}:=\frac{1}{|T^{k}|}\int_{T^{k}}\psi_{g}^{*}\hat{\omega}_c\,d\mu(g)=i\widetilde{\Theta_{h}}+i\partial\bar{\partial}u_{c},$$
where $\psi_{g}:M\to M$ is the automorphism of $M$ induced by $g\in T^{k}$ and where $u_{c}$ is defined implicitly. Since there is a path in $T^{k}$ connecting $g$ to the identity, we have that $\psi_{g}^{*}[\hat{\omega}_{c}]=[\psi_{g}^{*}\hat{\omega}_{c}]=[\hat{\omega}_{c}]$, from which it follows that $[\omega_{c}]=[\hat{\omega}_{c}]$. Moreover, it is clear that $\mathcal{L}_{JX}u_{c}=0$ and $\mathcal{L}_{JX}\omega_{c}=0$. Finally, since $T^{k}$ acts by holomorphic isometries on the slices of the cone $C_{0}$, we have that $\psi_{g}^{*}\rho_{\omega_{0}}=\rho_{\omega_{0}}$ and $\psi_{g}^{*}\omega_{0}=\omega_{0}$ for every $g\in T^{k}$. Hence $\omega_{c}=c\omega_{0}-\rho_{\omega_{0}}$ on $\{r>2R\}$ also.
\end{proof}

\newpage
\subsection{Set-up of the complex Monge-Amp\`ere equation}

We next set up the complex Monge-Amp\`ere equation that we will solve in order to construct our expanding gradient K\"ahler-Ricci solitons. In what follows, we drop the subscript $c$ from the metric $\omega_{c}$ and the function $u_{c}$ of Proposition \ref{background-metric} for clarity.
\begin{prop}\label{equationsetup}
Let $\omega$ denote the K\"ahler form of Proposition \ref{background-metric} and suppose that the resolution $\pi:M\to C_{0}$ satisfies hypothesis (b) of Theorem A, i.e., $H^{1}(M)=0$ or $H^{0,\,1}(M)=0$ or \linebreak $X|_{A}=0$ for $A\subset E$ for which $H_{1}(A)\to H_{1}(E)$ is surjective. Let $\varphi\in C_{-\epsilon}^{\infty}(M)$ for some $\epsilon>0$ with \linebreak $\omega_{\varphi}:=\omega+i\partial\bar{\partial}\varphi>0$. Then
\begin{equation}\label{sexysoliton}
\rho_{\omega_{\varphi}}+\omega_{\varphi}-\frac{1}{2}\mathcal{L}_{X}\omega_{\varphi}=0
\end{equation}
if and only if
\begin{equation}\label{e:soliton}
-\varphi+\log\frac{(\omega+i\partial\bar{\partial}\varphi)^{n}}{\omega^{n}}+\frac{1}{2}X\cdot\varphi=F,
\end{equation}
where $F\in C^{\infty}_{-2}(M)$ satisfies
\begin{equation*}
\left\{
\begin{array}{rl}
&\rho_{\omega}+\omega-\frac{1}{2}\mathcal{L}_{X}\omega=i\partial\bar{\partial}F\\
&\\
&\mathcal{L}_{JX}F=0
\end{array} \right.
\end{equation*}
and has leading order term $\frac{s_{\omega_{0}}}{2}+O(r^{-4})$. Here, $\rho_{\omega_{\varphi}}$,\,$\rho_{\omega_{0}}$, denote the Ricci forms of $\omega_{\varphi}$ and the K\"ahler cone metric $\omega_{0}$ respectively, and $s_{\omega_{0}}$ denotes the scalar curvature of $\omega_{0}$.
\end{prop}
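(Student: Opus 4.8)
The plan is to prove the equivalence by first exhibiting the function $F$, i.e. by showing that the closed real $(1,1)$-form $\Omega:=\rho_{\omega}+\omega-\frac{1}{2}\mathcal{L}_{X}\omega$ is \emph{globally} $i\partial\bar{\partial}$-exact, and then reducing \eqref{sexysoliton}$\Leftrightarrow$\eqref{e:soliton} to a one-line identity together with a maximum principle. Note first that $\Omega$ is closed, since $\rho_{\omega}$ and $\omega$ are closed and $\mathcal{L}_{X}\omega=d\iota_{X}\omega$ is exact (as $d\omega=0$), and that $\Omega$ is $JX$-invariant: $\mathcal{L}_{JX}\omega=0$ forces $\mathcal{L}_{JX}\rho_{\omega}=0$, while $[X,JX]=(\mathcal{L}_{X}J)X=0$ (as $X$ is real holomorphic) gives $\mathcal{L}_{JX}\mathcal{L}_{X}\omega=\mathcal{L}_{X}\mathcal{L}_{JX}\omega=0$. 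Hence any primitive of $\Omega$ may be averaged over the torus to become $JX$-invariant, so the real content is the exactness, which I would establish in two steps.

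For the summand $\rho_{\omega}+\omega$: by Proposition \ref{background-metric}, $\omega=i\widetilde{\Theta_{h}}+i\partial\bar{\partial}u_{c}$, where $i\widetilde{\Theta_{h}}$ is the curvature form of the averaged Hermitian metric on $K_{M}$, while $-\rho_{\omega}$ is the curvature form of the Hermitian metric induced by $\omega$ on $K_{M}$; two Hermitian metrics on the same line bundle have curvatures differing by $i\partial\bar{\partial}$ of the (globally defined) logarithm of their ratio, so $\rho_{\omega}+i\widetilde{\Theta_{h}}=i\partial\bar{\partial}w$ for some smooth $w$, and hence $\rho_{\omega}+\omega=i\partial\bar{\partial}(w+u_{c})$. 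For the summand $\frac{1}{2}\mathcal{L}_{X}\omega$ I would argue that $X^{\flat}:=g(X,\cdot\,)$ is closed — indeed $\iota_{JX}\omega=-X^{\flat}$, so $dX^{\flat}=-\mathcal{L}_{JX}\omega=0$ — and then invoke hypothesis (b), exactly as in Lemma \ref{hellothere} and Corollary \ref{howdy}, to promote $X^{\flat}$ from closed to \emph{exact}: if $H^{1}(M)=0$ this is immediate; if $H^{0,1}(M)=0$ one uses that $\iota_{X^{1,0}}\omega$ is a $\bar{\partial}$-closed hence $\bar{\partial}$-exact $(0,1)$-form; and if $X|_{A}=0$ with $H_{1}(A)\to H_{1}(E)\cong H_{1}(M)$ surjective, then $X^{\flat}$ vanishes on $A$, forcing $[X^{\flat}]=0$. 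Writing $X=\nabla^{g}\theta$, the computation $(JX)^{\flat}=d^{c}\theta$ then yields $\frac{1}{2}\mathcal{L}_{X}\omega=\frac{1}{2}d(JX)^{\flat}=\frac{1}{2}dd^{c}\theta=i\partial\bar{\partial}\theta$, with $\theta$ chosen $JX$-invariant. Combining gives $\Omega=i\partial\bar{\partial}(w+u_{c}-\theta)=:i\partial\bar{\partial}F$. This exactness of $\frac{1}{2}\mathcal{L}_{X}\omega$, i.e. the existence of the Hamiltonian potential $\theta$, is the crux of the argument and the only place hypothesis (b) is used.

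Granting $F$, the equivalence is algebra. For $\varphi$ with $\omega_{\varphi}>0$, the standard transformation $\rho_{\omega_{\varphi}}=\rho_{\omega}-i\partial\bar{\partial}\log\frac{\omega_{\varphi}^{n}}{\omega^{n}}$ and the identity $\frac{1}{2}\mathcal{L}_{X}\omega_{\varphi}=\frac{1}{2}\mathcal{L}_{X}\omega+\frac{1}{2}i\partial\bar{\partial}(X\cdot\varphi)$ (again using that $X$ is real holomorphic, so $\mathcal{L}_{X}$ commutes with $\partial\bar{\partial}$) give
\[
\rho_{\omega_{\varphi}}+\omega_{\varphi}-\tfrac{1}{2}\mathcal{L}_{X}\omega_{\varphi}=i\partial\bar{\partial}\Big(F+\varphi-\log\tfrac{\omega_{\varphi}^{n}}{\omega^{n}}-\tfrac{1}{2}X\cdot\varphi\Big).
\]
Thus \eqref{sexysoliton} holds if and only if $G:=-\varphi+\log\frac{\omega_{\varphi}^{n}}{\omega^{n}}+\frac{1}{2}X\cdot\varphi-F$ is pluriharmonic, and \eqref{e:soliton} is precisely the statement $G=0$; so \eqref{e:soliton}$\Rightarrow$\eqref{sexysoliton} is immediate. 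For the converse, $\varphi\in C^{\infty}_{-\epsilon}(M)$ and $F=O(r^{-2})$ force each of $\varphi$, $X\cdot\varphi$, $\log\frac{\omega_{\varphi}^{n}}{\omega^{n}}$ and $F$ to decay, so $G\to0$ at infinity; since $G$ pluriharmonic implies $\Delta_{\omega}G=0$, the maximum principle on the connected complete manifold $M$ gives $G\equiv0$, i.e. \eqref{e:soliton}.

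It remains to record the asymptotics of $F$, which I would extract from the explicit form of $\Omega$ near infinity. Identifying $M\setminus E$ with $C_{0}$, where $\omega=c\omega_{0}-\rho_{\omega_{0}}$ and $X=r\partial_{r}$, one computes using $\mathcal{L}_{r\partial_{r}}\omega_{0}=2\omega_{0}$, $\mathcal{L}_{r\partial_{r}}\rho_{\omega_{0}}=0$ that $\Omega=-i\partial\bar{\partial}\log\frac{\omega^{n}}{\omega_{0}^{n}}$ there; since $\omega-c\omega_{0}=-\rho_{\omega_{0}}=O(r^{-2})$ with $g_{0}$-derivatives, this form is $O(r^{-4})$, so the $JX$-invariant primitive $F$, normalised to decay, lies in $C^{\infty}_{-2}(M)$. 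A Taylor expansion of the Monge--Amp\`ere quotient, together with $\tr_{\omega_{0}}\rho_{\omega_{0}}=\frac{1}{2}s_{\omega_{0}}$, then identifies the leading term of $F$ with $\frac{s_{\omega_{0}}}{2}$ (this being half the scalar curvature of the asymptotic cone) and the remainder with $O(r^{-4})$, as claimed.
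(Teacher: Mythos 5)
Your overall strategy coincides with the paper's: build $F$ by combining the comparison of curvature forms on $K_{M}$ with a Hamiltonian potential $\theta$ for $X$ obtained from hypothesis (b), then reduce the equivalence of \eqref{sexysoliton} and \eqref{e:soliton} to the vanishing of a pluriharmonic function that decays at infinity, via the maximum principle. Those parts are fine, including the verification that $\Omega$ is closed and $JX$-invariant and the case analysis for the exactness of $X^{\flat}$.

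The genuine gap is in the sentence ``the $JX$-invariant primitive $F$, normalised to decay, lies in $C^{\infty}_{-2}(M)$.'' The primitive you actually construct is $F=w+u_{c}-\theta$, and both $u_{c}$ and $\theta$ grow quadratically in $r$; there is no a priori reason their combination should decay, and ``normalising'' can only mean adding a constant. What you know at this stage is that on the end, $F$ and the explicit decaying potential $G:=-\log\frac{\omega^{n}}{\omega_{0}^{n}}$ have the same $i\partial\bar{\partial}$, so $F-G$ is pluriharmonic there --- and a pluriharmonic function on the end of a cone can grow (e.g.\ the real part of a holomorphic function). Ruling this out is exactly where the nontrivial work lies: one uses $\mathcal{L}_{JX}(F-G)=0$ to see that $X^{1,0}\cdot(F-G)$ is a real-valued holomorphic function, hence a constant $c_{0}$ on the end, which forces $F-G=c_{0}\log r+c_{1}(x)$ with $c_{1}$ a function on the link; then $\Delta_{g_{0}}(F-G)=0$ integrated over the link gives $c_{0}=0$ and $c_{1}$ harmonic on the compact link, hence constant. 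Only after this can one subtract a constant from $F$ and conclude $F\in C^{\infty}_{-2}(M)$ with leading term $\frac{s_{\omega_{0}}}{2}+O(r^{-4})$. As written, your argument assumes the existence of a decaying $JX$-invariant primitive, which is precisely the point at issue; the rest of the proof (the algebraic identity and the maximum principle) is correct and matches the paper.
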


\begin{proof}
If $\varphi$ satisfies \eqref{e:soliton}, then by taking $i\partial\bar{\partial}$ of this equation, we see that
$\omega_{\varphi}$ satisfies \eqref{sexysoliton}.

Conversely, suppose that $\omega_{\varphi}$ satisfies \eqref{sexysoliton}. Then
\begin{equation}\label{computation}
\begin{split}
0&=\rho_{\omega_{\varphi}}+\omega_{\varphi}-\frac{1}{2}\mathcal{L}_{X}
\omega_{\varphi}\\
&=\rho_{\omega_{\varphi}}-\rho_{\omega}+\rho_{\omega}+\omega_{\varphi}-\frac{1}{2}\mathcal{L}_{X}\omega_{\varphi}\\
&=-i\partial\bar{\p}\log\frac{(\omega+i\partial\bar{\partial}\varphi)^{n}}{\omega^{n}}+
\rho_{\omega}+\omega_{\varphi}-\frac{1}{2}\mathcal{L}_{X}\omega_{\varphi}\\
&=-i\partial\bar{\p}\log\frac{(\omega+i\partial\bar{\partial}\varphi)^{n}}{\omega^{n}}+
i\partial\bar{\partial}\varphi-\frac{1}{2}i\partial\bar{\partial}\left(X\cdot\varphi\right)
+(\rho_{\omega}+\omega-\frac{1}{2}\mathcal{L}_{X}\omega),\\
\end{split}
\end{equation}
so that
\begin{equation*}
\begin{split}
i\partial\bar{\p}\left(-\varphi+\log\frac{(\omega+i\partial\bar{\partial}\varphi)^{n}}{\omega^{n}}+\frac{1}{2}X\cdot\varphi\right)
&=\rho_{\omega}+\omega-\frac{1}{2}\mathcal{L}_{X}\omega.\\
\end{split}
\end{equation*}
Now, since $\mathcal{L}_{JX}\omega=0$, $JX$ is Killing, and so by Lemma \ref{lemmma}, the $g$-dual one-form $\eta_{X}:=g(X,\,\cdot)$ of $X$ is closed,
where $g$ is the K\"ahler metric associated to $\omega$. In the case that $H^{1}(M)=0$ or $H^{0,\,1}(M)=0$, one can then find a smooth real-valued function $\theta_{X}$ such that $\eta_{X}=d\theta_{X}$, so that $X=\nabla^{g}\theta_{X}$. In the case that $H_{1}(A)\to H_{1}(E)$ is surjective, the existence of such a function $\theta_{X}$ follows from Corollary \ref{kingpin} since $E$ is homotopy equivalent to $M$. It then follows that in all cases, $\omega\lrcorner X=d\theta_{X}\circ J$, so that we can write $$\mathcal{L}_{X}\omega=d(\omega\lrcorner X)=i\partial\bar{\partial}\theta_{X}.$$ Since $\mathcal{L}_{JX}\mathcal{L}_{X}\omega=\omega([JX,\,X])=0$, by averaging over the action of the torus on $M$ induced by that on $C_{0}$, we may assume that $\mathcal{L}_{JX}\theta_{X}=0.$

Moreover, we have that $$\rho_{\omega}+i\Theta_{h}=i\partial\bar{\partial}v$$ for some $v\in C^{\infty}(M)$, where $\Theta_{h}$ is as in Proposition \ref{background-metric}. Averaging this equation over the action of the torus on $M$ induced by that on $C_{0}$, we obtain
$$\rho_{\omega}+i\widetilde{\Theta_{h}}=i\partial\bar{\partial}\tilde{v}$$
for some $\tilde{v}\in C^{\infty}(M)$ satisfying $\mathcal{L}_{JX}\tilde{v}=0$, where again our notation follows Proposition \ref{background-metric}. Here we have used the fact that $JX$ is holomorphic and Killing so that $\mathcal{L}_{JX}\rho_{\omega}=0$.

Now let $u\in C^{\infty}(M)$ be as in Proposition \ref{background-metric}. Then we can write:
\begin{equation}\label{sexier}
\begin{split}
\rho_{\omega}+\omega-\frac{1}{2}\mathcal{L}_{X}\omega&=\rho_{\omega}+i\widetilde{\Theta_{h}}
+i\partial\bar{\partial}u-\frac{1}{2}\mathcal{L}_{X}\omega\\
&=i\partial\bar{\partial}\tilde{v}+i\partial\bar{\partial}u-i\partial\bar{\partial}\theta_{X}\\
&=i\partial\bar{\partial}F
\end{split}
\end{equation}
for $F:=\tilde{v}+u-\theta_{X}\in C^{\infty}(M)$. In particular, notice that $\mathcal{L}_{JX}F=0$.

Next observe that at infinity we have
\begin{equation}\label{sexiest}
\begin{split}
\rho_{\omega}+\omega-\frac{1}{2}\mathcal{L}_{X}\omega&=\rho_{\omega}+(\omega_{0}-\rho_{\omega_{0}})-
\frac{1}{2}\left(\mathcal{L}_{r\p_{r}}\omega_{0}-\frac{1}{2}\underbrace{\mathcal{L}_{r\p_{r}}\rho_{\omega_{0}}}_{=\,0}\right)\\
&=-i\partial\bar{\partial}\log\frac{(\omega_{0}-\rho_{\omega_{0}})^{n}}{\omega^{n}_{0}}
+\underbrace{\omega_{0}-\frac{1}{2}\mathcal{L}_{r\p_{r}}\omega_{0}}_{=\,0}\\
&=-i\partial\bar{\partial}\log\frac{(\omega_{0}-\rho_{\omega_{0}})^{n}}{\omega^{n}_{0}}=i\partial\bar{\partial}G
\end{split}
\end{equation}
for
\begin{equation*}
\begin{split}
G=G(\omega_{0})&:=-\log\frac{(\omega_{0}-\rho_{\omega_{0}})^{n}}{\omega^{n}_{0}}=-\log\Bigg(1-\underbrace{n\frac{\omega_{0}^{n-1}\wedge \rho_{\omega_{0}}}{\omega^{n}_{0}}}_{=\,O(r^{-2})}+O(r^{-4})\Bigg)\\
&=-\left(-n\frac{\omega_{0}^{n-1}\wedge \rho_{\omega_{0}}}{\omega^{n}_{0}}+O(r^{-4})\right)=\frac{s_{\omega_{0}}}{2}+O(r^{-4})\in C^{\infty}_{-2}(M).
\end{split}
\end{equation*}
Notice that $\mathcal{L}_{JX}G=0$. On subtracting \eqref{sexier} from \eqref{sexiest}, we see that at infinity
\begin{equation}\label{beyonce}
i\partial\bar{\partial}(F-G)=0.
\end{equation}
Since $\mathcal{L}_{JX}(F-G)=0$, it then follows that $\frac{X}{2}\cdot(F-G)$ is holomorphic. But since $\frac{X}{2}\cdot(F-G)$ is real-valued, at infinity $\frac{X}{2}\cdot (F-G)=c_{0}$ for some constant $c_{0}$. Hence, $$F-G=c_{0}\log r+c_{1}(x),$$ where $c_{1}(x)$ is a function that depends on the link $(S,\,g_{S})$ of the cone. But by \eqref{beyonce}, we also have that $\Delta_{g_{0}}(F-G)=0$, which implies that
$$\frac{(2n - 2)}{r^{2}}c_{0}+\frac{1}{r^{2}}\Delta_{g_{S}}c_{1}(x)=0.$$ Integrating this equation over the link of the cone shows that $c_{0}=0$ so that $F-G=C$ at infinity for some constant $C$. Therefore, by subtracting a constant from $F$ in \eqref{sexier} if necessary, we arrive at
\begin{equation}\label{gorilla}
i\partial\bar{\p}\left(-\varphi+\log\frac{(\omega+i\partial\bar{\partial}\varphi)^{n}}{\omega^{n}}+\frac{1}{2}X\cdot\varphi\right)=i\partial\bar{\partial}F,\\
\end{equation}
where $F\in C^{\infty}_{-2}(M)$ is equal to $\frac{s_{\omega_{0}}}{2}+O(r^{-4})$ at infinity.

We now contract \eqref{gorilla} with $\omega$ to find that
\begin{equation*}
\Delta_{\omega}\left(-\varphi+\log\frac{(\omega+i\partial\bar{\partial}\varphi)^{n}}{\omega^{n}}
+\frac{1}{2}X\cdot\varphi-F\right)=0.
\end{equation*}
Applying the maximum principle then yields \eqref{e:soliton}.
\end{proof}

\section{Main setting and function spaces}\label{section-fct-spa}

Let $(M,\,g)$ be a complete Riemannian manifold. Motivated by the work of Siepmann \cite{Siepmann}, we define the following weighted H\"older spaces for $(M,\,g)$ that have already been introduced in \cite{Der-Smo-Pos-Cur-Con} and differ slightly from those function spaces introduced in Section \ref{s:linear_analysis}.

Let $E$ be a tensor bundle over $M$, i.e., $E=(\otimes^rT^*M)\otimes(\otimes^sTM)$ or a subbundle thereof, such as the exterior bundles $\Lambda^rT^*M$ or the symmetric bundles $S^rT^*M$. (We will mainly be concerned with the cases where $E$ is the trivial line bundle over $M$, the bundle of symmetric $2$-tensors $S^2T^*M$, or the bundle of $(1,\,1)$-forms $\Lambda^{(1,1)}T^*M$ when $M$ is a complex manifold.) Let $\alpha\in(0,1)$ and let $k$ be a non-negative integer. We omit the reference to $\alpha$ when $\alpha=0$; the same convention will apply when we deal with functions.
We begin with some definitions.
\begin{itemize}
\item Let $(M,\,g,\,J)$ be a K\"ahler manifold with associated K\"ahler form $\omega$ and let $X$ be a smooth real vector field on $M$. The \textit{weighted Laplacian} (\emph{with respect to $X$}) is defined as
$$\Delta_{\omega,X}T:=\Delta_{\omega} T+\nabla^g_{X}T,$$
where $T$ is a tensor on $M$ and $\nabla^g$ is the complex linear extension of the Levi-Civita connection of $g$. Also, here $\Delta_{\omega}$ denotes the Laplacian associated to $\nabla^g$. In normal coordinates this may be written as  $$\Delta_{\omega}:=\frac{1}{2}\left(\nabla^g_{i}\nabla^g_{\bar{\imath}}+\nabla^g_{\bar{\imath}}\nabla^g_i\right).$$
Recall that the Laplacian acting on functions takes the form
$$\Delta_{\omega}f=g^{i\bar{\jmath}}\partial_i\partial_{\bar{\jmath}}f=\tr_{\omega}\left(\frac{i}{2}\partial\bar{\partial}f\right)$$
for $f\in C^{\infty}(M)$ a smooth real-valued function on $M$. Here, the trace operator $\tr_{\omega}$ on $(1,\,1)$-forms is defined by
$$\tr_{\omega}(\alpha):=\frac{n\omega^{n-1}\wedge\alpha}{\omega^n}=g^{i\bar{\jmath}}\alpha_{i\bar{\jmath}},$$
where $\alpha=\frac{i}{2}\alpha_{j\bar{k}}dz^j\wedge dz^{\bar{k}}$ is a $(1,\,1)$-form on $M$.

In order to keep the notation as light as possible, we will omit the reference to the background K\"ahler metric $g$ or the associated K\"ahler form $\omega$ when there is no possibility for confusion.\\
\item Let $(M,\,g,\,J)$ be a K\"ahler manifold and let $w:M\rightarrow\mathbb{R}$ be a smooth real-valued function on $M$. The \textit{weighted Laplacian} (\emph{with respect to $w$}) is defined as
$$\Delta_{\omega,w}T:=\Delta_{\omega,\nabla w} T,$$
where $T$ is a tensor on $M$.\\

\item We define
\begin{eqnarray*}
&&|\nabla\varphi|^2:=g^{i\bar{\jmath}}\partial_i\varphi\partial_{\bar{\jmath}}\varphi\quad\textrm{and}\quad |\alpha|^2:=g^{i\bar{l}}g^{k\bar{\jmath}}\alpha_{i\bar{\jmath}}\alpha_{k\bar{l}},
\end{eqnarray*}
where $\varphi$ is a smooth real-valued function and $\alpha=\frac{i}{2}\alpha_{j\bar{k}}dz^j\wedge dz^{\bar{k}}$ is a $(1,\,1)$-form on $M$. Notice that $|\nabla\varphi|^2$ is half of the usual Riemannian norm of $\nabla\varphi$ with respect to $g$. One can also define in a similar fashion the norm of a tensor of any type on $M$.\\

\item Let $h\in C^{k,\alpha}_{loc}(M,E)$ and define
$$\left[\nabla^kh\right]_{\alpha}:=\sup_{x\in M}\sup_{y\in B(x,\delta)\setminus\{x\}}\frac{\arrowvert\nabla^kh(x)-P_{x,y}^*\nabla^kh(y)\arrowvert}{d(x,y)^{\alpha}},$$ where $\delta$ is a fixed positive constant depending on the injectivity radius of $(M,g)$ and $P_{x,y}$ denotes the parallel transport along the unique minimizing geodesic from $x$ to $y$. Then we set $$C^{k,\alpha}(M,E):=\{h\in C^{k,\alpha}_{loc}(M,E):\|h\|_{C^{k,\alpha}(M,E)}<+\infty\},$$ where
$$\|h\|_{C^{k,\alpha}(M,E)}:=\sum_{i=0}^k\sup_M\arrowvert\nabla^ih\arrowvert+\left[\nabla^kh\right]_{\alpha}.$$

\item We define $$C_{con}^{k,\alpha}(M,E):=\{h\in C^{k,\alpha}_{loc}(M,E):\|h\|_{C_{con}^{k,\alpha}(M,E)}<+\infty\},$$ where
$$\|h\|_{C_{con}^{k,\alpha}(M,E)}:=\sum_{i=0}^k\|(r_p^2+1)^{i/2}\nabla^ih\|_{C^{0,\alpha}(M,E)}.$$
Here, $r_p$ denotes the distance function to a fixed point $p\in M$ with respect to $g$ and ``$con$'' is an abbreviation of ``conical''.\\

\item Let $X$ be a smooth vector field on $M$. We define
$$D^{k+2}_{X}(M,E):=\{h\in \cap_{p\geq 1}W^{k+2,p}_{loc}(M,E): h\in C_{con}^{k}(M,E)\quad\textrm{and}\quad \Delta_{X} h\in C_{con}^{k}(M,E)\}$$ which we equip with the norm
$$\|h\|_{D^{k+2}_{X}(M,E)}:=\|h\|_{C_{con}^{k}(M,E)}+\|\Delta_{X}h\|_{C^{k}_{con}(M,E)}.$$
We also define
$$D^{k+2,\alpha}_{X}(M,E):=\{h\in C^{k+2,\alpha}_{loc}(M,E): h\in C_{con}^{k,\alpha}(M,E)\quad\textrm{and}\quad\Delta_{X} h\in C_{con}^{k,\alpha}(M,E)\}$$ which we equip with the norm $$\|h\|_{D^{k+2,\alpha}_{X}(M,E)}:=\|h\|_{C_{con}^{k,\alpha}(M,E)}+\|\Delta_{X} h\|_{C^{k,\alpha}_{con}(M,E)}.$$\\

\item Let $w:M\rightarrow\mathbb{R}_+$ be a smooth function on $M$. Then we define
\begin{equation*}
C_{con,w}^{k,\alpha}(M,E):=w^{-1}C_{con}^{k,\alpha}(M,E).
\end{equation*}
We endow this space with the norm $$\|h\|_{C_{con,w}^{k,\alpha}(M,E)}:=\|wh\|_{C_{con}^{k,\alpha}(M,E)}.$$
Similarly, we define
\begin{eqnarray*}
D^{k+2,\alpha}_{w,X}(M,E):=w^{-1}\cdot D^{k+2,\alpha}_{X}(M,E)
\end{eqnarray*}
which we endow with the norm $$\|h\|_{D^{k+2,\alpha}_{w,X}(M,E)}:=\|wh\|_{D^{k+2,\alpha}_{X}(M,E)}.$$\\

\item Finally, we define the important spaces
\begin{equation*}
\begin{split}
\mathcal{D}^{k+2,\alpha}_{w,X}(M)&:=D^{k+2,\alpha}_{w,X}(M)\cap  \{\varphi\in C^{1}_{loc}(M): \mathcal{L}_{JX}(\varphi)=0\},\\
\mathcal{D}^{\infty}_{w,X}(M)&:=\bigcap_{k\geq 0}\mathcal{D}^{k,\alpha}_{w,X}(M),\\
\mathcal{C}^{k,\alpha}_{w,X}(M)&:=C^{k,\alpha}_{con,w}(M)\cap\{\varphi\in C^1_{loc}(M): \mathcal{L}_{JX}(\varphi)=0\},\\
\mathcal{K}^{k+2,\alpha}_{w,X}&:=\{\varphi\in C^2_{loc}(M):\omega+i\partial\bar{\partial}\varphi>0\}\cap \mathcal{D}^{k+2,\alpha}_{w,X}(M).\\
\end{split}
\end{equation*}
\end{itemize}

In our setting, the weight $w$ will be chosen to be the anticipated potential function $f$ of the soliton, hence it will be quadratic in the distance from a fixed point of $M$ (cf.~Lemma \ref{id-EGS}). Furthermore, this choice stems from the fact that if the asymptotic cone is not Ricci-flat, then the convergence rate of the Ricci soliton at infinity is polynomial of order precisely two \cite{Der-Asy-Com-Egs}, hence our solution of the complex Monge-Amp\`ere equation must lie in a function space such that this is the case. If the cone at infinity is Ricci-flat, then the convergence rate to the asymptotic cone is exponential. This is the case considered by Siepmann \cite{Siepmann} and he sets $w=e^{f}$ to reflect this fact. However, this choice doesn't yield the optimal rate of convergence of his solitons. For this, one may also consult \cite{Der-Asy-Com-Egs}.

\begin{remark}
We remark that the spaces $C_{con}^{k,\alpha}(M,E)$ are not equal to the interpolation spaces $(C^k_{con}(M,E),C^{k+1}_{con}(M,E))_{\alpha,\infty}$. Indeed, one can make the identification
\begin{equation*}
(C^k_{con}(M,E),C^{k+1}_{con}(M,E))_{\alpha,\infty}=\left\{h\in C^k_{con}(M,E) : \left[(r_p^2+1)^{k/2}\nabla^kh\right]_{con,\alpha}<+\infty\right\},
\end{equation*}
where
\begin{equation*}
\left[H\right]_{con,\alpha}:=\sup_{x\in M}\sup_{y\in B(x,\delta r_p(x))\setminus\{x\}}\min\left\{r_p(x)^{\alpha},r_p(y)^{\alpha}\right\}\frac{\arrowvert H(x)-P_{x,y}^*H(y)\arrowvert}{d(x,y)^{\alpha}}
\end{equation*}
for a tensor $H$ on $M$. Here, $\inj(x,g)$ denotes the injectivity radius at $x\in M$ of the metric $g$ and $\delta>0$ is a fixed positive constant depending only on a lower bound of $\inf_{x\in M}\inj(x,g)/r_p(x)$.

Later we will need to use the fact that $C^{k,\alpha}_0(M)\subset C^{k,\alpha}_{con}(M)$ for any $k\geq 0$ and $\alpha\in(0,1)$, where $C^{k,\alpha}_0(M)$ is defined as in Definition \ref{s:linear_analysis}.
\end{remark}

We now turn our attention to Theorem D. Let $C_{0}$ be a K\"ahler cone of complex dimension $n$ with complex structure $J_{0}$, K\"ahler cone metric $g_{0}$, and radial function $r$, and let $\pi:M\to C_0$ be an equivariant resolution of $C_{0}$ with $X=\pi^{*}r\partial_{r}$ denoting the lift of the vector field $r\partial_{r}$ on $C_{0}$ to $M$. Denote by $J$ the complex structure on $M$ and define the function $f:M\rightarrow\mathbb{R}$ to be any positive extension of the pull-back by $\pi$ of the radial function $r^2/2$. Furthermore, let $g$ be an AC K\"ahler metric on $M$ asymptotic to $g_{0}$ with associated K\"ahler form $\omega$. We state an easy but fundamental lemma concerning the asymptotics of the derivatives of $f$ with respect to $g$.
\begin{lemma}\label{prop-basic-est-pot-fct}
In the above setting, we have
\begin{eqnarray*}
\nabla^gf=(1+\textit{O}(r^{-2}))X\quad\textrm{and}\quad\Delta_{\omega}f=n+\textit{O}(r^{-2})\quad\textrm{with $g$-derivatives}.
\end{eqnarray*}
In other words,
\begin{equation*}
\nabla^gf-X\in C^{\infty}_{con,f^{\frac{1}{2}}}(M)\quad\textrm{and}\quad\Delta_{\omega}f-n\in C^{\infty}_{con,f}(M).
\end{equation*}
\end{lemma}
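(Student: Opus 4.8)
The plan is to work entirely at infinity, where the resolution $\pi$ restricts to a biholomorphism of $M\setminus E$ onto $C_{0}\setminus\{o\}$, so that under this identification $f=r^{2}/2$, $X=r\partial_{r}$, and $J=J_{0}$ all hold \emph{exactly}. On the compact part the two weighted bounds are automatic, since $f$, $X$, and $\Delta_{\omega}f$ are smooth there while the norms defining $C^{\infty}_{con,f^{\frac{1}{2}}}(M)$ and $C^{\infty}_{con,f}(M)$ only constrain the behaviour as $r\to\infty$. The key preliminary observation is that the corresponding identities for the \emph{cone} metric are exact: since $df=r\,dr$ and $g_{0}=dr^{2}\oplus r^{2}g_{S}$, the Riemannian gradient satisfies $\nabla^{g_{0}}f=g_{0}^{-1}(r\,dr)=r\partial_{r}=X$, while $\frac{i}{2}\partial\bar{\partial}f=\frac{1}{2}\omega_{0}$ is a fixed multiple of $\omega_{0}$ (using $J=J_{0}$ and $\omega_{0}=\frac{i}{2}\partial\bar{\partial}r^{2}$), so that $\Delta_{\omega_{0}}f$ is the constant $n$ by a direct cone computation. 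Thus the two quantities to be estimated, $\nabla^{g}f-X$ and $\Delta_{\omega}f-n$, are \emph{purely} the discrepancy produced by replacing $g_{0}$ by $g$, and they vanish identically for the model. The whole error is therefore driven by $g-g_{0}=O(r^{-2})$ with $g_{0}$-derivatives, which is the convergence rate of the AC metric in our setting (cf.\ Proposition \ref{background-metric}).

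For the gradient, I would write $\nabla^{g}f-X=\nabla^{g}f-\nabla^{g_{0}}f=(g^{-1}-g_{0}^{-1})\,df$ and use the algebraic identity $g^{-1}-g_{0}^{-1}=-g^{-1}(g-g_{0})g_{0}^{-1}$, giving $\nabla^{g}f-X=-g^{-1}\big((g-g_{0})(X,\,\cdot\,)\big)$. Since $|X|_{g_{0}}=r$ (a degree-one homogeneous vector field, by Lemma \ref{simple321}) and $g^{-1}$ is uniformly comparable to $g_{0}^{-1}$, the $g_{0}$-norm is bounded by $|g-g_{0}|_{g_{0}}\,|X|_{g_{0}}\cdot O(1)=O(r^{-2})\cdot O(r)=O(r^{-1})$. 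Differentiating and applying the Leibniz rule, together with the homogeneities of $X$ and $df$ from Lemma \ref{simple321} and the bounds $|(\nabla^{g_{0}})^{l}(g-g_{0})|_{g_{0}}=O(r^{-2-l})$, yields $|(\nabla^{g_{0}})^{l}(\nabla^{g}f-X)|_{g_{0}}=O(r^{-1-l})$ for all $l$. As $f^{\frac{1}{2}}\sim r$, this is exactly the assertion $\nabla^{g}f-X\in C^{\infty}_{con,f^{\frac{1}{2}}}(M)$, i.e.\ $\nabla^{g}f=(1+O(r^{-2}))X$; that measuring with $g$ rather than $g_{0}$ makes no difference is the equivalence of the $g$- and $g_{0}$-weighted norms noted after the definitions.

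For the Laplacian, the point to exploit is that $\Delta_{\omega}f=\tr_{\omega}\big(\tfrac{i}{2}\partial\bar{\partial}f\big)$ pairs only the inverse metric against the \emph{tensorial} $(1,1)$-form $\tfrac{i}{2}\partial\bar{\partial}f$, with no Christoffel terms, since $\partial_{i}\partial_{\bar{\jmath}}f$ transforms as a tensor under holomorphic coordinate changes. At infinity $\tfrac{i}{2}\partial\bar{\partial}f=\tfrac{1}{2}\omega_{0}$, so $\Delta_{\omega}f-n=\big(\tr_{\omega}-\tr_{\omega_{0}}\big)\big(\tfrac{1}{2}\omega_{0}\big)$ is the contraction of $g^{-1}-g_{0}^{-1}$ with a fixed multiple of $\omega_{0}$. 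Because $|g^{-1}-g_{0}^{-1}|_{g_{0}}=O(r^{-2})$ (again via $g^{-1}-g_{0}^{-1}=-g^{-1}(g-g_{0})g_{0}^{-1}$) and $|\tfrac{1}{2}\omega_{0}|_{g_{0}}=O(1)$, this difference is $O(r^{-2})$, and the same Leibniz estimate gives each $g_{0}$-derivative decaying one further power of $r$, so $\Delta_{\omega}f-n\in C^{\infty}_{con,f}(M)$.

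The content lies not in any single estimate but in the interplay I would stress as the main point: because the cone identities are \emph{exact}, the entire discrepancy is governed by the quadratic rate $g-g_{0}=O(r^{-2})$, and the two different output weights arise solely from the different homogeneities of the objects paired against the metric --- $|X|_{g_{0}}\sim r$ converts the relative rate $r^{-2}$ into the absolute rate $r^{-1}$ for the gradient, whereas $|\tfrac{i}{2}\partial\bar{\partial}f|_{g_{0}}\sim 1$ leaves it at $r^{-2}$ for the Laplacian. The only genuinely technical bookkeeping is the passage between weighted $g$- and $g_{0}$-derivatives and the homogeneity estimates of Lemma \ref{simple321}, both of which are routine; there is no serious obstacle once the exactness of the model identities is isolated.
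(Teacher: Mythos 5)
The paper states this lemma without proof (it is introduced as ``an easy but fundamental lemma''), so there is no argument of the authors to compare against; your proof is correct and is clearly the intended one: the model identities $\nabla^{g_0}f=X$ and $\tfrac{i}{2}\partial\bar\partial f=\tfrac12\omega_0$ are exact on the cone, the whole error is linear in $g^{-1}-g_0^{-1}=-g^{-1}(g-g_0)g_0^{-1}=O(r^{-2})$, and the two output weights come from pairing against $df$ (norm $\sim r$) versus $\tfrac{i}{2}\partial\bar\partial f$ (norm $\sim 1$), exactly as you say. One bookkeeping caveat: with the paper's displayed convention $\tr_\omega(\alpha)=n\,\omega^{n-1}\wedge\alpha/\omega^{n}$ one gets $\tr_{\omega_0}(\tfrac12\omega_0)=n/2$ rather than the $n$ you assert; the constant $n$ in the lemma corresponds to normalising $\Delta_\omega$ as half the Riemannian Laplacian (the convention that makes Lemma \ref{id-EGS} hold), so fix one normalisation and carry it through consistently --- the decay rates, which are the actual content, are unaffected.
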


We now wish to prove Theorem D, a precise version of which may now be written as follows.
\begin{customthm}{D}[Existence, PDE precise version]
In the above situation, let $F\in \mathcal{C}^{\infty}_{f,X}(M)$, i.e., $F$ decays quadratically at infinity together with its derivatives.
Then there exists a unique K\"ahler potential $\varphi:M\rightarrow\mathbb{R}$ in $\mathcal{D}^{\infty}_{f,X}(M)$ satisfying the complex Monge-Amp\`ere equation
\begin{eqnarray}
\log\left(\frac{\omega_{\varphi}^n}{\omega^n}\right)=\varphi-\frac{X}{2}\cdot\varphi+F,\label{MA-bis}
\end{eqnarray}
or equivalently,
\begin{eqnarray}
\omega_{\varphi}^n=e^{\varphi-\frac{X}{2}\cdot\varphi+F}\omega^n,\label{MA}
\end{eqnarray}
where $$\omega_{\varphi}:=\omega+i\partial\bar{\partial}\varphi.$$
\end{customthm}
We prove this theorem in the subsequent Sections $5-8$.

\newpage
\section{Existence of small deformations}\label{section-small-def}

\subsection{Preliminaries and Fredholm properties of the linearized operator }
Define the following map as in \cite{Siepmann}:
\begin{eqnarray*}
MA:\varphi\in\{\phi\in C^2_{loc}(M):\omega_{\phi}:=\omega+i\partial\bar{\partial}\phi>0\}\mapsto\log\left(\frac{\omega_{\varphi}^n}{\omega^n}\right)+\frac{X}{2}\cdot\varphi-\varphi\in\mathbb{R}.
\end{eqnarray*}
Brute force computations then show that
\begin{eqnarray}
&&MA(0)=0,\nonumber\\
 &&D_{\varphi}MA(\psi)=\Delta_{\omega_{\varphi}}\psi+\frac{X}{2}\cdot\psi-\psi,\nonumber\\
 &&D^2_{\varphi}MA(\psi,\psi)=-\arrowvert i \partial\bar{\partial}\psi\arrowvert^2_{g_{\varphi}},\label{equ:sec-der}\\
 &&MA(\varphi)=\Delta_{\omega}\varphi+\frac{X}{2}\cdot\varphi-\varphi-\int_0^1\int_0^{\tau}\arrowvert i\partial\bar{\partial}\varphi\arrowvert^2_{g_{\sigma\varphi}}d\sigma d\tau,\label{equ:taylor-exp}
\end{eqnarray}
for any $\psi\in C_{loc}^2(M)$, where $g_{\varphi}$ (respectively $g_{\sigma\varphi}$) denotes the K\"ahler metric associated to the K\"ahler form $\omega_{\varphi}$ (resp.~$\omega_{\sigma\varphi}$ for any $\sigma\in[0,\,1]$) for $\varphi$ as above.

We summarise from \cite{Der-Smo-Pos-Cur-Con} the main result we need. A straightforward inspection of the proof of \cite[Theorem 2.15]{Der-Smo-Pos-Cur-Con} yields the following theorem.
\begin{theorem}\label{iso-sch-Laplacian}
In the above setting, let $\alpha\in[0,1)$ and $k\in\mathbb{N}$. Then
\begin{eqnarray*}
\Delta_{\omega}+\frac{X}{2}\cdot-\operatorname{Id}:\mathcal{D}^{k+2,\alpha}_{f,X}(M)\rightarrow \mathcal{C}^{k,\alpha}_{f,X}(M)
\end{eqnarray*}
is an isomorphism of Banach spaces. Moreover, the following holds.
\begin{itemize}
\item For $\alpha\in(0,1)$, the space
\begin{eqnarray*}
\cit_{f,X}^{k;1,\alpha}(M)&:=&\left\{\psi\in C^{k+1,\alpha}_{loc}(M): f^{i/2}\nabla^{g,i}(f\psi)\in C^{1,\alpha}(M)\quad \forall i=0,...,k,\quad\textrm{and}\quad\mathcal{L}_{JX}(\psi)=0\right\}
\end{eqnarray*}
embeds continuously in $\mathcal{D}^{k+2}_{f,X}(M)$.\\

\item There exists a positive constant $C$ such that, for $\alpha\in(0,1)$,
\begin{eqnarray*}
\|\varphi\|_{\cit^{k;2,\alpha}_{f,X}(M)}\leq C\left\|\Delta_{\omega}\varphi+\frac{X}{2}\cdot\varphi-\varphi\right\|_{\mathcal{C}^{k,\alpha}_{f,X}(M)},
\end{eqnarray*}
where
\begin{eqnarray*}
\cit^{k;2,\alpha}_{f,X}(M)&:=&\left\{\psi\in C^{k+2,\alpha}_{loc}(M): f^{i/2}\nabla^{g,i}(f\psi)\in C^{2,\alpha}(M)\quad \forall i=0,...,k,\quad\textrm{and}\quad\mathcal{L}_{JX}(\psi)=0\right\}.
\end{eqnarray*}
\end{itemize}
\end{theorem}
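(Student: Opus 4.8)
The plan is to recognise $\mathcal{L}:=\Delta_{\omega}+\frac{X}{2}\cdot-\operatorname{Id}$ as precisely the linearised operator analysed in the purely Riemannian setting of \cite{Der-Smo-Pos-Cur-Con}, and to verify that its hypotheses hold verbatim here: $(M,g)$ is AC, the drift field $X=\pi^{*}(r\partial_{r})$ satisfies $\nabla^{g}f-X\in C^{\infty}_{con,f^{1/2}}(M)$ with $f\sim r^{2}/2$ by Lemma \ref{prop-basic-est-pot-fct}, and the zeroth-order coefficient is the strictly negative constant $-1$. The only genuinely new point relative to the reference is that one works on the $JX$-invariant subspaces $\mathcal{D}^{k+2,\alpha}_{f,X}\subset D^{k+2,\alpha}_{f,X}$ and $\mathcal{C}^{k,\alpha}_{f,X}\subset C^{k,\alpha}_{con,f}$; since $JX$ is holomorphic and Killing, $[JX,X]=0$ and $\mathcal{L}_{JX}$ commutes with $\Delta_{\omega}$, so $\mathcal{L}$ preserves $JX$-invariance and the whole argument descends to these subspaces. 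Continuity of $\mathcal{L}$ between the two spaces is then immediate from the very definition of the weighted norms, the $D$-spaces being designed so that $\Delta_{X}h$ is controlled in $C^{k,\alpha}_{con,f}$; one uses Lemma \ref{prop-basic-est-pot-fct} to absorb the difference between the drift $\frac{X}{2}\cdot$ and $\frac12\nabla f\cdot$.

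For injectivity I would argue by the maximum principle, which is available because the zeroth-order term $-\operatorname{Id}$ is negative. If $\mathcal{L}u=0$ with $u$ decaying at infinity, then at an interior positive maximum one has $\Delta_{\omega}u\le 0$ and $X\cdot u=0$ (the gradient vanishes), forcing $0=\mathcal{L}u\le -u$, whence $u\le 0$; the same at a negative minimum gives $u\equiv 0$. (Alternatively, with respect to the measure $e^{f}\omega^{n}$ the operator $\Delta_{\omega}+\frac12 X\cdot$ is essentially self-adjoint with discrete non-positive spectrum of harmonic-oscillator type and top eigenvalue $0$, so $\mathcal{L}$ has spectrum $\le -1$ and trivial kernel.) The quantitative, weighted version of this is the heart of the matter and is obtained by a barrier argument: the decisive computation is
\[
\mathcal{L}(r^{-2})=\Delta_{\omega}(r^{-2})+\tfrac12 X\cdot(r^{-2})-r^{-2}=-2r^{-2}+O(r^{-4}),
\]
so $r^{-2}$ (capped smoothly near the exceptional set) is a supersolution whose rate matches the weight $f^{-1}\sim r^{-2}$ defining $\mathcal{C}^{k,\alpha}_{f,X}$. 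Comparison then yields the weighted $C^{0}$ bound $\|fu\|_{\infty}\lesssim\|f\,\mathcal{L}u\|_{\infty}$, and conically rescaled interior Schauder estimates upgrade this to the full estimate $\|u\|_{\cit^{k;2,\alpha}_{f,X}}\le C\|\mathcal{L}u\|_{\mathcal{C}^{k,\alpha}_{f,X}}$, simultaneously establishing the continuous embedding $\cit^{k;1,\alpha}_{f,X}\hookrightarrow\mathcal{D}^{k+2}_{f,X}$ of the second bullet.

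Surjectivity I would obtain by exhaustion: solve the Dirichlet problems $\mathcal{L}u_{R}=v$, $u_{R}|_{\partial\{\rho<R\}}=0$ on the relatively compact sublevel sets of a radius function (solvable since $\mathcal{L}$ is uniformly elliptic with negative zeroth order), apply the $R$-independent weighted estimate just proved to get uniform control of $\{u_{R}\}$ in $\mathcal{D}^{k+2,\alpha}_{f,X}$, and extract a limit by Arzel\`a--Ascoli together with local Schauder compactness; the uniform rate guarantees the limit $u$ lies in the correct weighted space and solves $\mathcal{L}u=v$, with uniqueness furnished by the injectivity above. I expect the main obstacle to be the weighted a priori estimate, namely ensuring that the \emph{unbounded} drift $X\sim r\partial_{r}$ does not destroy the $r^{-2}$ decay: one must track the competition between the conically scaling second-order part $\Delta_{\omega}$ (of order $r^{-4}$ on $r^{-2}$) and the order-one drift, and verify via the displayed indicial-type computation that the weight $-2$ is admissible. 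It is precisely the presence of this drift that collapses the usual family of indicial roots and makes $\mathcal{L}$ a clean isomorphism rather than merely Fredholm.
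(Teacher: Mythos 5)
Your proposal is correct in substance, and its key ingredients --- injectivity from the maximum principle using the coercive zeroth-order term $-\operatorname{Id}$, the indicial computation showing that the weight $r^{-2}$ is compatible with the unbounded drift, and deferral of the rescaled Schauder machinery to \cite{Der-Smo-Pos-Cur-Con} --- are the same as the paper's. The mechanics differ in two places. First, where you run a direct barrier comparison followed by exhaustion by Dirichlet problems, the paper instead conjugates by the weight: it writes $f\left(\Delta_{\omega}+\frac{X}{2}\cdot-\operatorname{Id}\right)(f^{-1}\psi)=\left(\Delta_{\omega}+\frac{X}{2}\cdot-2\operatorname{Id}\right)\psi+K\psi$ with $K\in C^{\infty}_{con,f}(M)$ acting as a compact multiplication operator, so that the weighted problem becomes a compact perturbation of the unweighted isomorphism $\Delta_{\omega}+\frac{X}{2}\cdot-2\operatorname{Id}$ on $D^{k+2,\alpha}_{X}(M)$ established in \cite{Der-Smo-Pos-Cur-Con}; the operator is then Fredholm of index $0$, and injectivity (the paper's Claim 5.2, proved with barriers $f^{a+\epsilon}$ and valid for functions growing like $f^{a}$ with $a<\min\{1,b\}$ --- a generality that is reused repeatedly later in the paper) upgrades Fredholmness to an isomorphism. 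Your route is morally identical --- your computation $\left(\Delta_{\omega}+\frac{X}{2}\cdot-\operatorname{Id}\right)(r^{-2})=-2r^{-2}+O(r^{-4})$ is precisely the statement that the conjugated zeroth-order coefficient is $-2$ --- but it re-derives the exhaustion rather than quoting the reference. Second, and this is the one thin spot: surjectivity onto the $JX$-invariant subspace does not follow merely from the observation that the operator preserves $JX$-invariance; given invariant data one must show that the solution produced in the larger space is itself invariant. The paper does this explicitly by pulling the solution $\varphi$ back by the flow $\psi_{t}$ of $JX$, noting that $\psi_{t}^{*}\varphi$ solves the same equation with the same data, and concluding $\varphi=\psi_{t}^{*}\varphi$ from Claim 5.2 applied to the bounded difference. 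Your exhaustion could deliver the same conclusion by choosing $JX$-invariant sublevel sets of a radius function and invoking uniqueness of the Dirichlet solutions, but this step should be stated rather than subsumed under ``the whole argument descends.''
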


\begin{proof}
Since our notation is slightly different to that in \cite{Der-Smo-Pos-Cur-Con}, we recall the main steps.
\begin{itemize}
\item $\Delta_{\omega}+\frac{X}{2}\cdot-\Id: \mathcal{D}^{k+2,\alpha}_{f,X}(M)\rightarrow \mathcal{C}^{k,\alpha}_{f,X}(M)$, $k\geq 0$, $\alpha\in(0,1)$, is injective.\\

Actually, we can do better than this. The argument we implement here shall be used throughout the proof of Theorem D.
\begin{claim}\label{claim-injectivity-max-ppe}
Let $\varphi\in C^2_{loc}(M)$ be a function on $M$ such that
\begin{eqnarray*}
\varphi=\textit{O}(f^{a})\quad\textrm{and}\quad\Delta_{\omega}\varphi+\frac{X}{2}\cdot\varphi-b\varphi=0
\end{eqnarray*}
for some non-negative real numbers $a,\,b,$ with $\min\{1,b\}>a$. Then $\varphi\equiv 0$.
\end{claim}
\begin{proof}[Proof of Claim \ref{claim-injectivity-max-ppe}]
Consider the function $\varphi-K^{-1} f^{a+\epsilon}$, where $K>0$ and $\epsilon\in[0,\,1]$ are such that $a+\epsilon\leq \min\{1,b\}$. Then
\begin{eqnarray*}
\left(\Delta_{\omega}+\frac{X}{2}\cdot\right)\left(\varphi-K^{-1} f^{a+\epsilon}\right)&\geq& b\varphi-K^{-1}\left(\Delta_{\omega}+\frac{X}{2}\cdot\right)\left(f^{a+\epsilon}\right)\\
&\geq&b\left(\varphi-K^{-1} f^{a+\epsilon}\right)+K^{-1}\left(bf^{a+\epsilon}-\left(\Delta_{\omega}+\frac{X}{2}\cdot\right)f^{a+\epsilon}\right).
\end{eqnarray*}
Now, by Lemma \ref{prop-basic-est-pot-fct}, we have that
\begin{eqnarray*}
\left|\left(\Delta_{\omega}+\frac{X}{2}\cdot\right)f^{a+\epsilon}-(a+\epsilon)f^{a+\epsilon}\right|\leq Cf^{a+\epsilon-1}\leq C'
\end{eqnarray*}
for some positive constants $C$ and $C'$ depending only on $\epsilon\in[0,1]$. Indeed,
\begin{eqnarray*}
\frac{X}{2}\cdot f^{a+\epsilon}&=&\frac{a+\epsilon}{2}f^{a+\epsilon-1}|\nabla^g_{\mathbb{R}}f|^2_g(1+\textit{O}(f^{-1}))\\
&=&(a+\epsilon)f^{a+\epsilon-1}|\nabla^gf|^2(1+\textit{O}(f^{-1}))\\
&=&(a+\epsilon)f^{a+\epsilon}(1+\textit{O}(f^{-1}))
\end{eqnarray*}
so that
\begin{eqnarray*}
\left(\Delta_{\omega}+\frac{X}{2}\cdot\right)\left(\varphi-K^{-1} f^{a+\epsilon}\right)&\geq&b\left(\varphi-K^{-1} f^{a+\epsilon}\right)-C'K^{-1}.
\end{eqnarray*}
By the growth assumption on $\varphi$, we know that $\lim_{x\to+\infty}(\varphi-K^{-1}f^{a+\epsilon})(x)=-\infty$. Hence \linebreak $\varphi-K^{-1}f^{a+\epsilon}$ attains its maximum on $M$ at some point $x_{K,\epsilon}\in M$ say. The maximum principle then implies that
\begin{eqnarray*}
\max_M(\varphi-K^{-1}f^{a+\epsilon})\leq \frac{C'}{bK}
\end{eqnarray*}
for some positive constant $C'$ depending only on $\epsilon$. Consequently, $\sup_M\varphi\leq 0$ by letting $K$ tend to $+\infty$. By considering $-\varphi$ also, one arrives at the desired result.
\end{proof}

On setting $a=0$ and $b=1$ in the previous claim, we obtain the desired injectivity of $\Delta_{\omega}+\frac{X}{2}\cdot-\Id$ on bounded functions.\\

 \item For any $k\geq 0$ and $\alpha\in(0,1)$, $\Delta_{\omega}+\frac{X}{2}\cdot-\Id: D^{2+k,\alpha}_{f,X}(M)\rightarrow C^{k,\alpha}_{f,X}(M)$ is a Fredholm operator of index $0$ with the corresponding embedding results concerning the spaces $\cit_{f,X}^{k;1,\alpha}(M)$ and $\cit_{f,X}^{k;2,\alpha}(M)$ (the so-called ``rescaled'' Schauder estimates).\\

The main idea here involves conjugating the operator $\Delta_{\omega}+\frac{X}{2}\cdot-\Id$ with the weight $f^{-1}$. This reduces the analysis to the study of a translation of this operator by a negative constant acting on functions (or tensors) in $C_{con}^{k,\alpha}(M)$ up to a compact perturbation. Indeed, we have
\begin{eqnarray*}
&&f\left(\Delta_{\omega}+\frac{X}{2}\cdot-\Id\right)(f^{-1}\psi)=\left(\Delta_{\omega}+\frac{X}{2}\cdot-2\Id\right)\psi+K\psi,\quad \psi\in C^2_{loc}(M),\\
\end{eqnarray*}
where, by Lemma \ref{prop-basic-est-pot-fct}, $K\in C^{\infty}_{con,f}(M)$ acts by multiplication. It is not difficult to check that $K: D_X^{2+k,\alpha}(M)\rightarrow C_{con}^{k,\alpha}(M)$ is a compact operator for any $k\geq 0$ and $\alpha\in(0,1)$. The remaining thing that needs to be checked is that the operator $\Delta_{\omega}+\frac{X}{2}\cdot-2\Id: D_X^{2+k,\alpha}(M)\rightarrow C_{con}^{k,\alpha}(M)$ is an isomorphism of Banach spaces with the corresponding rescaled Schauder estimates. The proof of this fact is a combination of the proofs of Theorems $2.2$ and $2.15$ in \cite{Der-Smo-Pos-Cur-Con}. \\

 \item The final thing that needs to be checked is that the operator $\Delta_{\omega}+\frac{X}{2}\cdot-\Id$ stays surjective when restricted to the set of $JX$-invariant functions. This essentially follows from the uniqueness established in Claim \ref{claim-injectivity-max-ppe}. Indeed, let $F\in \mathcal{C}^{k,\alpha}_{f,X}(M)$, let $\varphi\in D^{k+2,\alpha}_{f,X}(M)$ be a solution to
     \begin{equation}\label{ele}
     (\Delta_{\omega}+\frac{X}{2}\cdot-\Id)\varphi=F,
     \end{equation}
     and let $(\psi_t)_t$ be the flow generated by $JX$. Then, since $F$, $X$, and $JX$, are $JX$-invariant, the function $\varphi_t:=\psi_t^*\varphi$ also satisfies \eqref{ele}. Consequently, $\varphi-\varphi_t$ lies in the kernel of $\Delta_{\omega}+\frac{X}{2}\cdot-\Id$. Since $\varphi-\varphi_t$ is clearly bounded, we see from Claim \ref{claim-injectivity-max-ppe} that $\varphi_t=\varphi$ for every $t\in\mathbb{R}$. In other words, $\varphi$ is $JX$-invariant, as claimed.
\end{itemize}
\end{proof}

\subsection{Implicit deformations of expanding K\"ahler-Ricci solitons}
In this section, we will prove the following theorem.
\begin{theorem}\label{Imp-Def-Kah-Exp}
Let $F_0\in \mathcal{C}^{1,\alpha}_{f,X}(M)$ for some $\alpha\in(0,1)$ and let $\varphi_0\in\mathcal{D}^{3,\alpha}_{f,X}(M)$ be a solution to the complex Monge-Amp\`ere equation
\begin{eqnarray*}
\log\left(\frac{\omega^n_{\varphi_0}}{\omega^n}\right)=-\frac{X}{2}\cdot\varphi_0+\varphi_0+F_0.
\end{eqnarray*}
Then there exists a neighborhood $U_{F_0}\subset\mathcal{C}^{1,\alpha}_{f,X}(M)$ of $F_{0}$ in $\mathcal{C}^{1,\alpha}_{f,X}(M)$ such that for all $F\in U_{F_0}$, there exists a unique solution $\varphi\in\mathcal{D}^{3,\alpha}_{f,X}(M)$ such that
\begin{eqnarray*}
\log\left(\frac{\omega^n_{\varphi}}{\omega^n}\right)=-\frac{X}{2}\cdot\varphi+\varphi+F.
\end{eqnarray*}
\end{theorem}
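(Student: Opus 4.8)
The plan is to apply the inverse function theorem for Banach spaces to the map $MA$ at the point $\varphi_0$, for which the equation to be solved is precisely $MA(\varphi)=F$, with $MA(\varphi_0)=F_0$. First I would check that, for $\varphi$ in a sufficiently small neighborhood $\mathcal{U}$ of $\varphi_0$ inside $\mathcal{D}^{3,\alpha}_{f,X}(M)$, one has $\omega_{\varphi}>0$ and $MA(\varphi)\in\mathcal{C}^{1,\alpha}_{f,X}(M)$, so that $MA\colon\mathcal{U}\to\mathcal{C}^{1,\alpha}_{f,X}(M)$ is a well-defined map. The $JX$-invariance is preserved throughout, and the weight bookkeeping is routine: since $\varphi=O(f^{-1})$ with derivatives, each of $\log(\omega_{\varphi}^n/\omega^n)$, $\tfrac{X}{2}\cdot\varphi$, and $\varphi$ decays like $f^{-1}$ with the appropriate H\"older control, hence lies in $\mathcal{C}^{1,\alpha}_{f,X}(M)$. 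The derivative formulas already recorded, together with $D^2_{\varphi}MA(\psi,\psi)=-|i\partial\bar{\partial}\psi|^2_{g_{\varphi}}$, show that $MA$ is smooth between these Banach spaces, with $D_{\varphi_0}MA(\psi)=\Delta_{\omega_{\varphi_0}}\psi+\tfrac{X}{2}\cdot\psi-\psi$.

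The heart of the argument, and the step I expect to be the main obstacle, is to prove that the linearization $D_{\varphi_0}MA=\Delta_{\omega_{\varphi_0}}+\tfrac{X}{2}\cdot-\operatorname{Id}\colon\mathcal{D}^{3,\alpha}_{f,X}(M)\to\mathcal{C}^{1,\alpha}_{f,X}(M)$ is an isomorphism. Theorem \ref{iso-sch-Laplacian} supplies this for the \emph{background} operator $L:=\Delta_{\omega}+\tfrac{X}{2}\cdot-\operatorname{Id}$, but here the Laplacian is taken with respect to $\omega_{\varphi_0}=\omega+i\partial\bar{\partial}\varphi_0$. Since $\varphi_0\in\mathcal{D}^{3,\alpha}_{f,X}(M)$ decays quadratically, the metric perturbation $g_{\varphi_0}-g$, and hence $g_{\varphi_0}^{i\bar{\jmath}}-g^{i\bar{\jmath}}$, tends to zero at infinity, so the difference $D_{\varphi_0}MA-L=\Delta_{\omega_{\varphi_0}}-\Delta_{\omega}$ is a second-order operator with coefficients vanishing at infinity. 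On the AC manifold $M$ this difference is a compact operator $\mathcal{D}^{3,\alpha}_{f,X}(M)\to\mathcal{C}^{1,\alpha}_{f,X}(M)$, so $D_{\varphi_0}MA$ is a compact perturbation of the isomorphism $L$ and is therefore Fredholm of index $0$. It then remains to establish injectivity: if $\psi\in\mathcal{D}^{3,\alpha}_{f,X}(M)$ lies in the kernel, then $\psi$ is bounded (indeed $O(f^{-1})$), and since $g_{\varphi_0}$ is AC asymptotic to the same cone, the analogue of Lemma \ref{prop-basic-est-pot-fct} holds for $g_{\varphi_0}$; the maximum principle argument of Claim \ref{claim-injectivity-max-ppe}, applied with $\Delta_{\omega_{\varphi_0}}$ in place of $\Delta_{\omega}$ and with $a=0$, $b=1$, then forces $\psi\equiv0$. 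A Fredholm operator of index $0$ that is injective is an isomorphism, as required.

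With the isomorphism in hand, the inverse function theorem shows that $MA$ is a local diffeomorphism near $\varphi_0$: there is a neighborhood $U_{F_0}\subset\mathcal{C}^{1,\alpha}_{f,X}(M)$ of $F_0=MA(\varphi_0)$ and a smooth inverse assigning to each $F\in U_{F_0}$ a solution $\varphi\in\mathcal{D}^{3,\alpha}_{f,X}(M)$ of $MA(\varphi)=F$, that is, of the stated Monge--Amp\`ere equation, which yields existence and local uniqueness. For the global uniqueness claimed in the theorem, I would argue directly: if $\varphi_1,\varphi_2\in\mathcal{D}^{3,\alpha}_{f,X}(M)$ both solve $MA(\varphi)=F$, then writing $\varphi_t=\varphi_2+t(\varphi_1-\varphi_2)$ and integrating $D_{\varphi_t}MA$ in $t$, the difference $\psi=\varphi_1-\varphi_2$ satisfies the linear equation $\bigl(\int_0^1\Delta_{\omega_{\varphi_t}}\,dt\bigr)\psi+\tfrac{X}{2}\cdot\psi-\psi=0$, which is a Schr\"odinger-type operator of the same form with respect to an averaged elliptic coefficient $\bar{g}^{i\bar{\jmath}}=\int_0^1 g_{\varphi_t}^{i\bar{\jmath}}\,dt$ that is AC asymptotic to $g_0$. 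Applying the maximum principle of Claim \ref{claim-injectivity-max-ppe} once more gives $\psi\equiv0$, completing the proof.
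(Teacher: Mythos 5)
Your proof is correct in outline and follows the same skeleton as the paper's: both reduce the statement to the invertibility of the linearization $\Delta_{\omega_{\varphi_0}}+\tfrac{X}{2}\cdot-\operatorname{Id}\colon\mathcal{D}^{3,\alpha}_{f,X}(M)\to\mathcal{C}^{1,\alpha}_{f,X}(M)$ and then apply an inverse/implicit function theorem (the paper uses the implicit function theorem for the two-variable map $\widetilde{MA}(\varphi,F)$, which is equivalent to your inverse function theorem for $MA$; the well-definedness check via the Taylor expansion and the rescaled Schauder estimates is the same). Where you genuinely diverge is in how the isomorphism is obtained. The paper simply cites Theorem \ref{iso-sch-Laplacian}; strictly that theorem is stated for the background form $\omega$, and the intended reading is that its proof applies verbatim with $\omega_{\varphi_0}$ in place of $\omega$, since $\varphi_0\in\mathcal{D}^{3,\alpha}_{f,X}(M)$ makes $\omega_{\varphi_0}$ another AC K\"ahler metric asymptotic to the same cone. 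You instead treat $\Delta_{\omega_{\varphi_0}}-\Delta_{\omega}$ as a compact perturbation of the background isomorphism. This is a legitimate alternative, but the compactness claim as you state it is delicate: the difference is a \emph{second-order} operator, so between $\mathcal{D}^{3,\alpha}_{f,X}(M)$ and $\mathcal{C}^{1,\alpha}_{f,X}(M)$ there is no gain of derivatives, only a gain of decay, and decay alone does not give compactness at the same H\"older exponent $\alpha$ (a $C^{1,\alpha}$-bounded sequence on a compact set need not subconverge in $C^{1,\alpha}$). One should either pass to $\alpha'<\alpha$ and argue by interpolation, or simply rerun the proof of Theorem \ref{iso-sch-Laplacian} for $\omega_{\varphi_0}$, as the paper implicitly does. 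Finally, your global uniqueness argument (integrating the linearization along $\varphi_t$ and applying the maximum principle) does not appear in the paper's proof of this theorem, whose implicit function theorem only yields uniqueness near $\varphi_0$; the paper settles global uniqueness only later, in the uniqueness part of the proof of Theorem D, by the same maximum-principle device. Your inclusion of it here is a genuine improvement in completeness.
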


\begin{proof}
In order to apply the implicit function theorem for Banach spaces, we must re-interpret the statement of Theorem \ref{Imp-Def-Kah-Exp} in terms of the map $MA$ introduced formally at the beginning of this section. Hence consider the mapping
\begin{eqnarray*}
\widetilde{MA}:(\varphi,F)\in \mathcal{K}^{2+1,\alpha}_{f,X}(M)\times \mathcal{C}^{1,\alpha}_{f,X}(M)\mapsto \log\left(\frac{\omega_{\varphi}^n}{\omega^n}\right)+\frac{X}{2}\cdot \varphi-\varphi-F\in \mathcal{C}^{1,\alpha}_{f,X}(M),\quad \alpha\in(0,1).
\end{eqnarray*}
First note that $\widetilde{MA}$ is well-defined. Indeed, by (\ref{equ:taylor-exp}),
\begin{eqnarray*}
\widetilde{MA}(\varphi,F)=\Delta_{\omega}\varphi+\frac{X}{2}\cdot\varphi-\varphi-F-\int_0^1\int_0^{\tau}\arrowvert i\partial\bar{\partial}\varphi\arrowvert^2_{g_{\sigma\varphi}}d\sigma d\tau.
\end{eqnarray*}
Now, by the very definition of $\mathcal{D}^{2+1,\alpha}_{f,X}(M)$, the first three terms $\Delta_{\omega}\varphi+\frac{X}{2}\cdot\varphi$, $\varphi$, and $F$ are in $\mathcal{C}^{1,\alpha}_{f,X}(M)$. By the rescaled Schauder estimates in Theorem \ref{iso-sch-Laplacian}, $\varphi\in \cit^{1;2,\alpha}_{f,X}(M)$, i.e., $f^{\frac{i}{2}}\nabla^{g,i}(f\varphi)\in C^{2,\alpha}(M)$ for $i=0,1$. In particular, this implies that $f\nabla^{g,2}\varphi\ast \nabla^{g,2}\varphi\in C^{0,\alpha}(M)$, where $\ast$ denotes any linear combination of contractions of tensors (with respect to the metric $g_{\varphi}$ here), and that
\begin{eqnarray*}
f^{\frac{3}{2}}\nabla^g(\nabla^{g,2}\varphi\ast \nabla^{g,2}\varphi)=f^{\frac{3}{2}}(\nabla^{g,3}\varphi\ast \nabla^{g,2}\varphi)\in C^{0,\alpha}(M).
\end{eqnarray*}
That is, $|\partial\bar{\partial}\varphi|^2_{g_{\sigma\varphi}}\in\mathcal{C}^{1,\alpha}_{f,X}(M)$, where the $JX$-invariance is straightforward.

By definition, $\widetilde{MA}(\varphi,F)=0$ if and only if $\varphi$ is a solution to $(\ref{MA})$ with data $F$. By (\ref{equ:sec-der}), $$D_{\varphi_0}\widetilde{MA}(\psi)=\Delta_{\omega_{\varphi_0}}\psi+\frac{X}{2}\cdot\psi-\psi\quad\textrm{for $\psi\in \mathcal{D}^{2+1,\alpha}_{f,X}(M)$}.$$ Hence, by Theorem \ref{iso-sch-Laplacian}, $D_{\varphi_0}\widetilde{MA}$ is an isomorphism of Banach spaces. The result now follows by applying the implicit function theorem to the map $\widetilde{MA}$ in a neighborhood of $(\varphi_0,F_0)$.
\end{proof}

\newpage
\section{A priori estimates}\label{section-a-priori-est}

In this section, we establish crucial a priori weighted $\mathcal{D}^{3,\alpha}_{f,X}(M)$-estimates for a smooth solution to (\ref{MA}) with data $F\in \mathcal{C}^{1,\alpha}_{f,X}(M)$. This is the content of Theorem \ref{theo-C^3-wei-est} below. We fix $F\in \mathcal{C}^{1,\alpha}_{f,X}(M)$ and we consider a K\" ahler potential $\varphi$ that is a solution in $\mathcal{D}^{3,\alpha}_{f,X}(M)$ to (\ref{MA}). To see that one can make sense of the higher derivatives of a solution $\varphi\in \mathcal{K}^{3,\alpha}_{f,X}(M)$ when $F\in C^{k,\alpha}_{loc}(M)$, $k\geq 1$, we refer the reader to Proposition \ref{prop-loc-reg} where the desired local regularity of such solutions is established.

\subsection{$C^0$ a priori estimates}

\subsubsection{$C^0$-estimate on the potential $\varphi$}
\begin{prop}\label{prop-C^0-est}
We have the following $C^0$ a priori estimate:
\begin{eqnarray*}
\|\varphi\|_{C^0}\leq \| F\|_{C^0}.
\end{eqnarray*}
\end{prop}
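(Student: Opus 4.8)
The plan is to apply the maximum principle directly to the Monge-Amp\`ere equation \eqref{MA-bis}. First I would observe that since $\varphi\in\mathcal{D}^{3,\alpha}_{f,X}(M)$, the function $\varphi$ decays at infinity (it lies in a weighted space with weight $f$, so $\varphi=\textit{O}(f^{-1})\to0$). Consequently $\varphi$ attains both its maximum and its minimum on $M$ at interior points, say $x_{\max}$ and $x_{\min}$.

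At the maximum point $x_{\max}$, the standard first- and second-order conditions give $\nabla\varphi(x_{\max})=0$ and $i\partial\bar\partial\varphi(x_{\max})\leq0$. The vanishing of the gradient kills the transport term: $X\cdot\varphi(x_{\max})=0$. Moreover, since $\omega_{\varphi}=\omega+i\partial\bar\partial\varphi\leq\omega$ at $x_{\max}$ (as $i\partial\bar\partial\varphi\le0$ there), we have $\omega_{\varphi}^n\leq\omega^n$ at that point, whence $\log\left(\frac{\omega_{\varphi}^n}{\omega^n}\right)(x_{\max})\leq0$. Plugging this into \eqref{MA-bis} evaluated at $x_{\max}$ yields
\begin{equation*}
0\geq\log\left(\frac{\omega_{\varphi}^n}{\omega^n}\right)(x_{\max})=\varphi(x_{\max})+F(x_{\max}),
\end{equation*}
so that $\varphi(x_{\max})\leq -F(x_{\max})\leq\|F\|_{C^0}$. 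Since $x_{\max}$ is the global maximum, this gives $\sup_M\varphi\leq\|F\|_{C^0}$.

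Symmetrically, at the minimum point $x_{\min}$ one has $\nabla\varphi(x_{\min})=0$ (again killing the transport term) and $i\partial\bar\partial\varphi(x_{\min})\geq0$, so $\omega_{\varphi}^n\geq\omega^n$ and $\log\left(\frac{\omega_{\varphi}^n}{\omega^n}\right)(x_{\min})\geq0$. The equation then gives $0\leq\varphi(x_{\min})+F(x_{\min})$, hence $\varphi(x_{\min})\geq -F(x_{\min})\geq-\|F\|_{C^0}$, so $\inf_M\varphi\geq-\|F\|_{C^0}$. Combining the two bounds yields $\|\varphi\|_{C^0}\leq\|F\|_{C^0}$, as claimed. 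The only subtlety worth checking carefully is that the extrema are genuinely attained in the interior, which is precisely why the decay afforded by the function space $\mathcal{D}^{3,\alpha}_{f,X}(M)$ is essential; otherwise one would have to run a barrier argument at infinity of the kind used in Claim \ref{claim-injectivity-max-ppe}. Given the decay, however, the argument is the clean two-point maximum principle above, and I do not anticipate any serious obstacle.
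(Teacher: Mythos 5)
Your argument is correct and is essentially the proof given in the paper: apply the maximum principle at an interior extremum, where the first-order condition kills the transport term $\frac{X}{2}\cdot\varphi$ and the second-order condition $i\partial\bar{\partial}\varphi\leq 0$ (resp.\ $\geq 0$) controls the sign of $\log\bigl(\omega_{\varphi}^{n}/\omega^{n}\bigr)$. The only cosmetic point is that the maximum need not be attained when $\sup_M\varphi\leq 0$ (the supremum could be the unattained limit $0$ at infinity), but then the upper bound is trivial --- which is precisely how the paper phrases it, by first assuming $\sup_M\varphi>0$.
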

\begin{proof}
The proof is standard and only requires the maximum principle.

Assume that $\sup_M\varphi >0$; otherwise, there is nothing to prove. Then, since $\varphi$ tends to zero at infinity, it attains its maximum at some point $x_0\in M$. At this point, $\omega_{\varphi}(x_0)\leq \omega(x_0)$, which implies by (\ref{MA}) that
\begin{eqnarray*}
\max_M\varphi=\varphi(x_0)\leq -F(x_0)\leq \| F\|_{C^0}.
\end{eqnarray*}
The same argument applied to $-\varphi$ establishes the result.
\end{proof}

\subsubsection{$C^0$-weighted estimate on the potential $\varphi$}

\begin{prop}\label{prop-a-prio-wei-C0-est}
We have the following  weighted $C^0$ a priori estimate:
\begin{eqnarray*}
\|\varphi\|_{C^0_{con,f}(M)}\leq C\left(\| F\|_{C^0_{con,f}(M)}\right),
\end{eqnarray*}
where $C\left(\| F\|_{C^0_{con,f}(M)}\right)$ is bounded by a constant $C(\Lambda)$ depending only on an upper bound $\Lambda$ of $\| F\|_{C^0_{con,f}(M)}$.
\end{prop}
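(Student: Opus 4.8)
The plan is to prove the two-sided weighted bound by a barrier argument with the maximum principle, in the spirit of Claim \ref{claim-injectivity-max-ppe} but now with weight $f^{-1}$ and using the full Monge-Amp\`ere equation for one of the two signs. First I would unwind the norms: since $\|h\|_{C^0_{con,f}(M)}=\|fh\|_{C^0_{con}(M)}$ is comparable to $\sup_M|fh|$, the assertion reduces to producing a constant $C(\Lambda)$, depending only on an upper bound $\Lambda$ for $\sup_M|fF|$, with $\sup_M|f\varphi|\le C(\Lambda)$. Note that $\varphi\in\mathcal{D}^{3,\alpha}_{f,X}(M)$ already forces $f\varphi$ to be bounded, hence $\varphi=O(f^{-1})\to0$ at infinity; this is used only \emph{qualitatively}, to guarantee that the barrier differences below attain their extrema, so there is no circularity. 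Moreover, Proposition \ref{prop-C^0-est} gives the crude bound $\|\varphi\|_{C^0}\le\|F\|_{C^0}\le \Lambda/\inf_M f$, which already controls $|f\varphi|$ on any fixed compact region $\{f\le R_0\}$ by $R_0\Lambda/\inf_M f$; thus it remains to control $\varphi$ on the outer region $\{f>R_0\}$, where I am free to take $R_0=R_0(\Lambda)$ large.

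The two ingredients about the barrier come from Lemma \ref{prop-basic-est-pot-fct}. A computation exactly as in the proof of Claim \ref{claim-injectivity-max-ppe} gives $\frac{X}{2}\cdot f^{-1}=-f^{-1}+O(f^{-2})$ and $\Delta_{\omega}f^{-1}=(2-n)f^{-2}+O(f^{-3})$, whence
\[
\left(\Delta_{\omega}+\tfrac{X}{2}\cdot\right)f^{-1}=-f^{-1}+O(f^{-2}).
\]
I also record that $f^{-1}$ decays at rate $-2$ with $g$-derivatives, so $|i\partial\bar{\partial}f^{-1}|_{g}=O(f^{-2})$ by the conical asymptotics (Lemma \ref{simple321}). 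Finally, the Taylor identity \eqref{equ:taylor-exp}, together with $MA(\varphi)=F$ and the nonnegativity of the integral remainder, yields the one-sided differential inequality $\left(\Delta_{\omega}+\tfrac{X}{2}\cdot\right)\varphi-\varphi\ge F$.

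For the upper bound I would test with $w:=\varphi-\mu f^{-1}$ for a constant $\mu=\mu(\Lambda)$ to be fixed. If $\sup_M w>0$ it is attained at some $x_0$; choosing $\mu\ge R_0\Lambda/\inf_M f$ rules out $x_0\in\{f\le R_0\}$, so $f(x_0)>R_0$. At $x_0$ one has $(\Delta_{\omega}+\tfrac{X}{2}\cdot)w(x_0)\le0$, and inserting the displayed formula together with the one-sided inequality and $\varphi(x_0)>\mu f^{-1}(x_0)>0$ gives $0\ge f(x_0)F(x_0)+\mu\bigl(1-Cf^{-1}(x_0)\bigr)$. For $R_0$ large this forces $\mu\le2\Lambda$, contradicting the choice $\mu>2\Lambda$. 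Hence $\varphi\le\mu f^{-1}$.

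The lower bound is the delicate point, and the main obstacle, because concavity of the Monge-Amp\`ere operator controls $(\Delta_{\omega}+\tfrac{X}{2}\cdot)\varphi$ only from below, which is the wrong direction at a minimum. Here I would instead use the equation directly. Testing with $w':=\varphi+\mu f^{-1}$ and assuming a negative minimum at $x_0$ (again in $\{f>R_0\}$ for $\mu$ large), the condition $i\partial\bar{\partial}w'(x_0)\ge0$ gives $\omega_{\varphi}(x_0)\ge\omega-\mu\,i\partial\bar{\partial}f^{-1}(x_0)\ge\tfrac12\omega$ once $R_0=R_0(\mu)$ is large enough that $\mu\,|i\partial\bar{\partial}f^{-1}|_{g}\le\tfrac12$ there; monotonicity of the determinant in the L\"owner order then yields $\log\frac{\omega_{\varphi}^n}{\omega^n}(x_0)\ge-C\mu f^{-2}(x_0)$. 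Feeding this, together with the critical-point identity $\frac{X}{2}\cdot\varphi(x_0)=\mu f^{-1}(x_0)+O(\mu f^{-2}(x_0))$, into the Monge-Amp\`ere equation \eqref{MA-bis}, and using $\varphi(x_0)<-\mu f^{-1}(x_0)$, leads to $f(x_0)F(x_0)>\mu$, contradicting $\sup_M|fF|\le\Lambda$ for $\mu\ge\Lambda$. Choosing $\mu=\mu(\Lambda)$ large enough so that simultaneously $\mu>2\Lambda$, $\mu\ge R_0\Lambda/\inf_M f$, and the associated $R_0(\mu)$ meets the smallness requirements closes both cases and gives $|f\varphi|\le\mu=C(\Lambda)$; since $R_0$ need only grow like $\sqrt{\mu}$, these constraints are mutually compatible and no circularity arises. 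The only genuine difficulty is precisely this asymmetry between the two bounds together with the bookkeeping of $\mu$ and $R_0$; everything else is the standard barrier maximum principle.
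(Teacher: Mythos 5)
Your proposal is correct and follows essentially the same strategy as the paper: the upper bound comes from the concavity inequality $\Delta_{\omega}\varphi+\tfrac{X}{2}\cdot\varphi-\varphi\geq F$ together with a barrier and the maximum principle, and the lower bound uses exactly the paper's barrier $-\kappa f^{-1}$ (its $\chi_{\kappa}$), the positivity of $\omega_{\varphi}$ at the extremum, and the full Monge--Amp\`ere equation, with the same $R_{0}\sim\sqrt{\mu}$ bookkeeping. The only cosmetic differences are that the paper runs the upper bound on $f\varphi-\epsilon\ln f$ with the drift-conjugated operator rather than on $\varphi-\mu f^{-1}$, and phrases the lower bound as a comparison of the Monge--Amp\`ere operators of $\chi_{\kappa}$ and $\varphi$ on $\{f\geq R\}$ rather than evaluating at the critical point directly.
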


\begin{proof}
We begin with an upper bound for $\|\varphi\|_{C^0_{con,f}(M)}$. First notice that, by (\ref{equ:taylor-exp}), $\varphi$ satisfies the following differential inequality:
\begin{eqnarray*}
F&=&\log\left(\frac{\omega_{\varphi}^n}{\omega}\right)+\frac{X}{2}\cdot\varphi-\varphi\\
&\leq& \Delta_{\omega}\varphi+\frac{X}{2}\cdot \varphi-\varphi.
\end{eqnarray*}
Hence, outside a compact set $K$ independent of $\varphi$, we have that
\begin{eqnarray*}
\Delta_{\omega}(f\varphi)+\frac{X}{2}\cdot(f\varphi)-(f\varphi)&=&\left(\Delta_{\omega}f+\frac{X}{2}\cdot f\right)\varphi+2\nabla^gf\cdot\varphi+f\left(\Delta_{\omega}\varphi+\frac{X}{2}\cdot\varphi-\varphi\right)\\
&\geq& fF+2\nabla^g\ln f\cdot(f\varphi)+\left(\frac{\Delta_{\omega}f}{f}+\frac{X}{2}\cdot \ln f-2\nabla^g\ln f\cdot \ln f\right)(f\varphi).
\end{eqnarray*}
Now, on one hand, by Lemma \ref{prop-basic-est-pot-fct}, we know that
\begin{eqnarray*}
\frac{\Delta_{\omega}f}{f}+\frac{X}{2}\cdot \ln f-2\nabla^g\ln f\cdot \ln f=1+\textit{O}(f^{-1}),
\end{eqnarray*}
whereas on the other hand, we have that
 \begin{eqnarray*}
\left(\Delta_{\omega}+\left(\frac{X}{2}-2\nabla^g\ln f\right)\right)(\ln f)=1+\textit{O}(f^{-1}).
\end{eqnarray*}
So consider the function $f\varphi-\epsilon\ln f$ which tends to $-\infty$ at infinity since $f\varphi$ is bounded by assumption. At a point where $f\varphi-\epsilon\ln f$ achieves its maximum (outside of the compact set $K$; otherwise, there is nothing to prove), one has, by the maximum principle, the following inequality:
\begin{eqnarray*}
0&\geq& \left(\Delta_{\omega}+\left(\frac{X}{2}-2\nabla^g\ln f\right)\right)(f\varphi-\epsilon \ln f)\geq (1+\textit{O}(f^{-1}))(f\varphi)-\epsilon (1+\textit{O}(f^{-1}))-\|fF\|_{C^0}\\
&\geq& C\max_M(f\varphi-\epsilon\ln f)-\|fF\|_{C^0}-\epsilon (1+\textit{O}(f^{-1}))
\end{eqnarray*}
for some positive constant $C$ independent of $\epsilon$ and depending only on the compact set $K$. From this, we deduce that
\begin{eqnarray*}
f\varphi\leq C^{-1}(\|f F\|_{C^0}+c\epsilon)+\epsilon\ln f
\end{eqnarray*}
for some positive constant $c$ independent of $\epsilon$. This yields the desired upper bound if we now let $\epsilon$ tend to zero.

As for the lower bound, consider the function $\chi_{\kappa}:=-\kappa f^{-1}$ for some $\kappa>0$ to be defined later. One has the following estimate:
 \begin{eqnarray*}
\omega_{\chi_{\kappa}}&=&\omega+\frac{\kappa}{f}\left(\frac{i\partial\bar{\partial}f}{f}-2i\frac{\partial f\wedge\bar{\partial}f}{f^2}\right)\\
&\geq&(1-c\kappa f^{-2})\omega\\
\end{eqnarray*}
on $\{f^2\geq 2c\kappa\}$, where $c$ is a universal constant. Hence,
\begin{eqnarray*}
\log\left(\frac{\omega_{\chi_{\kappa}}^n}{\omega^n}\right)+\frac{X}{2}\cdot
\chi_{\kappa}-\chi_{\kappa}&\geq& \frac{\kappa}{f}\left(2-\textit{O}(f^{-1})\right)+n\log(1-c\kappa f^{-2})\\
&\geq&\frac{\kappa}{f}\left(2-\textit{O}(f^{-1})\right)-C(n)\kappa f^{-2}\\
&\geq&\frac{\kappa}{f}
\end{eqnarray*}
on $\{f^2\geq c\max\{\kappa,C(n)\}\}$, where $c$ is now a positive constant independent of $\kappa$ that can vary from line to line.

Next observe that on $\{f^2\geq c\max\{\kappa,C(n)\}\}$,
\begin{eqnarray*}
\log\left(\frac{(\omega_{\varphi}+i\partial\bar{\partial}(\chi_{\kappa}-\varphi))^n}{\omega_{\varphi}^n}\right)+\frac{X}{2}\cdot(\chi_{\kappa}-\varphi)-(\chi_{\kappa}-\varphi)&=&\log\left(\frac{\omega_{\chi_{\kappa}}^n}{\omega_{\varphi}^n}\right)+\frac{X}{2}\cdot(\chi_{\kappa}-\varphi)-(\chi_{\kappa}-\varphi)\\
&\geq&\frac{\kappa}{f}-F\\
&>&0
\end{eqnarray*}
if $\kappa\geq 2\|fF\|_{C^0}$. Thus, if $\kappa\geq 2\|fF\|_{C^0}$, then one has the following bound for any height $R$ with $R\geq c\sqrt{\kappa}$:
\begin{eqnarray*}
\max_{\{f\geq R\}}(\chi_{\kappa}-\varphi)=\max\left\{\max_{\{f=R\}}(\chi_{\kappa}-\varphi),0\right\}.
\end{eqnarray*}
Now, by Proposition \ref{prop-C^0-est},
\begin{eqnarray*}
\max_{\{f=R\}}(\chi_{\kappa}-\varphi)\leq \|\varphi\|_{C^0}-\kappa R^{-1}\leq \|F\|_{C^0}-\kappa R^{-1}\leq 0
\end{eqnarray*}
once $\kappa \geq \|F\|_{C^0}R$. We therefore define $\kappa:=\max\{c^2\| F\|^2_{C^0},\|fF\|_{C^0}\}$ and $R:=c\sqrt{\kappa}$ so that on $\{f\geq c\max\{c\| F\|_{C^0},\|fF\|^{1/2}_{C^0}\}\}$,
\begin{eqnarray*}
-f\varphi\leq \max\{c^2\| F\|^2_{C^0},\|fF\|_{C^0}\}.
\end{eqnarray*}
This yields the desired lower bound.
\end{proof}

\newpage
\subsubsection{$C^0$-estimate on the radial derivative $X\cdot \varphi$}
\begin{prop}\label{prop-c^0-rad-der}
We have the following $C^{0}$ a priori estimate on the radial derivative $X\cdot \varphi$:
\begin{eqnarray*}
\|X\cdot\varphi\|_{C^0(M)}\leq C\left(\| F\|_{C^0_{con,f}(M)}+1\right)
\end{eqnarray*}
for some positive constant $C$ independent of $\varphi$.
\end{prop}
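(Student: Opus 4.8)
The plan is to control $u:=X\cdot\varphi$, which by the $JX$-invariance of $\varphi$, $X$ and $JX$ is itself a $JX$-invariant function, and which, since $\varphi\in\mathcal{D}^{3,\alpha}_{f,X}(M)$ decays at infinity together with its first derivatives, is bounded and tends to $0$ at infinity; in particular it attains its extrema on $M$. The starting point is to differentiate the Monge--Amp\`ere equation \eqref{MA} along the flow of $X$. Since $X$ is real holomorphic, $\mathcal{L}_{X}$ commutes with $i\partial\bar{\partial}$, so that $\mathcal{L}_{X}\omega_{\varphi}=\mathcal{L}_{X}\omega+i\partial\bar{\partial}u$, and applying $\mathcal{L}_{X}$ to $\log(\omega_{\varphi}^{n}/\omega^{n})=\varphi-\tfrac12 X\cdot\varphi+F$ produces the linear drift-elliptic equation
\[
\Delta_{\omega_{\varphi}}u+\tfrac12 X\cdot u-u=\tr_{\omega}(\mathcal{L}_{X}\omega)-\tr_{\omega_{\varphi}}(\mathcal{L}_{X}\omega)+X\cdot F .
\]
Here the operator $\Delta_{\omega_{\varphi}}+\tfrac12 X\cdot-\Id$ is exactly the one governed by the maximum-principle argument of Claim \ref{claim-injectivity-max-ppe}. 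It is at this point that toric invariance is indispensable: it guarantees that $u$ is a genuine $JX$-invariant function of the same type as $\varphi$, that $X\cdot\varphi=\frac{d}{ds}\big|_{0}\sigma_{s}^{*}\varphi$ for the scaling flow $\sigma_{s}$ of $X$, and that $\sigma_{s}$ preserves the weighted function spaces.

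Next I would expose the sign of the inhomogeneity using the approximate-soliton identity of Proposition \ref{equationsetup}, namely $\mathcal{L}_{X}\omega=2(\rho_{\omega}+\omega)-2i\partial\bar{\partial}F$. Off a compact set, where $\omega=c\omega_{0}-\rho_{\omega_{0}}$ and $\mathcal{L}_{X}\rho_{\omega_{0}}=0$, this gives $\mathcal{L}_{X}\omega=2c\omega_{0}>0$; consequently $\tr_{\omega_{\varphi}}(\mathcal{L}_{X}\omega)\geq0$ there, while $\tr_{\omega}(\mathcal{L}_{X}\omega)=2n+O(r^{-2})$ and $X\cdot F=O(r^{-2})$ (as $F\in C^{\infty}_{-2}(M)$) are bounded in terms of the data, all of this using Lemma \ref{prop-basic-est-pot-fct}.

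The \emph{lower} bound on $u$ then follows from the maximum principle. Subtracting a small barrier of the form $\epsilon\ln f$, exactly as in the proof of Proposition \ref{prop-a-prio-wei-C0-est}, to force the relevant extremum into the region where $\mathcal{L}_{X}\omega>0$, the favourable sign of $-\tr_{\omega_{\varphi}}(\mathcal{L}_{X}\omega)$ (which bounds the right-hand side from above there) yields $u\geq-C(\|F\|_{C^{0}_{con,f}(M)}+1)$; on the remaining compact set the already-established bounds of Propositions \ref{prop-C^0-est} and \ref{prop-a-prio-wei-C0-est} control the boundary contribution.

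The \emph{upper} bound is the main obstacle, and it is here that the unboundedness of the transport term really bites. The argument above is asymmetric: at a maximum of $u$ the term $-\tr_{\omega_{\varphi}}(\mathcal{L}_{X}\omega)\leq0$ has the wrong sign, and since $\tr_{\omega_{\varphi}}(\mathcal{L}_{X}\omega)\sim 2\tr_{\omega_{\varphi}}\omega$ this is precisely the quantity whose control constitutes the $C^{2}$-estimate, which must come \emph{afterwards} on pain of circularity. I would therefore obtain the upper bound by a $C^{2}$-free weighted gradient estimate in the spirit of \cite{Der-Smo-Pos-Cur-Con} (a Bo\v{c}ner/Blocki--Guan type argument): one applies $\Delta_{\omega_{\varphi}}+\tfrac12 X\cdot$ to an auxiliary quantity built from $|\nabla\varphi|^{2}$ measured in the background metric $\omega$, so that the curvature of $\omega_{\varphi}$ never enters, together with a multiple of $\varphi$ to absorb the arising $\tr_{\omega_{\varphi}}\omega$ factors, the conical weight being tuned so that it is exactly the radial derivative $X\cdot\varphi$ that is controlled and only $\sup_{M}|\varphi|$ and $\|F\|_{C^{0}_{con,f}(M)}$ enter the constant. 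Equivalently one may bound $\sup_{M}|\sigma_{s}^{*}\varphi-\varphi|\leq Cs$ for small $s>0$ via the comparison principle applied to the two Monge--Amp\`ere equations solved by $\varphi$ and $\sigma_{s}^{*}\varphi$, again using toric invariance to keep $\sigma_{s}^{*}\varphi$ in the same space. In summary, the delicate step is controlling the second-order quantity $\tr_{\omega_{\varphi}}\omega$ before the $C^{2}$-estimate is available, and the resolution is to exploit the favourable sign for the lower bound while invoking the $C^{2}$-free gradient machinery for the upper bound.
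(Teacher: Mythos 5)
Your route is genuinely different from the paper's, and it does not close. The paper's proof of Proposition \ref{prop-c^0-rad-der} never differentiates the Monge--Amp\`ere equation. It rests on two facts only: first, Claim \ref{est-sec-der-vec-fiel}, where the $JX$-invariance of $\varphi$ converts $\overline{X^{1,0}}\cdot(X^{1,0}\cdot\varphi)$ into the pure second derivative of $\varphi$ along the flow of $X/2$, which is then bounded \emph{below} by $-\frac14\arrowvert X\arrowvert^2_g$ using nothing but $\omega_{\varphi}>0$; second, the weighted bound $\arrowvert\varphi\arrowvert\leq \|f\varphi\|_{C^0}f^{-1}$ of Proposition \ref{prop-a-prio-wei-C0-est}. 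One then interpolates: integrating a cutoff $\eta$ supported on a flow interval of length $\epsilon(x)\sim f(x)^{-1}$ against $\varphi_x(t)$ gives $\varphi_x'(0)=\int\eta''\varphi_x-\int\eta\,\varphi_x''$, where the second term is $O(\arrowvert X\arrowvert^2\epsilon)=O(1)$ by the one-sided convexity bound and the first is $O(\sup\arrowvert\varphi_x\arrowvert\,\epsilon^{-1})=O(\|f\varphi\|_{C^0})$ by the quadratic decay; both the upper and the lower bound come from the same inequality by flowing forward or backward. This is why the estimate can precede the $C^2$-estimate.

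Both halves of your maximum-principle scheme stumble on the factor $\tr_{\omega_{\varphi}}(\mathcal{L}_X\omega)$. For the lower bound you need $-\tr_{\omega_{\varphi}}(\mathcal{L}_X\omega)$ bounded above at the minimum of $u$; off a compact set $\mathcal{L}_X\omega=2c\omega_0\geq0$ and this holds, but on the compact set $\mathcal{L}_X\omega=i\partial\bar{\partial}\theta_X$ carries no sign from the construction in Proposition \ref{background-metric}, and $\tr_{\omega_{\varphi}}$ of a form with a negative direction is uncontrolled without a lower bound on $\omega_{\varphi}$ --- part of the very $C^2$-estimate you must avoid; moreover adding $\epsilon\ln f$ makes the minimum of $u+\epsilon\ln f$ exist but does not push it out of the compact set. (This half is repairable by the standard device of working with $u+A\varphi$, where $A$ is chosen with $\mathcal{L}_X\omega\geq-A\omega$ globally so that the $\tr_{\omega_{\varphi}}\omega$ terms cancel; you do not do this.) The upper bound, which you rightly call the main obstacle, is only a plan, and the analogous trick with $u-A\varphi$ fails structurally: it requires $\mathcal{L}_X\omega\leq A\omega$ with $A<2$, whereas $\mathcal{L}_X\omega=2c\omega_0\approx 2\omega$ at infinity, so the $-u$ term cannot absorb the error. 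Your proposed substitutes are not carried out and face real difficulties: a Blocki--Guan-type estimate would have to yield the weighted bound $\arrowvert\nabla\varphi\arrowvert_g=O(r^{-1})$, not merely boundedness, to dominate $X\cdot\varphi$; and the comparison of $\varphi$ with $\sigma_s^{*}\varphi$ founders because $\sigma_s^{*}\omega-\omega=\frac{s}{2}\mathcal{L}_X\omega+O(s^2)$ has potential $\frac{s}{2}\theta_X\sim s\,r^2$, so $\sigma_s^{*}\varphi$ does not solve a uniformly nearby equation of the form \eqref{MA} with the same background $\omega$. As it stands the proposal does not constitute a proof.
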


\begin{remark}
The quadratic decay at infinity and $JX$-invariance of $\varphi$ are both essential for the proof of Proposition \ref{prop-c^0-rad-der}. The proof as written would fail without either one of these two assumptions.
\end{remark}

\begin{proof}
The proof is due to Siepmann in the case that the convergence rate to the asymptotic cone is exponential; see \cite[Section 5.4.14]{Siepmann}. We adapt his proof here to the case where the convergence rate is only polynomial.

The proof shall comprise two parts. The first part will concern an upper bound for $X\cdot\varphi$ and the latter part will concern a lower bound for $X\cdot\varphi$. Before proceeding with the first part though, we make the following claim.
\begin{claim}\label{est-sec-der-vec-fiel}
One has
$$X^{1,0}\cdot (X^{1,0}\cdot \varphi)=i\partial\bar{\partial}\varphi(\Re(X^{1,0}),J\Re(X^{1,0}))\geq -\arrowvert \Re(X^{1,0})\arrowvert_{g}^2.$$
\end{claim}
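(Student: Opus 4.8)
The plan is to prove the two assertions separately: first the pointwise identity
$X^{1,0}\cdot(X^{1,0}\cdot\varphi)=i\partial\bar{\partial}\varphi(\Re(X^{1,0}),J\Re(X^{1,0}))$, which is purely algebraic and rests on the two structural hypotheses available here, and then the lower bound, which is an immediate consequence of the positivity of $\omega_{\varphi}$. Throughout I would abbreviate $\xi:=X^{1,0}=\tfrac{1}{2}(X-iJX)$ and $\bar{\xi}:=X^{0,1}=\overline{X^{1,0}}$, so that $\Re(X^{1,0})=\tfrac{1}{2}X$ and $J\Re(X^{1,0})=\tfrac{1}{2}JX=\tfrac{i}{2}(\xi-\bar{\xi})$.

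First I would record the two facts that drive the identity. Since $\mathcal{L}_{JX}\varphi=0$ and $JX=i\xi-i\bar{\xi}$, the invariance reads $0=JX\cdot\varphi=i\,\xi\cdot\varphi-i\,\bar{\xi}\cdot\varphi$, so that
$$\bar{\xi}\cdot\varphi=\xi\cdot\varphi.$$
Moreover, because $X$ is real holomorphic, $\xi=X^{1,0}$ is a genuine holomorphic $(1,0)$-vector field: in local holomorphic coordinates its components are holomorphic and those of $\bar{\xi}$ antiholomorphic, whence $[\xi,\bar{\xi}]=0$. With these in hand, writing $\partial\bar{\partial}\varphi=d(\bar{\partial}\varphi)$ and applying the Cartan formula together with $\bar{\partial}\varphi(\xi)=0$, $\bar{\partial}\varphi(\bar{\xi})=\bar{\xi}\cdot\varphi$, and $[\xi,\bar{\xi}]=0$ gives
$$\partial\bar{\partial}\varphi(\xi,\bar{\xi})=\xi\big(\bar{\partial}\varphi(\bar{\xi})\big)-\bar{\xi}\big(\bar{\partial}\varphi(\xi)\big)-\bar{\partial}\varphi([\xi,\bar{\xi}])=\xi\cdot(\bar{\xi}\cdot\varphi)=\xi\cdot(\xi\cdot\varphi)=X^{1,0}\cdot(X^{1,0}\cdot\varphi).$$
Expanding $\Re(X^{1,0})$ and $J\Re(X^{1,0})$ in terms of $\xi,\bar{\xi}$ and using that the $(1,1)$-form $\partial\bar{\partial}\varphi$ annihilates pairs of $(1,0)$- and pairs of $(0,1)$-vectors then converts the mixed evaluation $\partial\bar{\partial}\varphi(\xi,\bar{\xi})$ into $i\partial\bar{\partial}\varphi(\Re(X^{1,0}),J\Re(X^{1,0}))$, with the normalising constant fixed by the paper's convention for pairing $(1,1)$-forms with real vectors, yielding the asserted equality.

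For the inequality I would invoke positivity. Since $\varphi$ is a K\"ahler potential for the same complex structure $J$, the form $\omega_{\varphi}=\omega+i\partial\bar{\partial}\varphi$ is positive, so setting $Y:=\Re(X^{1,0})$ one has $\omega_{\varphi}(Y,JY)=g_{\varphi}(JY,JY)=|JY|^2_{g_{\varphi}}\geq 0$. Subtracting the background term and using $\omega(Y,JY)=g(JY,JY)=|Y|^2_{g}$ (as $J$ is a $g$-isometry) gives
$$i\partial\bar{\partial}\varphi(Y,JY)=\omega_{\varphi}(Y,JY)-\omega(Y,JY)\geq-\omega(Y,JY)=-|Y|^2_{g}=-|\Re(X^{1,0})|^2_{g},$$
which is exactly the claimed bound. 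Note that this step is robust to the normalisation ambiguity above, since the same convention for $\omega(\cdot,\cdot)$ enters both the definition of $|\cdot|_{g}$ and the subtracted term, and the sign of $\omega_{\varphi}(Y,JY)\geq0$ is convention-independent.

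The computation is elementary, and the only genuine content is the vanishing of the commutator term $\bar{\partial}\varphi([\xi,\bar{\xi}])$: this is precisely where the real-holomorphicity of $X$ is used, while the reduction $\bar{\xi}\cdot\varphi=\xi\cdot\varphi$ is where the $JX$-invariance of $\varphi$ enters. As the remark preceding the claim stresses, the statement genuinely fails without these two hypotheses, so I would take care to flag exactly where each is used rather than absorb them silently. The main subtlety to watch, then, is bookkeeping with the paper's form-evaluation and norm conventions so that the constants in the equality and the inequality are consistent with one another.
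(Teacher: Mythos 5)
Your argument is correct and follows essentially the same route as the paper: both proofs use the $JX$-invariance of $\varphi$ to identify $X^{1,0}\cdot(X^{1,0}\cdot\varphi)$ with the mixed second derivative $\overline{X^{1,0}}\cdot(X^{1,0}\cdot\varphi)=\partial\bar{\partial}\varphi(X^{1,0},\overline{X^{1,0}})$, and then deduce the lower bound from $\omega_{\varphi}>0$ via $i\partial\bar{\partial}\varphi(Y,JY)=\omega_{\varphi}(Y,JY)-|Y|_{g}^{2}$ with $Y=\Re(X^{1,0})$. The only cosmetic difference is that you obtain the pointwise identity via the Cartan formula and the vanishing of $[X^{1,0},\overline{X^{1,0}}]$, where the paper performs the equivalent coordinate computation directly.
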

\begin{proof}[Proof of Claim \ref{est-sec-der-vec-fiel}]
Since $\varphi$ is invariant under the flow of $JX$, we know that $$JX\cdot (X\cdot \varphi)=0.$$
In particular, we have that $X^{1,0}\cdot(X^{1,0}\cdot \varphi)=\Re(X^{1,0})\cdot(\Re(X^{1,0})\cdot\varphi)=\overline{X^{1,0}}\cdot(X^{1,0}\cdot\varphi)$. A straightforward computation then shows that
\begin{eqnarray*}
\overline{X^{1,0}}\cdot(X^{1,0}\cdot \varphi)=\partial\bar{\partial}\varphi(X^{1,0},\overline{X^{1,0}})=i\partial\bar{\partial}\varphi(\Re(X^{1,0}),J\Re(X^{1,0})).
\end{eqnarray*}
The result now follows from the fact that $\omega_{\varphi}> 0$ and
\begin{eqnarray*}
i\partial\bar{\partial}\varphi(\Re(X^{1,0}),J\Re(X^{1,0}))=\omega_{\varphi}(\Re(X^{1,0}),J\Re(X^{1,0}))-\arrowvert \Re(X^{1,0})\arrowvert_{g}^2\geq -\arrowvert \Re(X^{1,0})\arrowvert_{g}^2.
\end{eqnarray*}
\end{proof}
To get an upper bound for $X\cdot \varphi$, we introduce the flow $(\psi_t)_{t\in\mathbb{R}}$ generated by the vector field $X/2$. This flow is complete since $X$ grows linearly at infinity. By Lemma \ref{prop-basic-est-pot-fct}, we have the following estimates at $(x,\,t)\in M\times\mathbb{R}$:
\begin{equation*}
\partial_t f(\psi_t(x))=\left(\frac{X}{2}\cdot f\right)(\psi_t(x))=f(\psi_t(x))+\textit{O}(1)
\end{equation*}
and
\begin{equation*}
\partial_t\arrowvert X\arrowvert^2_{\omega}(\psi_t(x))\leq\arrowvert\nabla^gX\arrowvert_{\omega}(\psi_t(x))\arrowvert X\arrowvert^2_{\omega}(\psi_t(x))\leq C \arrowvert X\arrowvert^2_{\omega}(\psi_t(x))
\end{equation*}
for some uniform positive constant $C$ independent of $(x,\,t)\in M\times\mathbb{R}$, where, in the latter inequality, we have used the fact that $\nabla^gX$ is uniformly bounded on $M$. Gronwall's inequality then implies that
\begin{eqnarray}
f(\psi_t(x))+C&\leq& e^t(f(x)+C), \label{Gronwall-inequ-1}\\
f(\psi_t(x))-C&\geq& e^t(f(x)-C),\nonumber\\
\arrowvert X\arrowvert^2_{\omega}(\psi_t(x))&\leq& e^{Ct}\arrowvert X\arrowvert^2_{\omega}(x),\label{Gronwall-inequ-2}
\end{eqnarray}
for any space-time point $(x,t)\in M\times\mathbb{R}.$

Next, define $\varphi_x(t):=\varphi(\psi_t(x))$ for $(x,t)\in M\times\mathbb{R}.$ Then for any cut-off function $\eta:\mathbb{R}_+\rightarrow[0,1]$ such that $\eta(0)=1$, $\eta'(0)=0$, we have that
\begin{eqnarray*}
\int_0^{+\infty}\eta''(t)\varphi_x(t)dt&=&-\int_0^{+\infty}\eta'(t)\varphi_x'(t)dt\\
&=&\varphi_x'(0)+\int_0^{+\infty}\eta(t)\varphi_x''(t)dt.
\end{eqnarray*}
It then follows from (\ref{Gronwall-inequ-2}) and Claim \ref{est-sec-der-vec-fiel} that
\begin{eqnarray*}
\frac{X}{2}\cdot\varphi(x)&=&\varphi_x'(0)\leq -\int_{\supp(\eta)}\frac{X}{2}\cdot \left(\frac{X}{2}\cdot \varphi\right)(\psi_t(x))dt+\sup_{t\in\supp(\eta'')}\arrowvert\varphi_x(t)\arrowvert\int_{\supp(\eta'')}\arrowvert\eta''(t)\arrowvert dt\\
&\leq&\frac{1}{4}\int_{\supp(\eta)}\arrowvert X\arrowvert^2_{g}(\psi_t(x))dt+\sup_{t\in\supp(\eta'')}\arrowvert\varphi_x(t)\arrowvert\int_{\supp(\eta'')}\arrowvert\eta''(t)\arrowvert dt\\
&\leq& \frac{1}{4}\arrowvert X\arrowvert^2_{g}(x)\int_{\supp(\eta)}e^{Ct}dt+\sup_{t\in\supp(\eta'')}\arrowvert\varphi_x(t)\arrowvert\int_{\supp(\eta'')}\arrowvert\eta''(t)\arrowvert dt.
\end{eqnarray*}
Choose $\eta$ such that $\supp(\eta)\subset [0,\epsilon]$ for some $\epsilon>0$ to be chosen later and such that $\arrowvert\eta''\arrowvert\leq C/\epsilon^2$ for some uniform positive constant $C$. Then
\begin{eqnarray*}
\frac{X}{2}\cdot\varphi(x)\leq (4C)^{-1}(e^{C\epsilon}-1)\arrowvert X\arrowvert^2_{g}(x)+C\sup_{t\in[0,\epsilon]}\arrowvert\varphi_x(t)\arrowvert\epsilon^{-1}.
\end{eqnarray*}
Since $X$ grows linearly with respect to the distance from a fixed point, this dictates the following choice of $\epsilon$:
\begin{eqnarray*}
\epsilon(x):=C^{-1}\log\left(1+\frac{1}{1+\arrowvert X\arrowvert^2_{g}(x)}\right).
\end{eqnarray*}
It remains to bound $\sup_{t\in[0,\epsilon]}\arrowvert\varphi_x(t)\arrowvert$ in terms of $\epsilon$. Since $\varphi$ decays quadratically at infinity, one has that
\begin{eqnarray*}
\varphi_x(t)=\varphi(\psi_x(t))\leq \frac{\|f\varphi\|_{C^0(M)}}{f(\psi_x(t))}
\end{eqnarray*}
for any $(x,t)\in M\times \mathbb{R}$. Now, thanks to (\ref{Gronwall-inequ-1}),
\begin{eqnarray*}
\varphi_x(t)\leq \frac{\|f\varphi\|_{C^0(M)}}{C+e^t(f(x)-C)}\leq \frac{\|f\varphi\|_{C^0(M)}}{f(x)}
\end{eqnarray*}
for $t\geq 0$ and $f(x)\geq C$ with $C$ a uniform positive constant. Since $\epsilon(x)$ decays precisely as $C^{-1}f(x)^{-1}$ as $x$ tends to $+\infty$, we get the desired upper bound on the radial derivative $\frac{X}{2}\cdot \varphi$. Indeed, if $f(x)\leq C$, then the upper bound follows directly from the boundedness of $X$ in terms of $C$. The lower bound can be proved in a similar way by arguing on an interval $[-\epsilon,0]$, where $\epsilon$ has the same behaviour at infinity as above. The desired estimate now follows from Proposition \ref{prop-a-prio-wei-C0-est}.
\end{proof}

\subsection{$C^2$-estimate}
\begin{prop}\label{prop-C^2-est}
We have the following $C^2$ a priori estimate:
\begin{eqnarray*}
\|\partial\bar{\partial}\varphi\|_{C^0(M)}\leq C\left(n,\omega,\|F\|_{C^2(M)},\|F\|_{C^0_{con,f}(M)}\right).
\end{eqnarray*}
\end{prop}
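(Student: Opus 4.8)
The plan is to establish the bound by the classical Aubin--Yau second-order method, adapted to the weighted operator $L:=\Delta_{\omega_{\varphi}}+\tfrac{X}{2}\cdot$ that is natural for \eqref{MA}. Since $\omega_{\varphi}>0$, it suffices to bound $\operatorname{tr}_{\omega}\omega_{\varphi}$ from above: writing $\lambda_{1},\dots,\lambda_{n}>0$ for the eigenvalues of $\omega_{\varphi}$ with respect to $\omega$, once $\sum_{i}\lambda_{i}$ is controlled the identity $\prod_{i}\lambda_{i}=e^{\varphi-\frac{X}{2}\cdot\varphi+F}$ (which is bounded above and below away from $0$ via Propositions \ref{prop-C^0-est} and \ref{prop-c^0-rad-der}) pinches each $\lambda_{i}$ between two positive constants, giving precisely the two-sided bound on $\partial\bar{\partial}\varphi$ that we want.

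First I would run the Chern--Lu/Aubin--Yau computation on $\log\operatorname{tr}_{\omega}\omega_{\varphi}$. Applying $i\partial\bar{\partial}$ to \eqref{MA} gives $\operatorname{Ric}(\omega_{\varphi})=\operatorname{Ric}(\omega)-i\partial\bar{\partial}\bigl(\varphi-\tfrac12 X\cdot\varphi+F\bigr)$, and the standard inequality produces a lower bound for $\Delta_{\omega_{\varphi}}\log\operatorname{tr}_{\omega}\omega_{\varphi}$ of the form $\operatorname{tr}_{\omega}\bigl(i\partial\bar{\partial}(\tfrac12 X\cdot\varphi)\bigr)/\operatorname{tr}_{\omega}\omega_{\varphi}-B\operatorname{tr}_{\omega_{\varphi}}\omega-C$, where $B$ is a lower bound for the bisectional curvature of the fixed background metric $\omega$ and $C$ absorbs $\operatorname{tr}_{\omega}\operatorname{Ric}(\omega)$, $\Delta_{\omega}F$, and the already-controlled term $\Delta_{\omega}\varphi=\tfrac12(\operatorname{tr}_{\omega}\omega_{\varphi}-n)$. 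The crucial point is the genuinely third-order contribution $\operatorname{tr}_{\omega}\bigl(i\partial\bar{\partial}(X\cdot\varphi)\bigr)$: since $X$ is real holomorphic, $i\partial\bar{\partial}(X\cdot\varphi)=\mathcal{L}_{X}(i\partial\bar{\partial}\varphi)=\mathcal{L}_{X}\omega_{\varphi}-\mathcal{L}_{X}\omega$, and when $L$ is used in place of $\Delta_{\omega_{\varphi}}$ the extra transport term contributes $\tfrac{X}{2}\cdot\log\operatorname{tr}_{\omega}\omega_{\varphi}$, whose leading part is $\tfrac12\operatorname{tr}_{\omega}(\mathcal{L}_{X}\omega_{\varphi})/\operatorname{tr}_{\omega}\omega_{\varphi}$ up to terms controlled by $\nabla X$. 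The two third-order pieces cancel exactly, leaving $\tfrac12\operatorname{tr}_{\omega}(\mathcal{L}_{X}\omega)/\operatorname{tr}_{\omega}\omega_{\varphi}$, which is bounded because $\mathcal{L}_{X}\omega=i\partial\bar{\partial}\theta_{X}$ with $\theta_{X}$ fixed (cf.~the proof of Proposition \ref{equationsetup}) and $\mathcal{L}_{JX}\omega=0$. The upshot is an inequality $L\log\operatorname{tr}_{\omega}\omega_{\varphi}\geq -B\operatorname{tr}_{\omega_{\varphi}}\omega-C'$ with $C'$ controlled by the fixed geometry and by $\|F\|_{C^{2}(M)}$.

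Next I would form the auxiliary function $H:=\log\operatorname{tr}_{\omega}\omega_{\varphi}-A\varphi$ with $A$ a large constant (essentially $A=2B+2$). Since $\varphi\to 0$ and $\operatorname{tr}_{\omega}\omega_{\varphi}\to n$ at infinity (as $\varphi\in\mathcal{D}^{3,\alpha}_{f,X}(M)$ decays together with its derivatives), $H\to\log n$ at infinity, so if $\sup_{M}H>\log n$ it is attained at an interior point $x_{0}$. At $x_{0}$ one has $\nabla H=0$ and $\Delta_{\omega_{\varphi}}H\leq 0$, hence $LH(x_{0})\leq 0$ because the transport term of $L$ vanishes there. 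Using $L\varphi=\tfrac12(n-\operatorname{tr}_{\omega_{\varphi}}\omega)+\tfrac{X}{2}\cdot\varphi$ and the inequality from the previous step, this gives
\[
0\ \geq\ \Bigl(\tfrac{A}{2}-B\Bigr)\operatorname{tr}_{\omega_{\varphi}}\omega(x_{0})\ -\ \tfrac{A}{2}n\ -\ A\,\tfrac{X}{2}\!\cdot\!\varphi(x_{0})\ -\ C'.
\]
Choosing $A$ so that $\tfrac{A}{2}-B\geq 1$ and invoking $\|X\cdot\varphi\|_{C^{0}}\leq C(\|F\|_{C^{0}_{con,f}(M)}+1)$ from Proposition \ref{prop-c^0-rad-der} bounds $\operatorname{tr}_{\omega_{\varphi}}\omega(x_{0})$. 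The elementary inequality $\operatorname{tr}_{\omega}\omega_{\varphi}\leq C(n)(\operatorname{tr}_{\omega_{\varphi}}\omega)^{n-1}\tfrac{\omega_{\varphi}^{n}}{\omega^{n}}$, combined with the bound on $\tfrac{\omega_{\varphi}^{n}}{\omega^{n}}=e^{\varphi-\frac{X}{2}\cdot\varphi+F}$ from the $C^{0}$-estimates, then bounds $\operatorname{tr}_{\omega}\omega_{\varphi}(x_{0})$ and hence $H(x_{0})$; since $H\leq H(x_{0})$ and $\varphi\geq-\|F\|_{C^{0}}$ by Proposition \ref{prop-C^0-est}, we obtain a uniform upper bound for $\operatorname{tr}_{\omega}\omega_{\varphi}$ on all of $M$, which by the first paragraph yields the claim.

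The \textbf{main obstacle} is exactly the third-order cancellation described above: the transport term $\tfrac12 X\cdot\varphi$ in \eqref{MA} produces $X$-directional derivatives of $\partial\bar{\partial}\varphi$ that are one order higher than the second-order estimate can otherwise absorb, and it is only the holomorphy of $X$ (so that $\mathcal{L}_{X}$ commutes with $i\partial\bar{\partial}$) together with the transport term built into the weighted Laplacian $L$ that removes them. This is also the structural reason why the $C^{0}$-bound on $X\cdot\varphi$ of Proposition \ref{prop-c^0-rad-der} must be secured \emph{before} the $C^{2}$-estimate, avoiding the circularity noted in the introduction. As for the constants, $B$ and the residual $C'$ depend only on the fixed geometry of $\omega$ and on $\|F\|_{C^{2}(M)}$ (through $\Delta_{\omega}F$ and $X\cdot F$), while the $C^{0}$-inputs of Propositions \ref{prop-C^0-est}, \ref{prop-a-prio-wei-C0-est} and \ref{prop-c^0-rad-der} contribute the dependence on $\|F\|_{C^{0}_{con,f}(M)}$, giving exactly the stated bound $C\bigl(n,\omega,\|F\|_{C^{2}(M)},\|F\|_{C^{0}_{con,f}(M)}\bigr)$.
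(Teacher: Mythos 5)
Your proposal is correct and follows essentially the same route as the paper: both run the Aubin--Yau second-order computation for the drift operator $\Delta_{\omega_{\varphi}}+\frac{X}{2}\cdot$, exploit the holomorphy of $X$ so that the third-order term $\frac{X}{2}\cdot\Delta_{\omega}\varphi$ coming from $\Delta_{\omega}(X\cdot\varphi)$ is cancelled by the transport part of the operator, and feed in the previously established $C^{0}$-bounds on $\varphi$ and $X\cdot\varphi$ before applying the maximum principle. The only differences are standard cosmetic variants -- you use $\log\operatorname{tr}_{\omega}\omega_{\varphi}-A\varphi$ and locate an interior maximum via the decay of $\partial\bar{\partial}\varphi$, whereas the paper uses $e^{-\alpha\varphi}\operatorname{tr}_{\omega}(\omega_{\varphi})$, the superlinear term $u^{n/(n-1)}$, and an $\epsilon\ln f$ barrier -- and these do not change the substance of the argument.
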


\begin{remark}
The proof of Proposition \ref{prop-C^2-est} makes use of the equivalence of the metrics $g$ and $g_{\varphi}$ in order to build a suitable barrier function with which to apply the maximum principle. A consequence of this estimate will be that $g$ and $g_{\varphi}$ are in fact uniformly equivalent.
\end{remark}

\begin{proof}
The proof uses standard computations performed in Yau's seminal paper \cite{Calabiconj}. Only the presence of the vector field $X$ has to be taken into account, hence we will only outline the main steps.

Since $\varphi$ satisfies
 \begin{eqnarray*}
\log\left(\frac{\omega_{\varphi}^n}{\omega^n}\right)=F-\frac{X}{2}\cdot\varphi+\varphi=:F(\varphi),
\end{eqnarray*}
one can, as in \cite{Calabiconj}, compute the Laplacian of $F(\varphi)$ with respect to $\omega$ in holomorphic coordinates around a point $x\in M$ such that at $x$, the metrics $g$ and $g_{\varphi}$ associated to $\omega$ and $\omega_{\varphi}$ take the form $g_{i\bar{\jmath}}(x)=\delta_{i\bar{\jmath}}$ and $g_{\varphi}(x)=(1+\varphi_{i\bar{\imath}}(x))\delta_{i\bar{\jmath}}$ respectively; see Lemma \ref{hol-coor}. After a lengthy computation, one arrives at
\begin{eqnarray}
\Delta_{\omega}F(\varphi)=\Delta_{\omega_{\varphi}}(\tr_{\omega}(\omega_{\varphi}))-\frac{\varphi_{i\bar{\jmath}k}\varphi_{\bar{\imath}j\bar{k}}}{(1+\varphi_{i\bar{\imath}})(1+\varphi_{k\bar{k}})}+\Rm(g)_{i\bar{\imath}k\bar{k}}\left(1-\frac{1}{1+\varphi_{i\bar{\imath}}}-\frac{\varphi_{i\bar{\imath}}}{1+\varphi_{k\bar{k}}}\right).\label{eq-sec-der-yau}
\end{eqnarray}
Now,
\begin{eqnarray*}
\Rm(g)_{i\bar{\imath}k\bar{k}}\left(1-\frac{1}{1+\varphi_{i\bar{\imath}}}-\frac{\varphi_{i\bar{\imath}}}{1+\varphi_{k\bar{k}}}\right)&=&\frac{1}{2}\Rm(g)_{i\bar{\imath}k\bar{k}}\left(\frac{(\varphi_{i\bar{\imath}}-\varphi_{k\bar{k}})^2}{(1+\varphi_{i\bar{\imath}})(1+\varphi_{k\bar{k}})}\right)\\
&\geq& \frac{\inf_M\Rm(g)}{2}\left(\frac{1+\varphi_{i\bar{\imath}}}{1+\varphi_{k\bar{k}}}-1\right)\\
&\geq&\inf_M \Rm(g)\left(\tr_g(g_{\varphi}^{-1})(n+\Delta_{\omega}\varphi)-n^2\right)\\
&=&\inf_M \Rm(g)\left(\tr_g(g_{\varphi}^{-1})\tr_{\omega}(\omega_{\varphi})-n^2\right),
\end{eqnarray*}
where $\Rm(g)$ is the complex-linear extension of the curvature operator of the metric $g$ and where $\inf_M\Rm(g):=\inf_{i\neq k}\Rm(g)_{i\bar{\imath}k\bar{k}}$.

Next we study the term $\Delta_{\omega}(X\cdot\varphi)$. Since $X$ is holomorphic, $\nabla^gX$ is bounded, and since $\omega$ and $\varphi$ are $JX$-invariant, we have that
\begin{eqnarray*}
\Delta_{\omega}\left(\frac{X}{2}\cdot\varphi\right)&=&\Delta_{\omega}\left(X^{1,0}\cdot\varphi\right)\\
&=&\nabla^g_i(X^{1,0})^k\varphi_{\bar{\imath}k}+\frac{X^{1,0}}{2}\cdot\Delta_{\omega}\varphi\\
&=&\nabla^gX^{1,0}\ast\partial\bar{\partial}\varphi+\frac{X}{2}\cdot\Delta_{\omega}\varphi\\
&\leq&C\tr_{\omega}(\omega_{\varphi})+C(n)\|\nabla^gX\|_{C^0(M)}+\frac{X}{2}\cdot\tr_{\omega}(\omega_{\varphi}),
\end{eqnarray*}
where we have used the fact that $0<\omega_{\varphi}\leq (n+\Delta_{\omega}\varphi)\omega.$ To summarise, we obtain the following first crucial estimate:
\begin{equation}\label{crucial-est-tr-C2}
\begin{split}
\Delta_{\omega_{\varphi}}\tr_{\omega}(\omega_{\varphi})+\frac{X}{2}\cdot \tr_{\omega}(\omega_{\varphi})\geq& \frac{\varphi_{i\bar{\jmath}k}\varphi_{\bar{\imath}j\bar{k}}}{(1+\varphi_{i\bar{\imath}})(1+\varphi_{k\bar{k}})}\\
&+\Delta_{\omega}F-C\tr_{\omega}(\omega_{\varphi})(1+\inf_M \Rm(g)\tr_g(g_{\varphi}^{-1}))-C(n,g).
\end{split}
\end{equation}

Now, if $u:=e^{-\alpha\varphi}\tr_{\omega}(\omega_{\varphi})$, where $\alpha\in\mathbb{R}$ will be defined later, then, as in the proof of \cite[Lemma 5.4.16]{Siepmann}, one estimates the Laplacian of $u$ with respect to $\omega_{\varphi}$ as follows:
 \begin{eqnarray*}
\Delta_{\omega_{\varphi}}u\geq e^{-\alpha \varphi}\left(\Delta_{\omega}F(\varphi)-\inf_M\Rm(g)\tr_{g}(g_{\varphi}^{-1})\tr_{\omega}(\omega_{\varphi})-C-\alpha \Delta_{\omega_{\varphi}}\varphi\tr_{\omega}(\omega_{\varphi})\right)
\end{eqnarray*}
for some positive constant $C$ independent of $\varphi$. Thus, for some positive constant $C$ independent of $\varphi$,
\begin{eqnarray*}
\Delta_{\omega_{\varphi}}u+\frac{X}{2}\cdot u&\geq& e^{-\alpha\varphi}\left(\Delta_{\omega}F-\inf_M\Rm(g)\tr_g(g_{\varphi}^{-1})\tr_{\omega}(\omega_{\varphi})\right)\\
&&-Ce^{-\alpha \varphi}-\alpha \frac{X}{2}\cdot\varphi u-Cu-\alpha(n-\tr_{g}(g_{\varphi}^{-1}))
u\\
&\geq&-C(\|\varphi\|_{C^0(M)},\|F\|_{C^2(M)})-C(n,\|X\cdot\varphi\|_{C^0(M)})u+\tr_g(g_{\varphi}^{-1})u,
\end{eqnarray*}
where we set $\alpha:=\max\{1+\inf_M\Rm(g),1\}.$ A final estimate using the following geometric inequality
\begin{eqnarray*}
\sum_i\frac{1}{1+\varphi_{i\bar{\imath}}}\geq\left(\frac{\sum_i(1+\varphi_{i\bar{\imath}})}{\Pi_i(1+\varphi_{i\bar{\imath}})}\right)^{\frac{1}{n-1}},
\end{eqnarray*} or equivalently,
\begin{eqnarray*}
\tr_g(g_{\varphi}^{-1})\geq \left(\frac{\tr_g(g_{\varphi})}{\det_g(g_{\varphi})}\right)^{\frac{1}{n-1}},
\end{eqnarray*}
then leads to the following differential inequality satisfied by $u$:
\begin{eqnarray*}
\Delta_{\omega_{\varphi}}u+\frac{X}{2}\cdot u\geq -C(1+u)+C'u^{\frac{n}{n-1}}
\end{eqnarray*}
for some positive constants $C$ and $C'$ depending only on $n$, $\omega$, $\|\nabla^gX\|_{C^0(M)}$, $\|\varphi\|_{C^0(M)}$, $\|X\cdot\varphi\|_{C^0(M)}$, and $\|F\|_{C^2(M)}$. Since $u$ is non-negative and bounded, the maximum principle yields the desired upper bound for $n+\Delta_{\omega}\varphi$.

Indeed, consider the function $u-\epsilon \ln f$ for some positive $\epsilon$. Since $\lim_{x\to+\infty}(u-\epsilon \ln f)(x)=-\infty$, this function attains its maximum on $M$ at some point $x_0$. We can then apply the maximum principle to this function to obtain
\begin{eqnarray*}
(\max_M(u-\epsilon\ln f)+\epsilon\ln f(x_0))^{\frac{n}{n-1}}\leq C+C\epsilon\left\arrowvert\Delta_{\omega_{\varphi}}\ln f+\frac{X}{2}\cdot\ln f\right\arrowvert(x_0)
\end{eqnarray*}
for some uniform positive constant $C$. Now, since $g$ and $g_{\varphi}$ are equivalent, this implies that $\Delta_{\omega_{\varphi}}\ln f $ is bounded on $M$. Moreover, thanks to the asymptotics of $X$, $X\cdot \ln f$ is also bounded on $M$. Consequently,
\begin{eqnarray*}
u-\epsilon\ln f&\leq& (C+C'\epsilon)^{\frac{n-1}{n}}-\epsilon \ln f(x_0)\\
&\leq&(C+C'\epsilon)^{\frac{n-1}{n}}+C\epsilon,\\
\end{eqnarray*}
where $C$ is a uniform positive constant and $C'$ is a constant that may depend on the equivalence class of the metrics $g$ and $g_{\varphi}$. Since this estimate holds for any $\epsilon>0$, one obtains the desired a priori estimate independent of the equivalence class of the metrics $g$ and $g_{\varphi}$. Hence we obtain an upper bound on $\partial\bar{\partial}\varphi$ since $\varphi$, being a K\"ahler potential, implies that $\|\omega_{\varphi}\|_{C^0(M)}\leq \|n+\Delta_{\omega}\varphi\|_{C^0(M)}$.
\end{proof}

\begin{corollary}\label{coro-equiv-metrics-0}
The tensors $g^{-1}g_{\varphi}$ and $g_{\varphi}^{-1}g$ satisfy the following uniform estimate:
\begin{eqnarray*}
\|g^{-1}g_{\varphi}\|_{C^{0}(M)}+\|g_{\varphi}^{-1}g\|_{C^{0}(M)}\leq \Lambda\left(n,\alpha,\|F\|_{C^2(M)},\|F\|_{C^0_{con,f}(M)}\right).
\end{eqnarray*}
In particular, the metrics $g$ and $g_{\varphi}$ are uniformly equivalent.
\end{corollary}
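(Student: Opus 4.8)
The plan is to read off the two-sided bound directly from the eigenvalues of $g^{-1}g_{\varphi}$, using the $C^2$-estimate of Proposition \ref{prop-C^2-est} together with the Monge-Amp\`ere equation \eqref{MA} and the $C^0$-estimates of Propositions \ref{prop-C^0-est} and \ref{prop-c^0-rad-der}. First I would fix a point $x\in M$ and choose holomorphic coordinates as in Lemma \ref{hol-coor}, so that $g_{i\bar\jmath}(x)=\delta_{i\bar\jmath}$ and $(g_{\varphi})_{i\bar\jmath}(x)=(1+\varphi_{i\bar\imath})\delta_{i\bar\jmath}$. The eigenvalues of $g^{-1}g_{\varphi}$ at $x$ are then $\lambda_i=1+\varphi_{i\bar\imath}$, and they are strictly positive since $\omega_{\varphi}>0$.

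For the upper bound on $\|g^{-1}g_{\varphi}\|_{C^0(M)}$, note that each $\lambda_i$ is positive and $\sum_i\lambda_i=n+\Delta_{\omega}\varphi=\tr_{\omega}(\omega_{\varphi})$, so any single $\lambda_i$ is dominated by $\tr_{\omega}(\omega_{\varphi})$. Proposition \ref{prop-C^2-est} provides a uniform upper bound $\Lambda_1$ on $\|\partial\bar\partial\varphi\|_{C^0(M)}$, hence on $\tr_{\omega}(\omega_{\varphi})$, and therefore $0<\lambda_i\leq\Lambda_1$ at every point of $M$, giving $\|g^{-1}g_{\varphi}\|_{C^0(M)}\leq\Lambda_1$.

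For the upper bound on $\|g_{\varphi}^{-1}g\|_{C^0(M)}$, equivalently a uniform positive lower bound on the $\lambda_i$, I would use the determinant. By \eqref{MA}, $\prod_i\lambda_i=\det_g(g_{\varphi})=\frac{\omega_{\varphi}^n}{\omega^n}=e^{\varphi-\frac{X}{2}\cdot\varphi+F}$, and the exponent is uniformly bounded: $\|\varphi\|_{C^0}\leq\|F\|_{C^0}$ by Proposition \ref{prop-C^0-est}, $\|X\cdot\varphi\|_{C^0(M)}$ is bounded by Proposition \ref{prop-c^0-rad-der}, and $F\in\mathcal{C}^{\infty}_{f,X}(M)$ is bounded by hypothesis. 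Thus $\prod_i\lambda_i\geq c_0>0$ uniformly on $M$. Combining this with the upper bound $\lambda_i\leq\Lambda_1$ gives, for each fixed $j$, $\lambda_j=\frac{\prod_i\lambda_i}{\prod_{i\neq j}\lambda_i}\geq\frac{c_0}{\Lambda_1^{\,n-1}}$, so that $\|g_{\varphi}^{-1}g\|_{C^0(M)}=\sup_M\max_j\lambda_j^{-1}\leq\Lambda_1^{\,n-1}/c_0$. Adding the two bounds yields the claimed estimate, with $\Lambda$ inheriting its dependence on $n$, $\alpha$, $\|F\|_{C^2(M)}$, and $\|F\|_{C^0_{con,f}(M)}$ from Proposition \ref{prop-C^2-est}.

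Since the hard analytic work has already been carried out in Proposition \ref{prop-C^2-est}, I do not expect any genuine obstacle here; the result is an elementary consequence combining the trace bound with the determinant bound via symmetric-function inequalities. The only point requiring care is the uniform lower bound on the determinant, which rests on the $C^0$-control of the \emph{unbounded} radial derivative $X\cdot\varphi$ from Proposition \ref{prop-c^0-rad-der}. The uniform equivalence of $g$ and $g_{\varphi}$ is then immediate from the two-sided eigenvalue bounds.
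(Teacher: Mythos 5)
Your proposal is correct and follows essentially the same route as the paper: the upper bound on $g^{-1}g_{\varphi}$ comes from the $C^2$-estimate of Proposition \ref{prop-C^2-est}, and the lower bound on the eigenvalues comes from the uniform lower bound on $\det(g^{-1}g_{\varphi})=e^{F+\varphi-\frac{X}{2}\cdot\varphi}$ (via Propositions \ref{prop-C^0-est} and \ref{prop-c^0-rad-der}) combined with the upper eigenvalue bound. The paper merely states the last step, whereas you spell out the elementary symmetric-function inequality $\lambda_j\geq c_0/\Lambda_1^{\,n-1}$; the content is identical.
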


\begin{proof}
By Proposition \ref{prop-C^2-est}, we know that $$\|g^{-1}g_{\varphi}\|_{C^{0}(M)}\leq \Lambda\left(n,\alpha,\|F\|_{C^2(M)},\|F\|_{C^0_{con,f}(M)}\right).$$
Moreover, by Propositions \ref{prop-C^0-est} and \ref{prop-c^0-rad-der}, $g^{-1}g_{\varphi}$ satisfies
\begin{eqnarray*}
\det(g^{-1}g_{\varphi})=e^{F+\varphi-\frac{X}{2}\cdot\varphi}\geq e^{-C}
\end{eqnarray*}
for some uniform positive constant $C$. Finally, each eigenvalue of $g_{\varphi}$ is uniformly bounded from above. Thus, $$\|g_{\varphi}^{-1}g\|_{C^0(M)}\leq\Lambda\left(n,\alpha,\|F\|_{C^2(M)},\|F\|_{C^0_{con,f}(M)}\right).$$
\end{proof}

\subsection{$C^3$-estimate}

\begin{prop}\label{prop-C^3-est}
We have the following $C^3$ a priori estimate:
\begin{eqnarray*}
\|\nabla^g\partial\bar{\partial}\varphi\|_{C^0(M)}\leq C\left(n,\omega,\|F\|_{C^3(M)},\|F\|_{C^0_{con,f}(M)}\right).
\end{eqnarray*}
\end{prop}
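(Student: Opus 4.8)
The plan is to adapt Yau's third-order estimate \cite{Calabiconj} to the soliton setting, exactly as the preceding $C^2$-estimate adapts Yau's second-order estimate, the only genuinely new feature being the presence of the transport term $\frac{X}{2}\cdot$. Working in the holomorphic normal coordinates of Proposition \ref{prop-C^2-est} (so that $g_{i\bar{\jmath}}=\delta_{i\bar{\jmath}}$ and $(g_{\varphi})_{i\bar{\jmath}}=(1+\varphi_{i\bar{\imath}})\delta_{i\bar{\jmath}}$ at the point under consideration), I would introduce the Calabi third-order quantity
\[
S:=g_{\varphi}^{i\bar{l}}g_{\varphi}^{p\bar{\jmath}}g_{\varphi}^{k\bar{q}}\,\varphi_{i\bar{\jmath}k}\,\varphi_{\bar{l}p\bar{q}},
\]
which, by Corollary \ref{coro-equiv-metrics-0}, is uniformly comparable to $|\nabla^{g}\partial\bar{\partial}\varphi|^2_{g}$; a $C^0$-bound on $S$ is therefore equivalent to the asserted estimate. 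The strategy is to produce two differential inequalities, one for $S$ and one for $\tr_{\omega}(\omega_{\varphi})$, and then to combine them into a single quantity to which the maximum principle applies, using the barrier $\epsilon\ln f$ already exploited in Propositions \ref{prop-a-prio-wei-C0-est} and \ref{prop-C^2-est}.

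First I would differentiate the Monge-Amp\`ere equation $\log(\omega_{\varphi}^n/\omega^n)=F(\varphi)$, with $F(\varphi):=F-\frac{X}{2}\cdot\varphi+\varphi$, covariantly and run Yau's computation for $\Delta_{\omega_{\varphi}}S$, but with the \emph{weighted} operator $\Delta_{\omega_{\varphi}}+\frac{X}{2}\cdot$ in place of $\Delta_{\omega_{\varphi}}$. As in \cite[Lemma 5.4.16]{Siepmann}, the curvature of $g$ enters with coefficients controlled through Corollary \ref{coro-equiv-metrics-0}, while the third derivatives of the data appear via $\nabla^{g,3}F(\varphi)$ and are bounded by $\|F\|_{C^3(M)}$. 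The expected outcome is a Yau-type inequality
\[
\Delta_{\omega_{\varphi}}S+\frac{X}{2}\cdot S\geq -C_1 S-C_2+c_0\,|\nabla^{g_{\varphi}}(\nabla^{g}\partial\bar{\partial}\varphi)|^2_{g_{\varphi}},
\]
with $C_1,C_2,c_0$ depending only on $n,\omega,\|F\|_{C^3(M)},\|F\|_{C^0_{con,f}(M)}$ through Corollary \ref{coro-equiv-metrics-0}.

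The main obstacle is the honest bookkeeping of the vector-field contributions, and this is precisely where including $\frac{X}{2}\cdot$ on the left is essential. Differentiating the transport term $\frac{X}{2}\cdot\varphi$ inside $F(\varphi)$ three times produces $X\ast\nabla^{g,3}\varphi$ together with $\nabla^{g}X\ast\nabla^{g,2}\varphi$ and lower-order pieces; paired against $\overline{\nabla^{g}\partial\bar{\partial}\varphi}$, the first of these would a priori generate a term of size $|X|^2_{g}\,S\sim r^2 S$, which grows and cannot be absorbed. The point is that this $X$-linear, top-order contribution is exactly the one cancelled by the transport term $\frac{X}{2}\cdot S$ placed on the left-hand side, which is the reason for computing with the weighted Laplacian $\Delta_{\omega_{\varphi},X/2}$ throughout. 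After this cancellation, only terms in which at least one derivative has fallen on $X$ survive, i.e.\ terms of the schematic form $\nabla^{g}X\ast\nabla^{g_{\varphi}}(\nabla^{g}\partial\bar{\partial}\varphi)\ast\overline{\nabla^{g}\partial\bar{\partial}\varphi}$ and $\nabla^{g,2}X\ast(\nabla^{g}\partial\bar{\partial}\varphi)\ast\overline{(\nabla^{g}\partial\bar{\partial}\varphi)}$. Since $X$ is real holomorphic with $JX$ Killing and $g$ is asymptotic to the cone metric, $\nabla^{g}X$ and $\nabla^{g,2}X$ are uniformly bounded on $M$ (the bound on $\nabla^{g}X$ was already used in the proof of Proposition \ref{prop-C^2-est}), so by Young's inequality these are absorbed into $c_0\,|\nabla^{g_{\varphi}}(\nabla^{g}\partial\bar{\partial}\varphi)|^2_{g_{\varphi}}$ together with a multiple of $S$. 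Verifying that no growing $X$-term survives is the only genuinely new computation compared with \cite{Calabiconj,Siepmann}.

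Finally I would combine this with the second-order inequality \eqref{crucial-est-tr-C2}. By Corollary \ref{coro-equiv-metrics-0} the eigenvalues of $g_{\varphi}^{-1}$ are bounded, so the third-order term in \eqref{crucial-est-tr-C2} satisfies $\frac{\varphi_{i\bar{\jmath}k}\varphi_{\bar{\imath}j\bar{k}}}{(1+\varphi_{i\bar{\imath}})(1+\varphi_{k\bar{k}})}\geq c\,S$, while its remaining terms are bounded below, whence
\[
\Delta_{\omega_{\varphi}}\tr_{\omega}(\omega_{\varphi})+\frac{X}{2}\cdot\tr_{\omega}(\omega_{\varphi})\geq c\,S-C.
\]
Setting $u:=S+A\,\tr_{\omega}(\omega_{\varphi})$ with $A$ chosen so large that $Ac-C_1>0$, and using $0\leq u\leq S+AC$, I obtain $\Delta_{\omega_{\varphi}}u+\frac{X}{2}\cdot u\geq c'u-C'$ for constants of the advertised dependence. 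Since $\Delta_{\omega_{\varphi}}\ln f+\frac{X}{2}\cdot\ln f$ is bounded (again by Corollary \ref{coro-equiv-metrics-0} and Lemma \ref{prop-basic-est-pot-fct}), applying the maximum principle to $u-\epsilon\ln f$, which tends to $-\infty$ at infinity and hence attains its maximum at some $x_0\in M$, yields $c'u(x_0)\leq C'+\epsilon\cdot O(1)$; letting $\epsilon\to 0$ bounds $\sup_M u$, and therefore $\sup_M S$, by a constant depending only on $n,\omega,\|F\|_{C^3(M)}$, and $\|F\|_{C^0_{con,f}(M)}$, as required.
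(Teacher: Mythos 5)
Your proposal is correct, but it takes the route that the paper explicitly mentions and then deliberately avoids. You propose to run Calabi--Yau's pointwise third-order computation in normal coordinates on the quantity $S=g_{\varphi}^{i\bar{l}}g_{\varphi}^{p\bar{\jmath}}g_{\varphi}^{k\bar{q}}\varphi_{i\bar{\jmath}k}\varphi_{\bar{l}p\bar{q}}$, tracking by hand the contributions of the unbounded vector field and observing that the dangerous $X$-linear top-order term is exactly cancelled by the transport term $\tfrac{X}{2}\cdot S$ on the left; this is essentially Siepmann's argument in \cite[Lemma 5.4.20]{Siepmann}, which the paper cites as giving precisely this estimate but declines to reproduce on the grounds that the computation is ``tedious and not very enlightening.'' The paper instead follows the flow-flavoured route of \cite{Pho-Ses-Stu} and \cite[Chapter 3]{Bou-Eys-Gue}: it works with the equivalent quantity $S=|\Psi|^2_{g_{\varphi}}$, where $\Psi=\Gamma(g_{\varphi})-\Gamma(g)$, recasts the soliton as a static (self-similar) solution of a perturbed K\"ahler--Ricci flow evolving only by homotheties and the diffeomorphisms generated by $-X/(2(1+\tau))$, and imports the parabolic evolution equation for $\Psi$ (via the second Bianchi identity); self-similarity converts $\partial_{\tau}S|_{\tau=0}$ into $-S-\tfrac{X}{2}\cdot S$, producing the inequality $\Delta_{\omega_{\varphi}}S+\tfrac{X}{2}\cdot S\geq -C(S+1)$ with all $X$-cancellations handled structurally rather than term by term. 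From that point on the two arguments coincide: both invoke \eqref{crucial-est-tr-C2} to get $\Delta_{\omega_{\varphi}}\tr_{\omega}(\omega_{\varphi})+\tfrac{X}{2}\cdot\tr_{\omega}(\omega_{\varphi})\geq C^{-1}S-C$ and apply the maximum principle to a linear combination of $S$ and $\tr_{\omega}(\omega_{\varphi})$ (your $S+A\,\tr_{\omega}(\omega_{\varphi})$ versus the paper's $\epsilon S+\tr_{\omega}(\omega_{\varphi})$, which are the same up to scaling). What your approach buys is self-containedness and an explicit identification of where the transport term is needed; what the paper's approach buys is a shorter derivation in which the soliton structure, rather than coordinate bookkeeping, does the cancellation. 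One small point worth making explicit in your write-up: the boundedness of $\nabla^{g,2}X$ that you use follows from $\nabla^{g_0}(r\partial_r)=\operatorname{Id}$ on the cone together with the decay of $g-g_0$ with derivatives, so it is available but deserves a sentence.
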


\begin{remark}
As observed in \cite{Siepmann}, it is more convenient to adapt the computations of Yau \cite{Calabiconj} in the presence of an unbounded vector field $X$ than to use the machinery developed by Evans, Krylov and Safonov to avoid such a $C^3$-estimate. However, since this computation is tedious and not very enlightening, we will follow the alternative route of \cite{Pho-Ses-Stu} in order to establish a $C^3$ a priori estimate.
\end{remark}

\begin{proof}
Again, this is precisely \cite[Lemma 5.4.20]{Siepmann}. We will give a different proof with a flow flavor that follows closely the arguments of \cite[Chapter $3$]{Bou-Eys-Gue}.

Define
$$S(g_{\varphi},g):=\arrowvert\nabla^gg_{\varphi}\arrowvert^2_{g_{\varphi}}.$$
Then, by the very definition of $S$, we have that
\begin{eqnarray*}
S(g_{\varphi},g)&=&g_{\varphi}^{i\bar{\jmath}}g_{\varphi}^{k\bar{l}}g_{\varphi}^{p\bar{q}}\nabla^{g}_i{g_{\varphi}}_{k\bar{q}}\overline{\nabla^{g}_{j}{g_{\varphi}}_{l\bar{q}}}\\
&=&|\Psi|_{g_{\varphi}}^2,
\end{eqnarray*}
where $\Psi_{ij}^k(g_{\varphi},g):=\Gamma(g_{\varphi})_{ij}^k-\Gamma(g)_{ij}^k$. Now, since $\varphi$ solves (\ref{MA}), $(M,g_{\varphi},X)$ is an ``approximate'' expanding gradient K\"ahler-Ricci soliton in the following precise sense: if $g_{\varphi}(\tau):=(1+\tau)\varphi_{\tau}^*g_{\varphi}$ and $g(\tau):=(1+\tau)\varphi_{\tau}^*g$, where $(\varphi_{\tau})_{\tau>-1}$ is the one-parameter family of diffeomorphisms generated by $-X/(2(1+\tau))$, then, by Section \ref{subsection-existence-sol}, $(g_{\varphi}(\tau))_{\tau>-1}$ is a solution to the following perturbed K\"ahler-Ricci flow with initial condition $g_{\varphi}$:
\begin{eqnarray*}
\partial_{\tau}g_{\varphi}(\tau)&=&-\Ric(g_{\varphi}(\tau))+\varphi_{\tau}^*\left(-\mathcal{L}_{\frac{X}{2}}(g)+g+\Ric(g)+\partial\bar{\partial}F\right)\\
&=&-\Ric(g_{\varphi}(\tau))+\varphi_{\tau}^*\partial\bar{\partial}(F-F(g)),\quad \tau>-1,\\
 g_{\varphi}(0)&=&g_{\varphi}.
\end{eqnarray*}
In particular, $\partial_{\tau}g_{\varphi}=-\Ric(g_{\varphi})+\varphi_{\tau}^*\eta$, where $\eta:=\partial\bar{\partial}(F-F(g))$ has uniformly controlled $C^1$-norm by our assumptions and by construction of $g$.

Define $S(\tau):=S(g_{\varphi}(\tau),g(\tau))$ and correspondingly, $\Psi(\tau):=\Psi(g_{\varphi}(\tau),g(\tau))$. We adapt \cite[Proposition 3.2.8]{Bou-Eys-Gue} to our setting. By a brute force computation, we have that
\begin{eqnarray*}
\Delta_{\omega_{\varphi}}S&=&2\Re\left(g_{\varphi}^{i\bar{\jmath}}g_{\varphi}^{p\bar{q}}{g_{\varphi}}_{k\bar{l}}\left(\Delta_{\omega_{\varphi},1/2}\Psi_{ip}^k\right)\overline{\Psi_{jq}^l}\right)+|\nabla^{g_{\varphi}} \Psi|^2_{g_{\varphi}}+|\overline{\nabla}^{g_{\varphi}}\Psi|_{g_{\varphi}}^2\\
&&+\Ric(g_{\varphi})^{i\bar{\jmath}}g_{\varphi}^{p\bar{q}}{g_{\varphi}}_{k\bar{l}}\Psi_{ip}^k\overline{\Psi_{jq}^l}+g_{\varphi}^{i\bar{\jmath}}\Ric(g_{\varphi})^{p\bar{q}}{g_{\varphi}}_{k\bar{l}}\Psi_{ip}^k\overline{\Psi_{jq}^l}-g_{\varphi}^{i\bar{\jmath}}g_{\varphi}^{p\bar{q}}\Ric(g_{\varphi})_{k\bar{l}}\Psi_{ip}^k\overline{\Psi_{jq}^l},
\end{eqnarray*}
where
\begin{eqnarray*}
&&\Delta_{\omega_{\varphi},1/2}=g_{\varphi}^{i\bar{\jmath}}\nabla^{g_{\varphi}}_i\nabla^{g_{\varphi}}_{\bar{\jmath}},\\
&&T^{i\bar{\jmath}}:=g_{\varphi}^{i\bar{k}}g_{\varphi}^{l\bar{\jmath}}T_{k\bar{l}},
\end{eqnarray*}
for $T_{k\bar{l}}\in\Lambda^{1,\,0}M\otimes\Lambda^{0,\,1}M$. We also have that
\begin{eqnarray*}
{\partial_{\tau}\Psi(\tau)_{ip}^k|_{\tau=0}}&=&\partial_{\tau}|_{\tau=0}(\Gamma(g_{\varphi}(\tau))-\Gamma(g(\tau)))_{ip}^k\\
&=&\nabla^{g_{\varphi}}_i(-\Ric(g_{\varphi})_p^k+\eta_p^k)-\nabla^{g}_i(-\Ric(g)_p^k+\partial\bar{\partial}F(g)_p^k),\\
\partial_{\tau}g_{\varphi}^{i\bar{\jmath}}&=&\Ric(g_{\varphi})^{i\bar{\jmath}}-\eta^{i\bar{\jmath}}.
\end{eqnarray*}
Finally, by using the second Bianchi identity, we see that
\begin{eqnarray*}
\Delta_{g_{\varphi},1/2}\Psi_{ip}^k=g_{\varphi}^{a\bar{b}}\nabla_a^{g_{\varphi}}\Rm(g)_{i\bar{b}p}^k-\nabla^{g_{\varphi}}_i\Ric(g_{\varphi})_p^k,
\end{eqnarray*}
which implies that the following evolution equation is satisfied by $\Psi$:
\begin{eqnarray*}
{\partial_{\tau}\Psi_{ip}^k(\tau)|_{\tau=0}}&=&\Delta_{g_{\varphi},1/2}\Psi_{ip}^k+T_{ip}^k,
\end{eqnarray*}
where $T$ is a tensor such that
\begin{eqnarray*}
T&=&g_{\varphi}^{-1}\ast\nabla^{g_{\varphi}}\Rm(g)+\nabla^{g_{\varphi}}\eta+\nabla^g(\Ric(g)-\partial\bar{\partial}F(g))\\
&=&g_{\varphi}^{-1}\ast\nabla^g\Rm(g)+g_{\varphi}^{-1}\ast g_{\varphi}^{-1}\ast\Rm(g)\ast\Psi+g_{\varphi}^{-1}\ast\Psi\ast\eta+\nabla^g(\eta+\Ric(g)-\partial\bar{\partial} F(g)).
\end{eqnarray*}
Since this flow is only evolving by homotheties and diffeomorphisms, we have that
\begin{eqnarray*}
S(\tau)&=&(1+\tau)^{-1}\varphi_{\tau}^*S(g_{\varphi},g),\\
\partial_{\tau}S|_{\tau=0}&=&-S(g_{\varphi},g)-\frac{X}{2}\cdot S(g_{\varphi},g).
\end{eqnarray*}

To summarise, by Young's inequality, the boundedness of $\|g_{\varphi}^{-1}g\|_{C^0(M)}$, $\|g_{\varphi}g^{-1}\|_{C^0(M)}$, and the boundedness of the covariant derivatives of the tensors $\Rm(g)$, $\eta$, and $F(g)$, we have that
\begin{eqnarray*}
\Delta_{g_{\varphi}}S+\frac{X}{2}\cdot S\geq -C(S+1)
\end{eqnarray*}
for some positive uniform constant $C$.

We use as a barrier function the trace $\tr_{\omega}(\omega_{\varphi})$ which, by (\ref{crucial-est-tr-C2}) and the uniform equivalence of the metrics $g$ and $g_{\varphi}$, satisfies
\begin{eqnarray*}
\Delta_{\omega_{\varphi}}\tr_{\omega}(\omega_{\varphi})+\frac{X}{2}\cdot \tr_{\omega}(\omega_{\varphi})&\geq& C^{-1}S-C,
\end{eqnarray*}
where $C$ is a uniform positive constant that may vary from line to line. By applying the maximum principle to $\epsilon S+\tr_{\omega}(\omega_{\varphi})$ for some sufficiently small $\epsilon>0$, one arrives at the desired a priori estimate.
\end{proof}

\begin{corollary}\label{coro-equiv-metrics}
 The tensors $g^{-1}g_{\varphi}$ and $g_{\varphi}^{-1}g$ satisfy the following uniform estimate:
 \begin{eqnarray*}
\|g^{-1}g_{\varphi}\|_{C^{0,\alpha}(M)}+\|g_{\varphi}^{-1}g\|_{C^{0,\alpha}(M)}\leq \Lambda\left(n,\alpha,\|F\|_{C^3(M)},\|F\|_{C^0_{con,f}(M)}\right)
\end{eqnarray*}
for any $\alpha\in(0,1)$.
 \end{corollary}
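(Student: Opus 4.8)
The plan is to bootstrap the $C^{0}$ bounds of Corollary \ref{coro-equiv-metrics-0} to $C^{0,\alpha}$ bounds by feeding in the pointwise control on the third derivatives of $\varphi$ provided by Proposition \ref{prop-C^3-est}. The underlying principle is elementary: a uniform $C^{0}$ bound on one covariant derivative of a tensor automatically controls every Hölder seminorm of order $\alpha\in(0,1)$, because at distances below the fixed radius $\delta$ entering the definition of $[\,\cdot\,]_{\alpha}$ one has room to trade a full derivative for a fractional one.

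First I would observe that $\nabla^{g}g_{\varphi}=\nabla^{g}(i\partial\bar{\partial}\varphi)$, since $g_{\varphi}=g+i\partial\bar{\partial}\varphi$ and $g$ is parallel for its own Levi-Civita connection. Thus Proposition \ref{prop-C^3-est} gives $\|\nabla^{g}g_{\varphi}\|_{C^{0}(M)}\leq C(n,\omega,\|F\|_{C^{3}(M)},\|F\|_{C^{0}_{con,f}(M)})$. Using the definition of the Hölder seminorm via parallel transport along the minimizing geodesic from $x$ to $y$, for $d(x,y)<\delta$ the fundamental theorem of calculus along this geodesic yields
$$|g_{\varphi}(x)-P^{*}_{x,y}g_{\varphi}(y)|\leq \|\nabla^{g}g_{\varphi}\|_{C^{0}(M)}\,d(x,y),$$
so that $[g_{\varphi}]_{C^{0,\alpha}}\leq \|\nabla^{g}g_{\varphi}\|_{C^{0}(M)}\,\delta^{1-\alpha}$ for every $\alpha\in(0,1)$. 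Combined with the uniform $C^{0}$ bound on $g_{\varphi}$ already contained in Corollary \ref{coro-equiv-metrics-0}, this shows $g_{\varphi}\in C^{0,\alpha}(M)$ with a uniform bound of the asserted form. Since $g^{-1}$ is a fixed smooth background tensor with bounded covariant derivatives, hence of finite $C^{0,\alpha}$ norm, the product $g^{-1}g_{\varphi}$ inherits a uniform $C^{0,\alpha}$ bound.

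For the inverse, I would use that parallel transport, being an isometry, commutes with inversion, so $P^{*}_{x,y}g_{\varphi}^{-1}(y)=(P^{*}_{x,y}g_{\varphi}(y))^{-1}$, and then apply the algebraic identity $A^{-1}-B^{-1}=A^{-1}(B-A)B^{-1}$ with $A=g_{\varphi}(x)$ and $B=P^{*}_{x,y}g_{\varphi}(y)$. The $C^{0}$-equivalence of $g$ and $g_{\varphi}$ from Corollary \ref{coro-equiv-metrics-0} gives a uniform lower bound on the eigenvalues of $g_{\varphi}$, whence $\|g_{\varphi}^{-1}\|_{C^{0}(M)}$ is uniformly bounded; substituting this together with the Hölder bound on $g_{\varphi}$ just obtained into the identity produces a uniform $C^{0,\alpha}$ bound on $g_{\varphi}^{-1}$. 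Multiplying by the smooth background tensor $g$ then gives the desired estimate for $g_{\varphi}^{-1}g$.

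There is no serious analytic obstacle here; the statement is a purely local interpolation-type consequence of the $C^{0}$ and $C^{3}$ estimates already in hand. The only points requiring care are the \emph{uniformity} in the passage from the covariant-derivative bound to the Hölder seminorm, which rests on the uniform positivity of the radius $\delta$ for the AC metric $g$ (a consequence of its bounded geometry), and the control of the inverse, for which the uniform ellipticity furnished by Corollary \ref{coro-equiv-metrics-0} is precisely what is needed.
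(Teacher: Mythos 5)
Your proposal is correct and follows essentially the same route as the paper: the $C^{0,\alpha}$ bound on $g^{-1}g_{\varphi}$ comes from combining the $C^{2}$ and $C^{3}$ estimates (trading the bounded covariant derivative for the Hölder seminorm on scales below $\delta$), and the bound on the inverse comes from the uniform lower bound on the eigenvalues of $g_{\varphi}$ — which the paper extracts from $\det(g^{-1}g_{\varphi})=e^{F+\varphi-\frac{X}{2}\cdot\varphi}\geq e^{-C}$ and which you import equivalently from Corollary \ref{coro-equiv-metrics-0} — together with an elementary algebraic identity controlling the Hölder seminorm of an inverse.
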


 \begin{proof}
By Proposition \ref{prop-C^2-est}, together with Proposition \ref{prop-C^3-est}, we know that $$\|g^{-1}g_{\varphi}\|_{C^{0,\alpha}}\leq \Lambda\left(n,\alpha,\|F\|_{C^3(M)},\|F\|_{C^0_{con,f}(M)}\right).$$ We also know from Propositions \ref{prop-C^0-est} and \ref{prop-c^0-rad-der} that $g^{-1}g_{\varphi}$ satisfies
 \begin{eqnarray*}
\det(g^{-1}g_{\varphi})=e^{F+\varphi-\frac{X}{2}\cdot\varphi}\geq e^{-C}
\end{eqnarray*}
for some uniform positive constant $C$. Moreover, each eigenvalue of $g_{\varphi}$ is uniformly bounded from above. Thus, $$\|g_{\varphi}^{-1}g\|_{C^0(M)}\leq\Lambda\left(n,\alpha,\|F\|_{C^3(M)},\|F\|_{C^0_{con,f}(M)}\right).$$ Finally, if $u$ is a positive function on $M$ in $C^{\alpha}(M)$ uniformly bounded from below by a positive constant, then $[u^{-1}]_{\alpha}\leq \|u\|_{C^{\alpha}(M)}(\inf_Mu)^{-2}$. This last remark implies that $$\|g_{\varphi}^{-1}g\|_{C^{0,\alpha}(M)}\leq\Lambda\left(n,\alpha,\|F\|_{C^3(M)},\|F\|_{C^0_{con,f}(M)}\right)$$ as well.
\end{proof}

\newpage

\subsection{Weighted estimates on higher derivatives of the potential $\varphi$}

\begin{prop}\label{prop-weighted-rad-der-est}
Let $\varphi$ be a solution to (\ref{MA}). Then
\begin{eqnarray*}
\|X\cdot \varphi\|_{C^0_{con,f}(M)}\leq C\left(n,\omega,\|F\|_{C^3_{con,f}}\right).
\end{eqnarray*}
\end{prop}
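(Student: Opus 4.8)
The plan is to differentiate the complex Monge--Amp\`ere equation (\ref{MA}) along the flow of $X$ and then run a weighted maximum principle for the resulting linear equation on the radial derivative $w:=X\cdot\varphi$. Since $X$ is real holomorphic, $\mathcal{L}_{X}$ commutes with $i\partial\bar\partial$, so applying $X\cdot$ to (\ref{MA}) and using $\mathcal{L}_{X}(i\partial\bar\partial\varphi)=i\partial\bar\partial w$ produces
\begin{equation*}
2\Delta_{\omega_{\varphi}}w+\tr_{\omega_{\varphi}}(\mathcal{L}_{X}\omega)-\tr_{\omega}(\mathcal{L}_{X}\omega)=w-\tfrac{1}{2}X\cdot w+X\cdot F.
\end{equation*}
Feeding in the background identity $\rho_{\omega}+\omega-\tfrac{1}{2}\mathcal{L}_{X}\omega=i\partial\bar\partial F$ from Proposition \ref{equationsetup} to eliminate $\mathcal{L}_{X}\omega$, this rearranges into $L_{\varphi}w=R$, where $L_{\varphi}:=2\Delta_{\omega_{\varphi}}+\tfrac{1}{2}X\cdot-\operatorname{Id}$ is precisely the linearised operator of Theorem \ref{iso-sch-Laplacian} (now associated to $\omega_\varphi$), and $R$ gathers $X\cdot F$, the curvature term $\tr_{\omega_{\varphi}}(\rho_{\omega})-s_{\omega}$, and the trace terms $\Delta_{\omega}F$, $\Delta_{\omega_{\varphi}}F$ coming from $i\partial\bar\partial F$.

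The next step is to check that $R\in C^{0}_{con,f}(M)$, i.e.\ that $R=O(f^{-1})$. The transport term satisfies $X\cdot F=O(r^{-2})$ because $F\in\mathcal{C}^{\infty}_{f,X}(M)$ decays quadratically together with its derivatives while $X$ grows linearly; the terms $\Delta_{\omega}F$ and $\Delta_{\omega_{\varphi}}F$ are $O(r^{-4})$ by the same decay combined with the uniform equivalence of $g$ and $g_{\varphi}$ from Corollary \ref{coro-equiv-metrics}; and $\tr_{\omega_{\varphi}}(\rho_{\omega})-s_{\omega}=O(r^{-2})$ since $\rho_{\omega}$ is asymptotic to the quadratically decaying Ricci form of the cone and $g_{\varphi}^{-1}$ is bounded. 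The one term that does \emph{not} manifestly decay is the contribution $2(n-\tr_{\omega_{\varphi}}\omega)=4\Delta_{\omega_{\varphi}}\varphi$ of the $2\omega$-piece of $\mathcal{L}_{X}\omega$, which a priori is only bounded.

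With $R$ understood, I would conclude by a weighted maximum principle against the barrier $f^{-1}$: by Lemma \ref{prop-basic-est-pot-fct} one has $L_{\varphi}(f^{-1})=-2f^{-1}(1+O(f^{-1}))$, so comparing $w$ with $Af^{-1}$ for $A$ large and, exactly as in the proof of Proposition \ref{prop-a-prio-wei-C0-est}, subtracting $\epsilon\ln f$ to force an interior maximum before letting $\epsilon\to0$, should yield $|w|\leq Cf^{-1}$ outside a compact set, the compact part being handled by Proposition \ref{prop-c^0-rad-der}. The main obstacle, and the heart of the argument, is precisely the non-decaying term $4\Delta_{\omega_{\varphi}}\varphi$: at the extremal point it cannot be discarded, and it must be controlled by invoking the Monge--Amp\`ere equation itself, namely $\log\det(g^{-1}g_{\varphi})=\varphi-\tfrac{1}{2}w+F$ together with the weighted $C^{0}$ bound $\varphi=O(f^{-1})$ of Proposition \ref{prop-a-prio-wei-C0-est}, to trade $\Delta_{\omega_{\varphi}}\varphi$ against $w$ and the already-established $C^{2}$ and $C^{3}$ a priori estimates (Propositions \ref{prop-C^2-est} and \ref{prop-C^3-est}). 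This route is forced: one cannot instead obtain the weighted bound on $w$ by rescaled Schauder estimates, since those require the right-hand side $\varphi-\tfrac{1}{2}X\cdot\varphi+F$ of (\ref{MA}) to decay quadratically, which is the present statement --- so this proposition must precede, and then feeds into, the weighted $C^{2}$ estimate. The confining nature of the drift $\tfrac{1}{2}X\approx\tfrac{1}{2}\nabla^{g}f$, that is, the harmonic-oscillator structure of $L_{\varphi}$ noted in the introduction, is what allows the barrier argument to close, mirroring the polynomial-rate analysis of \cite{Der-Smo-Pos-Cur-Con} and the exponential-rate computation of Siepmann \cite[Section 5.4]{Siepmann}.
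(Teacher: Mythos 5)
Your overall strategy---differentiate (\ref{MA}) along $X$, isolate the non-decaying term, and close with a weighted maximum principle against the barrier $f^{-1}$---is the paper's, and your linearised equation for $w=X\cdot\varphi$ is correct. The gap is in the step you yourself flag as the heart of the argument: controlling $4\Delta_{\omega_{\varphi}}\varphi=2(n-\tr_{\omega_{\varphi}}\omega)$. Invoking the Monge--Amp\`ere equation $\log\det(g^{-1}g_{\varphi})=\varphi-\tfrac12 w+F$ cannot do this: the determinant constrains only the product of the eigenvalues of $g^{-1}g_{\varphi}$, whereas $\tr_{\omega_{\varphi}}\omega$ is the sum of their reciprocals, and the arithmetic--geometric mean inequality gives only a lower bound on that trace---the wrong side for an upper bound on the source term. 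The unweighted estimates of Propositions \ref{prop-C^2-est} and \ref{prop-C^3-est} give boundedness, not decay, of $\Delta_{\omega_{\varphi}}\varphi$; its decay is essentially the weighted $C^{2}$ estimate of Proposition \ref{prop-C^2-wei-est}, which, as you note, is proved \emph{after} and \emph{using} the present proposition, so appealing to it here would be circular. As written, your right-hand side $R$ is only $O(1)$, and the comparison with $Af^{-1}$ returns nothing beyond the unweighted bound of Proposition \ref{prop-c^0-rad-der}.

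The missing idea is an algebraic regrouping already visible in your displayed equation: the troublesome term belongs to the principal part. Set $v:=\tfrac{X}{2}\cdot\varphi-\varphi=\tfrac{w}{2}-\varphi$. Then $2\Delta_{\omega_{\varphi}}w-4\Delta_{\omega_{\varphi}}\varphi=4\Delta_{\omega_{\varphi}}v$, and the drift and zeroth-order terms combine as well, since $\tfrac12 X\cdot w-w=X\cdot\bigl(\tfrac{w}{2}-\varphi\bigr)=X\cdot v$ exactly (using $X\cdot\varphi=w$). Your equation therefore becomes $4\Delta_{\omega_{\varphi}}v+X\cdot v=O(f^{-1})$, with every remaining source term genuinely decaying by Proposition \ref{prop-C^2-est} together with the decay of $F$ and of $\rho_{\omega}$. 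This is precisely the equation the paper derives, there by applying $X^{1,0}$ to $\log(\omega_{\varphi}^{n}/\omega^{n})$ and recognising, via $\nabla^{g}\bar\nabla^{g}f\sim g$ from Lemma \ref{prop-basic-est-pot-fct}, that the commutator term $g_{\varphi}^{i\bar\jmath}\nabla_{i}X^{k}\nabla_{\bar\jmath}\nabla_{k}\varphi$ equals $\Delta_{\omega_{\varphi}}\varphi$ up to $O(f^{-1})\ast\partial\bar\partial\varphi$. The barrier argument then applies directly to $v$ (the absence of a $-\operatorname{Id}$ term is harmless since $(\Delta_{\omega_{\varphi}}+\tfrac{X}{2}\cdot)(f^{-1})\leq-Cf^{-1}<0$), giving $|v|\leq Af^{-1}$ outside a compact set, and the weighted $C^{0}$ bound $\varphi=O(f^{-1})$ of Proposition \ref{prop-a-prio-wei-C0-est} converts this into the claimed estimate on $X\cdot\varphi$.
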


\begin{proof}
As in \cite{Siepmann}, we compute the evolution equation of the quantity $\varphi-\frac{X}{2}\cdot \varphi$.

Since $\varphi$ satisfies (\ref{MA}) and is $JX$-invariant, and since $X^{1,0}$ is holomorphic, we see that
\begin{eqnarray*}
\frac{X}{2}\cdot\left(\varphi-\frac{X}{2}\cdot\varphi\right)+\frac{X}{2}\cdot F&=&X^{1,0}\cdot\left(\varphi-X^{1,0}\cdot\varphi\right)+X^{1,0}\cdot F\\
&=&\nabla^g_{X^{1,0}}\log\left(\frac{\omega_{\varphi}^n}{\omega^n}\right)\\
&=&\tr_{g_{\varphi}}(\nabla^g_{X^{1,0}}\nabla^g_{\cdot}\nabla^g_{\bar{\cdot}}\varphi)\\
&=&\Delta_{\omega_{\varphi}}\left(\frac{X}{2}\cdot\varphi\right)-g_{\varphi}^{i\bar{\jmath}}\nabla^g_{i}{X^{1,0}}^k\nabla^g_{\bar{\jmath}}\nabla^g_k\varphi\\
&=&\Delta_{\omega_{\varphi}}\left(\frac{X}{2}\cdot\varphi\right)-g_{\varphi}^{i\bar{\jmath}}g^{k\bar{l}}\nabla^g_i\nabla^g_{\bar{l}}f\nabla^g_{\bar{\jmath}}\nabla^g_k\varphi+\textit{O}(f^{-1})\ast\partial\bar{\partial}\varphi\\
&=&\Delta_{\omega_{\varphi}}\left(\frac{X}{2}\cdot\varphi-\varphi\right)+\textit{O}(f^{-1})\ast\partial\bar{\partial}\varphi,\\
\end{eqnarray*}
where we have made use of Lemma \ref{prop-basic-est-pot-fct} in the final two equalities. The $C^2$ a priori estimate provided by Proposition \ref{prop-C^2-est}, together with the asymptotics of $F$, then imply that
\begin{eqnarray*}
\left\arrowvert\Delta_{\omega_{\varphi}}\left(\frac{X}{2}\cdot\varphi-\varphi\right)+\frac{X}{2}\cdot\left(\frac{X}{2}\cdot\varphi-\varphi\right)\right\arrowvert\leq Cf^{-1}
\end{eqnarray*}
for some uniform positive constant $C$.

Now, it turns out that $f^{-1}$ is a good barrier tensor outside a compact set. Indeed, thanks to Lemma \ref{prop-basic-est-pot-fct} and Corollary \ref{coro-equiv-metrics},
\begin{eqnarray*}
\left(\Delta_{\omega_{\varphi}}+\frac{X}{2}\cdot\right)(f^{-1})\leq -Cf^{-1}
\end{eqnarray*}
for some uniform positive constant $C$. Hence, for any positive constant $A\geq C$, the following holds outside a compact set $\{f\leq t_0\}$ independent of $A$:
\begin{eqnarray*}
\Delta_{\omega_{\varphi}}\left(\frac{X}{2}\cdot\varphi-\varphi-Af^{-1}\right)+\frac{X}{2}\cdot\left(\frac{X}{2}\cdot\varphi-\varphi-Af^{-1}\right)\leq 0.
\end{eqnarray*}
Now, we can find a constant $A$ depending on $\|\frac{X}{2}\cdot\varphi-\varphi\|_{C^0(M)}$, hence, by Propositions \ref{prop-C^0-est} and \ref{prop-c^0-rad-der}, on the data $F$, such that $\max_{\{f=t_0\}}(\frac{X}{2}\cdot\varphi-\varphi-Af^{-1})\leq 0$. Since $\lim_{x\to+\infty}\left(\frac{X}{2}\cdot\varphi-\varphi\right)(x)=0$ by assumption, the maximum principle applied to $\frac{X}{2}\cdot\varphi-\varphi-Af^{-1}$ yields the desired estimate
\begin{eqnarray*}
\frac{X}{2}\cdot\varphi-\varphi\leq Af^{-1}.
\end{eqnarray*}
The same argument applies to obtain a uniform weighted lower bound.
\end{proof}

\subsection{$C^2$-weighted estimates}
\begin{prop}\label{prop-C^2-wei-est}
Let $F\in \mathcal{C}^{3,\alpha}_{f,X}(M)$ for some $\alpha\in(0,1)$ and let $\varphi$ be a solution to (\ref{MA}) in $\mathcal{D}^{2,\alpha}_{f,X}(M)$. Then
\begin{eqnarray*}
\|\varphi\|_{\mathcal{D}^{2,\alpha}_{f,X}(M)}\leq C\left(n,\alpha,w,\|F\|_{\mathcal{C}^{3,\alpha}_{f,X}(M)}\right).
\end{eqnarray*}
\end{prop}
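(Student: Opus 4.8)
The plan is to linearise the equation and reduce to the isomorphism of Theorem \ref{iso-sch-Laplacian}, then to control the quadratic error in the weighted H\"older norm. Set $Q(\varphi):=\int_0^1\int_0^{\tau}|i\partial\bar\partial\varphi|^2_{g_{\sigma\varphi}}\,d\sigma\,d\tau$. Since $\varphi$ solves \eqref{MA}, the Taylor expansion \eqref{equ:taylor-exp} gives $\left(\Delta_\omega+\tfrac X2\cdot-\Id\right)\varphi=F+Q(\varphi)$. As $\varphi\in\mathcal D^{2,\alpha}_{f,X}(M)$, applying the bounded inverse of the isomorphism $\Delta_\omega+\tfrac X2\cdot-\Id:\mathcal D^{2,\alpha}_{f,X}(M)\to\mathcal C^{0,\alpha}_{f,X}(M)$ yields
$$\|\varphi\|_{\mathcal D^{2,\alpha}_{f,X}(M)}\le C\|F+Q(\varphi)\|_{\mathcal C^{0,\alpha}_{f,X}(M)}.$$
Because $F\in\mathcal C^{3,\alpha}_{f,X}(M)\subset\mathcal C^{0,\alpha}_{f,X}(M)$ is controlled by hypothesis, everything reduces to bounding $\|Q(\varphi)\|_{\mathcal C^{0,\alpha}_{f,X}(M)}=\|f\,Q(\varphi)\|_{C^{0,\alpha}(M)}$ by a constant depending only on $n,\alpha,w$ and $\|F\|_{\mathcal C^{3,\alpha}_{f,X}(M)}$; here I am free to use the a priori estimates of Propositions \ref{prop-C^0-est}--\ref{prop-weighted-rad-der-est} and Corollaries \ref{coro-equiv-metrics-0}, \ref{coro-equiv-metrics}, all of whose constants already depend only on these data.

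The crux is a weighted pointwise decay of the second and third derivatives of $\varphi$, namely $|i\partial\bar\partial\varphi|=O(r^{-1})$ and $|\nabla^g\partial\bar\partial\varphi|=O(r^{-1})$. Granted these, the supremum part of $\|f\,Q(\varphi)\|_{C^{0,\alpha}}$ is immediate since $f|i\partial\bar\partial\varphi|^2=O(r^2)O(r^{-2})=O(1)$, while on a ball of fixed radius about a point at distance $r$ the H\"older quotient of $f\,Q(\varphi)$ splits, by the product rule, into $f\,[\,|i\partial\bar\partial\varphi|^2]_\alpha\lesssim r^2\|i\partial\bar\partial\varphi\|_\infty\|\nabla^g\partial\bar\partial\varphi\|_\infty=O(1)$ and $|i\partial\bar\partial\varphi|^2[f]_\alpha\lesssim r^{-2}\cdot r=O(r^{-1})$, both bounded; the uniform $C^{0,\alpha}$-equivalence of $g$ and $g_{\sigma\varphi}$ from Corollary \ref{coro-equiv-metrics} lets me carry these bounds uniformly in $\sigma$ under the integral, and the $JX$-invariance of $Q(\varphi)$ is inherited from that of $\varphi$.

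To prove the weighted Hessian decay I read \eqref{MA} as $\log\det(g^{-1}g_\varphi)=H$ with $H:=\varphi-\tfrac X2\cdot\varphi+F=O(r^{-2})$ by Propositions \ref{prop-a-prio-wei-C0-est} and \ref{prop-weighted-rad-der-est}, and let $\lambda_1,\dots,\lambda_n\in[\Lambda^{-1},\Lambda]$ be the eigenvalues of $g^{-1}g_\varphi$ (Corollary \ref{coro-equiv-metrics-0}). The convexity gap $\sum_i[(\lambda_i-1)-\log\lambda_i]=\Delta_\omega\varphi-H$ is nonnegative and bounded below by $c\sum_i(\lambda_i-1)^2$ on $[\Lambda^{-1},\Lambda]$, so it suffices to bound $\Delta_\omega\varphi=\tr_\omega\omega_\varphi-n$ from above by $Cr^{-2}$ in order to deduce $\sum_i(\lambda_i-1)^2=O(r^{-2})$, i.e. $|i\partial\bar\partial\varphi|=O(r^{-1})$. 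This upper bound I extract from a barrier argument on $f\,(\tr_\omega\omega_\varphi-n)$ based on the Yau-type inequality \eqref{crucial-est-tr-C2}, comparing against $Af^{-1}$, which satisfies $\left(\Delta_{\omega_\varphi}+\tfrac X2\cdot\right)f^{-1}\le -cf^{-1}$ by Lemma \ref{prop-basic-est-pot-fct} and Corollary \ref{coro-equiv-metrics}. The weighted third-order bound $|\nabla^g\partial\bar\partial\varphi|=O(r^{-1})$ is obtained in the same way, running the maximum principle on $S(g_\varphi,g)=|\nabla^g g_\varphi|^2_{g_\varphi}$ against $Af^{-1}$, starting from the differential inequality for $S$ derived in the proof of Proposition \ref{prop-C^3-est}.

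The step I expect to be the main obstacle is the weighted upper bounds on $\tr_\omega\omega_\varphi-n$ and on $S$: as stated, \eqref{crucial-est-tr-C2} and the inequality for $S$ carry the global constant $\inf_M\Rm(g)$ and additive terms of the form $C(n,g)$, none of which decays. The argument therefore requires re-deriving these inequalities keeping track of the fact that all such terms originate from the background curvature $\Rm(g)$, from $\Delta_\omega F$, and from the error tensor $\eta=\partial\bar\partial(F-F(g))$, each of which decays quadratically for an AC metric; only then do the inequalities take the form $\left(\Delta_{\omega_\varphi}+\tfrac X2\cdot\right)(\,\cdot\,)\ge -Cr^{-2}(\,\cdot\,+1)$ at large $r$ needed to run the barrier against $Af^{-1}$. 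This bookkeeping of the genuinely quadratic (rather than exponential) decay rate, forcing the use of the conical Schauder scale throughout, is exactly the feature distinguishing the present setting from the exponentially asymptotic case treated by Siepmann \cite{Siepmann}.
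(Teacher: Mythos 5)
Your reduction is the paper's: linearise via \eqref{equ:taylor-exp}, invoke the isomorphism of Theorem \ref{iso-sch-Laplacian}, and reduce everything to showing $\|f\,Q(\varphi)\|_{C^{0,\alpha}(M)}\le C$, for which the crux is the rough decay $|i\partial\bar\partial\varphi|=O(r^{-1})$ on unit balls. Where you diverge is in how that decay is obtained, and this is exactly where your argument is not closed. The paper's route is soft: since Propositions \ref{prop-a-prio-wei-C0-est} and \ref{prop-weighted-rad-der-est} already give $\varphi=O(f^{-1})$ and $\tfrac X2\cdot\varphi-\varphi=O(f^{-1})$ in $C^0$, one rewrites \eqref{MA} in normal holomorphic coordinates as the uniformly elliptic equation $a^{i\bar\jmath}\partial_i\partial_{\bar\jmath}\varphi=F-\tfrac X2\cdot\varphi+\varphi$ with $C^{0,\alpha}$-controlled coefficients (Corollary \ref{coro-equiv-metrics}) and a right-hand side that decays, and applies interior Morrey--Schauder estimates twice on balls $B_g(x,\delta)$: first to get $\|\varphi\|_{C^{1,\alpha}(B_g(x,\delta/2))}=O(f^{-1})$, hence $\|X\cdot\varphi\|_{C^{0,\alpha}(B_g(x,\delta))}=O(f^{-1/2})$ (one loses $f^{1/2}$ because $|X|$ grows like $f^{1/2}$), and then again to get $\|\varphi\|_{C^{2,\alpha}(B_g(x,\delta/2))}=O(f^{-1/2})$. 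This yields both the sup bound and the H\"older seminorm of $i\partial\bar\partial\varphi$ in one stroke, with no third-order input and no maximum principle for $\tr_{\omega}(\omega_\varphi)$.

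Your alternative — the convexity gap $\sum_i[(\lambda_i-1)-\log\lambda_i]=\Delta_\omega\varphi-H$ plus a barrier upper bound $\Delta_\omega\varphi\le Cr^{-2}$ — is a genuinely different idea, but it is left open at the step you yourself flag as the main obstacle, and your diagnosis of that obstacle is inaccurate. The non-decaying terms in \eqref{crucial-est-tr-C2} do \emph{not} all originate from $\Rm(g)$, $\Delta_\omega F$ and $\eta$: the dominant one is $\nabla^gX^{1,0}\ast\partial\bar\partial\varphi$, produced by commuting $\Delta_\omega$ past the linearly growing vector field, and $\nabla^gX$ tends to a multiple of the identity rather than decaying. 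This term is asymptotically a fixed multiple of $\tr_\omega(\omega_\varphi)-n$, i.e.\ of the very quantity you are trying to bound, so whether the barrier against $Af^{-1}$ closes depends on the precise coefficient and sign of this zeroth-order contribution (it happens to come out as $+\tfrac12(\tr_\omega(\omega_\varphi)-n)$, which is favourable, but you have not computed it), and also on knowing a priori that $f(\tr_\omega(\omega_\varphi)-n)$ attains its supremum, which does not follow from $\varphi\in\mathcal D^{2,\alpha}_{f,X}(M)$ alone. The same objection applies, more seriously, to your proposed barrier for $S(g_\varphi,g)$: there the inequality of Proposition \ref{prop-C^3-est} contains $\Ric(g_\varphi)\ast\Psi\ast\Psi$ terms whose decay is not known a priori. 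Note finally that you are reaching for a strictly stronger intermediate statement than needed: the paper never proves $\Delta_\omega\varphi\le Cr^{-2}$ at this stage, only $|i\partial\bar\partial\varphi|=O(r^{-1})$, which already gives $Q(\varphi)=O(r^{-2})$. I would therefore replace the whole barrier scheme by the two-step local Schauder argument, which uses only estimates you already have in hand.
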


\begin{proof}
Since $\varphi$ is a solution to (\ref{MA}), we have by (\ref{equ:taylor-exp}),
\begin{eqnarray}
F=MA(\varphi)=\Delta_{\omega}\varphi+\frac{X}{2}\cdot\varphi-\varphi-\int_0^1\int_0^{\tau}\arrowvert \partial\bar{\partial}\varphi\arrowvert^2_{g_{\sigma\varphi}}d\sigma d\tau.\label{MA-lin-form}
\end{eqnarray}
In order to apply Theorem \ref{iso-sch-Laplacian} to obtain the desired a priori weighted $C^2$-bound, it suffices to prove the following claim which gives a rough estimate on the second derivatives of $\varphi$.
\begin{claim}\label{claim-int-est-a priori}
For any $\alpha\in(0,1)$,
 \begin{equation*}
\left\|\int_0^1\int_0^{\tau}\arrowvert \partial\bar{\partial}\varphi\arrowvert^2_{g_{\sigma\varphi}}d\sigma d\tau\right\|_{C_{con,f}^{0,\alpha}(M)}\leq C\left(n,\alpha,w,\|F\|_{\mathcal{C}^{3,\alpha}_{f,X}(M)}\right).
\end{equation*}
 \end{claim}
 \begin{proof}[Proof of Claim \ref{claim-int-est-a priori}]
We proceed by using the local (Morrey)-Schauder estimates as in \cite[Section 5.4.1.8]{Siepmann}.

Let $x\in M$ and choose normal holomorphic coordinates in a ball $B_{g}(x,\delta)$ for some $\delta>0$ uniform in $x\in M$ (cf.~Lemma \ref{hol-coor}). Then we have that
 \begin{eqnarray*}
F&=&\log\left(\frac{\omega_{\varphi}^n}{\omega^n}\right)+\frac{X}{2}\cdot\varphi-\varphi\\
&=&\left(\int_0^1 g_{\tau\varphi}^{i\bar{\jmath}}d\tau\right)\partial_i\partial_{\bar{\jmath}}\varphi+\frac{X}{2}\cdot\varphi-\varphi\\
&=&a^{i\bar{\jmath}}\partial_i\partial_{\bar{\jmath}}\varphi+\frac{X}{2}\cdot\varphi-\varphi.
\end{eqnarray*}
Now, by Corollary \ref{coro-equiv-metrics}, $\|a^{i\bar{\jmath}}\|_{C^{0,\alpha}(B_{g}(x,\delta))}$ is uniformly bounded from above and $a^{i\bar{\jmath}}\geq \Lambda^{-1}\delta^{i\bar{\jmath}}$ on $B_{g}(x,\delta)$ for some uniform constant $\Lambda>0$. Therefore, by considering $\frac{X}{2}\cdot \varphi-\varphi$ as a source term, the local Morrey-Schauder estimates \cite[Chapter 3]{Lun-Sch-Est} yield
\begin{eqnarray*}
\|\varphi\|_{C^{1,\alpha}(B_g(x,\delta/2))}\leq C\left(\left\|\frac{X}{2}\cdot\varphi-\varphi\right\|_{C^{0}(B_g(x,\delta))}+\|F\|_{C^0(B_g(x,\delta))}+\|\varphi\|_{C^0(B_g(x,\delta))}\right)
\end{eqnarray*}
for some uniform positive constant $C=C\left(n,\alpha,\omega,\|F\|_{\mathcal{C}^{3,\alpha}_{f,X}(M)}\right)$. Moreover, Propositions \ref{prop-a-prio-wei-C0-est} and \ref{prop-weighted-rad-der-est} imply that
\begin{eqnarray*}
\sup_{x\in M}f(x)\|\varphi\|_{C^{1,\alpha}(B_g(x,\delta/2))}\leq C\left(n,\alpha,\omega,\|F\|_{\mathcal{C}^{3,\alpha}_{f,X}(M)}\right),
\end{eqnarray*}
which, in turn, gives rise to the following rough a priori decay:
\begin{eqnarray*}
\sup_{x\in M}f^{\frac{1}{2}}(x)\|X\cdot\varphi\|_{C^{0,\alpha}(B_g(x,\delta))}\leq C\left(n,\alpha,\omega,\|F\|_{\mathcal{C}^{3,\alpha}_{f,X}(M)}\right).
\end{eqnarray*}
Thus, the Schauder estimates imply that
\begin{eqnarray*}
\|\varphi\|_{C^{2,\alpha}(B_g(x,\delta/2))}&\leq& C\left(\|F\|_{C^{0,\alpha}(B_g(x,\delta))}+\left\|\frac{X}{2}\cdot\varphi-\varphi\right\|_{C^{0,\alpha}(B_g(x,\delta))}+\|\varphi\|_{C^{0,\alpha}(B_g(x,\delta))}\right)\\
&\leq& Cf(x)^{-\frac{1}{2}}
\end{eqnarray*}
for some uniform positive constant $C=C\left(n,\alpha,\omega,\|F\|_{\mathcal{C}^{3,\alpha}_{f,X}(M)}\right)$. The desired rough a priori estimate on $\partial\bar{\partial}\varphi$ now follows.
\end{proof}
\end{proof}

\subsection{Weighted $C^3$-estimates}
\begin{theorem}\label{theo-C^3-wei-est}
Let $F\in \mathcal{C}^{3,\alpha}_{f,X}(M)$ for some $\alpha\in(0,1)$ and let $\varphi$ be a solution to (\ref{MA}) in $\mathcal{D}^{2+1,\alpha}_{f,X}(M)$. Then
 \begin{eqnarray*}
\|\varphi\|_{\mathcal{D}^{2+1,\alpha}_{f,X}(M)}\leq C\left(n,\alpha,w,\|F\|_{\mathcal{C}^{3,\alpha}_{f,X}(M)}\right).
\end{eqnarray*}
\end{theorem}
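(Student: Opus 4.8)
The strategy is to reproduce the scheme of Proposition~\ref{prop-C^2-wei-est} one order of differentiation higher: to control the nonlinear term of \eqref{MA} in the weighted space $\mathcal{C}^{1,\alpha}_{f,X}(M)$ and then to invoke the rescaled Schauder isomorphism of Theorem~\ref{iso-sch-Laplacian}. Writing $\varphi$ via the Taylor expansion \eqref{equ:taylor-exp} turns \eqref{MA} into the linear relation
\begin{equation*}
\Delta_{\omega}\varphi+\frac{X}{2}\cdot\varphi-\varphi=F+Q(\varphi),\qquad Q(\varphi):=\int_0^1\int_0^{\tau}\arrowvert\partial\bar{\partial}\varphi\arrowvert^2_{g_{\sigma\varphi}}\,d\sigma\,d\tau.
\end{equation*}
Since $\varphi\in\mathcal{D}^{2+1,\alpha}_{f,X}(M)$ by hypothesis and $\Delta_{\omega}+\frac{X}{2}\cdot-\Id:\mathcal{D}^{2+1,\alpha}_{f,X}(M)\to\mathcal{C}^{1,\alpha}_{f,X}(M)$ is an isomorphism with bounded inverse by Theorem~\ref{iso-sch-Laplacian}, one has
\begin{equation*}
\|\varphi\|_{\mathcal{D}^{2+1,\alpha}_{f,X}(M)}\leq C\,\|F+Q(\varphi)\|_{\mathcal{C}^{1,\alpha}_{f,X}(M)}.
\end{equation*}
As $F\in\mathcal{C}^{3,\alpha}_{f,X}(M)\subset\mathcal{C}^{1,\alpha}_{f,X}(M)$ by assumption, everything reduces to the single a priori bound
\begin{equation*}
\|Q(\varphi)\|_{C^{1,\alpha}_{con,f}(M)}\leq C\!\left(n,\alpha,\omega,\|F\|_{\mathcal{C}^{3,\alpha}_{f,X}(M)}\right),
\end{equation*}
the $JX$-invariance of $Q(\varphi)$ being automatic; this is the exact analogue of Claim~\ref{claim-int-est-a priori} with one extra weighted derivative.

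To establish this bound I would continue the local (Morrey--)Schauder bootstrap of Proposition~\ref{prop-C^2-wei-est}. In normal holomorphic coordinates on a ball $B_g(x,\delta)$ of radius $\delta$ uniform in $x$ (Lemma~\ref{hol-coor}), the equation reads $a^{i\bar{\jmath}}\partial_i\partial_{\bar{\jmath}}\varphi=F-\frac{X}{2}\cdot\varphi+\varphi$, where $a^{i\bar{\jmath}}=\int_0^1 g_{\tau\varphi}^{i\bar{\jmath}}\,d\tau$ is uniformly elliptic and, by Corollary~\ref{coro-equiv-metrics}, uniformly bounded in $C^{0,\alpha}$. Proposition~\ref{prop-C^2-wei-est} already provides weighted control of $\partial\bar{\partial}\varphi$; the new ingredient is a weighted bound on the third derivatives $\nabla^g\partial\bar{\partial}\varphi$. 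Feeding the decay of the data $F\in\mathcal{C}^{3,\alpha}_{f,X}(M)$ and of $\frac{X}{2}\cdot\varphi-\varphi$ (controlled by Propositions~\ref{prop-c^0-rad-der} and~\ref{prop-weighted-rad-der-est}) into the interior estimates on each fixed-radius ball and tracking the rates through the conical scaling $x\mapsto f(x)$, one upgrades the previous weighted $C^{2,\alpha}$-bound to a weighted $C^{3,\alpha}$-bound. Since schematically $\nabla^g Q(\varphi)=\partial\bar{\partial}\varphi\ast\nabla^g\partial\bar{\partial}\varphi$ and $|\nabla^g f|=O(f^{1/2})$, the weighted decay of $\partial\bar{\partial}\varphi$ and $\nabla^g\partial\bar{\partial}\varphi$ then forces both $f\,Q(\varphi)$ and $f^{1/2}\nabla^g(f\,Q(\varphi))$ to remain bounded in $C^{0,\alpha}$, which is exactly the assertion $Q(\varphi)\in C^{1,\alpha}_{con,f}(M)$ with controlled norm.

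The main obstacle is precisely this weighted third-derivative bound. The coefficients $a^{i\bar{\jmath}}$ are only known to be uniformly $C^{0,\alpha}$ (Corollary~\ref{coro-equiv-metrics}), so a direct $C^{3,\alpha}$-Schauder estimate with uniform constants is unavailable: it would require $a^{i\bar{\jmath}}\in C^{1,\alpha}$, i.e. $\nabla^g\partial\bar{\partial}\varphi\in C^{0,\alpha}$, which is essentially the bound being sought. I would circumvent this by differentiating the equation once and moving the top-order term $(\partial_k a^{i\bar{\jmath}})\,\partial_i\partial_{\bar{\jmath}}\varphi$ onto the right-hand side; the factor $\partial_k a^{i\bar{\jmath}}\sim\nabla^g\partial\bar{\partial}\varphi$ is controlled by the \emph{global} $C^0$-estimate of Proposition~\ref{prop-C^3-est}, which, together with the local smoothness of $\varphi$ furnished by the local regularity theory (Proposition~\ref{prop-loc-reg}), is enough to run the interior estimates for $\partial_k\varphi$ on each fixed-radius ball. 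Propagating the resulting local bounds through the conical scaling converts the global $C^3$-bound into the desired weighted decay of $\nabla^g\partial\bar{\partial}\varphi$, and the proof then concludes by inserting the resulting bound on $\|Q(\varphi)\|_{C^{1,\alpha}_{con,f}(M)}$ into Theorem~\ref{iso-sch-Laplacian} as above.
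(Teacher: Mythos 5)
Your overall architecture is exactly the paper's: reduce via \eqref{MA-lin-form} to a weighted $C^{1,\alpha}$ bound on $Q(\varphi)=\int_0^1\int_0^{\tau}|\partial\bar{\partial}\varphi|^2_{g_{\sigma\varphi}}\,d\sigma d\tau$, feed it into the isomorphism of Theorem \ref{iso-sch-Laplacian}, and recognise that the obstruction is a uniform $C^{1,\alpha}$ bound on the coefficients $a^{i\bar{\jmath}}$ in \eqref{MA-equ-hol-coord}, i.e.\ a global $C^{3,\alpha}$ bound on $\varphi$ (this is precisely Claim \ref{claim-C^3-alpha-est} of the paper). The gap is in your workaround for that obstruction. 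Differentiating the equation once and putting $(\partial_k a^{i\bar{\jmath}})\partial_i\partial_{\bar{\jmath}}\varphi$ on the right-hand side gives an equation $a^{i\bar{\jmath}}\partial_i\partial_{\bar{\jmath}}(\partial_k\varphi)=h$ with $a\in C^{0,\alpha}$ and $h$ controlled only in $C^{0}$ (Proposition \ref{prop-C^3-est} bounds $\nabla^g\partial\bar{\partial}\varphi$ in sup-norm, not in $C^{0,\alpha}$). The interior Morrey--Schauder estimates then yield at best $\partial_k\varphi\in C^{1,\alpha}$, i.e.\ $\varphi\in C^{2,\alpha}$ — information you already have from Proposition \ref{prop-C^2-est} and Corollary \ref{coro-equiv-metrics}. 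To reach $\partial_k\varphi\in C^{2,\alpha}$ from this equation you would need $h\in C^{0,\alpha}$, which requires $[\nabla^g\partial\bar{\partial}\varphi]_{\alpha}$ — exactly the quantity being sought. So the loop does not close: you lose one derivative, and without the H\"older seminorm of the third derivatives you can control neither $\|a^{i\bar{\jmath}}\|_{C^{1,\alpha}}$ nor $\|f^{1/2}\nabla^g(fQ(\varphi))\|_{C^{0,\alpha}}$, both of which your argument needs.

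The paper escapes this by a two-operator trick rather than by differentiating the Monge--Amp\`ere equation directly. Set $u:=\Delta_{\omega}\varphi+\frac{X}{2}\cdot\varphi-\varphi$. Yau's identity \eqref{eq-sec-der-yau}, rewritten as \eqref{weloveele}, exhibits $\Delta_{\omega_{\varphi}}u$ as a sum of terms — including $g^{-1}\ast g_{\varphi}^{-1}\ast g_{\varphi}^{-1}\ast\bar{\nabla}\nabla\bar{\nabla}\varphi\ast\nabla\bar{\nabla}\nabla\varphi$ — that are uniformly bounded in $C^{0}$ by Propositions \ref{prop-C^2-est} and \ref{prop-C^3-est}; here the dangerous third-derivative quantity appears as a bounded \emph{source}, not as a differentiated \emph{coefficient}. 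Since the coefficients of $\Delta_{\omega_{\varphi}}$ are uniformly $C^{0,\alpha}$ by Corollary \ref{coro-equiv-metrics}, the Morrey--Schauder estimates give $\|u\|_{C^{1,\alpha}(M)}\leq C$. One then writes $\Delta_{\omega}\varphi=u-\frac{X}{2}\cdot\varphi+\varphi\in C^{1,\alpha}$ and applies the Schauder estimates for the \emph{fixed, smooth-coefficient} operator $\Delta_{\omega}$, which is what gains the missing derivative and yields $\|\varphi\|_{C^{3,\alpha}(M)}\leq C$. With that uniform bound in hand, the local weighted Schauder estimates on the balls $B_g(x,\delta)$ and the final application of Theorem \ref{iso-sch-Laplacian} proceed exactly as you describe. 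I would therefore replace the paragraph containing your circumvention with this argument via $u$ and the background Laplacian; the rest of your proof then goes through.
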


\begin{proof}
The proof is almost identical to the proof of Proposition \ref{prop-C^2-wei-est}. Recall that if $x\in M$ and if we choose normal holomorphic coordinates on $B_{g}(x,\delta)$ for some constant $\delta>0$ uniform in $x\in M$ (cf.~Lemma \ref{hol-coor}), then
 \begin{eqnarray}
F&=&a^{i\bar{\jmath}}\partial_i\partial_{\bar{\jmath}}\varphi+\frac{X}{2}\cdot\varphi-\varphi.\label{MA-equ-hol-coord}
\end{eqnarray}
In order to apply the Schauder estimates, we need an a priori $C^{1,\alpha}$-bound on the coefficients $(a^{i\bar{\jmath}})_{i\bar{\jmath}}$. Arguing as in the proof of Corollary \ref{coro-equiv-metrics}, it suffices to prove the following claim.
\begin{claim}\label{claim-C^3-alpha-est}
There exists a uniform bound on the $C^{3,\alpha}(M)$ norm of $\varphi$, i.e.,
\begin{eqnarray*}
\|\varphi\|_{C^{3,\alpha}}\leq C\left(n,\alpha,\omega,\|F\|_{\mathcal{C}_{f,X}^{3,\alpha}}\right).
\end{eqnarray*}
\begin{proof}[Proof of Claim \ref{claim-C^3-alpha-est}]
From the proof of the $C^2$-estimate (cf.~Proposition \ref{prop-C^2-est} and equation (\ref{eq-sec-der-yau})), one has
\begin{equation}\label{weloveele}
\begin{split}
\Delta_{\omega_{\varphi}}\left(\Delta_{\omega}\varphi+\frac{X}{2}\cdot\varphi-\varphi\right) =& \Delta_{\omega_{\varphi}}F+g_{\varphi}^{-1}\ast g^{-1}\ast\Rm(g)\\
&+g^{-1}\ast g^{-1}\ast\Rm(g)+g^{-1}\ast g_{\varphi}^{-1}\ast g_{\varphi}^{-1}\ast \bar{\nabla}\nabla\bar{\nabla}\varphi\ast\nabla\bar{\nabla}\nabla \varphi,
\end{split}
\end{equation}
where $\ast$ denotes the ordinary contraction of two tensors. By Propositions \ref{prop-C^2-est} and \ref{prop-C^3-est}, the $C^0(M)$-norm of the right-hand side of \eqref{weloveele} is uniformly bounded and, thanks to Corollary \ref{coro-equiv-metrics}, so also are the coefficients of $\Delta_{\omega_{\varphi}}$ in the $C^{0,\alpha}$ sense. Consequently, by applying the Morrey-Schauder $C^{1,\alpha}$-estimates, we see that
\begin{eqnarray*}
\left\|\Delta_{\omega}\varphi+\frac{X}{2}\cdot\varphi-\varphi\right\|_{C^{1,\alpha}(M)}\leq C\left(n,\alpha,\omega,\|F\|_{C^{3,\alpha}_{f,X}(M)}\right).
\end{eqnarray*}
Applying the Schauder estimates once again with respect to $\Delta_{\omega}$, we find, using Proposition \ref{prop-C^2-wei-est}, that
\begin{eqnarray*}
\|\varphi\|_{C^{3,\alpha}(M)}&\leq& C(n,\alpha,\omega)\left(\|\Delta_{\omega}\varphi\|_{C^{1,\alpha}(M)}+\|\varphi\|_{C^{1,\alpha}(M)}\right)\\
&\leq& C\left(n,\alpha,\omega,\|F\|_{C^{3,\alpha}_{f,X}(M)}\right).
\end{eqnarray*}
\end{proof}
\end{claim}
With Claim \ref{claim-C^3-alpha-est} in hand, one can now apply the Schauder estimates to (\ref{MA-equ-hol-coord}) in the following way:
\begin{eqnarray*}
\|\varphi\|_{C^{3,\alpha}(B_g(x,\delta/2))}\leq C\left(\left\|\frac{X}{2}\cdot\varphi-\varphi\right\|_{C^{1,\alpha}(B_g(x,\delta))}+\|\varphi\|_{C^{1,\alpha}(B_g(x,\delta))}+\|F\|_{C^{1,\alpha}(B_g(x,\delta))}\right),
\end{eqnarray*}
where $C:=C\left(n,\alpha,w,\|F\|_{\mathcal{C}^{3,\alpha}_{f,X}(M)}\right).$ Thanks to Proposition \ref{prop-C^2-wei-est}, this estimate implies a rough decay on the third derivatives of $\varphi$; more precisely, it implies that
\begin{eqnarray*}
\sup_{x\in M}f(x)\|\varphi\|_{C^{3,\alpha}(B_g(x,\delta/2))}\leq C\left(n,\alpha,w,\|F\|_{\mathcal{C}^{3,\alpha}_{f,X}(M)}\right).
\end{eqnarray*}
To complete the proof of Theorem \ref{theo-C^3-wei-est}, it suffices to invoke Theorem \ref{iso-sch-Laplacian}. Indeed, since $\varphi$ satisfies (\ref{MA-lin-form}), and since $$F+\int_0^1\int_0^{\tau}\arrowvert \partial\bar{\partial}\varphi\arrowvert^2_{g_{\sigma\varphi}}d\sigma d\tau$$ is uniformly bounded in the $C^{1,\alpha}_{con,f}(M)$ sense by the previous rough estimate combined with Proposition \ref{prop-C^2-wei-est}, one can apply Theorem \ref{iso-sch-Laplacian} to assert that
\begin{eqnarray*}
\|\varphi\|_{\mathcal{D}^{2+1,\alpha}_{f,X}(M)}\leq C(n,\alpha,\omega)\left\|\Delta_{\omega}\varphi+\frac{X}{2}\cdot\varphi-\varphi\right\|_{\mathcal{C}^{1,\alpha}_{f,X}(M)}\leq C\left(n,\alpha,\omega,\|F\|_{\mathcal{C}^{3,\alpha}_{f,X}(M)}\right).
\end{eqnarray*}
\end{proof}

\newpage
\section{Bootstrapping}\label{section-bootstrapping}
In this section, we prove that if $\varphi$ is a solution to (\ref{MA}) with some finite regularity at infinity, say $\varphi\in \mathcal{D}^{k,\alpha}_{f,X}(M)$ for some $k\in\mathbb{N}$ and $\alpha\in(0,1)$, and if $F\in \mathcal{C}^{\infty}_{f,X}(M)$, then $\varphi\in \mathcal{D}^{k+1,\alpha}_{f,X}(M)$ with corresponding estimates on these norms. Again, we follow \cite{Siepmann}, but, since the rate of convergence to the asymptotic cone is only polynomial in our case, we must pay careful attention to the rough a priori estimates which are crucial intermediate steps in obtaining the full a priori estimate.
\begin{theorem}\label{theo-bootstrap}
Let $\varphi\in \mathcal{D}^{k+2,\alpha}_{f,X}(M)$ for some integer $k\geq 1$ and $\alpha\in(0,1)$ be a K\"ahler potential that is a solution to the complex Monge-Amp\`ere equation (\ref{MA}) with $F\in \mathcal{C}^{\infty}_{f,X}(M)$. Then $\varphi\in \mathcal{D}^{\infty}_{f,X}(M)$. Moreover, one has the following estimate:
\begin{eqnarray*}
\|\varphi\|_{\mathcal{D}^{(k+1)+2,\alpha}_{f,X}(M)}\leq C\left(n,k,\alpha,\omega,\|F\|_{\mathcal{C}^{\max\{k+1,3\},\alpha}_{f,X}(M)}\right).
\end{eqnarray*}
\end{theorem}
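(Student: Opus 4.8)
The plan is to bootstrap by combining the linearised form of the equation with the isomorphism of Theorem \ref{iso-sch-Laplacian}, exactly in the spirit of the proof of Theorem \ref{theo-C^3-wei-est}, gaining one derivative at a time and then iterating. Writing (\ref{MA}) in the Taylor-expanded form (\ref{equ:taylor-exp}), the potential $\varphi$ satisfies
$$\Delta_{\omega}\varphi + \frac{X}{2}\cdot\varphi - \varphi = F + Q(\varphi), \qquad Q(\varphi) := \int_0^1\int_0^{\tau} |\partial\bar{\partial}\varphi|^2_{g_{\sigma\varphi}}\,d\sigma\,d\tau,$$
so that $L\varphi = F + Q(\varphi)$, where $L := \Delta_{\omega} + \frac{X}{2}\cdot - \Id$. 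By Theorem \ref{iso-sch-Laplacian}, the map $L : \mathcal{D}^{(k+1)+2,\alpha}_{f,X}(M) \to \mathcal{C}^{k+1,\alpha}_{f,X}(M)$ is an isomorphism of Banach spaces, so it suffices to show that the right-hand side lies in $\mathcal{C}^{k+1,\alpha}_{f,X}(M)$ with norm controlled by the data. As $F \in \mathcal{C}^{\infty}_{f,X}(M) \subset \mathcal{C}^{k+1,\alpha}_{f,X}(M)$, the whole problem reduces to proving that $Q(\varphi) \in \mathcal{C}^{k+1,\alpha}_{f,X}(M)$ under the sole hypothesis $\varphi \in \mathcal{D}^{k+2,\alpha}_{f,X}(M)$.

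The difficulty is that $Q(\varphi)$ is a contraction of two copies of $\partial\bar{\partial}\varphi$, and from $\varphi \in \mathcal{D}^{k+2,\alpha}_{f,X}(M)$ one controls only $k$ derivatives of $\partial\bar{\partial}\varphi$, so a naive product estimate lands $Q(\varphi)$ in $\mathcal{C}^{k,\alpha}_{f,X}(M)$ and loses a derivative. To recover it I would first establish interior regularity. Differentiating (\ref{MA}) in a frame direction $e$ and setting $\psi := \partial_e\varphi$ gives a linear equation $g_{\varphi}^{i\bar{\jmath}}\partial_i\partial_{\bar{\jmath}}\psi + \frac{X}{2}\cdot\psi - \psi = h$, whose coefficients $g_{\varphi}^{-1}$ lie in $C^{k,\alpha}_{loc}$ (by the assumed regularity together with the rescaled Schauder embeddings of Theorem \ref{iso-sch-Laplacian}, and uniformly elliptic by Corollary \ref{coro-equiv-metrics}), and whose source $h$ — built from derivatives of the fixed background data, of $F \in C^{\infty}$, and of $\varphi$ itself — also lies in $C^{k,\alpha}_{loc}$. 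Locally $X$ is a bounded smooth vector field, so the local (Morrey--)Schauder estimates on balls $B_g(x,\delta)$ of uniform radius, applied as in Proposition \ref{prop-C^2-wei-est} and Theorem \ref{theo-C^3-wei-est}, yield $\psi \in C^{k+2,\alpha}_{loc}$, hence $\varphi \in C^{(k+1)+2,\alpha}_{loc}$; in particular $\partial\bar{\partial}\varphi$ now possesses $k+1$ controlled derivatives.

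The second and main step is to propagate the conical weights through this local argument, producing a \emph{rough} weighted decay on the top-order derivatives. Feeding the already-established weighted bounds on $\varphi$ (Propositions \ref{prop-a-prio-wei-C0-est}, \ref{prop-c^0-rad-der}, \ref{prop-weighted-rad-der-est} and Theorem \ref{theo-C^3-wei-est}) into the local Schauder estimate and evaluating it against the scale $f(x) \sim r(x)^2$ on $B_g(x,\delta)$ bounds $\sup_{x\in M} f(x)\|\varphi\|_{C^{(k+1)+2,\alpha}(B_g(x,\delta/2))}$, a crude conical decay for the new derivatives. This is exactly where the polynomial (rather than exponential) convergence to the cone forces care, as flagged in the section preamble: one cannot fold the weights into an exponential factor as Siepmann does, and must instead carry the conical weights by hand. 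With this rough decay in place, the quadratic structure of $Q(\varphi)$ finishes the job: each factor $\partial\bar{\partial}\varphi$ decays like $O(r^{-4})$ with its derivatives (conically), so $|\partial\bar{\partial}\varphi|^2$ and its first $k+1$ derivatives decay far faster than the rate $O(r^{-2})$ required for membership in $\mathcal{C}^{k+1,\alpha}_{f,X}(M)$; together with the straightforward $JX$-invariance this gives $Q(\varphi) \in \mathcal{C}^{k+1,\alpha}_{f,X}(M)$ with the required norm control.

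Feeding this back into Theorem \ref{iso-sch-Laplacian} then gives $\varphi \in \mathcal{D}^{(k+1)+2,\alpha}_{f,X}(M)$ together with
$$\|\varphi\|_{\mathcal{D}^{(k+1)+2,\alpha}_{f,X}(M)} \le C\big\|F + Q(\varphi)\big\|_{\mathcal{C}^{k+1,\alpha}_{f,X}(M)} \le C\left(n,k,\alpha,\omega,\|F\|_{\mathcal{C}^{\max\{k+1,3\},\alpha}_{f,X}(M)}\right),$$
the appearance of $\max\{k+1,3\}$ reflecting that the base of the induction is the $\mathcal{D}^{3,\alpha}_{f,X}$-estimate of Theorem \ref{theo-C^3-wei-est}, which already consumes three derivatives of $F$. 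Since $F \in \mathcal{C}^{\infty}_{f,X}(M)$, one finally iterates the gain from $\mathcal{D}^{k+2,\alpha}_{f,X}(M)$ to $\mathcal{D}^{(k+1)+2,\alpha}_{f,X}(M)$ over all $k$ to conclude $\varphi \in \mathcal{D}^{\infty}_{f,X}(M) = \bigcap_{m\ge 0}\mathcal{D}^{m,\alpha}_{f,X}(M)$. I expect the rough weighted decay on the top derivatives to be the genuine obstacle, the remaining steps being either standard elliptic theory or direct invocations of results already in hand.
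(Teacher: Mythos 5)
Your proposal follows the paper's proof essentially step for step: first an unweighted $C^{(k+1)+2,\alpha}(M)$ bound obtained by differentiating the equation and applying local Schauder estimates, then a rough conically weighted decay on the top-order derivatives (correctly identified as the delicate point in the polynomial-rate setting), and finally an application of Theorem \ref{iso-sch-Laplacian} to the linearised form $L\varphi=F+Q(\varphi)$. The one step you elide is that the isomorphism produces a solution $\tilde{\varphi}\in\mathcal{D}^{(k+1)+2,\alpha}_{f,X}(M)$ which must then be identified with $\varphi$ (known a priori only to lie in the larger space $\mathcal{D}^{k+2,\alpha}_{f,X}(M)$) via the maximum-principle uniqueness of Claim \ref{claim-injectivity-max-ppe}; this is exactly how the paper closes the argument.
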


\begin{remark}
The a priori estimate we obtain in Theorem \ref{theo-bootstrap} is called a ``(rough) tame'' estimate in the terminology of \cite{Ham-Nas-Mos}. In \cite{Der-Smo-Pos-Cur-Con}, because the nonlinearities of the expanding gradient Ricci soliton equation are more tedious in the generic Riemannian case, we used a more refined method to derive such tame estimates, one that allowed us to make use of the Nash-Moser implicit function theorem as presented in \cite{Ham-Nas-Mos}.
\end{remark}

\begin{proof}
We divide the proof of Theorem \ref{theo-bootstrap} into three steps that follow closely the steps of the proof of Theorem \ref{theo-C^3-wei-est}. In the course of the proof, we denote by $C$ a positive constant that only depends on the data $n,k,\alpha,\omega,\|F\|_{\mathcal{C}^{\max\{k+1,3\},\alpha}_{f,X}(M)}$. Moreover, in order to keep clarity in our notation, we will denote the Levi-Civita connection of $g$ by $\nabla$. We start with a (non-weighted) a priori $C^{k+3,\alpha}(M)$-bound.\\
\begin{claim}\label{claim-C-k+3}
Under the assumptions of Theorem \ref{theo-bootstrap}, $\|\varphi\|_{C^{(k+1)+2,\alpha}(M)}\leq C$.
\end{claim}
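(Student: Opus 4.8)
The plan is to prove the claim by induction on the order of differentiation, gaining one derivative at a time from the base regularity $\|\varphi\|_{C^{3,\alpha}(M)}\le C$ already furnished by Claim \ref{claim-C^3-alpha-est}. Throughout, the local higher regularity of $\varphi$ that legitimises the forthcoming differentiations is guaranteed by Proposition \ref{prop-loc-reg}, so that the real content of the claim is the derivation of \emph{uniform} (i.e.\ $x$-independent) bounds. Concretely, I would work in the uniform-radius normal holomorphic coordinate balls $B_{g}(x,\delta)$ of Lemma \ref{hol-coor} and establish, for each integer $m$ with $3\le m\le k+2$, the implication $\|\varphi\|_{C^{m,\alpha}(M)}\le C\implies\|\varphi\|_{C^{m+1,\alpha}(M)}\le C$; the desired estimate $\|\varphi\|_{C^{(k+1)+2,\alpha}(M)}=\|\varphi\|_{C^{k+3,\alpha}(M)}\le C$ then follows after finitely many steps. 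The uniformity of the geometry (bounded injectivity radius, and uniform equivalence of $g$ and $g_{\varphi}$ with $C^{0,\alpha}$-control from Corollary \ref{coro-equiv-metrics}) is what will let the interior Schauder constants be chosen independently of $x$.

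For the inductive step I would differentiate the complex Monge-Amp\`ere equation \eqref{MA} \emph{in its determinant form} rather than the averaged form \eqref{MA-equ-hol-coord}. Applying $\partial_{\ell}$ to $\log\det(g_{i\bar{\jmath}}+\varphi_{i\bar{\jmath}})-\log\det(g_{i\bar{\jmath}})=F-\tfrac{X}{2}\cdot\varphi+\varphi$ in the coordinates of $B_{g}(x,\delta)$, each first derivative $w:=\partial_{\ell}\varphi$ is seen to satisfy a \emph{linear, uniformly elliptic, drift-free} equation
\[
g_{\varphi}^{i\bar{\jmath}}\partial_i\partial_{\bar{\jmath}}w=H_{\ell},\qquad H_{\ell}:=\partial_{\ell}F-\partial_{\ell}\!\left(\tfrac{X}{2}\cdot\varphi\right)+\partial_{\ell}\varphi-g_{\varphi}^{i\bar{\jmath}}\partial_{\ell}g_{i\bar{\jmath}}+g^{i\bar{\jmath}}\partial_{\ell}g_{i\bar{\jmath}}.
\]
The crucial structural point, and the reason the determinant form gains a derivative while the averaged form does not, is that the third derivative of $\varphi$ produced by $\partial_{\ell}$ is exactly the principal term $\partial_i\partial_{\bar{\jmath}}w$ that one solves for, so no stray coefficient of the form $\nabla^{3}\varphi\ast\nabla^{2}\varphi$ appears; moreover the transport term is pushed entirely into $H_{\ell}$, so the operator $g_{\varphi}^{i\bar{\jmath}}\partial_i\partial_{\bar{\jmath}}$ carries no unbounded coefficient. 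Since $g_{\varphi}^{i\bar{\jmath}}\in C^{m-2,\alpha}(B_{g}(x,\delta))$ with uniform bounds (Corollary \ref{coro-equiv-metrics} and the inductive hypothesis), the interior Morrey-Schauder estimates of \cite[Chapter 3]{Lun-Sch-Est}, applied on $B_{g}(x,\delta/2)\subset B_{g}(x,\delta)$, would yield $w\in C^{m,\alpha}$ with $x$-uniform bounds once $H_{\ell}\in C^{m-2,\alpha}(B_{g}(x,\delta))$ is controlled uniformly. Combined with the decay of $\varphi$ and its derivatives at infinity, this delivers $\varphi\in C^{m+1,\alpha}(M)$.

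The one genuine difficulty I expect is the verification that $H_{\ell}\in C^{m-2,\alpha}(B_{g}(x,\delta))$ \emph{uniformly}, owing to the presence of the unbounded vector field $X$, for which $|X|\sim f^{1/2}$, through the term $\partial_{\ell}(\tfrac{X}{2}\cdot\varphi)\supset\tfrac12 X^{k}\partial_{\ell}\partial_{k}\varphi$. I would handle this by exploiting that $X$ is holomorphic with uniformly bounded covariant derivatives of every positive order (as already used in the proofs of Propositions \ref{prop-C^2-est} and \ref{prop-C^3-est}, via Lemma \ref{prop-basic-est-pot-fct}): when the $m-2$ derivatives needed for the H\"older norm land on $X^{k}\partial^{2}\varphi$, every term in which $X$ itself survives has the schematic form $X\ast\nabla^{m}\varphi$ with $m\le k+2$, while all remaining terms carry at least one derivative of $X$ and at most $m-1$ derivatives of $\varphi$. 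The hypothesis $\varphi\in\mathcal{D}^{k+2,\alpha}_{f,X}(M)$, together with the weighted a priori estimates already proved (Proposition \ref{prop-c^0-rad-der}, Proposition \ref{prop-weighted-rad-der-est}, Theorem \ref{theo-C^3-wei-est}), supplies the weighted decay $\nabla^{m}\varphi=\textit{O}(f^{-1-m/2})$ for these orders, which beats the linear growth of $X$ and renders $X\ast\nabla^{m}\varphi=\textit{O}(f^{-(m+1)/2})$ uniformly bounded. This is precisely why the weighted estimates of the previous section must precede the bootstrap, and it is the step demanding the most bookkeeping. Assembling the $x$-uniform interior estimates and invoking the decay of $\varphi$ at infinity then closes the induction and yields the global bound $\|\varphi\|_{C^{k+3,\alpha}(M)}\le C$, proving the claim.
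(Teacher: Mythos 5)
Your argument is correct, but it takes a different route from the paper. The paper does not differentiate the Monge--Amp\`ere equation in $\partial_{\ell}$; instead it recycles the second-order identity \eqref{salut} from the $C^{2}$-estimate, namely $\Delta_{\omega_{\varphi}}\bigl(\Delta_{\omega}\varphi+\tfrac{X}{2}\cdot\varphi-\varphi\bigr)=\Delta_{\omega_{\varphi}}F+\dots+g^{-1}\ast g_{\varphi}^{-1}\ast g_{\varphi}^{-1}\ast\bar{\nabla}\nabla\bar{\nabla}\varphi\ast\nabla\bar{\nabla}\nabla\varphi$, observes that under the hypothesis $\varphi\in C^{k+2,\alpha}$ the coefficients lie in $C^{k-1,\alpha}$ and the right-hand side in $C^{k-1,\alpha}$, applies Schauder once to conclude $\Delta_{\omega}\varphi+\tfrac{X}{2}\cdot\varphi-\varphi\in C^{k+1,\alpha}$, uses the assumption on $\varphi$ to see that $\tfrac{X}{2}\cdot\varphi-\varphi$ is itself in $C^{k+1,\alpha}$, and then applies Schauder a second time to $\Delta_{\omega}\varphi$ to land directly in $C^{k+3,\alpha}$ --- a one-shot gain of a derivative with no induction on the order $m$. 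Your scheme, by contrast, is the classical Calabi--Yau bootstrap: differentiate the determinant form of \eqref{MA} once, obtain a uniformly elliptic drift-free equation for $\partial_{\ell}\varphi$ with coefficients $g_{\varphi}^{i\bar{\jmath}}$, and climb one derivative at a time from the base $C^{3,\alpha}$ bound of Claim \ref{claim-C^3-alpha-est}. What your approach buys is that the handling of the unbounded drift is made completely explicit --- you correctly isolate $X\ast\nabla^{m}\varphi$ as the only dangerous term in the source and beat its linear growth with the weighted decay $\nabla^{m}\varphi=O(f^{-1-m/2})$ coming from $\varphi\in\mathcal{D}^{k+2,\alpha}_{f,X}(M)$ via the rescaled Schauder estimates of Theorem \ref{iso-sch-Laplacian}; this is exactly the same input the paper invokes, more tersely, when it asserts that $\tfrac{X}{2}\cdot\varphi-\varphi$ is bounded in $C^{(k-1)+2,\alpha}(M)$ ``by assumption on $\varphi$''. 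What the paper's route buys is brevity: no induction on $m$ and no need to track the differentiated coefficients $g_{\varphi}^{i\bar{\jmath}}\partial_{\ell}g_{i\bar{\jmath}}$ in the source. Both arguments rest on the same pillars (uniform normal holomorphic coordinates from Lemma \ref{hol-coor}, the uniform $C^{0,\alpha}$-equivalence of $g$ and $g_{\varphi}$ from Corollary \ref{coro-equiv-metrics}, and the weighted lower-order estimates of Section \ref{section-a-priori-est}), and your observation that the averaged form \eqref{MA-equ-hol-coord} cannot by itself gain a derivative --- since its coefficients have exactly the regularity of $\partial\bar{\partial}\varphi$ --- is a correct diagnosis of why one must pass either to the differentiated equation or to the Yau identity.
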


\begin{proof}[Proof of Claim \ref{claim-C-k+3}]
Recall from the proof of Proposition \ref{prop-C^2-est} that $\Delta_{\omega}\varphi+\frac{X}{2}\cdot\varphi-\varphi$ satisfies
\begin{equation}\label{salut}
\begin{split}
\Delta_{\omega_{\varphi}}\left(\Delta_{\omega}\varphi+\frac{X}{2}\cdot\varphi-\varphi\right)=&\Delta_{\omega_{\varphi}}F+g_{\varphi}^{-1}\ast g^{-1}\ast\Rm(g)\\
&+g^{-1}\ast g^{-1}\ast\Rm(g)+g^{-1}\ast g_{\varphi}^{-1}\ast g_{\varphi}^{-1}\ast \bar{\nabla}\nabla\bar{\nabla}\varphi\ast\nabla\bar{\nabla}\nabla \varphi.
\end{split}
\end{equation}
Now, the coefficients of $\Delta_{\omega_{\varphi}}$ are uniformly bounded in $C^{k,\alpha}(M)$, hence in $C^{k-1,\alpha}(M)$. Moreover, the right-hand side of \eqref{salut} is also bounded in $C^{k-1,\alpha}(M)$. It then follows from the (local) Schauder estimates that $\Delta_{\omega}\varphi+\frac{X}{2}\cdot\varphi-\varphi$ is uniformly bounded in $C^{(k-1)+2,\alpha}(M)$. Since $\frac{X}{2}\cdot\varphi-\varphi$ is uniformly bounded in $C^{(k-1)+2,\alpha}(M)$ by assumption on $\varphi$, an application of the Schauder estimates once more to $\Delta_{\omega}\varphi$ yields the desired result.
\end{proof}

\begin{claim}\label{claim-rough-est}
The following rough estimate holds:
\begin{eqnarray*}
\sup_{x\in M}f^{\frac{k+1}{2}}(x)\|\varphi\|_{C^{(k+1)+2,\alpha}(B(x,\delta/2))}\leq C,
\end{eqnarray*}
where $\delta>0$ is uniform in $x\in M$ (cf.~Lemma \ref{hol-coor}).
\end{claim}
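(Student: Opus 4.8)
The plan is to establish this rough decay by the same interior Schauder scheme used at the very end of the proof of Theorem \ref{theo-C^3-wei-est}, raised by $k+1$ orders and now fed by the inductive information $\varphi\in\mathcal{D}^{k+2,\alpha}_{f,X}(M)$. At the outset I would collect the available ingredients: the global, unweighted bound $\|\varphi\|_{C^{(k+1)+2,\alpha}(M)}\leq C$ from Claim \ref{claim-C-k+3}; the weighted control of $\nabla^{g,j}\varphi$ for $0\le j\le k+2$ coming from $\varphi\in\mathcal{D}^{k+2,\alpha}_{f,X}(M)$; and the uniform equivalence of $g$ and $g_{\varphi}$ together with a uniform $C^{(k+1),\alpha}$-bound on the coefficients $a^{i\bar\jmath}=\int_0^1 g_{\tau\varphi}^{i\bar\jmath}\,d\tau$ of the quasi-linear form \eqref{MA-equ-hol-coord} of the equation, furnished by Claim \ref{claim-C-k+3} together with Corollary \ref{coro-equiv-metrics}. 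As in the $C^3$ case, the point is that an interior Schauder estimate gains two derivatives at no cost in decay, the decay of the left-hand side being fixed by that of the right-hand side.

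Concretely, I would fix $x\in M$, pass to the holomorphic normal coordinates of Lemma \ref{hol-coor} on $B_g(x,\delta)$ with $\delta$ uniform in $x$, and read \eqref{MA-equ-hol-coord} as a linear elliptic equation $a^{i\bar\jmath}\partial_i\partial_{\bar\jmath}\varphi=F-\tfrac{X}{2}\cdot\varphi+\varphi$ with source $\tfrac{X}{2}\cdot\varphi-\varphi$. Applying the Morrey--Schauder estimates on $B_g(x,\delta/2)$ bounds $\|\varphi\|_{C^{(k+1)+2,\alpha}(B_g(x,\delta/2))}$ by the $C^{(k+1),\alpha}(B_g(x,\delta))$-norms of $F$, of $\varphi$, and of $\tfrac{X}{2}\cdot\varphi-\varphi$. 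Since $f\sim r^2$ is comparable to the constant $f(x)$ on the fixed-size ball $B_g(x,\delta)$, the decay of these data---extracted from $F\in\mathcal{C}^{\infty}_{f,X}(M)$, from the inductive hypothesis, and from the weighted radial-derivative estimate of Proposition \ref{prop-weighted-rad-der-est} (iterated to higher order exactly as that proposition is proved, by differentiating the relation between $\tfrac{X}{2}\cdot(\varphi-\tfrac{X}{2}\cdot\varphi)$ and $\Delta_{\omega_{\varphi}}(\tfrac{X}{2}\cdot\varphi-\varphi)$)---transfers through the Schauder inequality; bookkeeping the resulting powers of $f(x)$ across the successive gains of derivatives is what produces the asserted weight $f^{\frac{k+1}{2}}$.

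The principal difficulty, exactly as in Propositions \ref{prop-c^0-rad-der} and \ref{prop-weighted-rad-der-est}, is the unbounded transport term $\tfrac{X}{2}\cdot\varphi$: when the equation is differentiated to raise the order, it produces a genuinely top-order contribution $X\ast\nabla^{g,(k+1)+2}\varphi$ whose coefficient $X\sim r\partial_r$ grows linearly, threatening both a loss of one derivative and a degradation of the decay rate. I would control it by exploiting the $JX$-invariance of $\varphi$, which turns $X\cdot\varphi$ into a bona fide radial derivative and, via Claim \ref{est-sec-der-vec-fiel} and Lemma \ref{prop-basic-est-pot-fct}, lets each commutator of $\nabla^g$ past $X$ cost only a bounded factor (as $\nabla^g X$ is bounded), so that the unbounded contribution is absorbed into the already-controlled elliptic part. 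It is precisely here that the merely quadratic, rather than exponential, rate of convergence to the cone forces the careful, step-by-step tracking of powers of $f$---the content of the present rough estimate---before the sharp weighted bound can be recovered by invoking the isomorphism of Theorem \ref{iso-sch-Laplacian}.
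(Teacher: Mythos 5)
Your overall scheme (localize on balls of uniform size, pass to the normal holomorphic coordinates of Lemma \ref{hol-coor}, apply interior Schauder estimates to the quasi-linear form \eqref{MA-equ-hol-coord}, and track powers of $f$) is the right starting point, but as described it only reproduces the base case of the paper's argument and misses the mechanism that actually produces the weight $f^{\frac{k+1}{2}}$. Note first that the claim is really a statement about the top-order derivative $\nabla^{(k+1)+2}\varphi$ only (the paper says so explicitly at the start of its proof): the full local $C^{(k+1)+2,\alpha}$-norm of $\varphi$ cannot decay faster than $f^{-1}$, since $\varphi$ itself is only $O(f^{-1})$. The single interior Schauder estimate you invoke bounds the \emph{full} $C^{(k+1)+2,\alpha}(B_g(x,\delta/2))$-norm by the \emph{full} $C^{(k+1),\alpha}(B_g(x,\delta))$-norms of $F$, $\varphi$ and $\frac{X}{2}\cdot\varphi$, and on a ball of fixed radius those norms are dominated by their zeroth-order pieces, each of which is only $O(f(x)^{-1})$. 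So this step yields exactly the weight $f^{1}$ --- the paper's case $l=1$ --- and no bookkeeping of the data's decay upgrades it to $f^{\frac{k+1}{2}}$ without a further idea.

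The further idea is to differentiate the equation $k+1$ times and apply Schauder to the resulting elliptic equation for $\nabla^{k+1}\varphi$. You do mention differentiating the equation, but you locate the danger in the wrong place. The transport contribution is harmless: $\nabla^{k+1}\bigl(\frac{X}{2}\cdot\varphi\bigr)$ contains at worst $X\ast\nabla^{k+2}\varphi$ (not $X\ast\nabla^{k+3}\varphi$), and the inductive assumption $\varphi\in\mathcal{D}^{k+2,\alpha}_{f,X}(M)$ together with the rescaled Schauder estimates of Theorem \ref{iso-sch-Laplacian} already gives $\nabla^{k+2}\varphi=O(f^{-\frac{k}{2}-1})$, hence $X\ast\nabla^{k+2}\varphi=O(f^{-\frac{k+1}{2}})$, exactly the required rate; no absorption argument is needed, and your proposed iteration of Proposition \ref{prop-weighted-rad-der-est} to higher order is likewise unnecessary. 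The genuinely problematic term is $\nabla^{k+1}a^{i\bar{\jmath}}\ast\partial_i\partial_{\bar{\jmath}}\varphi$: since $a^{i\bar{\jmath}}$ is built from $g_{\varphi}^{-1}$, its $(k+1)$-th derivative contains $\nabla^{k+1}(\partial\bar{\partial}\varphi)=\nabla^{k+3}\varphi$, i.e.\ precisely the quantity whose decay is being established. This self-reference forces the induction on the decay exponent that the paper runs: assuming the weight $f^{\frac{l+1}{2}}$ for the top derivative, the product with $\partial\bar{\partial}\varphi=O(f^{-2})$ decays like $f^{-\frac{l+1}{2}-2}$, and Schauder returns an improved weight, capped at $f^{\frac{k+1}{2}}$ by the term $\nabla^{k+1}\bigl(F-\frac{X}{2}\cdot\varphi+\varphi\bigr)$; iterating from $l=1$ reaches $\frac{k+1}{2}$ after finitely many steps. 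Without setting up this loop, your argument has no way to close.
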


\begin{proof}[Proof of Claim \ref{claim-rough-est}]
Notice that this a priori estimate only applies to the $\alpha$-H\"older norm of the $((k+1)+2)$-th covariant derivative of $\varphi$. Let $x\in M$ and let $B_g(x,\delta)$ be a ball endowed with holomorphic normal coordinates as in Lemma \ref{hol-coor}. We will prove this claim by induction with the following induction hypotheses:
\begin{eqnarray*}
\sup_{x\in M}f^{\frac{l+1}{2}}(x)\|\varphi\|_{C^{(k+1)+2,\alpha}(B(x,\delta/2))}\leq C\quad\textrm{for $1\leq l\leq k$}.
\end{eqnarray*}
Recall that in normal holomorphic coordinates, $\varphi$ satisfies (\ref{MA-equ-hol-coord}), that is, the equation
\begin{equation}\label{6.8again}
a^{i\bar{\jmath}}\partial_i\partial_{\bar{\jmath}}\varphi=F-\frac{X}{2}\cdot\varphi+\varphi.
\end{equation}
By Claim \ref{claim-rough-est}, the coefficients $(a^{i\bar{\jmath}})_{i\bar{\jmath}}$ are uniformly bounded in $C^{k+1,\alpha}(M)$. The same also holds true for the right-hand side of \eqref{6.8again}. The Schauder estimates therefore tell us that
\begin{eqnarray*}
f(x)\|\varphi\|_{C^{(k+1)+2,\alpha}(B_g(x,\delta/2))}&\leq& f(x)C\left(\left\|F-\frac{X}{2}\cdot\varphi+\varphi\right\|_{C^{(k+1),\alpha}(B_g(x,\delta))}+\|\varphi\|_{C^{(k+1),\alpha}(B_g(x,\delta))}\right)\\
&\leq& C,
\end{eqnarray*}
which establishes the case $l=1$.

Next, let us derive the equation satisfied by $\nabla^{k+1}\varphi$. We compute:
\begin{eqnarray*}
a^{i\bar{\jmath}}\nabla_i\nabla_{\bar{\jmath}}\nabla^{k+1}\varphi&=&a^{i\bar{\jmath}}[\nabla_i\nabla_{\bar{\jmath}},\nabla^{k+1}]\varphi+\nabla^{k+1}(a^{i\bar{\jmath}}\partial_i\partial_{\bar{\jmath}}\varphi)+\sum_{p=0}^k\nabla^{k+1-p}a^{i\bar{\jmath}}\ast_{g}\nabla^p\partial_i\partial_{\bar{\jmath}}\varphi\\
&=&a^{i\bar{\jmath}}[\nabla_i\nabla_{\bar{\jmath}},\nabla^{k+1}]\varphi+\nabla^{k+1}\left(F-\frac{X}{2}\cdot\varphi+\varphi\right)+\sum_{p=0}^k\nabla^{k+1-p}a^{i\bar{\jmath}}\ast_{g}\nabla^p\partial_i\partial_{\bar{\jmath}}\varphi\\
&=&\sum_{p=1}^{k+1}a^{-1}\ast\nabla^{k+1-p}\Rm(g)\ast_g\nabla^p\varphi+\sum_{p=0}^k\nabla^{k+1-p}a^{i\bar{\jmath}}\ast_{g}\nabla^p\partial_i\partial_{\bar{\jmath}}\varphi\\
&&+\nabla^{k+1}\left(F-\frac{X}{2}\cdot\varphi+\varphi\right),
\end{eqnarray*}
where $\ast_{g}$ denotes contraction with respect to $g$. Now, by using the quadratic decay of the curvature at infinity, together with the assumption $\varphi\in\mathcal{D}^{k+2,\alpha}_{f,X}(M)$, one has that
\begin{eqnarray*}
&&\left\|f^{\frac{k+1}{2}}\nabla^{k+1}\left(F-\frac{X}{2}\cdot\varphi+\varphi\right)\right\|_{C^{0,\alpha}(M)}\leq C,\\
&&\|f^{\frac{k+1-p}{2}+1}\nabla^{k+1-p}\Rm(g)\|_{C^{0,\alpha}(M)}\leq C,\quad\|f^{\frac{p}{2}+1}\nabla^p\varphi\|_{C^{0,\alpha}(M)}\leq C,\quad p=0,...,k,\\
&&\| f^{\frac{k}{2}+1}\nabla^{k+1}\varphi\|_{C^{0,\alpha}(M)}\leq C,\\
&&\left\|f^{\frac{k}{2}+2}\sum_{p=1}^{k+1}a^{-1}\ast\nabla^{k+1-p}\Rm(g)\ast_g\nabla^p\varphi\right\|_{C^{0,\alpha}(M)}\leq C,
\end{eqnarray*}
where we put the best a priori possible power of $f$ in front of each term. It remains to estimate the sum $\sum_{p=0}^k\nabla^{k+1-p}a^{i\bar{\jmath}}\ast_{g}\nabla^p\partial_i\partial_{\bar{\jmath}}\varphi$. We know that
\begin{eqnarray*}
&&\|f^{\frac{p+2}{2}+1}\nabla^p\partial_i\partial_{\bar{\jmath}}\varphi\|_{C^{0,\alpha}(M)}\leq C,\quad p=0,...,k-2,\\
&&\|f^{\frac{k}{2}+1}\nabla^p\partial_i\partial_{\bar{\jmath}}\varphi\|_{C^{0,\alpha}(M)}\leq C,\quad p\in\{k-1,k\}.
\end{eqnarray*}
In order to estimate the covariant derivatives of $(a^{i\bar{\jmath}})_{i\bar{\jmath}}$, it suffices to understand the decay of the covariant derivatives of $g_{\varphi}^{-1}$. We have the following formulas:
\begin{eqnarray*}
&&\nabla\left(g_{\varphi}^{-1}\right)=g_{\varphi}^{-1}\ast g_{\varphi}^{-1}\ast\nabla g_{\varphi},\\
&&\nabla^m\left(g_{\varphi}^{-1}\right)=g_{\varphi}^{-1}\ast\sum_{p=0}^{m-1}\nabla^{m-p}g_{\varphi}\ast\nabla^p\left(g_{\varphi}^{-1}\right),\quad m\geq 1,
\end{eqnarray*}
which imply that
\begin{eqnarray*}
&&\|f^{\frac{m+2}{2}+1}\nabla^m\left(g_{\varphi}^{-1}\right)\|_{C^{0,\alpha}(M)}\leq C,\quad m\leq k-2,\\
&&\|f^{\frac{k}{2}+1}\nabla^m\left(g_{\varphi}^{-1}\right)\|_{C^{0,\alpha}(M)}\leq C,\quad m\in\{k-1,k\},\\
&&\|f^{\frac{l+1}{2}}\nabla^{k+1}\left(g_{\varphi}^{-1}\right)\|_{C^{0,\alpha}(M)}\leq C.
\end{eqnarray*}
Therefore
\begin{eqnarray*}
\left\|f^{\frac{l+1}{2}+2}\left(\sum_{p=0}^k\nabla^{k+1-p}a^{i\bar{\jmath}}\ast_{g}\nabla^p\partial_i\partial_{\bar{\jmath}}\varphi\right)\right\|_{C^{0,\alpha}(M)}\leq C,
\end{eqnarray*}
which, using the Schauder estimates, implies in turn that
\begin{eqnarray*}
\sup_{x\in M}\|f^{\max\{\frac{l+1}{2}+2,\frac{k+1}{2}\}}\nabla^{k+1}\varphi\|_{C^{0,\alpha}(B_g(x,\delta/2))}\leq C.
\end{eqnarray*}
This completes the proof of the claim.
\end{proof}

\begin{claim}\label{claim-gap-decay-inf}
We have that $\varphi\in\mathcal{D}^{(k+1)+2,\alpha}_{f,X}(M)$ and $\|\varphi\|_{\mathcal{D}^{(k+1)+2,\alpha}_{f,X}(M)}\leq C$.
\end{claim}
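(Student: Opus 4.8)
The plan is to reinterpret $\varphi$ as a solution of the \emph{linear} equation coming from the Taylor expansion \eqref{equ:taylor-exp} and then to feed the regularity already gained in Claims \ref{claim-C-k+3} and \ref{claim-rough-est} into the isomorphism of Theorem \ref{iso-sch-Laplacian}, now at the higher index $k+1$. Concretely, since $\varphi$ solves \eqref{MA}, the identity \eqref{MA-lin-form} gives
\begin{equation*}
\Delta_{\omega}\varphi+\frac{X}{2}\cdot\varphi-\varphi=F+N,\qquad N:=\int_0^1\int_0^{\tau}\arrowvert\partial\bar{\partial}\varphi\arrowvert^2_{g_{\sigma\varphi}}\,d\sigma\,d\tau.
\end{equation*}
Because $\Delta_{\omega}+\tfrac{X}{2}\cdot-\Id:\mathcal{D}^{(k+1)+2,\alpha}_{f,X}(M)\to\mathcal{C}^{(k+1),\alpha}_{f,X}(M)$ is an isomorphism of Banach spaces by Theorem \ref{iso-sch-Laplacian}, it suffices to show that $F+N$ lies in $\mathcal{C}^{(k+1),\alpha}_{f,X}(M)$ with norm $\le C$. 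The preimage of $F+N$ then lies in $\mathcal{D}^{(k+1)+2,\alpha}_{f,X}(M)$, and, being bounded, must coincide with $\varphi$ by the injectivity of $\Delta_{\omega}+\tfrac{X}{2}\cdot-\Id$ on bounded functions established in Claim \ref{claim-injectivity-max-ppe} (applied with $a=0$, $b=1$). This simultaneously yields $\varphi\in\mathcal{D}^{(k+1)+2,\alpha}_{f,X}(M)$ and the stated estimate.

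Since $F\in\mathcal{C}^{\infty}_{f,X}(M)$ by hypothesis, the only point requiring work is to prove $N\in C^{(k+1),\alpha}_{con,f}(M)$. First I would expand $N$ by the Leibniz rule as a finite sum of contractions of schematic form $g_{\sigma\varphi}^{-1}\ast\cdots\ast\nabla^{a}\partial\bar{\partial}\varphi\ast\nabla^{b}\partial\bar{\partial}\varphi$, where the remaining derivatives of the $k+1$ total fall on the coefficients $g_{\sigma\varphi}^{-1}$ (and on the weight $f$ after multiplying through by $f$). The factors $g_{\sigma\varphi}^{-1}$ and their covariant derivatives are uniformly controlled in the conical Hölder sense by the uniform equivalence of $g$ and $g_{\varphi}$ (Corollary \ref{coro-equiv-metrics}) together with the formulas for $\nabla^{m}(g_{\varphi}^{-1})$ and the weighted bounds of Claim \ref{claim-rough-est}, so the decay of each summand is governed by the two factors of (derivatives of) $\partial\bar{\partial}\varphi$. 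Reading off from $\varphi\in\mathcal{D}^{k+2,\alpha}_{f,X}(M)$ gives $\partial\bar{\partial}\varphi=\textit{O}(f^{-2})$ with each further derivative $\nabla^{j}\partial\bar{\partial}\varphi$ carrying an additional gain in the radial weight; multiplying two such factors produces decay of order $\textit{O}(f^{-4})$, which is far stronger than the $\textit{O}(f^{-1})$ decay demanded by the weight $w=f$ defining $C^{(k+1),\alpha}_{con,f}(M)$. Thus the bookkeeping closes with room to spare for every summand except the top-order ones.

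The hard part will be the top-order term, in which all $k+1$ derivatives together with the $\alpha$-Hölder difference land on a single factor, producing $\nabla^{k+1}\partial\bar{\partial}\varphi=\nabla^{(k+1)+2}\varphi$. Here the decay of $\partial\bar{\partial}\varphi$ alone no longer suffices, and one must control $\nabla^{(k+1)+2}\varphi$ locally in $C^{0,\alpha}$ with exactly the right power of $f$: this is precisely what the sharp rough estimate of Claim \ref{claim-rough-est}, namely $\sup_{x\in M}f^{\frac{k+1}{2}}(x)\|\varphi\|_{C^{(k+1)+2,\alpha}(B(x,\delta/2))}\le C$, supplies, so that $f\cdot\nabla^{(k+1)+2}\varphi\ast\partial\bar{\partial}\varphi$ sits in the conical Hölder space with the weight appropriate to index $k+1$. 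I expect this to be the one genuinely delicate point; it is exactly the reason Claims \ref{claim-C-k+3} and \ref{claim-rough-est} were established first, since $\mathcal{D}^{k+2,\alpha}_{f,X}(M)$ by itself does not even furnish enough derivatives to make sense of the $C^{(k+1),\alpha}_{con,f}(M)$-norm of $N$. Granting this, and noting that the $JX$-invariance of $N$ is inherited from that of $\varphi$ (and of $g_{\sigma\varphi}$), we conclude $F+N\in\mathcal{C}^{(k+1),\alpha}_{f,X}(M)$, and Theorem \ref{iso-sch-Laplacian} finishes the proof of the claim, hence the inductive step of Theorem \ref{theo-bootstrap}.
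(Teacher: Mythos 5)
Your proposal is correct and follows essentially the same route as the paper: apply Theorem \ref{iso-sch-Laplacian} at index $k+1$ to the linearised form \eqref{MA-lin-form}, control the nonlinearity $N$ in $\mathcal{C}^{(k+1),\alpha}_{f,X}(M)$ via the Leibniz expansion and the sharp rough estimate of Claim \ref{claim-rough-est} (with the top-order term $\nabla^{(k+1)+2}\varphi\ast\partial\bar{\partial}\varphi$ being exactly the delicate point), and then identify $\varphi$ with the solution produced in the better space by uniqueness of bounded solutions of the homogeneous equation. The paper phrases this last step as constructing $\tilde{\varphi}$ by surjectivity and showing $\varphi-\tilde{\varphi}=0$ by the maximum principle, which is the same argument as your appeal to Claim \ref{claim-injectivity-max-ppe}.
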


\begin{proof}[Proof of Claim \ref{claim-gap-decay-inf}]
By Theorem \ref{iso-sch-Laplacian}, we need only to prove that
\begin{eqnarray*}
&&\varphi\in\mathcal{D}^{(k+1)+2,\alpha}_{f,X}(M),\\
&&\left\|\Delta_{\omega}\varphi+\frac{X}{2}\cdot\varphi-\varphi\right\|_{\mathcal{C}^{(k+1),\alpha}_{f,X}(M)}\leq C.
\end{eqnarray*}
Now, since $\varphi$ satisfies (\ref{MA-lin-form}), it suffices to prove that
\begin{eqnarray*}
&&\varphi\in\mathcal{D}^{(k+1)+2,\alpha}_{f,X}(M),\\
&&\left\|F+\int_0^1\int_0^{\tau}\arrowvert \partial\bar{\partial}\varphi\arrowvert^2_{g_{\sigma\varphi}}d\sigma d\tau\right\|_{\mathcal{C}^{(k+1),\alpha}_{f,X}(M)}\leq C.
\end{eqnarray*}
The second estimate here is implied by the assumptions on $F$, $\varphi$, and by Claim \ref{claim-rough-est}; indeed,
\begin{eqnarray*}
&&\nabla^{k+1}(\partial\bar{\partial}\varphi\ast\partial\bar{\partial}\varphi)=\sum_{i=0}^{k+1}\nabla^{k+1-i}\partial\bar{\partial}\varphi\ast\nabla^i\partial\bar{\partial}\varphi,\\
&&\|f^{\frac{k+1}{2}+1}\nabla^{k+1}(\partial\bar{\partial}\varphi\ast\partial\bar{\partial}\varphi)\|_{C^{0,\alpha}(M)}\leq C.
\end{eqnarray*}
Hence it only remains to prove that $\varphi\in\mathcal{D}^{(k+1)+2,\alpha}_{f,X}(M)$; that is, $f^{\frac{k+1}{2}+1}\nabla^{k+1}\varphi\in C^{0,\alpha}(M)$. Notice that, since $\varphi\in\mathcal{D}^{k+2,\alpha}_{f,X}(M)$, we only know that $f^{\frac{k}{2}+1}\nabla^{k+1}\varphi\in C^{0,\alpha}(M)$. On the other hand, Theorem \ref{iso-sch-Laplacian} yields a solution $\tilde{\varphi}\in \mathcal{D}^{(k+1)+2,\alpha}_{f,X}(M)$; indeed,
\begin{eqnarray*}
\Delta_{\omega}\tilde{\varphi}+\frac{X}{2}\cdot\tilde{\varphi}-\tilde{\varphi}=F+\int_0^1\int_0^{\tau}\arrowvert i\partial\bar{\partial}\varphi\arrowvert^2_{\omega_{\sigma\varphi}}d\sigma d\tau.
\end{eqnarray*}
Consequently, $\psi:=\varphi-\tilde{\varphi}$ satisfies $\Delta_{\omega}\psi+\frac{X}{2}\cdot\psi-\psi=0$. Since $\psi$ tends to zero at infinity, we can now use the maximum principle to see that $\psi=0$, i.e., $\varphi=\tilde{\varphi}\in \mathcal{D}^{(k+1)+2,\alpha}_{f,X}(M)$.
\end{proof}
\end{proof}

\section{Proof of Theorem \ref{t:existence}}\label{section-theorem-existence}
In this section, we complete the proof of Theorem \ref{t:existence}. We recall the statement before completing the proof.
\begin{customthm}{D}\label{Existence-Uniqueness-Pde}
For any $F\in \mathcal{C}^{\infty}_{f,X}(M)$, there exists a unique K\"ahler potential $\varphi\in\mathcal{D}^{\infty}_{f,X}(M)$ such that
\begin{eqnarray*}
\omega_{\varphi}^n=e^{\varphi-\frac{X}{2}\cdot\varphi+F}\omega^n.
\end{eqnarray*}
\end{customthm}

\begin{proof} We split the proof up into two parts.
\begin{description}
\item[Existence]
Given $F\in \mathcal{C}^{\infty}_{f,X}(M)$, define $F_t:=tF\in \mathcal{C}^{\infty}_{f,X}(M)$ for $t\in[0,1]$. For $\alpha\in(0,1)$ fixed, denote by
$$S:=\{t\in[0,1]:\textrm{there exists $\varphi_t\in \mathcal{D}^{3,\alpha}_{f,X}(M)$ satisfying (\ref{MA-bis}) with data $F_t\in \mathcal{C}^{\infty}_{f,X}(M)\subset \mathcal{C}^{3,\alpha}_{f,X}(M)$}\}.$$
First note that $S\neq\emptyset$ since $0\in S$ (take $\varphi_{0}=0$).

We next claim that $S$ is open. Indeed, this follows from Theorem \ref{Imp-Def-Kah-Exp}; if $t_0\in S$, then, by Theorem \ref{Imp-Def-Kah-Exp}, there exists $\epsilon_0 >0$ such that there exists a solution $\varphi_{t_0+\epsilon}\in \mathcal{D}^{3,\alpha}_{f,X}(M)$ to $(\ref{MA-bis})$ with data $(t_0+\epsilon)F$ for $\epsilon\in[0,\epsilon_0)$, i.e., $(t_{0}-\epsilon_{0},\,t_{0}+\epsilon_{0})\subset[0,\,1]$.

We also claim that $S$ is closed. Indeed, take a sequence $(t_k)_{k\geq 0}$ in $S$ converging to $t_{\infty}\in S$. Then, for $F_k:=t_kF$, $k\geq 0$, the associated solutions $\varphi_{t_k}=:\varphi_k$, $k\geq 0$, of (\ref{MA-bis}) satisfy
 \begin{eqnarray}
\omega_{\varphi_k}^n=e^{F_k-\frac{X}{2}\cdot\varphi_k+\varphi_k},\quad k\geq 0.\label{MA-seq}
\end{eqnarray}
Now, it is straightforward to see that the sequence $(F_k)_{k\geq 0}$ is uniformly bounded in $\mathcal{C}^{1,\alpha}_{f,X}(M)$. Consequently, by Theorem \ref{theo-C^3-wei-est}, the sequence $(\varphi_k)_{k\geq 0}$ is bounded in $\mathcal{D}^{3,\alpha}_{f,X}(M)$, so that, by the Arzel\`a-Ascoli theorem, a subsequence of $(\varphi_k)_{k\geq 0}$ converges to some $\varphi_{\infty}\in C^{3,\beta}_{loc}(M)$, $\beta\in(0,\alpha)$. Since $(\varphi_k)_{k\geq 0}$ is uniformly bounded in $\mathcal{D}^{3,\alpha}_{f,X}(M)$, $\varphi_{\infty}$ will lie in $\mathcal{D}^{3,\alpha}_{f,X}(M)$ as well. We wish to show that $\varphi_{\infty}$ is a K\"ahler potential, i.e., that $\omega_{\varphi_{\infty}}(x)>0$ for every $x\in M$. To do this, it suffices to show that $\omega_{\varphi_{\infty}}^n(x)>0$ for every $x\in M$. But this last statement follows by letting $k$ tend to $+\infty$ (up to a subsequence) in \eqref{MA-seq}.

Since $S$ is an open and closed non-empty subset of $[0,1]$, connectedness of $[0,\,1]$ implies that in fact $S=[0,1]$. Moreover, Theorem \ref{theo-bootstrap} implies that any solution $\varphi\in \mathcal{D}^{2+1,\alpha}_{f,X}(M)$ of (\ref{MA}) with data $F\in\mathcal{C}^{\infty}_{f,X}(M)$ also lies in $\mathcal{D}^{\infty}_{f,X}(M)$. This concludes the proof of existence.\\

\item[Uniqueness]
Let $(\varphi_j)_{j=1,2}$ be two K\"ahler potentials in $\mathcal{C}^{\infty}_{f,X}(M)$ satisfying
\begin{eqnarray*}
\omega_{\varphi_j}^n=e^{\varphi_j-\frac{X}{2}\cdot\varphi_j+F}\omega^n,\quad j=1,2.
\end{eqnarray*}
Then
\begin{eqnarray}
\log\left(\frac{\omega_{\varphi_2}^n}{\omega_{\varphi_1}^n}\right)=(\varphi_2-\varphi_1)-\frac{X}{2}\cdot(\varphi_2-\varphi_1).\label{MA-diff-sol}
\end{eqnarray}
On one hand, since $\varphi_2-\varphi_1$ tends to zero at infinity, the maximum principle applied to (\ref{MA-diff-sol}) implies that $\sup_M(\varphi_2-\varphi_1)\leq 0$. On the other hand, the minimum principle implies that $\inf_M(\varphi_2-\varphi_1)\geq 0$. Hence $\varphi_2=\varphi_1$.
\end{description}
\end{proof}

\section{Proof of Theorem \ref{Uniqueness-Theorem}}\label{section-proof-uniqueness}
By assumption, we have that $$\rho_{\omega_{i}}+\omega_{i}=\frac{1}{2}\mathcal{L}_{X}\omega_{i}$$
for $i=1,\,2$. Subtracting these two equations yields
\begin{equation*}
\begin{split}
\omega_{1}-\omega_{2}&=-(\rho_{\omega_{1}}-\rho_{\omega_{2}})+\frac{1}{2}\mathcal{L}_{X}(\omega_{1}-\omega_{2})\\
&=i\partial\bar{\partial}\log\left(\frac{\omega_{1}^{n}}{\omega_{2}^{n}}\right)+\frac{1}{2}d((\omega_{1}-\omega_{2})\lrcorner X).\\
\end{split}
\end{equation*}
It is clear from our hypotheses that $$\left|\log\left(\frac{\omega_{1}^{n}}{\omega_{2}^{n}}\right)\right|(x)=O(d_{g_{1}}(x,\,x_{0})^{\lambda+2}).$$
We next claim that $d((\omega_{1}-\omega_{2})\lrcorner X)=i\partial\bar{\partial}f$ for a function $f$ satisfying
\begin{equation}
f(x)=O(d_{g_{1}}(x,\,x_{0})^{\lambda+2})\qquad\textrm{and}\qquad|df|_{g_1}(x)=O(d_{g_{1}}(x,\,x_{0})^{\lambda+1}).\label{est-diff-pot-fct}
\end{equation}

Indeed, let $f:=f_2-f_1$, where $f_i$ is the potential function of the expanding gradient K\"ahler-Ricci soliton $(M,\omega_i,X)$ for $i=1,2$. In particular, $X=\nabla^{g_i}f_i$ for $i=1,2$. By Lemma \ref{id-EGS}, one has the following estimates on $f$:
\begin{eqnarray*}
f&=&f_2-f_1=\arrowvert\nabla^{g_2}f_2\arrowvert^2_{g_2}-|\nabla^{g_1}f_1|^2_{g_1}+\frac{1}{2}(s_{\omega_2}-s_{\omega_1})\\
&=&(g_2-g_1)(X,X)+\frac{1}{2}(s_{\omega_2}-s_{\omega_1}),\\
df(\cdot)&=&g_2(X,\cdot)-g_1(X,\cdot).
\end{eqnarray*}
In case (i), the Ricci forms $\rho_{\omega_{i}}$, $i=1,2$ are non-negative by assumption, hence, by Proposition \ref{pot-fct-est}, the scalar curvatures $s_{\omega_i}$, $i=1,2,$ are bounded on $M$. Since $X$ grows linearly (with respect to the associated K\"ahler metrics $g_1$ and $g_2$ of $\omega_{1}$ and $\omega_{2}$ respectively), and since the difference of the metrics $g_2-g_1$ tends to zero at infinity, again by assumption, one obtains the claimed asymptotic behaviour (\ref{est-diff-pot-fct}) on $f$. A similar argument in case (ii) yields the same result.

Next let $\varphi:=f+\log\left(\frac{\omega_{1}^{n}}{\omega_{2}^{n}}\right)$. Then we have that $\varphi=O(r^{\lambda+2})$ and $\omega_{1}-\omega_{2}=i\partial\bar{\partial}\varphi$. Computing as in \eqref{computation},
one sees that $\varphi$ satisfies
\begin{equation*}
i\partial\bar{\partial}\left(\log\left(\frac{(\omega_2+i\partial\bar{\partial}\varphi)^n}{\omega_2^n}\right)+\frac{X}{2}\cdot\varphi-\varphi\right)=0.
\end{equation*}
We now trace this equation to obtain a function $u:M\to\mathbb{R}$ satisfying
\begin{eqnarray*}
&&u:=\log\left(\frac{(\omega_2+i\partial\bar{\partial}\varphi)^n}{\omega_2^n}\right)+\frac{X}{2}\cdot\varphi-\varphi,\\
&&\Delta_{g_1}u=0.\\
\end{eqnarray*}
Since $|X|_{g_{1}}$ grows at most linearly in both cases (i) and (ii) by Lemma \ref{id-EGS}, we have that
$$|X\cdot f|=O(r^{\lambda+2}).$$
To estimate the term $X\cdot\log\left(\frac{\omega_{1}^{n}}{\omega_{2}^{n}}\right)$, note that
\begin{equation*}
\begin{split}
X\cdot\log\left(\frac{\omega_{1}^{n}}{\omega_{2}^{n}}\right)&
=\tr_{\omega_{1}}\mathcal{L}_{X}\omega_{1}-\tr_{\omega_{2}}\mathcal{L}_{X}\omega_{2}\\
&=2\tr_{\omega_{1}}(\rho_{\omega_{1}}+\omega_{1})-2\tr_{\omega_{2}}(\rho_{\omega_{2}}+\omega_{2})\\
&=2(s_{\omega_{1}}-s_{\omega_{2}}).
\end{split}
\end{equation*}
In case (i), this difference is bounded by Lemma \ref{id-EGS}, so that $|X\cdot\varphi|=O(r^{\lambda+2})$. Consequently, in this case, $|u|=O(r^{\lambda+2})$. In case (ii), the difference $|s_{\omega_{1}}-s_{\omega_{2}}|=o(1)$ by assumption, and so we deduce that $|X\cdot\varphi|=o(1)$ so that $|u|=o(1)$ in this case also.

We now apply Cheng-Yau's result on harmonic functions with sublinear growth on a Riemannian manifold with non-negative Ricci curvature \cite{Che-Yau} in case (i) to deduce that $u$ is constant. In case (ii), this fact follows from the maximum principle.
Thus, in both cases, by subtracting a constant from $\varphi$ if necessary, the situation reduces to $\varphi$ satisfying the following complex Monge-Amp\`ere equation:
\begin{eqnarray}
&&\log\left(\frac{(\omega_2+i\partial\bar{\partial}\varphi)^n}{\omega_2^n}\right)+\frac{X}{2}\cdot\varphi-\varphi=0,\label{MA-Equ-Uni}\\
&&|\varphi|=O(r^{1-\epsilon})\qquad\textrm{for $\epsilon=-\lambda-1>0$}.\nonumber
\end{eqnarray}

Next, observe that
\begin{equation*}
\left|\frac{X}{2}\cdot\varphi-\varphi\right|=\left|\log\left(\frac{(\omega_2+i\partial\bar{\partial}\varphi)^n}{\omega_2^n}\right)\right|=O(r^{-1-\epsilon}).
\end{equation*}
By integrating this differential equation along the Morse flow $(\psi_t)_{t}$ associated to $X/2$, that is, along the flow generated by $2X/|X|_{g_1}^2$, where $|X|_{g_1}$ denotes the Riemannian norm of $X$ with respect to $g_{1}$, one finds that
\begin{eqnarray*}
\partial_t\varphi_{t}&=&\frac{2}{|X|_{g_1}^2}X\cdot \varphi_t\\
&=&\frac{4}{|X|_{g_1}^2}\varphi_t+\textit{O}(t^{-\frac{3+\epsilon}{2}})\\
&=&\frac{2}{|\nabla^{g_1}f|^2}\varphi_t+\textit{O}(t^{-\frac{3+\epsilon}{2}})\\
&=&\frac{1}{t-\frac{s_{\omega_1}}{2}}\varphi_t+\textit{O}(t^{-\frac{3+\epsilon}{2}})\\
&=&\frac{1}{t}\left(1+\textit{O}(t^{-1})\right)\varphi_t+\textit{O}(t^{-\frac{3+\epsilon}{2}}),\\
\varphi_{t}(x)&:=&\varphi(\psi_{t}(x))\quad\textrm{for all $x\in M$ and $t\geq t_0>\min_M f$},
\end{eqnarray*}
where we have used Proposition \ref{pot-fct-est} and the soliton identities from Lemma \ref{id-EGS}. (Note that although we do not assume any a priori decay on the curvature at infinity, Proposition \ref{pot-fct-est} ensures the boundedness of the scalar curvature under the hypothesis of non-negative Ricci curvature in case (i).) Thus,
\begin{eqnarray}
\varphi_t&=&\frac{t}{t_0}e^{\int_{t_0}^ta(s)ds}\left(\varphi_{t_0}+\int_{t_0}^te^{-a(\tau)d\tau}\frac{t_0}{s}\textit{O}(s^{-\frac{3+\epsilon}{2}})ds\right)\label{est-pot-kah-ode}
\end{eqnarray}
for some integrable function $a\in L^1([t_0,+\infty)).$ Since $f$ is quadratic in the distance from a fixed point, a priori we have that $\varphi_t=\textit{O}(t^{\frac{1-\epsilon}{2}})$. By letting $t$ tend to $+\infty$ in (\ref{est-pot-kah-ode}), it therefore follows that
\begin{eqnarray*}
\varphi_{t_0}&=&\textit{O}\left(t_0\int_{t_0}^{+\infty}s^{-\frac{3+\epsilon}{2}-1}ds\right)\\
&=&\textit{O}(t_0^{-\frac{1+\epsilon}{2}})
\end{eqnarray*}
so that $\sup_{\partial B_{g_1}(p,R)}\varphi=\textit{O}(R^{-\epsilon-1}).$ We can now apply the maximum principle to the complex Monge-Amp\`ere equation (\ref{MA-Equ-Uni}) as in the proof of the uniqueness statement of Theorem \ref{Existence-Uniqueness-Pde} to conclude that $\varphi\equiv 0$, i.e., $\omega_1=\omega_2$.

\section{Proof of Theorem A}\label{Proof of Theorem A}

From what we have already done, Theorem D immediately yields the implication ``$\eqref{condition}\implies\eqref{e:main_rate_0}$''. Indeed, for any equivariant resolution $\pi:M\to C_{0}$ satisfying the hypotheses of Theorem \ref{theorem-A}, we apply Proposition \ref{background-metric} to construct a suitable background K\"ahler metric on $M$. Hypothesis (b) of Theorem A then allows us to apply Proposition \ref{equationsetup}, and the result then follows from Theorem \ref{t:existence}. After analysing the background metric of Proposition \ref{background-metric}, one sees that the soliton metric takes the form $c\omega_{0}+\rho_{\omega_0}+O(r^{-4})$, which explains the appearance of the Ricci curvature in \eqref{e:main_rate_0}. The converse implication ``$\eqref{e:main_rate_0}\implies\eqref{condition}$'' follows from the defining equation of an expanding K\"ahler-Ricci soliton.

The uniqueness part of Theorem A is implied by Theorem C using the fact that by \cite{Der-Uni-Con-Ric-Exp}, any two expanding gradient Ricci solitons $g_{1},\,g_{2},$ that are asymptotically conical with the same tangent cone with respect to the same diffeomorphism satisfy $|g_{1}-g_{2}|_{g_{1}}=O(d_{g_{1}}(x,\,x_{0})^{-k})$ for any $k\geq 0$.
\newpage

\section{Proof of Theorem \ref{coro-F}}\label{Proof of coroF}

In this section, let $\pi:M\to C_0$ be an equivariant resolution of a K\"ahler cone $(C_0,\,g_{0})$ of complex dimension $n$ with K\"ahler cone metric $g_{0}$ and associated K\"ahler form $\omega_{0}=\frac{i}{2}\partial\bar{\partial}r^{2}$, where $r$ is the radius function of $g_{0}$, let $X$ denote the lift of the vector field $r\partial_{r}$ from $C_0$ to $M$, and let $E$ denote the exceptional set of the resolution. Our assumption throughout this section is that:

\vspace{0.3cm}
\noindent {\bf Assumption.} $K_{M}$ admits a hermitian metric $h$ whose curvature form $\Theta_{h}$ satisfies
\begin{equation*}
\int_{V}(i\Theta_{h})^{k}\wedge\omega^{\dim_{\mathbb{C}}V-k}>0
\end{equation*}
for all positive-dimensional irreducible analytic subvarieties $V\subset E$ and for all $1\leq k\leq \dim_{\C}V$ for some K\"ahler form $\omega$ on $M$.
\vspace{0.3cm}

\noindent Denote by $J$ and $J_{0}$ the complex structures on $M$ and $C_0$ respectively and by $\widetilde{\Theta_{h}}$ the average of $\Theta_{h}$ over the torus action on $M$ induced by the flow of the vector field $J_{0}r\partial_{r}$ on $C_0$. We first prove a family version of Theorem \ref{theorem-A}, namely the following theorem.
\begin{theorem}[Existence, family version]\label{theo-family-version}
Let $\{\varphi_{t}\}_{t\in[0,\,+\infty]}$ be a one-parameter family of smooth functions on $C_0$ varying smoothly in $t$ such that $\mathcal{L}_{X}\varphi_{t}=\mathcal{L}_{JX}\varphi_{t}=0$ with each member of the family $\{\omega_{0}^{t}=\frac{i}{2}\partial\bar{\partial}(r^{2}e^{2\varphi_{t}})\}_{t\,\in\,[0,\,+\infty]}$ positive-definite on $C_0$ and such that $\norm{\varphi_{t}}_{C^{k}}$ is uniformly bounded in $t$ for all $k\geq 0$. Moreover, suppose that $M$ is simply connected or that $X|_{A}=0$ for $A\subset E$ for which $H_{1}(A)\to H_{1}(E)$ is surjective.

Then for all $c>0$, there exists a compact subset $K\subset M$ and a one-parameter family of expanding gradient K\"ahler-Ricci solitons $\{g^{t}_{c}\}_{t\in[0,\,+\infty]}$ varying smoothly in $t$ with $\mathcal{L}_{JX}g^{t}_{c}=0$ such that on $M\setminus K$,
\begin{equation*}
|(\nabla^{g^{t}_{0}})^k(\pi_{*}g^{t}_{c}-cg^{t}_{0}-\operatorname{Ric}(g^{t}_{0}))|_{g^{t}_{0}} \leq C(k)r_{t}^{-4-k}\qquad\textrm{for all $k\in\mathbb{N}_{0}$}.
\end{equation*}
Here, $g^{t}_{0}$ denotes the K\"ahler cone metric associated to $\omega^{t}_{0}$, $r_{t}:=re^{\varphi_{t}}$ denotes the corresponding radius function, and $\operatorname{Ric}(g^{t}_{0})$ denotes the Ricci curvature of $g^{t}_{0}$.
\end{theorem}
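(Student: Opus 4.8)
The plan is to carry out, simultaneously for all $t\in[0,\,+\infty]$, the construction used to prove Theorem A, and then to promote the pointwise-in-$t$ solutions to a smooth family by a parametrized implicit function theorem, tracking uniformity in $t$ throughout. The key structural observation is that the $\varphi_{t}$ are type II deformations of the fixed cone metric $\omega_{0}=\frac{i}{2}\partial\bar{\partial}r^{2}$, with radius function $r_{t}=re^{\varphi_{t}}$ and with the radial field $r_{t}\partial_{r_{t}}=r\partial_{r}$ unchanged; hence the soliton vector field $X=\pi^{*}(r\partial_{r})$, and with it the weighted Laplacian $\Delta+\frac{X}{2}\cdot$ and the $JX$-invariance defining the function spaces, is the \emph{same} for every member of the family. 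First I would construct the family of background metrics: since the hermitian metric $h$ on $K_{M}$, its averaged curvature form $\widetilde{\Theta_{h}}$, and the positivity hypothesis \eqref{hypoth} are all $t$-independent, the near-$E$ part of the construction of Proposition \ref{background-metric} (in particular the application of P\u{a}un's theorem) is performed once, and the only $t$-dependence enters through $\rho_{\omega^{t}_{0}}$ and the gluing at large $r$. Using the uniform $C^{k}$-bounds on $\varphi_{t}$, the cutoff data $\epsilon,C,R$ and the estimate $|\rho_{\omega^{t}_{0}}|=O(r_{t}^{-2})$ are uniformly controlled, so a single choice yields $\omega^{t}_{c}=i\widetilde{\Theta_{h}}+i\partial\bar{\partial}u^{t}_{c}$ equal to $\pi^{*}(c\omega^{t}_{0}-\rho_{\omega^{t}_{0}})$ outside one compact set. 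Invoking the hypothesis that $M$ is simply connected or that $X|_{A}=0$, Proposition \ref{equationsetup} then converts the soliton equation for each $t$ into the complex Monge-Amp\`ere equation
$$-\psi+\log\frac{(\omega^{t}_{c}+i\partial\bar{\partial}\psi)^{n}}{(\omega^{t}_{c})^{n}}+\frac{1}{2}X\cdot\psi=F_{t},$$
where $F_{t}$ has leading order $\frac{1}{2}s_{\omega^{t}_{0}}+O(r_{t}^{-4})$ and is uniformly bounded in the relevant weighted norms.

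For each fixed $t$, Theorem \ref{t:existence} yields a unique solution $\psi_{t}\in\mathcal{D}^{\infty}_{f_{t},X}(M)$, and hence the expanding gradient K\"ahler-Ricci soliton $g^{t}_{c}$ associated to $\omega^{t}_{c}+i\partial\bar{\partial}\psi_{t}$. To upgrade this to a smoothly varying family, I would adjoin $t$ as a parameter to the map $\widetilde{MA}$ of Theorem \ref{Imp-Def-Kah-Exp} and consider
$$\Phi(\psi,t):=\log\frac{(\omega^{t}_{c}+i\partial\bar{\partial}\psi)^{n}}{(\omega^{t}_{c})^{n}}+\frac{X}{2}\cdot\psi-\psi-F_{t}.$$
The two ingredients needed for the implicit function theorem with parameters are the joint smoothness of $\Phi$—which follows from the smooth dependence of $\omega^{t}_{c}$ and $F_{t}$ on $t$, itself a consequence of the smooth dependence of $\varphi_{t}$ and of the uniform construction above—and the fact that $D_{\psi}\Phi(\psi_{t},t)=\Delta_{g^{t}_{c}}+\frac{X}{2}\cdot-\Id$ is a Banach-space isomorphism, which is exactly Theorem \ref{iso-sch-Laplacian}. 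Local smoothness in $t$ then globalizes because the solution is unique for each $t$ by Theorem \ref{t:existence}, yielding a smooth family $\{\psi_{t}\}_{t\in[0,\,+\infty]}$; smoothness at the endpoint $t=+\infty$ is understood in the compactifying chart in which $\{\varphi_{t}\}$ is assumed to vary smoothly, and the same argument applies there with the limiting deformation $\varphi_{\infty}$.

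One subtlety to be handled is that the weighted H\"older spaces $\mathcal{D}^{k,\alpha}_{f_{t},X}(M)$ and $\mathcal{C}^{k,\alpha}_{f_{t},X}(M)$ depend on the potential $f_{t}\sim r_{t}^{2}/2$, hence on $t$. The uniform $C^{k}$-bounds on $\varphi_{t}$, however, render the metrics $g^{t}_{0}$ and the potentials $f_{t}$ mutually uniformly equivalent, so these spaces coincide as sets with uniformly equivalent norms; I would therefore fix a single model space and track the smooth $t$-dependence of the reference data rather than of the space itself. With this in place, the a priori estimates of Section \ref{section-a-priori-est}, culminating in Theorem \ref{theo-C^3-wei-est}, and the bootstrapping of Section \ref{section-bootstrapping}, namely Theorem \ref{theo-bootstrap}, hold with constants uniform in $t$. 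This simultaneously gives the smooth regularity of the family and shows that, outside a single compact set $K$ independent of $t$, one has the estimate $|(\nabla^{g^{t}_{0}})^{k}(\pi_{*}g^{t}_{c}-cg^{t}_{0}-\Ric(g^{t}_{0}))|_{g^{t}_{0}}\le C(k)r_{t}^{-4-k}$; the precise form of the left-hand side arises, as in the proof of Theorem A, from the fact that $\omega^{t}_{c}=c\omega^{t}_{0}+\rho_{\omega^{t}_{0}}+O(r_{t}^{-4})$.

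The main obstacle is the uniformity across the non-compact parameter interval, combined with the joint smoothness in $t$ for the varying function spaces. Concretely, one must verify that every constant entering Proposition \ref{background-metric}, the $C^{0}$-, $C^{2}$-, weighted, and $C^{3}$-estimates of Section \ref{section-a-priori-est}, and the bootstrapping scheme of Section \ref{section-bootstrapping} can be taken independent of $t\in[0,\,+\infty]$, and that the parametrized implicit function theorem continues to apply uniformly up to and including the endpoint $t=+\infty$. It is precisely here that the hypothesis of uniform $C^{k}$-boundedness of $\varphi_{t}$ is indispensable: it is what guarantees both the uniform equivalence of the metrics $g^{t}_{0}$ and the uniform control of the data $F_{t}$ on which all of these estimates depend.
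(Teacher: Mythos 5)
Your proposal is correct and follows essentially the same route as the paper: a family version of the background-metric construction and of the equation set-up (using the simple-connectedness or $X|_{A}=0$ hypothesis to get a primitive $\theta^{t}_{X}$ depending smoothly on $t$), the observation that the spaces $\mathcal{D}^{k,\alpha}_{f_{t},X}(M)$ are $t$-independent up to uniform equivalence, and Theorem \ref{iso-sch-Laplacian} together with the uniform a priori estimates of Sections \ref{section-a-priori-est}--\ref{section-bootstrapping} to control the family. The only organizational difference is that the paper runs a continuity method in the parameter $t$ (openness via the implicit function theorem, closedness via the uniform estimates), whereas you obtain existence for each $t$ directly from Theorem \ref{t:existence} and then deduce smooth $t$-dependence from the parametrized implicit function theorem combined with uniqueness; both arguments rest on the same ingredients.
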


\noindent Notice that we have the condition $\pi_{1}(M)=0$ here in place of the cohomological vanishing conditions of hypothesis (b) of Theorem A; see Remark \ref{fini} below for why this is the case.

The proof of Theorem \ref{theo-family-version} will involve several steps. We begin with a family version of the construction of a background metric. By inspecting the proof of Proposition \ref{background-metric}, one can deduce the following.
\begin{prop}\label{background-metric-family}
Let $\{\varphi_{t}\}_{t\in[0,\,+\infty]}$ be a one-parameter family of smooth functions on $C_0$ varying smoothly in $t$ such that $\mathcal{L}_{X}\varphi_{t}=\mathcal{L}_{JX}\varphi_{t}=0$ with each member of the family $\{\omega_{0}^{t}=\frac{i}{2}\partial\bar{\partial}(r^{2}e^{2\varphi_{t}})\}_{t\,\in\,[0,\,+\infty]}$ positive-definite and such that $\norm{\varphi_{t}}_{C^{k}}$ is uniformly bounded in $t$ for all $k\geq 0$.

Then for all $c>0$, there exists a compact subset $K\subset M$ and a one-parameter family of smooth functions $\{u^{t}_{c}\}_{t\in[0,\,+\infty]}\subset C^{\infty}(M)$ depending smoothly on $t$ with $\mathcal{L}_{JX}u^{t}_{c}=0$ such that \linebreak $\omega^{t}_{c}:=i\widetilde{\Theta_{h}}+i\partial\bar{\partial}u^{t}_{c}$ is a K\"ahler form satisfying
$\omega^{t}_{c}=\pi^{*}(c\omega^{t}_{0}-\rho_{\omega^{t}_{0}})$ on $M\setminus K$, where $\rho_{\omega^{t}_{0}}$ denotes the Ricci form of $\omega^{t}_{0}$. In particular, we have that $\mathcal{L}_{JX}\omega_{c}^{t}=0$.
\end{prop}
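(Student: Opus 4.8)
The plan is to rerun the proof of Proposition \ref{background-metric} verbatim, now with the parameter $t$ present throughout, checking at each stage that the objects produced depend smoothly on $t$ and that the constants $C$, $R$ and the compact set $K$ can be taken independent of $t$. The essential structural point is that every member $\omega_0^t=\frac{i}{2}\partial\bar\partial(r^2e^{2\varphi_t})$ is a deformation of type II of the fixed cone metric $\omega_0$: since $\mathcal{L}_X\varphi_t=\mathcal{L}_{JX}\varphi_t=0$, the radial vector field $r\partial_r$ (and hence its lift $X$) is common to the whole family, while the radius functions $r_t=re^{\varphi_t}$ are all uniformly comparable to $r$ because $\|\varphi_t\|_{C^0}$ is bounded uniformly in $t$. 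Likewise the uniform $C^k$ bounds on $\varphi_t$ force the metrics $\omega_0^t$, their Ricci forms $\rho_{\omega_0^t}$, and the curvature quantities $|\rho_{\omega_0^t}|_{i\partial\bar\partial r_t^{2\alpha}}$ to be controlled uniformly in $t$ at every order. I would stress at the outset that the fixed data entering the construction --- the hermitian metric $h$ on $K_M$, its curvature $\Theta_h$, the constant $\epsilon>0$, and the function furnished by \cite[Theorem 1.1]{paunthm} making $i\Theta_h+i\partial\bar\partial\varphi>0$ on $E\cup\{r<4\epsilon\}$ --- are all independent of $t$, since \eqref{hypoth} is a condition on the fixed pair $(\omega,\Theta_h)$.

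Next I would produce the $t$-dependent potentials. Writing $-i\Theta_h=\rho_{\omega_0^t}+i\partial\bar\partial u^t$ on $M\setminus E$, the difference $u^t-u$ (with $u$ the function of Proposition \ref{background-metric}) satisfies $i\partial\bar\partial(u^t-u)=\rho_{\omega_0}-\rho_{\omega_0^t}=i\partial\bar\partial\log\frac{(\omega_0^t)^n}{\omega_0^n}$, so one may take $u^t=u+\log\frac{(\omega_0^t)^n}{\omega_0^n}$ up to a constant; this varies smoothly in $t$, is $JX$-invariant, and is uniformly controlled in $C^k$ on compacta because $\frac{(\omega_0^t)^n}{\omega_0^n}$ is. Keeping the cutoffs $\zeta_\epsilon$, $\zeta_R$ and the plurisubharmonic $\Psi_\alpha=\psi_\alpha\circ r^{2\alpha}$ fixed (built from the $t$-independent $r$) while using $\Psi_1^t=\psi_1\circ r_t^2$ for the leading conical term so that the form agrees with $c\omega_0^t$ at infinity (legitimate since $r_t,r$ are uniformly comparable, so the relevant cutoff regions match up to enlarging thresholds by uniform factors), I would set
\begin{equation*}
\hat\omega_c^t:=i\Theta_h+i\partial\bar\partial(\zeta_\epsilon\varphi)+i\partial\bar\partial((1-\zeta_\epsilon)u^t)+C\,i\partial\bar\partial(\zeta_R\Psi_\alpha)+c\,i\partial\bar\partial\Psi_1^t.
\end{equation*}

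The positivity of $\hat\omega_c^t$ is then checked region by region exactly as in Proposition \ref{background-metric}: on $E\cup\{0<r<2\epsilon\}$ it follows from the $t$-independent Păun positivity together with the plurisubharmonicity of $\Psi_\alpha$ and $\Psi_1^t$; on $\{2R<r\}$ and $\{3\epsilon<r<R\}$ it follows from the uniform curvature decay $|\rho_{\omega_0^t}|=O(r^{-2})$, resp.\ $O(r^{-2\alpha})$, after increasing $R$, resp.\ $C$; and on the fixed compact annuli $\{2\epsilon\le r\le3\epsilon\}$ and $\{2R\le r\le3R\}$ it follows by compactness. The only new point is that each ``after increasing $C$/$R$'' step must be made uniformly in $t$; this is legitimate because the relevant forms vary continuously over the \emph{compact} parameter interval $[0,+\infty]$ and are uniformly bounded, so a single choice of $C$ and $R$ serves all $t$ simultaneously. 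With these uniform choices $\hat\omega_c^t=c\omega_0^t-\rho_{\omega_0^t}$ on $\{r>2R\}$, so $K:=\{r\le 2R\}$ is a $t$-independent compact set.

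Finally I would average over the torus action generated by $J_0r\partial_r$, setting $\omega_c^t=\frac{1}{|T^k|}\int_{T^k}\psi_g^*\hat\omega_c^t\,d\mu(g)=i\widetilde{\Theta_h}+i\partial\bar\partial u_c^t$. Averaging is a smooth linear operation commuting with the $t$-dependence, so $u_c^t$ and $\omega_c^t$ depend smoothly on $t$ and satisfy $\mathcal{L}_{JX}u_c^t=\mathcal{L}_{JX}\omega_c^t=0$; since each $\varphi_t$ is $JX$-invariant the cone data is torus-invariant, whence $\psi_g^*\omega_0^t=\omega_0^t$ and $\psi_g^*\rho_{\omega_0^t}=\rho_{\omega_0^t}$, giving $\omega_c^t=c\omega_0^t-\rho_{\omega_0^t}$ on $\{r>2R\}$, as required. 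The main obstacle is purely the bookkeeping of uniformity: one must verify that none of the threshold-raising steps in the positivity check secretly forces $C$ or $R$ to blow up as $t$ varies. This is exactly where the hypotheses that $\|\varphi_t\|_{C^k}$ is bounded uniformly in $t$ and that the parameter range $[0,+\infty]$ is compact become indispensable; with these in hand every estimate from Proposition \ref{background-metric} is uniform and the construction carries over directly.
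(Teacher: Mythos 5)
Your proposal is correct and is essentially the paper's own argument: the paper proves this proposition simply by asserting that one "inspects the proof of Proposition \ref{background-metric}", and your write-up is a faithful execution of exactly that inspection, with the only genuinely new content --- the uniformity in $t$ of the constants $C$, $R$ and of the compact set $K$ --- correctly traced back to the uniform $C^{k}$ bounds on $\varphi_{t}$ and the compactness of the parameter interval $[0,+\infty]$. The explicit choice $u^{t}=u+\log\frac{(\omega_{0}^{t})^{n}}{\omega_{0}^{n}}$ is a nice touch that makes the smooth dependence on $t$ transparent.
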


Next, we state and prove the family version of Proposition \ref{equationsetup}.
\begin{prop}\label{prop-set-up-equ-family}
In the setting of Proposition \ref{background-metric-family}, suppose that $M$ in addition is simply connected or that
$X|_{A}=0$ for $A\subset E$ for which $H_{1}(A)\to H_{1}(E)$ is surjective. Then for all $c>0$, there exists a one-parameter family of smooth functions $\{F_{c}^{t}:M\to\R\}_{t\,\in\,[0,\,+\infty]}\subset C_{-2}^{\infty}(M)$ varying smoothly in $t$ with
$\mathcal{L}_{JX}F_{c}^{t}=0$ and with $F_{c}^{t}=-\log\frac{(c\omega^{t}_{0}-\rho_{\omega^{t}_{0}})^{n}}{(\omega^{t}_{0})^{n}}$ on the complement of $K\subset M$ such that $$\rho_{\omega_{c}^{t}}+\omega_{c}^{t}-\frac{1}{2}\mathcal{L}_{X}\omega_{c}^{t}=i\partial\bar{\partial}F_{c}^{t}.$$
Here, $\rho_{\omega_{c}^{t}}$ denotes the Ricci form of $\omega_{c}^{t}$.
\end{prop}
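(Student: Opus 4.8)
The plan is to reproduce, for each fixed $t \in [0,+\infty]$, the first half of the proof of Proposition~\ref{equationsetup}---the part that produces a $JX$-invariant potential for the background defect $\rho_\omega + \omega - \frac12\mathcal{L}_X\omega$ together with its asymptotic normalization---and then to verify that every object constructed can be chosen to vary smoothly in $t$, using the smooth $t$-dependence and uniform $C^k$-bounds supplied by Proposition~\ref{background-metric-family}. I would fix $c > 0$ and work with the family $\{\omega_c^t\}_t$ of that proposition, for which $\mathcal{L}_{JX}\omega_c^t = 0$ and $\omega_c^t = \pi^*(c\omega_0^t - \rho_{\omega_0^t})$ on $M \setminus K$. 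Since $\mathcal{L}_{JX}\omega_c^t = 0$, the field $JX$ is Killing for the associated metric $g_c^t$, so by Lemma~\ref{lemmma} the dual one-form $\eta_X^t := g_c^t(X,\cdot)$ is closed. Under the standing hypothesis---$M$ simply connected, whence $H^1(M;\R) = 0$, or $X|_A = 0$ with $H_1(A) \to H_1(E)$ surjective, in which case Corollary~\ref{kingpin} applies since $E \simeq M$---this form is exact, and, exactly as in the proof of Proposition~\ref{equationsetup}, it yields a $JX$-invariant function $\theta_X^t$ with $\frac12\mathcal{L}_X\omega_c^t = i\partial\bar\partial\theta_X^t$. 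I would pin $\theta_X^t$ down uniquely by integrating its closed defining one-form from a fixed basepoint $p_0$ and then averaging over the torus; both operations preserve smooth dependence on $t$.

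Next, exactly as in the proof of Proposition~\ref{equationsetup}, the form $\rho_{\omega_c^t} + i\Theta_h$ represents the trivial class in $H^2(M;\R)$ and is therefore $i\partial\bar\partial$-exact, say $\rho_{\omega_c^t} + i\Theta_h = i\partial\bar\partial v^t$; solving for $v^t$ via a normalized (Green's operator) solution and averaging over the torus produces $\tilde v^t$ with $\rho_{\omega_c^t} + i\widetilde{\Theta_h} = i\partial\bar\partial\tilde v^t$ and $\mathcal{L}_{JX}\tilde v^t = 0$, again smoothly in $t$. Combining these with the identity $\omega_c^t = i\widetilde{\Theta_h} + i\partial\bar\partial u_c^t$ from Proposition~\ref{background-metric-family}, precisely as in~\eqref{sexier}, gives
\begin{equation*}
\rho_{\omega_c^t} + \omega_c^t - \frac12\mathcal{L}_X\omega_c^t = i\partial\bar\partial F_c^t, \qquad F_c^t := \tilde v^t + u_c^t - \theta_X^t,
\end{equation*}
a smooth family of $JX$-invariant functions.

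To fix the asymptotics, on $M \setminus K$ I would repeat the cone computation of~\eqref{sexiest}: using $\omega_c^t = c\omega_0^t - \rho_{\omega_0^t}$ together with $\mathcal{L}_{r\partial_r}\rho_{\omega_0^t} = 0$ and $c\omega_0^t - \frac12\mathcal{L}_{r\partial_r}(c\omega_0^t) = 0$, one obtains $\rho_{\omega_c^t} + \omega_c^t - \frac12\mathcal{L}_X\omega_c^t = i\partial\bar\partial G_c^t$ there, with $G_c^t := -\log\frac{(c\omega_0^t - \rho_{\omega_0^t})^n}{(\omega_0^t)^n} \in C^\infty_{-2}(M)$. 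Thus $F_c^t - G_c^t$ is pluriharmonic and $JX$-invariant at infinity, and the argument following~\eqref{beyonce}---that $\frac{X}{2}\cdot(F_c^t - G_c^t)$ is a real holomorphic, hence constant, quantity $c_0^t$, and that harmonicity forces $\frac{2n-2}{r^2}c_0^t + \frac{1}{r^2}\Delta_{g_S^t}c_1^t = 0$, which integrates over the link to give $c_0^t = 0$---shows that $F_c^t - G_c^t$ equals a constant $C^t$ on $M \setminus K$. Subtracting this $t$-dependent constant yields the normalized family with $F_c^t = G_c^t$ on $M \setminus K$, so that $F_c^t \in C^\infty_{-2}(M)$ as required.

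The principal obstacle is not the pointwise-in-$t$ construction, which is a verbatim copy of Proposition~\ref{equationsetup}, but the simultaneous smoothness and uniformity across the whole compactified interval $[0,+\infty]$. Each normalization I have introduced---the basepoint for $\theta_X^t$ and $\tilde v^t$, and the subtracted constant $C^t$---must be made to depend smoothly on $t$ up to and including the endpoint $t = +\infty$, and the decay $F_c^t = G_c^t = \tfrac{s_{\omega_0^t}}{2} + O(r^{-4})$ must hold with constants uniform in $t$. This is exactly where the hypothesis that $\|\varphi_t\|_{C^k}$ is uniformly bounded in $t$ enters: it forces uniform $C^k$-control of the cone data $\omega_0^t$ and $\rho_{\omega_0^t}$, hence of $G_c^t$ and of the background potentials $u_c^t$, so that the smooth dependence $t \mapsto F_c^t$ and the weighted bounds persist up to $t = +\infty$.
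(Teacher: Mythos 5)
Your proposal is correct and follows essentially the same route as the paper: closedness of $\eta_X^t$ via Lemma \ref{lemmma}, an explicit primitive $\theta_X^t(x)=\int_{x_0}^x\eta_X^t$ (which is precisely the device the paper uses, per Remark \ref{fini}, to guarantee smooth $t$-dependence where the cohomological vanishing conditions would not), torus-averaging, the Ricci-potential identity against $i\widetilde{\Theta_h}$, and the cone computation plus subtraction of a $t$-dependent constant to normalize the asymptotics. The only cosmetic difference is your Green's-operator normalization of $v^t$, where the paper simply takes the explicit logarithm of a ratio of hermitian metrics on $K_M$; both achieve the same smooth dependence on $t$.
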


\begin{remark}\label{fini}
The reason why we have to replace the vanishing of $H^1(M)$ or $H^{0,1}(M)$ as in Proposition \ref{equationsetup}
with the condition of simple connectedness of the resolution $M$ here is because we require an explicit primitive for every one-form on $M$ in order to see that the primitive depends smoothly on the parameter $t$ if the one-form itself depends smoothly on this parameter, something which is not clear from the aforementioned cohomological vanishing conditions.
\end{remark}

\begin{proof}
Fix $c>0$ and note that since $\mathcal{L}_{JX}\omega_{c}^{t}=0$, the dual one-form $\eta_{X}^{t}$  of $X$ with respect to the K\"ahler metric associated to $\omega_{c}^{t}$ is closed. Our hypotheses then imply that
\begin{equation}\label{miami}
 \mathcal{L}_{X}\omega_{c}^{t}=d(\omega_{c}^{t}\lrcorner X)=i\partial\bar{\partial}\theta_{X}^{t},
\end{equation}  
where $\theta^{t}_{X}(x)=\int^{x}_{x_{0}}\eta^{t}_{X}$, the integral of $\eta^{t}_{X}$ along any curve connecting a fixed point $x_{0}\in M$ to $x\in M$. When $\pi_{1}(M)=0$, this last assertion is clear. In the case that $X|_{A}=0$ for $A\subset E$ for which $H_{1}(A)\to H_{1}(E)$ is surjective, \eqref{miami} follows from Lemma \ref{simple} after noting that $M$ is homotopy equivalent to $E$ and $\eta_{X}^{t}|_{A}=0$. By averaging over the torus action on $M$ induced from that on $C_{0}$, we may assume that $\mathcal{L}_{JX}\theta_{X}^{t}=0$.

Next note that
$$\rho_{\omega_{c}^{t}}+i\Theta_{h}=i\partial\bar{\partial}v^{t}$$
for a smooth function $v^{t}$ that varies smoothly in $t$. Averaging again over the induced torus action on $M$ then yields
$$\rho_{\omega_{c}^{t}}+i\widetilde{\Theta_{h}}=i\partial\bar{\partial}\tilde{v}^{t}$$
for $\tilde{v}^{t}$ a family of smooth functions varying smoothly in $t$ with $\mathcal{L}_{JX}\tilde{v}^{t}=0$. We then have that
\begin{equation*}
\begin{split}
\rho_{\omega_{c}^{t}}+\omega_{c}^{t}-\frac{1}{2}\mathcal{L}_{X}\omega_{c}^{t}&
=\rho_{\omega_{c}^{t}}+i\widetilde{\Theta_{h}}+i\partial\bar{\partial}u^{t}_{c}-i\partial\bar{\partial}\theta^{t}_{X}\\
&=i\partial\bar{\partial}(\tilde{v}^{t}+u^{t}_{c}-\theta_{X}^{t})\\
&=i\partial\bar{\partial}F_{c}^{t}.
\end{split}
\end{equation*}
Clearly $F_{c}^{t}$ varies smoothly in $t$ and $\mathcal{L}_{JX}F^{t}_{c}=0$.

As in the proof of Proposition \ref{background-metric}, we see that outside $K$ we can write
$$F^{t}_{c}=f(t)-\log\frac{(c\omega_{t}-\rho_{\omega_{t}})^{n}}{(c\omega_{t})^{n}}$$
for some $f:\mathbb{R}\to\mathbb{R}$. Since $F^{t}_{c}$ varies smoothly in $t$, $f(t)$ must also be smooth as a function of $t$. We then redefine $F^{t}_{c}$ by $$F^{t}_{c}-f(t).$$ This function has the desired properties.
\end{proof}

We are now in a position to prove Theorem \ref{theo-family-version}.

\subsubsection*{Proof of Theorem \ref{theo-family-version}}
The proof is verbatim the proof of Theorem \ref{theorem-A}. Thanks to Proposition \ref{prop-set-up-equ-family}, it suffices to prove the family version of Theorem \ref{t:existence}. We adopt the assumptions and notations of Theorem \ref{theo-family-version} and Proposition \ref{prop-set-up-equ-family} and we fix $c>0$, although we shall omit the subscript $c$ for ease of notation. Theorem \ref{theo-family-version} will follow from the next claim.
\begin{claim}\label{claim-family-version-thm-D}
There exists a smooth one-parameter family of K\"ahler potentials $(\varphi_t)_{t\geq 0}$ satisfying
\begin{equation}
\left\{
\begin{array}{rl}
&\omega_{\varphi_t}:=\omega^t+i\partial\bar{\partial}\varphi_t>0,\quad\varphi_t\in \mathcal{D}^{\infty}_{\widetilde{r_t}^2,X}(M),\quad t\geq 0,\\
&\\
&-\varphi_t+\log\frac{(\omega^t+i\partial\bar{\partial}\varphi_t)^{n}}{(\omega^t)^{n}}+\frac{1}{2}X\cdot\varphi_t=F^t\label{MA-Family-Version},
\end{array} \right.
\end{equation}
where $\widetilde{r_t}$ denotes any positive extension of the radius function $r_t$ on $C_{0}$ to the resolution $M$.
\end{claim}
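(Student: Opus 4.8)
The plan is to treat the family as a single parameter-dependent problem: supply per-slice existence from Theorem \ref{t:existence} and extract smooth dependence on $t$ from the implicit function theorem, exactly as in Theorem \ref{Imp-Def-Kah-Exp}. First I would fix $c>0$ and, for each $t\in[0,+\infty]$, verify the hypotheses of Theorem \ref{t:existence} for the background form $\omega^t$ of Proposition \ref{background-metric-family} together with the datum $F^t$ of Proposition \ref{prop-set-up-equ-family}: the form $\omega^t$ is AC K\"ahler asymptotic to $\omega^t_0$ at the requisite quadratic rate, while $F^t$ agrees with $-\log\frac{(c\omega^t_0-\rho_{\omega^t_0})^n}{(\omega^t_0)^n}=O(r_t^{-2})$ outside $K$ and is $JX$-invariant, so that $F^t\in\mathcal{C}^\infty_{\widetilde{r_t}^2,X}(M)$. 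Theorem \ref{t:existence} then yields, for each fixed $t$, a unique $\varphi_t\in\mathcal{D}^\infty_{\widetilde{r_t}^2,X}(M)$ solving (\ref{MA-Family-Version}). Since $\|\varphi_t\|_{C^k}$ is uniformly bounded, the radius functions satisfy $r_t=re^{\varphi_t}\asymp r$ uniformly in $t$, so all the weighted spaces $\mathcal{D}^{k,\alpha}_{\widetilde{r_t}^2,X}(M)$ coincide as topological vector spaces with uniformly equivalent norms; I would therefore fix once and for all the scale of spaces attached to a single weight $f$ (an extension of $r^2/2$) and regard the $\varphi_t$ as elements of the fixed spaces $\mathcal{D}^{k,\alpha}_{f,X}(M)$.

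The substance of the claim is the \emph{smooth} dependence of $\varphi_t$ on $t$, which I would obtain via the implicit function theorem for Banach spaces. Define the joint map
\begin{equation*}
G:[0,+\infty]\times\mathcal{K}^{3,\alpha}_{f,X}(M)\to\mathcal{C}^{1,\alpha}_{f,X}(M),\qquad G(t,\varphi):=\log\frac{(\omega^t+i\partial\bar{\partial}\varphi)^n}{(\omega^t)^n}+\frac{X}{2}\cdot\varphi-\varphi-F^t.
\end{equation*}
Because $\omega^t$ and $F^t$ vary smoothly in $t$ (by Propositions \ref{background-metric-family} and \ref{prop-set-up-equ-family}, the latter being exactly where simple connectedness of $M$ enters; cf.\ Remark \ref{fini}) and the Monge--Amp\`ere expression is smooth in $\varphi$, the map $G$ is smooth jointly in $(t,\varphi)$, with $G(t,\varphi_t)=0$. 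Its partial differential in $\varphi$ at a solution $(t_0,\varphi_{t_0})$ is the linearized operator $D_\varphi G(t_0,\varphi_{t_0})=\Delta_{\omega_{\varphi_{t_0}}}+\frac{X}{2}\cdot-\Id$, which, by Theorem \ref{iso-sch-Laplacian} applied to the metric $\omega_{\varphi_{t_0}}$ (uniformly equivalent to $\omega^{t_0}$ by the a priori estimates of Section \ref{section-a-priori-est}), is an isomorphism of Banach spaces. The implicit function theorem then produces, near each $t_0$, a unique smooth family $t\mapsto\varphi_t\in\mathcal{D}^{3,\alpha}_{f,X}(M)$ solving $G(t,\varphi_t)=0$; by the uniqueness in Theorem \ref{t:existence} this local family coincides with the globally defined $\varphi_t$. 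Covering the compact connected interval $[0,+\infty]$ by such neighbourhoods patches these into a single smooth family.

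To promote this to smoothness into $\mathcal{D}^\infty_{f,X}(M)$, I would run the same argument at every regularity level: for each $k$, Theorem \ref{iso-sch-Laplacian} gives the isomorphism on $\mathcal{D}^{k+2,\alpha}_{f,X}(M)$, so the implicit function theorem yields a smooth family there, and uniqueness together with the bootstrapping of Theorem \ref{theo-bootstrap} forces it to agree with $\varphi_t$. Hence $t\mapsto\varphi_t$ is smooth into every $\mathcal{D}^{k,\alpha}_{f,X}(M)$, and therefore into $\mathcal{D}^\infty_{f,X}(M)=\mathcal{D}^\infty_{\widetilde{r_t}^2,X}(M)$, which is the assertion of the claim. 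The main obstacle I anticipate is not any single estimate---these are all in hand from Sections \ref{section-small-def}--\ref{section-theorem-existence}---but rather the bookkeeping required to guarantee that every ingredient (the linearized isomorphism, the Schauder and weighted a priori bounds, the metric equivalences) is \emph{uniform in $t$}, so that the $t$-dependent weights may legitimately be replaced by the single fixed weight $f$ and the implicit function theorem applied with $t$-independent source and target spaces; a secondary point demanding care is smoothness of the family across the endpoint $t=+\infty$, which relies on the hypothesis that the cone deformations $\varphi_t$, and hence the data $\omega^t$ and $F^t$, extend smoothly there with uniformly bounded derivatives.
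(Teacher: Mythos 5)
Your proposal is correct, and it rests on exactly the same pillars as the paper's argument: the $t$-independence of the weighted spaces (since the backgrounds of Proposition \ref{background-metric-family} are uniformly equivalent), the linearized isomorphism of Theorem \ref{iso-sch-Laplacian} feeding an implicit function theorem that delivers local smooth dependence on $t$, and the uniform a priori estimates of Section \ref{section-a-priori-est}. The one structural difference is how global existence over all of $[0,+\infty]$ is obtained. The paper runs a second continuity method \emph{in the parameter $t$}: it defines $S_{Family}$ as the set of $t$ for which the equation is solvable, shows it is non-empty (containing $t=0$ by Theorem \ref{t:existence}), open (by the inverse function theorem of Theorem \ref{iso-sch-Laplacian}, which also gives the smoothness in $t$), and closed (by adapting the a priori estimates of Theorem \ref{theo-C^3-wei-est} to the compact family $(\omega^t)$). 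You instead observe that Theorem \ref{t:existence} already applies slice-wise to each pair $(\omega^t,F^t)$, so existence and uniqueness for every $t$ come for free, and the implicit function theorem is needed only to upgrade the resulting family to a smooth one; this dispenses with the closedness step at the cost of having to check the hypotheses of Theorem \ref{t:existence} uniformly for each slice. Both routes are valid; yours is marginally more economical, while the paper's phrasing keeps the family statement self-contained as a single continuity argument. Your closing caveats --- uniformity in $t$ of all constants so that a single weight $f$ may be fixed, and smoothness of the data across the endpoint $t=+\infty$ --- are precisely the points the paper flags as the (straightforward) adaptations required.
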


\begin{proof}[Proof of Claim \ref{claim-family-version-thm-D}]
First note that the function spaces $\mathcal{D}^{\infty}_{\widetilde{r_t}^2,X}(M)$ do not depend on the parameter $t$ since all of the background metrics furnished by Proposition \ref{background-metric-family} are equivalent.

The proof of this claim is based on a continuity method. Let
\begin{eqnarray*}
S_{Family}:=\{t\geq 0: \mbox{$(\ref{MA-Family-Version})_t$ has a solution}\}.
\end{eqnarray*}
Then $S_{Family}$ is non-empty; indeed, by Theorem \ref{t:existence}, $0\in S_{Family}$. Moreover, openness of $S_{Family}$ is proven by arguing as in the proof of Theorem \ref{t:existence}; if $t_0\in S_{Family}$, then there exists some $\epsilon>0$ such that $(\ref{MA-Family-Version})_{t_0+\epsilon}$ has a solution by the inverse function theorem established in Theorem \ref{iso-sch-Laplacian}. The smoothness in $t$ follows from Theorem \ref{iso-sch-Laplacian}. Finally, $S_{Family}$ is closed for the same reasons as in the proof of Theorem \ref{t:existence}; the adaptation of Theorem \ref{theo-C^3-wei-est} to the (compact) family of K\"ahler forms $(\omega^t)_{t\geq 0}$ provided by Proposition \ref{background-metric-family} is straightforward.
\end{proof}

Finally, we prove Theorem \ref{coro-F}.
\subsubsection*{Proof of Theorem \ref{coro-F}}
We view $\pi:\mathbb{C}^{n}\to\mathcal{O}_{\mathbb{P}^{n-1}}(-1)^{\times}$ trivially as an equivariant resolution with $\pi=\operatorname{Id}$. We have a K\"ahler cone metric $g_{0}$ on $\mathbb{C}^{n}$ with radial function $r$ whose K\"ahler form we shall denote by $\omega_{0}=\frac{i}{2}\partial\bar{\partial}r^{2}$. Denote by $\sigma$ the transverse K\"ahler form of $\omega_{0}$ on the complex base $D=\mathbb{P}^{n-1}$ of the cone and recall that, by assumption, $a\cdot r\partial_{r}$ is equal to the Euler vector field on $\mathbb{C}^{n}$ for some $0<a<1$. We invoke the result announced by Perelman and proved in \cite{Tia-Zhu-Con-Kah-Ric} that states that the normalised K\"ahler-Ricci flow $(\hat{\sigma}(t))_{t\geq 0}$ converges smoothly to a K\"ahler-Einstein metric with Einstein constant equal to $1$ starting from any K\"ahler form $\hat{\sigma}(0)=\hat{\sigma}$ with $[\hat{\sigma}]\in c_1(D)$. As in Example \ref{example-Kahler-Ricci-Type-II}, this yields a one-parameter family of potentials $\varphi(t)\in C^{\infty}(D)$ on $D$ with $\varphi(0)=0$
satisfying \eqref{krf} with initial metric $\hat{\sigma}=\frac{n}{a}\sigma\in c_{1}(D)$ such that $\varphi(t)$ converges smoothly to a smooth real-valued function $\varphi(\infty)$ on $D$, and such that the corresponding K\"ahler metrics $\sigma(t):=\sigma+i\partial\bar{\partial}(\frac{a}{n}\varphi(t))$ converge smoothly to $\sigma(\infty)=a\omega_{FS}=\sigma+i\partial\bar{\partial}(\frac{a}{n}\varphi(\infty))$ as $t\to\infty$, where $\omega_{FS}$ denotes the Fubini-Study form on $\mathbb{P}^{n-1}$. The induced evolution $\omega_{0}(t)$ of $\omega_{0}$ is then via $\omega_{0}(t)=\frac{i}{2}\partial\bar{\partial}(r^{2}e^{\frac{2a}{n}p^{*}\varphi(t)})$, where
$p:\mathcal{O}_{\mathbb{P}^{n-1}}(-1)\to\mathbb{P}^{n-1}$ denotes the projection. It is not hard to see that the limit cone is given by
$(\mathbb{C}^{n},\,\omega_{0}(\infty))=(\mathbb{C}^n,\Re(\partial\bar{\partial}|\cdot|^{2a}))$. (Note that $a<1$ here since we discard the flat case by assumption.) We next apply Theorem \ref{theo-family-version} to the one-parameter family of K\"ahler cones $(\mathbb{C}^{n},\,\omega_{0}(t))_{t\in[0,\,+\infty]}$ to obtain, for each $c>0$, a smooth one-parameter family of expanding gradient K\"ahler-Ricci solitons $(\mathbb{C}^{n},\omega_{c}^t,r\partial_{r})_{t\in[0,+\infty]}$, where $(\mathbb{C}^{n},\omega_{c}^{\infty})$ is asymptotic to $(\mathbb{C}^n,c\Re(\partial\bar{\partial}|\cdot|^{2a}))$. By uniqueness (see \cite{Cho-Fon} or Theorem \ref{Uniqueness-Theorem}(ii)), $(\mathbb{C}^{n},\,\omega_{c}^{\infty},\,r\partial_{r})$ is Cao's expanding gradient K\"ahler-Ricci soliton. Recall that this soliton is $U(n)$-invariant and has positive curvature operator on real $(1,1)$-forms (cf.~\cite{Che-Zhu-Pos-Cur} regarding this latter point). What must be checked is that the positivity of the curvature operator on real $(1,1)$-forms is preserved along the one-parameter path $(\mathbb{C}^{n},\omega_{c}^t,r\partial_{r})_{t\in[0,+\infty]}$. We use a continuity method to prove this. In what follows, we shall omit the subscript $c$.

First note that by construction, $\sigma(t)$ is the transverse K\"ahler form of $\omega_{0}(t)$ on $D$ and that $\frac{n}{a}\sigma(t)$
evolves along the normalised K\"ahler-Ricci flow on $D$. Since $\sigma$, and hence $\frac{n}{a}\sigma$, has positive curvature operator on real $(1,1)$-forms by assumption, every member of the one-parameter family $(\frac{n}{a}\sigma(t))_{t\geq 0}$ has in fact positive curvature operator on real $(1,1)$-forms as well; see \cite[Theorem 7.34]{Cho-Lu-Ni-I}. Consequently, each member of the family $(\sigma(t))_{t\geq 0}$ will have positive curvature operator on real $(1,1)$-forms.
Furthermore, since the Reeb field on the asymptotic cone is fixed along the path $(\omega_{0}(t))_{t\geq0}$, the sectional curvature $K$ of $\omega_{0}(t)$ for each $t$ in the directions $(J_0(r\partial_r),U)$, where $U$ is tangent to $D$, is given by $K(J_0(r\partial_r),U)=1$ when restricted to the slice $\{r=1\}$ of the cone. In particular, if $(\mathbb{C}^{n},\,\omega^{t_0},\,r\partial{r})$ has positive curvature operator on real $(1,1)$-forms for some $t_0\geq 0$, then, since the corresponding K\"ahler metrics $g^t$ are asymptotic to $g_0^t$ at rate $-2$ with $g_{0}^{t}$-derivatives with respect to $\pi$ for any $t$, there exists some neighbourhood $\mathcal{U}_{t_0}$ of $t_0$ such that the curvature operator is positive on real $(1,1)$-forms with no components involving the radial direction $X$ for all $t\in\mathcal{U}_{t_0}$. Hence, to complete the openness argument, it suffices to study the positivity of the curvature operator on $2$-planes containing the radial direction $X$. We argue precisely as in \cite[Proposition 3.13]{Der-Asy-Com-Egs}. For the convenience of the reader, we sketch a proof. We begin with the following claim.
\begin{claim}\label{claim-pos-cur-op}
In the above setting,
\begin{eqnarray*}
\liminf_{r\rightarrow +\infty}r^2\inf_{\partial B_{g^t}(p,r)}\min\{\Rm(g^t)(X,U,U,X): U\perp X\quad\textrm{and}\quad|U|_{g^t}=1\}>0
\end{eqnarray*}
for all $t\geq 0$, where $g^t$ is the metric associated to the K\"ahler form $\omega^t$.
\end{claim}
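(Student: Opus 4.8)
The plan is to reduce the claim to a statement about the leading $r^{-2}$ coefficient of the radial sectional curvature, and then to extract its sign from the expanding soliton equation. First I would record that on each exact tangent cone $(\mathbb{C}^n,g_0^t)$ the radial planes are flat, i.e. $\Rm(g_0^t)(\partial_r,V,V,\partial_r)=0$ for every $V$ tangent to the link; hence, writing $X=r\partial_r$ with $|X|_{g_0^t}=r$, the quantity $\Rm(g_0^t)(X,U,U,X)$ vanishes identically. Consequently the entire content of the claim lives at rate $-2$: since by Theorem~\ref{theo-family-version} the soliton metric $g^t$ is asymptotic to $cg_0^t+\Ric(g_0^t)$ at rate $-4$ with $g_0^t$-derivatives, the curvature difference is $O(r^{-6})$ as a $(0,4)$-tensor, so $r^2\,\Rm(g^t)(X,U,U,X)$ agrees up to $O(r^{-2})$ with the corresponding quantity for the model $cg_0^t+\Ric(g_0^t)$. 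The latter is a rate-$(-2)$ perturbation of the cone $cg_0^t$, so $r^2\,\Rm(g^t)(X,U,U,X)$ converges, as $r\to\infty$, to a coefficient determined purely by the cone data $g_0^t$ and its Ricci tensor, uniformly in $t$ by the uniform $C^k$-bounds on $\varphi_t$ of Proposition~\ref{background-metric-family}.

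To compute and sign this coefficient I would use the gradient structure. Each $(M,g^t,X)$ is a gradient expander with $X=\nabla^{g^t}f_t$ and, by Lemma~\ref{id-EGS}, $\operatorname{Hess}_{g^t}(f_t)=\Ric(g^t)+g^t$, where $f_t$ is quadratic in the distance. Differentiating this identity along $X$ and commuting covariant derivatives brings in the curvature in the radial slot: for $U\perp X$ one obtains a Riccati-type identity expressing $\Rm(g^t)(X,U,U,X)$ in terms of $\nabla^{g^t}_X\!\big(\Ric(g^t)+g^t\big)(U,U)$ and $\big((\Ric(g^t)+g^t)^2\big)(U,U)$ up to terms of lower order. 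Equivalently, one contracts twice with $X$ the drift-elliptic equation of the form $\Delta_{f_t}\Rm(g^t)+2\,\Rm(g^t)=Q(\Rm(g^t))$ satisfied by the curvature of a gradient expander, where $\Delta_{f_t}=\Delta_{\omega^t}-\nabla^{g^t}_X$ and $Q$ is the Hamilton quadratic. Evaluating the right-hand side against the cone asymptotics then reduces the leading coefficient to an explicit expression in the curvature of $g_0^t$.

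The positivity of this coefficient is where the hypotheses on the cone enter. On $(\mathbb{C}^n,g_0^t)$ the Reeb--horizontal sectional curvatures $K(J_0X,U)$ equal $1$ on the unit slice and, by construction, the transverse K\"ahler form $\sigma(t)$ has positive curvature operator on $(1,1)$-forms; via the K\"ahler symmetries (the first Bianchi identity and $J$-invariance of $\Rm$) these two facts combine to force the limiting value of $r^2\,\Rm(g^t)(X,U,U,X)$ to be strictly positive and bounded below away from zero over unit $U\perp X$. To pass from this formal leading-order computation to the stated $\liminf$, I would integrate the Riccati identity along the flow lines of $X$ (using that $X$ grows linearly and $f_t\sim r^2/2$ is quadratic), or equivalently apply the drift maximum principle governed by $\Delta_{f_t}$, to propagate the positive cone-level bound to a genuine positive lower bound on spheres at large radius.

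The hard part will be twofold. First, pinning down the correct zeroth-order coefficient in the Riccati/drift equation for the radial curvature component and verifying that its sign is favorable---this is exactly the delicate step carried out in \cite[Proposition~3.13]{Der-Asy-Com-Egs}, which is why I would import that argument rather than rederive it. Second, making every estimate uniform in the parameter $t\in[0,+\infty]$, which requires precisely the uniform rate-$(-2)$ convergence and uniform $C^k$-bounds supplied by the family construction of Proposition~\ref{background-metric-family} and Theorem~\ref{theo-family-version}; without this uniformity one would only obtain a positive liminf for each fixed $t$, which would not suffice to run the subsequent openness argument for preservation of positivity of the curvature operator.
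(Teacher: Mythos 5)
You have the right mechanism in mind --- the gradient soliton structure should convert the radial curvature component $\Rm(X,U,U,X)$ into Ricci terms, whose sign is then controlled by the transverse geometry of the cone --- but the proposal stops exactly at the step that constitutes the proof. The paper's argument is a short \emph{pointwise} computation: identity \eqref{equ:4} of Lemma \ref{id-EGS} (the contracted second Bianchi identity for a gradient expander) gives
$\Rm(g)(X,U,U,X)=-2\nabla^g_X\Ric(g)(U,U)+2\nabla^g_U\Ric(g)(X,U)$; the first-order soliton identities $\nabla^g_U X=U+\Ric(g)(U)$ and $\nabla^g s_\omega=-\Ric(g)(X)$ turn the second term into $-\nabla^{g,2}s_\omega(U,U)-\Ric(g)\bigl(U+\Ric(g)(U),U\bigr)$, while the static equation $\Delta_\omega\Ric(g)+\nabla^g_{X/2}\Ric(g)+\Ric(g)+\Rm(g)\ast\Ric(g)=0$ handles the first; after discarding everything that decays like $r^{-4}$ one is left with the clean identity $\Rm(g)(X,U,U,X)=2\Ric(g)(U,U)+O(r^{-4})|U|_g^2$, and the claim reduces to $\liminf_{r\to\infty} r^2\inf\Ric(g)(U,U)>0$, which follows from the positivity of the cone's Ricci curvature orthogonal to the radial direction (a consequence of the transverse positivity hypothesis). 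Your ``Riccati-type identity'' is morally the first of these steps, but you never identify the leading term, and your assertion that its sign is ``forced'' by combining $K(J_0X,U)=1$ with the transverse positivity ``via the K\"ahler symmetries'' is not an argument; moreover the fact $K(J_0X,U)=1$ is used in the paper only for the openness of positivity on $2$-planes \emph{not} containing $X$, and plays no role in this claim.

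Two further structural problems. First, your closing step --- integrating along the flow lines of $X$ or invoking a drift maximum principle ``to propagate the positive cone-level bound'' --- is unnecessary and reflects a misreading of what is being proved: once one has the pointwise identity $\Rm(X,U,U,X)=2\Ric(U,U)+O(r^{-4})$ there is nothing to propagate, one simply takes the $\liminf$. Second, your opening reduction (computing the limiting coefficient of $r^2\Rm(X,U,U,X)$ purely from the cone data $g_0^t$ and $\Ric(g_0^t)$) is delicate to carry out honestly: for certain directions $U\perp X$ (e.g.\ the Reeb direction) the corresponding cone-level Ricci coefficient degenerates, and the favourable sign for the soliton is ultimately extracted through the soliton identities themselves rather than through the cone curvature alone. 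So the ``explicit expression in the curvature of $g_0^t$'' whose positivity you defer is precisely the computation that must be done, and deferring it leaves the proof with a genuine gap.
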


\begin{proof}[Proof of Claim \ref{claim-pos-cur-op}]
For the sake of clarity, we fix once and for all an expanding gradient K\"ahler-Ricci soliton $(M,\,\omega,\,X)$ with associated Riemannian metric $g$ that is asymptotically conical K\"ahler with tangent cone a regular K\"ahler cone with cone metric having positive curvature operator on real $(1,1)$-forms on the complex base $D$ of the cone.

On one hand, we see from (\ref{equ:4}) that
\begin{eqnarray*}
\Rm(g)(X,U,V,X)=-2\Div_{g}\Rm(g)(X, U, V)=-2\nabla^g_{X}\Ric(g)(U,V)+2\nabla^g_U\Ric(g)\left(X,V\right),
\end{eqnarray*}
for any vectors $U$ and $V$. On the other hand, by the soliton identities given by Lemma \ref{id-EGS}, we have that
\begin{eqnarray*}
\nabla^g_U\Ric(g)\left(X,V\right)&=&U\cdot\Ric(g)(X,V)-\Ric(g)(\nabla^g_UX,V)-\Ric(g)(X,\nabla^g_UV)\\
&=&\langle\nabla^g_U(\Ric(g)(X)),V\rangle-\Ric(g)\left(U+\Ric(g)(U),V\right)\\
&=&-\nabla^{g,2}s_{\omega}(U,V)-\Ric(g)\left(U+\Ric(g)(U),V\right).
\end{eqnarray*}
Therefore, since the norm of the curvature decays at infinity, we have that
\begin{eqnarray*}
\Rm(g)(X,U,U,X)&=&4\left(\Delta_{\omega}\Ric(g)+\Ric(g)+\Rm(g)\ast\Ric(g)\right)(U,U)\\
&&-2\left(\nabla^2s_{\omega}(U,V)+\Ric(g)+\Ric(g)\otimes\Ric(g)\right)(U,U),\\
&=&2\Ric(g)(U,U)+\textit{O}(r^{-4})|U|^2_g.
\end{eqnarray*}
Now, the positivity of the curvature operator of the metric on real $(1,1)$-forms on the complex base $D$ of the cone implies the positivity of the Ricci curvature of the metric of the cone when restricted to vectors orthogonal to the radial direction. In particular, this implies that
\begin{eqnarray*}
\liminf_{r\rightarrow +\infty}r^2\inf_{\partial B_{g}(p,r)}\min\{\Ric(g)(U,U): U\perp X\quad\textrm{and}\quad|U|_{g}=1\}>0,
\end{eqnarray*}
which in turn implies the claim.
\end{proof}
We now prove that the positivity assumption is a closed condition along the path\linebreak $(\mathbb{C}^{n},\omega^t,r\partial_{r})_{t\in[0,+\infty]}$, i.e., that any limit of a sequence $(\mathbb{C}^{n},\omega^{t_i},X)_{i\in\mathbb{N}}$ will have non-negative curvature operator on real $(1,1)$-forms. Since the corresponding curve on the complex base $D$ of the cone has constant volume, and since we begin with a non-flat expanding gradient K\"ahler-Ricci soliton, the Bishop-Gromov theorem implies that the whole curve cannot be flat for every $t\in[0,+\infty]$. In particular, since any limit is a self-similar solution of the Ricci flow, Hamilton's splitting theorem \cite[Theorem 7.34]{Cho-Lu-Ni-I} implies the positivity of the curvature operator on real $(1,1)$-forms of any limit of a sequence $(\mathbb{C}^{n},\omega^{t_i},X)_{i\in\mathbb{N}}$.

\newpage

\appendix

\section{Some technical results}

\subsection{Normal holomorphic coordinates}
A proof of the existence of normal holomorphic coordinates can be found for example in \cite{Bou-Eys-Gue}.
\begin{lemma}\label{hol-coor}
Let $(M^n,\,g)$, $n=\dim_{\mathbb{C}}M$, be a complete K\"ahler manifold with bounded geometry, that is, with positive injectivity radius and bounded curvature. Then there exists $\delta>0$ and $\Lambda>0$ such that for all $x\in M$, there exist holomorphic coordinates $z:=(z_1,...,z_n):B_g(x,\delta)\rightarrow B_{\eucl}(0,\delta)\subset \mathbb{C}^n$ such that with respect to these coordinates, $$\textrm{$g_{i\bar{\jmath}}(x)=\delta_{i\bar{\jmath}}$, $\Lambda^{-1}\delta\leq g\leq \Lambda\delta$ on $B_g(x,\delta)$, and $\Gamma(g)_{ij}^k(x)=0$.}$$ Moreover, if $\varphi$ is a K\"ahler potential, then the holomorphic coordinates can be chosen such that $(g_{\varphi})_{i\bar{\jmath}}(x)=(1+\varphi_{i\bar{\imath}}(x))\delta_{i\bar{\jmath}}.$
\end{lemma}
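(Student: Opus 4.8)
The plan is to separate the purely algebraic normalisation at a single point, which is the classical Bochner coordinate construction, from the uniform control in $x$, which must come from the bounded geometry hypothesis. Fixing $x\in M$, I would start from any holomorphic coordinate system centred at $x$ together with a local K\"ahler potential $\phi$, so that $g_{i\bar{\jmath}}=\partial_i\partial_{\bar{\jmath}}\phi$. A constant linear, hence holomorphic, change of coordinates $w=A\tilde{w}$ arranges $g_{i\bar{\jmath}}(x)=\delta_{i\bar{\jmath}}$, after which the residual linear freedom preserving this normalisation is exactly the unitary group $U(n)$. A subsequent holomorphic quadratic change $z^k=\tilde{w}^k+\tfrac12\,a^k_{ij}\tilde{w}^i\tilde{w}^j$, with the symmetric coefficients $a^k_{ij}$ chosen to cancel the holomorphic first derivatives $\partial_k g_{i\bar{\jmath}}(x)$ (this is possible precisely because the K\"ahler condition makes $\partial_i g_{j\bar{l}}(x)$ symmetric in $i,j$), then kills all first derivatives of the metric at $x$. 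Since for a K\"ahler metric $\Gamma(g)_{ij}^k=g^{k\bar{l}}\partial_i g_{j\bar{l}}$, this is exactly the assertion $\Gamma(g)_{ij}^k(x)=0$; and because the Jacobian of the quadratic map is the identity at $x$, it leaves $g_{i\bar{\jmath}}(x)=\delta_{i\bar{\jmath}}$ undisturbed.

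The substance of the lemma is the uniformity of $\delta$ and $\Lambda$, and this is where I expect the main difficulty to lie. I would derive it from bounded geometry as follows. The lower injectivity radius bound together with $|\Rm|\le K$ yields, via Anderson's harmonic radius estimate, a uniform radius on which the metric, expressed in suitable charts, is uniformly comparable to the Euclidean metric and controlled in $C^{1,\alpha}$. Since $(M,g)$ is K\"ahler the complex structure $J$ is parallel and hence uniformly controlled on such balls, so one may produce a genuinely holomorphic chart of uniform size and uniformly bounded Jacobian either by solving $\bar{\partial}$ through H\"ormander's $L^2$-estimates on the controlled ball, or, more softly, by a contradiction argument: were no uniform pair $(\delta,\Lambda)$ to exist, a sequence of bad basepoints $x_i$ would, by bounded geometry, subconverge in the pointed Cheeger-Gromov $C^\infty$ sense to a complete K\"ahler limit on which holomorphic coordinates plainly exist, and transplanting these back would contradict the failure for large $i$. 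Performing the algebraic normalisation of the previous paragraph inside such a chart, its defining data being the uniformly controlled $1$-jet of $g$ at $x$, then yields $\Lambda^{-1}\delta\le g\le\Lambda\delta$ on $B_g(x,\delta)$ with $\delta,\Lambda$ independent of $x$.

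For the final statement involving a K\"ahler potential $\varphi$, the relevant operation is linear algebra carried out before the quadratic normalisation. Once $g_{i\bar{\jmath}}(x)=\delta_{i\bar{\jmath}}$ has been arranged, I would exploit the residual $U(n)$-freedom to diagonalise the Hermitian matrix $\big(\varphi_{i\bar{\jmath}}(x)\big)$: selecting $U\in U(n)$ with $U^{\ast}\big(\varphi_{i\bar{\jmath}}(x)\big)U$ diagonal preserves $g_{i\bar{\jmath}}(x)=\delta_{i\bar{\jmath}}$ because $U^{\ast}U=\Id$, and gives $(g_\varphi)_{i\bar{\jmath}}(x)=\delta_{i\bar{\jmath}}+\varphi_{i\bar{\jmath}}(x)=(1+\varphi_{i\bar{\imath}}(x))\delta_{i\bar{\jmath}}$. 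Because the subsequent quadratic change has identity Jacobian at $x$, it fixes the tensorial point-values $g_{i\bar{\jmath}}(x)$ and $\varphi_{i\bar{\jmath}}(x)$ and thus preserves this simultaneous diagonalisation, completing the proof.
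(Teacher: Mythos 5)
The paper offers no proof of this lemma at all --- it simply points to \cite{Bou-Eys-Gue} --- so there is no in-house argument to compare yours against; your job here is to be judged on its own merits, and on those merits it is essentially correct and follows the standard route. The pointwise algebraic part is complete and correctly ordered: the linear normalisation of $g_{i\bar{\jmath}}(x)$, the use of the residual $U(n)$-freedom to diagonalise the Hermitian matrix $(\varphi_{i\bar{\jmath}}(x))$, and the quadratic Bochner change whose existence rests exactly on the K\"ahler symmetry $\partial_i g_{j\bar{l}}(x)=\partial_j g_{i\bar{l}}(x)$ and whose identity Jacobian at $x$ preserves all previously arranged point values while killing $\Gamma(g)_{ij}^k(x)=g^{k\bar{l}}\partial_ig_{j\bar{l}}(x)$. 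The one soft spot is in the uniformity argument: with bounded geometry taken, as in the statement, to mean only $\inj(g)>0$ and $|\Rm(g)|\leq K$, Anderson's harmonic radius gives uniform charts with $C^{1,\alpha}$ (not $C^\infty$) control, so pointed Cheeger--Gromov subconvergence of bad basepoints is available only in $C^{1,\alpha}$; the limit is then a $C^{1,\alpha}$ metric with a $C^{1,\alpha}$ parallel almost complex structure, and producing holomorphic coordinates there and transplanting them back requires a low-regularity Newlander--Nirenberg statement plus a $\bar{\partial}$-correction --- more delicate than ``holomorphic coordinates plainly exist.'' Your first route is the safe one: on a harmonic-coordinate ball of uniform size the metric is uniformly Euclidean-equivalent and $J$ is parallel hence uniformly controlled, so H\"ormander $L^2$-estimates (or an elliptic perturbation of the approximately holomorphic coordinate functions) produce a holomorphic chart of uniform size with uniformly bounded Jacobian; since the lemma only asserts $C^0$-equivalence $\Lambda^{-1}\delta\leq g\leq\Lambda\delta$ on the chart, the $C^{1,\alpha}$ control coming from the harmonic radius is more than sufficient once such a chart is in hand, and the pointwise normalisations then only use the (uniformly controlled) $1$-jet of $g$ at $x$.
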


\subsection{Regularity}
We next establish the following local regularity result for solutions to (\ref{MA}).
\begin{prop}\label{prop-loc-reg}
Let $F\in C^{k,\alpha}_{loc}(M)$ for some $k\geq1$ and $\alpha\in(0,1)$ and let $\varphi\in C^{3,\alpha}_{loc}(M)$ be a solution to (\ref{MA}) with data $F$. Then $\varphi\in C^{k+2,\alpha}_{loc}(M)$.
\end{prop}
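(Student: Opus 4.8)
The plan is to prove this purely locally by elliptic bootstrapping, differentiating the equation once at each step so that the highest-order derivatives of $\varphi$ enter linearly. Everything takes place in a fixed relatively compact holomorphic coordinate chart, in which the complex Monge-Amp\`ere equation (\ref{MA}) reads
\begin{equation*}
\log\det(g_{i\bar{\jmath}}+\partial_i\partial_{\bar{\jmath}}\varphi)-\log\det(g_{i\bar{\jmath}})=\varphi-\tfrac{X}{2}\cdot\varphi+F.
\end{equation*}
Since $\varphi\in C^{3,\alpha}_{loc}(M)$, the coefficients $g_{\varphi}^{i\bar{\jmath}}$ of the inverse metric lie in $C^{1,\alpha}_{loc}$, and because $\varphi$ is a genuine K\"ahler potential ($\omega_{\varphi}>0$), the real second-order operator $L_{\varphi}:=g_{\varphi}^{i\bar{\jmath}}\partial_i\partial_{\bar{\jmath}}$ is uniformly elliptic on every ball compactly contained in the chart. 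I would then prove, by induction on $m$ with $1\le m\le k$, that $\varphi\in C^{m+2,\alpha}_{loc}(M)$; the base case $m=1$ is exactly the hypothesis $\varphi\in C^{3,\alpha}_{loc}(M)$.

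For the inductive step, assume $\varphi\in C^{m+2,\alpha}_{loc}(M)$ with $m+1\le k$, so that $g_{\varphi}^{i\bar{\jmath}}\in C^{m,\alpha}_{loc}$. Differentiating the displayed equation with respect to a coordinate direction $\partial_l$ yields a linear elliptic equation for $w:=\partial_l\varphi$,
\begin{equation*}
g_{\varphi}^{i\bar{\jmath}}\partial_i\partial_{\bar{\jmath}}w=\partial_lF+\partial_l\varphi-\partial_l\bigl(\tfrac{X}{2}\cdot\varphi\bigr)-g_{\varphi}^{i\bar{\jmath}}\partial_lg_{i\bar{\jmath}}+g^{i\bar{\jmath}}\partial_lg_{i\bar{\jmath}}=:H.
\end{equation*}
Counting regularities, $\partial_lF\in C^{m,\alpha}_{loc}$ (as $F\in C^{m+1,\alpha}_{loc}$), the term $\partial_l(\tfrac{X}{2}\cdot\varphi)\in C^{m,\alpha}_{loc}$ since $X$ is smooth and $\varphi\in C^{m+2,\alpha}_{loc}$, and $g_{\varphi}^{i\bar{\jmath}}\partial_lg_{i\bar{\jmath}}\in C^{m,\alpha}_{loc}$ as the product of a $C^{m,\alpha}$ and a smooth function; hence $H\in C^{m,\alpha}_{loc}$. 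As the coefficients $g_{\varphi}^{i\bar{\jmath}}$ are themselves in $C^{m,\alpha}_{loc}$ and uniformly elliptic, the interior Schauder estimates for linear second-order elliptic equations (applied to the real and imaginary parts of $w$) give $w=\partial_l\varphi\in C^{m+2,\alpha}_{loc}$. Applying this to every $\partial_l$ and $\partial_{\bar{l}}$ shows that all first derivatives of $\varphi$ lie in $C^{m+2,\alpha}_{loc}$, i.e.\ $\varphi\in C^{m+3,\alpha}_{loc}(M)$, which completes the induction and yields $\varphi\in C^{k+2,\alpha}_{loc}(M)$.

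The only genuine obstacle is the quasilinear character of (\ref{MA}): in the undifferentiated equation $a^{i\bar{\jmath}}\partial_i\partial_{\bar{\jmath}}\varphi=F-\tfrac{X}{2}\cdot\varphi+\varphi$ the coefficients $a^{i\bar{\jmath}}$ depend on $\partial\bar{\partial}\varphi$, so that a direct application of Schauder can never improve the regularity of $\varphi$ beyond what is already needed to define the coefficients, producing a circular estimate. Differentiating once removes this circularity, because the top-order quantity $\partial_l\varphi$ then solves a \emph{linear} equation whose coefficients $g_{\varphi}^{i\bar{\jmath}}$ carry precisely the regularity ($C^{m,\alpha}$) required by Schauder to gain two derivatives on $w$. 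The transport term $\tfrac{X}{2}\cdot\varphi$ causes no difficulty, since $X$ is holomorphic and hence smooth, so differentiating it only ever costs derivatives of $\varphi$ that the inductive hypothesis already controls.
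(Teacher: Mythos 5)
Your proof is correct and follows essentially the same route as the paper: differentiate the Monge--Amp\`ere equation once in a holomorphic coordinate direction so that $\partial_l\varphi$ satisfies a linear elliptic equation with coefficients $g_{\varphi}^{i\bar{\jmath}}$ of the right H\"older class, then apply interior Schauder estimates and induct. Your write-up is more explicit about the regularity bookkeeping and about why the differentiation is needed to break the quasilinear circularity, but the argument is the same.
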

\begin{proof}
We prove this proposition by induction on $k\geq 1$. The case $k=1$ is true by assumption, so let $F\in C^{k+1,\alpha}_{loc}(M)$ and let $\varphi\in C^{3,\alpha}_{loc}(M)$ be a solution to (\ref{MA}). Then, by induction, $\varphi\in C^{k+2,\alpha}_{loc}(M)$.
Let $x\in M$ and choose holomorphic coordinates defined on $B_{g}(x,\delta)$ for some $\delta>0$ uniform in $x\in M$ (see Lemma \ref{hol-coor}). Then, since $\varphi$ is a solution to (\ref{MA}) for $F$, i.e.,
 \begin{eqnarray*}\nonumber
F&=&\log\left(\frac{\omega_{\varphi}^n}{\omega^n}\right)+\frac{X}{2}\cdot\varphi-\varphi,\\
\end{eqnarray*}
we know that the derivative $\partial_j\varphi$ for $j=1,...,n$, satisfies
\begin{eqnarray*}
\Delta_{\omega_{\varphi}}\partial_j\varphi=F_j(\varphi)\in C^{k,\alpha}_{loc}(M).
\end{eqnarray*}
Since the coefficients of $\Delta_{\omega_{\varphi}}$ are in $C^{k,\alpha}_{loc}(M)$, an application of the classical interior Schauder estimates now yields the desired local regularity result, that is, $\partial_j\varphi\in C^{k+2,\alpha}_{loc}(M)$ for any $j=1,...,n$, or equivalently, $\varphi\in C^{k+3,\alpha}_{loc}(M)$.
\end{proof}

\subsection{Properties of Ricci solitons}
The next lemma collects together some well-known Ricci soliton identities and (static) evolution equations satisfied by their curvature tensor.
\begin{lemma}\label{id-EGS}
Let $(M^{2n},\,\omega,\,X=\nabla^gf)$, $n=\dim_{\mathbb{C}}M$, be an expanding gradient K\"ahler-Ricci soliton. Then the trace and first order soliton identities are:
\begin{eqnarray}
&&\Delta_{\omega} f = \frac{s_{\omega}}{2}+n,\nonumber\\
&&\nabla^g s_{\omega}+ \Ric(g)(X)=0, \nonumber \\
&&\arrowvert \nabla^g f \arrowvert^2+\frac{1}{2}s_{\omega}-f=\operatorname{const.},\nonumber\\
&&2\Div_g\Rm(g)(Y,Z,T)=\Rm(g)(Y,Z, X,T),\label{equ:4}
\end{eqnarray}
for any vector fields $Y$, $Z$, and $T$, where $\arrowvert \nabla^g f \arrowvert^2:=g^{i\bar{\jmath}}\partial_if\partial_{\bar{\jmath}}f$. In particular, if $s_{\omega}$ is bounded, then $f$ grows at most quadratically and $\nabla^g f$ grows at most linearly.

The evolution equations for the curvature operator, the Ricci curvature, and the scalar curvature are via
\begin{eqnarray}
&& \Delta_{\omega}\Rm(g)+\nabla^g_{\frac{X}{2}}\Rm(g)+\Rm(g)+\Rm(g)\ast\Rm(g)=0,\nonumber\\
&&\Delta_{\omega}\Ric(g)+\nabla^g_{\frac{X}{2}}\Ric(g)+\Ric(g)+\Rm(g)\ast\Ric(g)=0,\nonumber\\
&&\Delta_{\omega}s_{\omega}+\nabla^g_{\frac{X}{2}}s_{\omega}+s_{\omega}+2\arrowvert\Ric(g)\arrowvert_{\omega}^2=0,\nonumber
\end{eqnarray}
where, for any two tensors $A$ and $B$, $A\ast B$ denotes any linear combination of contractions of the tensorial product of $A$ and $B$, and where $\arrowvert\Ric(g)\arrowvert_{\omega}:=g^{i\bar{l}}g^{k\bar{\jmath}}\Ric(g)_{k\bar{l}}\Ric(g)_{i\bar{\jmath}}$.
\end{lemma}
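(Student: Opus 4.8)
The plan is to derive everything from the gradient soliton equation written in K\"ahler form. Since $X=\nabla^g f$ is a real holomorphic gradient field, the defining equation $\rho_\omega-\tfrac12\mathcal{L}_X\omega+\omega=0$ is equivalent to $\mathcal{L}_X\omega=2\,\rho_\omega+2\,\omega$, and in complex coordinates this reads $\Ric(g)_{i\bar\jmath}+g_{i\bar\jmath}=\nabla^g_i\nabla^g_{\bar\jmath}f$. Contracting with $g^{i\bar\jmath}$ and using $g^{i\bar\jmath}g_{i\bar\jmath}=n$, together with the convention $s_\omega=2g^{i\bar\jmath}\Ric(g)_{i\bar\jmath}$ implicit in the definition of $\Delta_\omega$, immediately yields the trace identity $\Delta_\omega f=\tfrac{s_\omega}{2}+n$. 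For the identity $\nabla^g s_\omega+\Ric(g)(X)=0$, I would take a covariant divergence of the soliton equation and invoke the contracted second Bianchi identity $\Div_g\Ric(g)=\tfrac12\nabla^g s_\omega$, commuting the two derivatives of $f$ through the Ricci identity so that the curvature-contracted-with-$\nabla f$ terms reassemble into $\Ric(g)(\nabla^g f)=\Ric(g)(X)$. The conservation law $|\nabla^g f|^2+\tfrac12 s_\omega-f=\mathrm{const}$ then follows by differentiating its left-hand side: using the soliton equation to rewrite $\nabla^g|\nabla^g f|^2=2\,\operatorname{Hess}f(\nabla^g f,\cdot)=2(\Ric(g)+g)(X,\cdot)$ and substituting the previous identity for $\tfrac12\nabla^g s_\omega=-\Ric(g)(X)$, all curvature terms cancel and one is left with a total gradient that vanishes.

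For the fourth first-order identity $2\Div_g\Rm(g)(Y,Z,T)=\Rm(g)(Y,Z,X,T)$, the approach is to start from the second Bianchi identity, contract to express $\Div_g\Rm(g)$ in terms of the antisymmetrized derivative $\nabla^g\Ric(g)$, then replace $\Ric(g)=\operatorname{Hess}f-g$ from the soliton equation. The third covariant derivatives of $f$ differ only by a curvature term (the Ricci commutation identity applied to $\nabla^g f$), and this is precisely the mechanism that produces the factor $\Rm(g)(\cdot,\cdot,\nabla^g f,\cdot)=\Rm(g)(\cdot,\cdot,X,\cdot)$; the $g$-part of $\Ric(g)$ contributes only symmetric terms that drop out. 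The at-most-quadratic growth of $f$ and at-most-linear growth of $\nabla^g f$ are then read off from the conservation law: if $s_\omega$ is bounded then $|\nabla^g f|^2\le f+C$, so $|\nabla^g\sqrt{f+C}|$ is bounded, whence $\sqrt{f+C}$ is Lipschitz in the distance to a fixed point, giving the quadratic bound on $f$ and, re-inserting into the conservation law, the linear bound on $\nabla^g f$.

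For the three static evolution equations, I would realize the soliton as a self-similar expanding Ricci flow exactly as in the $C^3$-estimate of Proposition~\ref{prop-C^3-est}: setting $g(\tau)=(1+\tau)\varphi_\tau^*g$ with $(\varphi_\tau)$ generated by $-X/(2(1+\tau))$ produces a genuine solution of the (appropriately normalized) Ricci flow with initial data $g$. Differentiating the standard flow evolution equations $\partial_\tau\Rm=\Delta_\omega\Rm+\Rm\ast\Rm$, and the analogous equations for $\Ric$ and $s_\omega$, at $\tau=0$ and matching against the self-similar ansatz, the homothety factor $(1+\tau)$ contributes the zeroth-order linear terms $+\Rm(g)$, $+\Ric(g)$, $+s_\omega$, while the diffeomorphism part contributes the drift terms $\nabla^g_{X/2}\Rm(g)$, $\nabla^g_{X/2}\Ric(g)$, $\nabla^g_{X/2}s_\omega$; rearranging gives the three stated identities, with the reaction terms $\Rm\ast\Rm$, $\Rm\ast\Ric$, and $2|\Ric|^2_\omega$ carried over verbatim from the flow equations.

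The main obstacle is not conceptual but bookkeeping: one must fix a single convention for the Ricci form, the scalar curvature (the factor relating $s_\omega$ to $g^{i\bar\jmath}\Ric(g)_{i\bar\jmath}$), the normalization of the Laplacian $\Delta_\omega$, and the sign/normalization of the Ricci flow, and then carry it consistently through the Bianchi and commutator manipulations in the curvature-divergence identity and through the self-similar differentiation in the evolution equations. The place where genuine care is required is the derivation of $2\Div_g\Rm(g)=\Rm(g)(\cdot,\cdot,X,\cdot)$, since that is where the curvature tensor and the soliton field $X$ become linked and a single mis-commutation of covariant derivatives would alter the curvature term; I would verify the constants there against the trace of the identity, which must be compatible with $\nabla^g s_\omega+\Ric(g)(X)=0$ already established.
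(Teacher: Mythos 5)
Your outline is correct, but note that the paper does not actually prove this lemma: its ``proof'' consists of the single line ``See \cite[Chapter 1]{Cho-Lu-Ni-I}.'' What you have written is essentially the standard derivation recorded in that reference, so there is no divergence of method to speak of --- you are supplying the argument that the authors chose to cite rather than reproduce. All five ingredients you identify (tracing the soliton equation, divergence plus contracted second Bianchi, differentiating the resulting conservation law, the once-contracted second Bianchi identity combined with the Ricci commutation on $\nabla^g f$, and pulling back the parabolic evolution equations along the self-similar flow $g(\tau)=(1+\tau)\varphi_\tau^{*}g$) are the right ones and do yield the stated identities.

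The one place where your self-assessment is exactly on target is the convention bookkeeping, and it is worth being explicit about why: the identities as stated use the K\"ahler normalizations $\Delta_{\omega}f=g^{i\bar\jmath}\partial_i\partial_{\bar\jmath}f$ and $|\nabla^g f|^2=g^{i\bar\jmath}\partial_i f\partial_{\bar\jmath}f$, each of which is \emph{half} of its Riemannian counterpart. This is why, for instance, the second identity reads $\nabla^g s_{\omega}+\Ric(g)(X)=0$ rather than the Riemannian-literature form $\nabla s+2\Ric(X)=0$, and why the conservation law carries the coefficient $\tfrac12$ on $s_{\omega}$. Your plan of checking the constant in $2\Div_g\Rm(g)(Y,Z,T)=\Rm(g)(Y,Z,X,T)$ against the trace identity is the right consistency test. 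One further small point on the evolution equations: the diffeomorphism part of the self-similar ansatz contributes a Lie derivative $\mathcal{L}_{X/2}$, not directly $\nabla^g_{X/2}$; the discrepancy is $\nabla X\ast\Rm=(\Ric(g)+g)\ast\Rm$, which is absorbed into the $\Rm\ast\Rm$ and linear terms, so the stated form survives, but this step should not be elided in a written-out proof.
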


\begin{proof}
See \cite[Chapter 1]{Cho-Lu-Ni-I}.
\end{proof}

We now make a remark on our conventions.
\begin{convention}
Our convention is to normalize expanding gradient Ricci solitons $(M,\,g,\,X)$ such that their potential function $f:M\to\mathbb{R}$ satisfies $\arrowvert \nabla^g f \arrowvert^2+\frac{s_{g}}{2}=f.$ Here, $s_{g}$ denotes the scalar curvature of $g$.
\end{convention}

One can say quite a lot about expanding gradient Ricci solitons with constraints on the Ricci curvature. For example, we have:
\begin{prop}\label{pot-fct-est}
Let $(M^n,g,\nabla^g f)$ be a normalised expanding gradient Ricci soliton of real dimension $n$.
\begin{enumerate}
\item If $\Ric(g)\geq 0$, then $M^n$ is diffeomorphic to $\mathbb{R}^n$. Moreover, the following estimates hold:
\begin{eqnarray}
&&\frac{1}{4}r_p(x)^2+\min_{M}f\leq f(x)\leq\left(\frac{1}{2}r_p(x)+\sqrt{\min_{M}f}\right)^2,\quad \forall x\in M,\nonumber\\
&&\AVR(g):=\lim_{r\rightarrow+\infty}\frac{{{\vol}}_g B_g(q,r)}{r^n}>0,\quad\forall q\in M,\nonumber\\
&&0\leq s_g\leq S_0<+\infty,\nonumber
\end{eqnarray}
where $p\in M$ is the unique critical point of $f$.\\

\item If $\Ric(g)=\textit{O}(r_p^{-2})$, where $r_p$ denotes the distance function to a fixed point $p\in M$, then the potential function is equivalent to $r_p^2/4$ (up to order $2$).
\end{enumerate}
\end{prop}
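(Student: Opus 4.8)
The plan is to read everything off the normalised soliton identities. In the normalisation of the Convention the soliton equation reads $\operatorname{Hess}(f)=\Ric(g)+\tfrac12 g$, and the Convention records the first-order identity $|\nabla^g f|^2+\tfrac12 s_g=f$; I will also use the static evolution equation $\Delta_{\omega}s_g+\tfrac12\nabla^g_X s_g+s_g+2|\Ric(g)|^2=0$ (the Riemannian analogue of Lemma \ref{id-EGS}, see \cite{Cho-Lu-Ni-I}). For the topology and lower bound in (1), note that $\Ric(g)\ge0$ turns the soliton equation into $\operatorname{Hess}(f)\ge\tfrac12 g>0$, so $f$ is strictly geodesically convex: along any unit-speed geodesic $\gamma$ one has $(f\circ\gamma)''\ge\tfrac12$. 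This forces $f$ to be proper and bounded below, so a minimising sequence is precompact and $f$ attains its minimum at a point $p$ with $\nabla^g f(p)=0$; strict convexity makes $p$ the unique, and nondegenerate, critical point. Elementary Morse theory then exhibits every sublevel set as a disk and identifies $M^n$ with $\mathbb R^n$. Integrating $(f\circ\gamma)''\ge\tfrac12$ along a minimising geodesic from $p$ to $x$, using $(f\circ\gamma)'(0)=0$, gives $f(x)\ge\min_M f+\tfrac14 r_p(x)^2$.

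For the upper bound and the curvature bounds in (1), I would argue as follows. Since $\Ric(g)\ge0$ we have $s_g\ge0$, so the first-order identity gives $|\nabla^g f|^2\le f$ and hence $|\nabla^g\sqrt f|\le\tfrac12$ (note $f\ge0$); integrating along a minimising geodesic from $p$ yields $\sqrt{f(x)}\le\sqrt{\min_M f}+\tfrac12 r_p(x)$, which is the asserted upper bound after squaring. The bound $s_g\ge0$ is immediate. For the upper bound on $s_g$ I would feed $|\Ric(g)|^2\ge s_g^2/n$ into the evolution equation to obtain $\Delta_f\,s_g\le-\tfrac2n s_g^2$ for the drift Laplacian $\Delta_f:=\Delta_{\omega}+\tfrac12\nabla^g_X$, whose Bakry--\'Emery tensor $\Ric(g)+\operatorname{Hess}(f)=\tfrac12 g$ is positive; the generalised maximum principle available for such $\Delta_f$ then bounds $s_g$ (cf.\ \cite{Der-Asy-Com-Egs}). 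Finally, $\Ric(g)\ge0$ makes $r\mapsto\operatorname{vol}_g B_g(q,r)/r^n$ non-increasing by Bishop--Gromov, so $\AVR(g)$ exists; its positivity uses that the quadratic control of $f$ forces the sublevel sets to open up like a genuine cone, and is established in \cite{Der-Asy-Com-Egs}.

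For part (2), if $\Ric(g)=O(r_p^{-2})$ then $s_g=\tr_g\Ric(g)=O(r_p^{-2})$, so the first-order identity upgrades to $|\nabla^g f|^2=f+O(r_p^{-2})$ and hence $|\nabla^g\sqrt f|^2=\tfrac14+O(r_p^{-4})$. Since $r_p^{-4}$ is integrable along rays, the increments of $\sqrt f-\tfrac12 r_p$ converge, giving $f=\tfrac14 r_p^2+O(r_p)$; moreover the soliton equation now reads $\operatorname{Hess}(f)=\tfrac12 g+O(r_p^{-2})$, which matches $\operatorname{Hess}(\tfrac14 r_p^2)$ to the same order away from $p$, so $f$ is equivalent to $r_p^2/4$ up to order two. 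The genuinely analytic points, and the expected main obstacle, are the positivity of $\AVR(g)$ and the upper bound on $s_g$: the elementary geodesic comparisons above only yield $s_g=O(r_p)$ together with the mere existence of the volume limit, so both facts require the drift maximum principle and the volume-comparison input indicated above, which I would import from \cite{Der-Asy-Com-Egs} rather than reprove.
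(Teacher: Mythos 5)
Your sketch is essentially correct, but be aware that the paper does not prove this proposition at all: its entire ``proof'' is the citation ``See \cite{Der-Asy-Com-Egs} and the references therein.'' What you have done is reconstruct the standard soliton-identity arguments for the elementary assertions (strict convexity of $f$ from $\operatorname{Hess}(f)=\Ric(g)+\tfrac12 g\geq\tfrac12 g$, hence properness, a unique nondegenerate critical point and the Morse-theoretic identification with $\mathbb{R}^{n}$, plus the two-sided quadratic bounds via $|\nabla^{g}\sqrt{f}|\leq\tfrac12$), while deferring exactly the two genuinely nontrivial points --- $\AVR(g)>0$ and the uniform upper bound on $s_{g}$ --- to the same reference the paper cites; that division of labour is reasonable and matches where the real content lies. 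Two corrections to the sketch. First, your justification of the drift maximum principle is off: for $\Delta_{f}=\Delta+\nabla^{g}_{\nabla^{g}f}$ the relevant Bakry--\'Emery tensor is $\Ric(g)-\operatorname{Hess}(f)=-\tfrac12 g$, not $\Ric(g)+\operatorname{Hess}(f)$ (which equals $2\Ric(g)+\tfrac12 g$ in any case); what actually makes the cutoff/Keller--Osserman argument uniform is $\Ric(g)\geq0$ (Laplacian comparison) together with the linear growth $|\nabla^{g}f|\leq\sqrt{f}$, so the argument survives but for a different reason than you state. Second, there is a cleaner route to $s_{g}\leq S_{0}$: the soliton identity gives $\langle\nabla^{g}s_{g},\nabla^{g}f\rangle=-2\Ric(g)(\nabla^{g}f,\nabla^{g}f)\leq0$, so $s_{g}$ is non-increasing along the integral curves of $\nabla^{g}f$, all of which emanate from $p$, whence $s_{g}\leq s_{g}(p)$. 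Finally, in part (2) the phrase ``matches $\operatorname{Hess}(\tfrac14 r_{p}^{2})$ to the same order'' is not quite right without an asymptotically conical hypothesis, since $\operatorname{Hess}(r_{p}^{2})$ need not equal $\tfrac12 g$ up to $O(r_{p}^{-2})$ in general; the intended conclusion (that $f$, $|\nabla^{g}f|^{2}$ and $\operatorname{Hess}(f)$ are all comparable to those of $r_{p}^{2}/4$) does, however, follow from your integration of $|\nabla^{g}\sqrt{f}|=\tfrac12+O(r_{p}^{-4})$ together with the soliton equation itself.
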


\begin{proof}
See \cite{Der-Asy-Com-Egs} and the references therein.
\end{proof}

\subsection{Sufficient conditions for a vector field $X$ to be gradient}
We begin with the following general fact.
\begin{lemma}\label{lemmma}
Let $M$ be a K\"ahler manifold with K\"ahler metric $g$ and complex structure $J$ and let $X$ be a real holomorphic vector field on $M$. Then the following are equivalent.
\begin{enumerate}[label=\textnormal{(\roman{*})}, ref=(\roman{*})]
\item $JX$ is Killing.
\item $g(\nabla^{g}_{Y}X,\,Z)=g(Y,\,\nabla^{g}_{Z}X)$ for all real vector fields $Y,\,Z$ on $M$.
\item The $g$-dual one-form of $X$ is closed.
\end{enumerate}
\end{lemma}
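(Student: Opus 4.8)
The plan is to reduce all three conditions to the single algebraic statement that the endomorphism $A:=\nabla^{g}X$, defined by $A(Y)=\nabla^{g}_{Y}X$, is $g$-symmetric. I will use repeatedly that $g$ is K\"ahler, so $\nabla^{g}J=0$, and that $J$ is $g$-skew-adjoint, i.e. $g(JY,\,Z)=-g(Y,\,JZ)$. First I would translate the two standing hypotheses on $X$ into statements about $A$: computing $\mathcal{L}_{X}J$ with the help of $\nabla^{g}J=0$ shows that $X$ being real holomorphic is equivalent to $AJ=JA$, while the same identity $\nabla^{g}J=0$ gives $\nabla^{g}_{Y}(JX)=J\,A(Y)$, so that $\nabla^{g}(JX)=JA$.

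The equivalence (ii)$\iff$(iii) is purely Riemannian and uses neither holomorphicity nor the K\"ahler condition. I would compute the exterior derivative of the dual one-form $\eta_{X}=g(X,\,\cdot\,)$ from
\[
d\eta_{X}(Y,\,Z)=Y\,g(X,\,Z)-Z\,g(X,\,Y)-g(X,\,[Y,\,Z]),
\]
expand the first two terms using metric compatibility, and observe that the torsion-free identity $\nabla^{g}_{Y}Z-\nabla^{g}_{Z}Y=[Y,\,Z]$ cancels all terms in which $X$ is undifferentiated. This leaves $d\eta_{X}(Y,\,Z)=g(AY,\,Z)-g(Y,\,AZ)$, so that $\eta_{X}$ is closed exactly when $A$ is symmetric, which is condition (ii).

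For (i)$\iff$(ii) I would use that $JX$ is Killing precisely when $\nabla^{g}(JX)=JA$ is $g$-skew-adjoint, i.e. $g(JAY,\,Z)+g(Y,\,JAZ)=0$ for all $Y,\,Z$. Applying the skew-adjointness of $J$ to each term rewrites this as $g(AY,\,JZ)+g(JY,\,AZ)=0$; substituting $Z\mapsto JZ$ and invoking both the holomorphicity relation $AJ=JA$ and the fact that $J$ is a $g$-isometry then collapses the identity to $g(AY,\,Z)=g(Y,\,AZ)$, again condition (ii). This is the only place where holomorphicity of $X$ is genuinely needed. Combining (ii)$\iff$(iii) with (i)$\iff$(ii) yields the full equivalence. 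The computation is essentially a sign-tracking exercise, so the main point to get right is to invoke the K\"ahler identity $\nabla^{g}J=0$ and the holomorphicity of $X$ exactly where each is required and nowhere else.
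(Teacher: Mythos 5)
Your proof is correct and follows essentially the same route as the paper's: both establish (i)$\iff$(ii) by using holomorphicity of $X$ (equivalently $AJ=JA$) together with the $g$-skew-adjointness of $J$ to identify the Killing condition for $JX$ with the symmetry of $\nabla^{g}X$, and both establish (ii)$\iff$(iii) via the identity $d\eta_{X}(Y,Z)=g(\nabla^{g}_{Y}X,Z)-g(Y,\nabla^{g}_{Z}X)$. The only cosmetic differences are that you derive this last formula from the intrinsic expression for $d\eta_{X}$ while the paper quotes the Lie-derivative identity, and that you substitute $Z\mapsto JZ$ where the paper effectively substitutes $Y\mapsto JY$.
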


\begin{proof}
Since $X$ is holomorphic, we have that $$\nabla^{g}_{JY}X=J\nabla^{g}_{Y}X$$ for every real vector field $Y$ on $M$. Hence, for every real vector field $Y$ and $Z$ on $M$, we see that
\begin{equation*}
\begin{split}
(\mathcal{L}_{JX}g)(Y,\,Z)=g(\nabla^{g}_{Y}(JX),\,Z)+g(Y,\,\nabla^{g}_{Z}(JX))&=g(J\nabla^{g}_{Y}X,\,Z)+g(Y,\,J\nabla^{g}_{Z}X)\\
&=g(\nabla^{g}_{JY}X,\,Z)+g(Y,\,J\nabla^{g}_{Z}X)\\
&=g(\nabla^{g}_{JY}X,\,Z)-g(JY,\,\nabla^{g}_{Z}X).
\end{split}
\end{equation*}
The equivalence of (i) and (ii) now follows.

The equivalence of (ii) and (iii) can be seen from the identity
\begin{equation*}
\begin{split}
d\eta_{X}(Y,\,Z)&=2g(\nabla^{g}_{Y}X,\,Z)-(\mathcal{L}_{X}g)(Y,\,Z)\\
&=2g(\nabla^{g}_{Y}X,\,Z)-(g(\nabla^{g}_{Y}X,\,Z)+g(Y,\,\nabla^{g}_{Z}X))\\
&=g(\nabla^{g}_{Y}X,\,Z)-g(Y,\,\nabla^{g}_{Z}X)
\end{split}
\end{equation*}
for every real vector field $Y$ and $Z$ on $M$, where $\eta_{X}$ denotes the $g$-dual one-form of $X$.
\end{proof}

Using this, we can prove:
\begin{corollary}\label{howdy}
Let $M$ be a K\"ahler manifold with K\"ahler metric $g$ and complex structure $J$ and let $X$ be a real holomorphic vector field on $M$.
\begin{enumerate}[label=\textnormal{(\roman{*})}, ref=(\roman{*})]
\item If $H^{1}(M)=0$ or $H^{0,\,1}(M)=0$ and $JX$ is Killing, then there exists a smooth real-valued function $f\in C^{\infty}(M)$ such that $X=\nabla^{g}f$.
\item Conversely, if $X=\nabla^{g}f$ for a smooth real-valued function $f\in C^{\infty}(M)$, then $JX$ is Killing.
\end{enumerate}
\end{corollary}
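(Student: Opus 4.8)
The plan is to deduce both statements directly from Lemma~\ref{lemmma}, whose content is that, for a real holomorphic $X$ on the K\"ahler manifold $(M,g,J)$, the field $JX$ is Killing if and only if the $g$-dual one-form $\eta_{X}:=g(X,\,\cdot)$ is closed. Part (ii) is then immediate: if $X=\nabla^{g}f$ for a smooth real-valued $f$, then $\eta_{X}=df$ is exact, hence closed, and the implication (iii)$\,\Rightarrow\,$(i) of Lemma~\ref{lemmma} yields that $JX$ is Killing. For part (i) I would begin, in either case, by invoking (i)$\,\Rightarrow\,$(iii) of Lemma~\ref{lemmma}: since $JX$ is Killing, $\eta_{X}$ is a closed \emph{real} one-form, and producing $f$ with $X=\nabla^{g}f$ is exactly producing $f$ with $df=\eta_{X}$, i.e.\ exhibiting $\eta_{X}$ as exact.

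If $H^{1}(M)=0$ this is a one-line argument: a closed one-form on a manifold with vanishing first de Rham cohomology is exact, and the resulting primitive is automatically real because $\eta_{X}$ is. In the case $H^{0,\,1}(M)=0$ I would instead work with the bidegree decomposition $\eta_{X}=\eta_{X}^{1,0}+\eta_{X}^{0,1}$. Reality of $\eta_{X}$ forces $\eta_{X}^{1,0}=\overline{\eta_{X}^{0,1}}$, while the $(0,2)$-component of $d\eta_{X}=0$ reads $\bar{\partial}\eta_{X}^{0,1}=0$. Thus $\eta_{X}^{0,1}$ is a $\bar{\partial}$-closed $(0,1)$-form, so its Dolbeault class lies in $H^{0,\,1}(M)=0$; hence $\eta_{X}^{0,1}=\bar{\partial}h$ for some smooth complex-valued $h$, and $\eta_{X}=\bar{\partial}h+\partial\bar{h}$.

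The hard part is the final step: upgrading this complex $\bar{\partial}$-primitive $h$ to a global \emph{real} de Rham primitive of $\eta_{X}$. Writing $h=u+iv$ with $u,v$ real, the $(1,1)$-component of $d\eta_{X}=0$ forces $\partial\bar{\partial}v=0$ (so that $v$ is pluriharmonic), and one computes $\eta_{X}=du+i(\bar{\partial}-\partial)v$, where the second summand is again a closed real one-form. Exactness of $\eta_{X}$ therefore reduces to exactness of $i(\bar{\partial}-\partial)v$, equivalently to the existence of a global harmonic conjugate for $v$, i.e.\ a holomorphic function whose imaginary part is $v$. This is the genuinely delicate point, and it cannot be bypassed by $H^{0,\,1}(M)=0$ alone without further input: it becomes automatic precisely when the global pluriharmonic function $v$ is forced to be constant (for instance on a compact $M$, via the maximum principle, in which case $H^{0,\,1}(M)=0$ is anyway equivalent to $H^{1}(M)=0$ by Hodge theory and the previous case applies). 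In the setting of this paper, where $M$ is homotopy equivalent to the compact exceptional set $E$ of the resolution, I would exploit exactly this finite-dimensionality of $H^{1}(M)\cong H^{1}(E)$ together with the K\"ahler Hodge structure on $E$ to eliminate the pluriharmonic ambiguity, thereby concluding that $\eta_{X}=df$ for a smooth real-valued $f$ and hence $X=\nabla^{g}f$.
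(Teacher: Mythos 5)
Your treatment of part (ii) and of the $H^{1}(M)=0$ case of part (i) coincides with the paper's: both reduce to Lemma~\ref{lemmma}, and for $H^{1}(M)=0$ the closed real one-form $\eta_{X}$ is exact for purely topological reasons. The divergence is in the $H^{0,\,1}(M)=0$ case, which the paper dispatches with the single sentence ``a similar argument applies'', whereas you actually attempt it and arrive at the reduction $\eta_{X}=du+i(\bar{\partial}-\partial)v$ with $v=\operatorname{Im}h$ pluriharmonic. You are right that this is where the real content lies, and right that $H^{0,\,1}(M)=0$ alone does not make $i(\bar{\partial}-\partial)v$ exact: what the argument actually needs is injectivity of the natural map $H^{1}(M;\mathbb{R})\to H^{0,\,1}(M)$, $[\eta]\mapsto[\eta^{0,1}]$, not vanishing of its target. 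Your proof is therefore incomplete as written --- but so is the paper's, and in fact the statement in this generality fails. Take $M=\mathbb{C}^{*}$ with the complete flat cylinder metric $g=|dz|^{2}/|z|^{2}$ and $X=\partial_{\theta}$ the generator of the rotations $z\mapsto e^{is}z$ (so $X^{1,0}=iz\partial_{z}$ is holomorphic). Then $JX=\mp\partial_{t}$ with $t=\log|z|$ is Killing, $H^{0,\,1}(\mathbb{C}^{*})\cong H^{1}(\mathbb{C}^{*},\mathcal{O})=0$ because $\mathbb{C}^{*}$ is Stein, and yet $\eta_{X}=d\theta$ is closed but not exact, so $X$ is not a gradient field; here $v=\log|z|$ is precisely a pluriharmonic function with no global conjugate.

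So the obstruction you isolated is genuine and cannot be removed by a ``similar argument''; some further input is required, e.g.\ $H^{1}(M;\mathbb{R})=0$, or compactness (where, as you note, Hodge theory makes the two hypotheses equivalent), or the third alternative of hypothesis (b) of Theorem~\ref{theorem-A}. Your closing suggestion --- exploiting the homotopy equivalence of $M$ with the compact exceptional set $E$ --- is essentially what the paper does elsewhere via Lemma~\ref{simple} and Corollary~\ref{kingpin}, but there one additionally uses $X|_{A}=0$ with $H_{1}(A)\to H_{1}(E)$ surjective to kill the periods of $\eta_{X}$; finite-dimensionality of $H^{1}(M)$ alone does not suffice (the cylinder again), and in any case these hypotheses are not part of Corollary~\ref{howdy}. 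Two minor points: exactness of $i(\bar{\partial}-\partial)v$ is equivalent to $v$ admitting a global pluriharmonic conjugate, which is strictly weaker than $v$ being constant (consider $v=\operatorname{Re}z$ on $\mathbb{C}$), so your ``precisely when'' should be ``in particular when''; and for part (ii) the paper argues via the symmetry of $\nabla^{g}X=\operatorname{Hess}(f)$ and the equivalence of (i) and (ii) in Lemma~\ref{lemmma}, which amounts to the same computation as your route through (iii).
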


\begin{proof}
\begin{enumerate}[label=\textnormal{(\roman{*})}, ref=(\roman{*})]
\item Since $JX$ is Killing, the $g$-dual one-form $\eta_{X}$ of $X$ is closed by Lemma \ref{lemmma}. In the case that $H^{1}(N)=0$, we can write $\eta_{X}=d\theta_{X}$ for some real-valued smooth function $\theta_{X}$ on $N$, from which the result follows. A similar argument applies when $H^{0,\,1}(N)=0$.
 \item If $X=\nabla^{g}f$, then $\nabla^{g}X$, being the Hessian of $f$, is symmetric. The result then follows from Lemma \ref{lemmma}.
\end{enumerate}
\end{proof}

We next make the following simple observation.
\begin{lemma}\label{simple}
Let $M$ be a manifold and let $\alpha$ be a closed one-form on $M$. If $M$ is homotopy equivalent to a compact subset $E\subset M$ and $\alpha|_{A}=0$ for a subset $A\subset E$ for which $H_{1}(A)\to H_{1}(E)$ is surjective, then $\alpha=df$ for $f(x)=\int_{x_{0}}^{x}\alpha$ the integral of $\alpha$ along any curve connecting a fixed point $x_{0}\in M$ to $x\in M$.
\end{lemma}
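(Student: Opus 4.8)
The plan is to show that the proposed primitive $f(x)=\int_{x_{0}}^{x}\alpha$ is well-defined, i.e.\ independent of the choice of path from $x_{0}$ to $x$; once this is established, the relation $df=\alpha$ is immediate from the local fundamental theorem of calculus. Path-independence is equivalent to the vanishing of $\int_{\gamma}\alpha$ over every loop $\gamma$ in $M$ based at $x_{0}$, equivalently over every singular $1$-cycle in $M$.

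The first step is to observe that, since $\alpha$ is closed, the period $\gamma\mapsto\int_{\gamma}\alpha$ is a homotopy invariant and descends to a homomorphism $H_{1}(M;\mathbb{Z})\to\mathbb{R}$: if two $1$-cycles are homologous, their difference bounds a $2$-chain $\Sigma$, and Stokes' theorem gives $\int_{\partial\Sigma}\alpha=\int_{\Sigma}d\alpha=0$. Thus it suffices to prove that this homomorphism vanishes on all of $H_{1}(M;\mathbb{Z})$.

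To this end I would trace through the two hypotheses in turn. The inclusion $\iota\colon E\hookrightarrow M$ is a homotopy equivalence, so $\iota_{*}\colon H_{1}(E;\mathbb{Z})\to H_{1}(M;\mathbb{Z})$ is an isomorphism; in particular every integral $1$-cycle in $M$ is homologous in $M$ to one supported in $E$. The surjectivity of $H_{1}(A)\to H_{1}(E)$ then allows me, after modifying the representative by a boundary lying in $E$, to arrange that the cycle in fact lies in $A$. Finally, because $\alpha|_{A}=0$, the integral of $\alpha$ over any $1$-cycle supported in $A$ vanishes, since $\alpha$ pulls back to zero along every simplex factoring through $A$. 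Combining these reductions, the period of $\alpha$ over an arbitrary class in $H_{1}(M;\mathbb{Z})$ equals its period over a cycle in $A$, which is zero; hence $f$ is well-defined.

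The argument is essentially formal, so there is no single hard computation. The only point requiring care is the bookkeeping involved in transporting a representative cycle from $M$ into $A$, first through the homotopy equivalence $\iota$ and then through the surjection on $H_{1}$, while verifying that each reduction alters the cycle only by a boundary, over which the closed form $\alpha$ integrates to zero. I would also take a moment to fix the precise meaning of $\alpha|_{A}=0$ (vanishing of the pullback to $A$) so that the final vanishing of periods over cycles in $A$ is unambiguous even when $A$ fails to be a submanifold.
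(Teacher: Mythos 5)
Your proposal is correct and follows essentially the same route as the paper's proof: reduce to vanishing of periods, use the homotopy equivalence to push a cycle into $E$, use surjectivity of $H_{1}(A)\to H_{1}(E)$ to represent it by a cycle in $A$, and conclude from $\alpha|_{A}=0$. Your version simply spells out the bookkeeping (homology invariance via Stokes, moving representatives by boundaries) that the paper leaves implicit.
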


\begin{proof}
Since $M$ is homotopy equivalent to $E$, every closed loop in $M$ is homotopy equivalent to a loop in $E$. Moreover, $\alpha$ being closed implies that its integral over any closed loop will only depend on the real homology class of the loop. The surjectivity of $H_{1}(A)\to H_{1}(E)$, together with the fact that  $\alpha|_{A}=0$, therefore implies that the integral of $\alpha$ over every closed loop is equal to zero. The lemma now follows.
\end{proof}

From this, we deduce:
\begin{corollary}\label{kingpin}
Let $M$ be a K\"ahler manifold with K\"ahler metric $g$ and complex structure $J$ and let $X$ be a real holomorphic vector field on $M$ for which $JX$ is Killing. If $M$ is homotopy equivalent to a compact subset $E\subset M$ and $X|_{A}=0$ for a subset $A\subset E$ for which $H_{1}(A)\to H_{1}(E)$ is surjective, then there exists a smooth real-valued function $f\in C^{\infty}(M)$ such that $X=\nabla^{g}f$.
\end{corollary}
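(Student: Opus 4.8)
The plan is to assemble this corollary directly from the two preceding lemmas, Lemma \ref{lemmma} and Lemma \ref{simple}, with the only real content being the translation between the vector field $X$ and its metric-dual one-form. First I would set $\eta_{X}:=g(X,\,\cdot)$, the $g$-dual one-form of $X$. Since $X$ is real holomorphic and $JX$ is Killing by hypothesis, Lemma \ref{lemmma} (the equivalence of (i) and (iii)) immediately gives that $\eta_{X}$ is closed. This is the entire input needed from the K\"ahler structure; the remainder is purely topological.

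Next I would observe that the vanishing hypothesis $X|_{A}=0$ transfers to the dual one-form: if $X_{p}=0$ for every $p\in A$, then $(\eta_{X})_{p}=g(X_{p},\,\cdot)=0$ for every $p\in A$, so in particular the pullback of $\eta_{X}$ to $A$ vanishes, i.e., $\eta_{X}|_{A}=0$. With this in hand, the hypotheses of Lemma \ref{simple} are met verbatim for the closed one-form $\alpha=\eta_{X}$: namely, $M$ is homotopy equivalent to the compact subset $E$, and $\eta_{X}|_{A}=0$ on a subset $A\subset E$ for which $H_{1}(A)\to H_{1}(E)$ is surjective. Applying Lemma \ref{simple} therefore yields a well-defined smooth function $f(x):=\int_{x_{0}}^{x}\eta_{X}$, given by integrating $\eta_{X}$ along any path from a fixed basepoint $x_{0}$ to $x$, with $df=\eta_{X}$.

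Finally, I would unwind the definitions to conclude. The identity $df=\eta_{X}=g(X,\,\cdot)$ means precisely that $X$ is the $g$-gradient of $f$, since $g(\nabla^{g}f,\,\cdot)=df=g(X,\,\cdot)$ forces $X=\nabla^{g}f$ by nondegeneracy of $g$. This $f\in C^{\infty}(M)$ is the desired potential.

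Honestly, I do not expect a serious obstacle here: the statement is a corollary whose proof is a short concatenation of Lemmas \ref{lemmma} and \ref{simple}. The one point requiring a moment of care is the passage from $X|_{A}=0$ to $\eta_{X}|_{A}=0$ (a pointwise vanishing rather than merely a tangential one), together with checking that the surjectivity and homotopy-equivalence hypotheses match those of Lemma \ref{simple} exactly; once these alignments are noted, the conclusion is automatic.
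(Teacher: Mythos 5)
Your argument is correct and coincides with the paper's own proof: Lemma \ref{lemmma} gives closedness of $\eta_{X}$, the pointwise vanishing $X|_{A}=0$ gives $\eta_{X}|_{A}=0$, and Lemma \ref{simple} then produces the primitive $f$ with $df=\eta_{X}$, hence $X=\nabla^{g}f$. No gaps.
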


\begin{proof}
Simply apply Lemma \ref{simple} using the fact that the $g$-dual one-form $\eta_{X}$ of $X$ is closed by Lemma \ref{lemmma} and $\eta_{X}|_{A}=0$ because $X|_{A}=0$ by assumption.
\end{proof}

\subsection{A vanishing theorem}
Finally, we note the following vanishing result.
\begin{prop}\label{vanishinging}
Let $\pi:M\to C$ be a resolution of a complex cone with a rational singularity. Then $H^{0,\,1}(M)=0$.
\end{prop}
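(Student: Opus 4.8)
The plan is to pass to sheaf cohomology and to exploit both the defining property of a rational singularity and the Steinness of the cone. First I would invoke the Dolbeault isomorphism to rewrite the quantity of interest as $H^{0,1}(M)\cong H^{1}(M,\mathcal{O}_{M})$, so that the statement becomes the vanishing of the first cohomology of the structure sheaf. Recall that, by definition, the apex of $C$ being a rational singularity means precisely that $C$ is normal---so that $\pi_{*}\mathcal{O}_{M}=\mathcal{O}_{C}$---and that the higher direct image sheaves vanish, $R^{q}\pi_{*}\mathcal{O}_{M}=0$ for all $q\geq 1$. These sheaves are in any case supported at the apex, since $\pi$ restricts to a biholomorphism $M\setminus E\to C\setminus\{o\}$ away from the exceptional set $E$, so the real content of the rational singularity hypothesis is the vanishing at the apex.

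Next I would feed this into the Leray spectral sequence $E_{2}^{p,q}=H^{p}(C,R^{q}\pi_{*}\mathcal{O}_{M})\Rightarrow H^{p+q}(M,\mathcal{O}_{M})$. Because $R^{q}\pi_{*}\mathcal{O}_{M}=0$ for $q\geq 1$, only the bottom row $q=0$ survives, the spectral sequence degenerates at $E_{2}$, and one obtains an isomorphism $H^{p}(M,\mathcal{O}_{M})\cong H^{p}(C,\pi_{*}\mathcal{O}_{M})=H^{p}(C,\mathcal{O}_{C})$ for every $p$. In particular $H^{1}(M,\mathcal{O}_{M})\cong H^{1}(C,\mathcal{O}_{C})$, which reduces the problem to the base.

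It remains to show $H^{1}(C,\mathcal{O}_{C})=0$, and here the key point is that $C$ is a Stein space: the radial function furnishes a strictly plurisubharmonic exhaustion (indeed $\frac{i}{2}\partial\bar{\partial}r^{2}$ is the K\"ahler form of the cone metric away from the apex, and $r^{2}$ extends continuously as a plurisubharmonic exhaustion across the apex), so $C$ is Stein by Grauert's criterion; equivalently, a complex cone is an affine variety. Cartan's Theorem B then gives $H^{i}(C,\mathcal{G})=0$ for every coherent sheaf $\mathcal{G}$ and every $i\geq 1$, in particular $H^{1}(C,\mathcal{O}_{C})=0$. Combining the three steps yields $H^{0,1}(M)=0$. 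The only genuinely delicate point is the verification that $C$ is Stein, i.e.\ that the cone's plurisubharmonic exhaustion behaves well at the apex; once this is granted the remaining arguments are purely formal.
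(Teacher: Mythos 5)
Your argument is correct and is essentially the proof given in the paper: both reduce to $H^{1}(M,\mathcal{O}_{M})$, use the Leray spectral sequence for $\pi$ together with $R^{1}\pi_{*}\mathcal{O}_{M}=0$ from the rationality of the singularity, and kill the cohomology on the base via Steinness of the cone and Cartan's Theorem B. The only cosmetic difference is that you invoke full degeneration of the spectral sequence, whereas the paper extracts what it needs from the five-term exact sequence of low-degree terms (and cites a reference for the Steinness of $C$, which you rightly flag as the one point requiring care).
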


\begin{proof}
By Oka's coherence theorem and Grauert's direct image theorem, the sheaves $R^{q}\pi_{*}\mathcal{O}_{M}$ are coherent analytic sheaves on $C_{0}$ for $q\geq 0$. Thus, since $C_{0}$ is a Stein space (cf.~\cite[Theorem 1.8]{Conlon}), by Cartan's Theorem B, we deduce that $$H^{p}(X_{0},\,R^{q}\pi_{*}\mathcal{O}_{X})=0 \quad\textrm{for all $p\geq 1$ and $q\geq 0$}.$$
Consider next the Leray spectral sequence \cite[Theorem 4.17.1, p.201]{Godement}
\begin{equation*}
E_{2}^{p,q}:=H^{p}(X_{0},\,R^{q}\pi_{*}\mathcal{O}_{X})\Rightarrow H^{p+q}(M,\,\mathcal{O}_{M})
\end{equation*}
and form its exact sequence of terms of low degree \cite[Theorem 4.5.1, p.82]{Godement}
\begin{equation*}
0\longrightarrow H^{1}(X_{0},\,\pi_{*}\mathcal{O}_{X})\longrightarrow H^{1}(X,\,\mathcal{O}_{X})
\longrightarrow H^{0}(X_{0},\,R^{1}\pi_{*}\mathcal{O}_{X})\longrightarrow H^{2}(X_{0},\,\pi_{*}\mathcal{O}_{X})\longrightarrow H^{2}(X,\,\mathcal{O}_{X}).
\end{equation*}
Since $R^{1}\pi_{*}\mathcal{O}_{M}=0$ and $H^{1}(C,\,\pi_{*}\mathcal{O}_{M})=0$, we find from this sequence that $H^{1}(M,\,\mathcal{O}_{M})=0$, as claimed.
\end{proof}

\newpage

\bibliographystyle{amsalpha}

\bibliography{ref2}

\end{document}